\documentclass[10pt, twoside, a4paper]{amsart}

\usepackage{amsthm,amsmath,amsfonts,amssymb,mathtools}
\usepackage[authoryear]{natbib}
\RequirePackage[colorlinks,citecolor=blue,urlcolor=blue]{hyperref}

\usepackage[foot]{amsaddr}
\usepackage{enumitem}
\usepackage{latexsym}
\usepackage[capitalise, noabbrev]{cleveref}
\usepackage[linesnumbered,ruled]{algorithm2e}

\DeclareMathAlphabet{\mathpzc}{OT1}{pzc}{m}{it}

\usepackage{bm}
\newcommand{\blambda}{\bm{\lambda}}
\newcommand{\bLambda}{\bm{\Lambda}}
\newcommand{\bM}{\mathbf{M}}
\newcommand{\cF}{\mathcal{F}}
\newcommand{\cG}{\mathcal{G}}
\newcommand{\pa}[1]{\left(#1\right)}
\newcommand{\convUD}{\xrightarrow{\scalebox{0.6}{$\mathcal{D}/\Theta$}}}
\newcommand{\convUDnull}{\xrightarrow{\scalebox{0.6}{$\mathcal{D}/\Theta_0$}}}
\newcommand{\convUP}{\xrightarrow{\scalebox{0.6}{$P/\Theta$}}}
\newcommand{\convUPnull}{\xrightarrow{\scalebox{0.6}{$P/\Theta_0$}}}

\newcommand{\ex}{\mathbb{E}}
\newcommand{\var}{\mathrm{Var}}

\usepackage{mathbbol}
\DeclareSymbolFontAlphabet{\mathbb}{AMSb}
\DeclareSymbolFontAlphabet{\mathbbl}{bbold}
\newcommand{\one}{\mathbbl{1}}
\newcommand{\VVV}{W\hspace{-0.7em}W}

\DeclareMathOperator*{\cov}{cov}
\renewcommand{\epsilon}{\varepsilon}
\newcommand{\cl}{c\`adl\`ag}
\newcommand{\lc}{c\`agl\`ad}

\newtheorem{thm}{Theorem}[section]
\newtheorem{prop}[thm]{Proposition}
\newtheorem{lem}[thm]{Lemma}
\newtheorem{cor}[thm]{Corollary}

\theoremstyle{definition}
\newtheorem{dfn}[thm]{Definition}

\newtheorem{assump}{Assumption}[section]
\crefname{assump}{Assumption}{Assumptions}

\usepackage{tikz}
\usepgflibrary{arrows}
\usetikzlibrary{calc,fadings,decorations.pathreplacing,intersections,positioning}
\usetikzlibrary{decorations.markings}

\newcommand{\vertiii}[1]{{\left\vert\kern-0.25ex\left\vert\kern-0.25ex\left\vert #1 
    \right\vert\kern-0.25ex\right\vert\kern-0.25ex\right\vert}}
    
\title[Nonparametric CLI testing]{Nonparametric conditional local \\independence testing}

\author[A. M. Christgau]{Alexander Mangulad Christgau}
\email{amc@math.ku.dk}
\author[L. Petersen]{Lasse Petersen}
\email{lassepetersen@protonmail.com}
\author[N. R. Hansen]{Niels Richard Hansen}
\email{Niels.R.Hansen@math.ku.dk}

\address{Department of Mathematical Sciences, University
of Copenhagen, Universitetsparken 5, 2100 Copenhagen \O, Denmark
}

\begin{document}

    \begin{abstract}
        Conditional local independence is an asymmetric independence relation among continuous time stochastic processes. It describes whether the evolution of one process is directly influenced by another process given the histories of additional processes, and it is important for the description and learning of causal
        relations among processes. We formulate a model-free framework for 
        testing the hypothesis that a counting process is conditionally locally independent 
        of another process. To this end, we introduce a new functional parameter called the Local Covariance Measure (LCM), which quantifies deviations from the hypothesis. 
        Following the principles of double machine learning, we propose an estimator 
        of the LCM and a test of the hypothesis using nonparametric estimators and sample splitting or cross-fitting.
        We call this test the (cross-fitted) Local Covariance Test ((X)-LCT), and we show that its level and power can be controlled uniformly, provided that the nonparametric estimators are consistent with modest rates. We illustrate the 
        theory by an example based on a marginalized Cox model with
        time-dependent covariates, and we show in simulations that when double 
        machine learning is used in combination with cross-fitting, then 
        the test works well without restrictive parametric assumptions.    
    \end{abstract}

\maketitle

\section{Introduction}
Notions of how one variable influences a target variable are central to 
both predictive and causal modeling. Depending on the objective, the relevant 
notion of influence can be variable importance in a predictive model of the target, 
but it can also be the causal effect of the variable on the target. 
In either case, we can investigate influence conditionally on a third 
variable -- to quantify the added predictive value, the direct 
causal effect or the causal effect adjusted for a confounder. 
Our interests are in an asymmetric notion of direct influence among stochastic 
processes, which is not adequately captured by classical (symmetric)
notions of conditional dependence. The objective of this paper is 
therefore to quantify this notion of asymmetric influence 
and specifically to develop a new nonparametric test of the hypothesis
that one stochastic process does not directly influence another.  

The hypothesis we consider is formalized as the hypothesis of \emph{conditional
local independent} -- a concept introduced by \cite{schweder1970} as a continuous time
formalization of the phenomenon that the past of one stochastic process does not 
directly influence the evolution of another stochastic process.  
As such, conditional local independence is a continuous time version of 
the discrete time concept of Granger non-causality \citep{Granger:1969}. 

To illustrate the concept of conditional local independence we will in 
this introduction consider an example involving three processes: $X$, $Z$ and $N$ -- 
see Figure \ref{fig:lig0}. 
The process $N$ is the indicator of death, $N_t = \one(T \leq t)$, for an 
individual with survival time $T$, and $X_t$ denotes the total pension 
savings of the individual at time $t$. The process $Z$ is a covariate 
process, e.g., health variables or employment status, that may directly affect 
both the pension savings and the survival time. This is indicated in Figure \ref{fig:lig0}
by edges pointing from $Z$ to $X$ and $N$. Edges pointing from 
$N$ to $X$ and $Z$ indicate that a death event directly affects both $X$ and $Z$ (which 
take the values $X_T$ and $Z_T$, respectively, after time $T$, see Section \ref{sec:introex}).

To define conditional local independence let 
$\mathcal{F}^{N,Z}_t = \sigma(N_s, Z_s; s \leq t)$ denote the filtration generated by 
the $N$- and $Z$-processes. The $\sigma$-algebra $\mathcal{F}_t^{N,Z}$ represents 
the information contained in the $N$- and the $Z$- processes before
time $t$. Informally, the process $N_t$ is conditionally locally independent of
the process $X_t$ given $\mathcal{F}_t^{N,Z}$ if $(X_s)_{s \leq t}$ does not add 
predictable information to $\mathcal{F}_{t-}^{N,Z}$ about the infinitesimal evolution of $N_t$.
For this particular example this means that the conditional hazard function 
of $T$ does not depend on $(X_s)_{s \leq t}$ given $\mathcal{F}_t^{N,Z}$.
In Figure \ref{fig:lig0} the hypothesis of interest, that $N_t$ is conditionally 
locally independent of $X_t$ given $\mathcal{F}^{N,Z}_t$, is represented by 
the lack of an edge from $X$ to $N$.

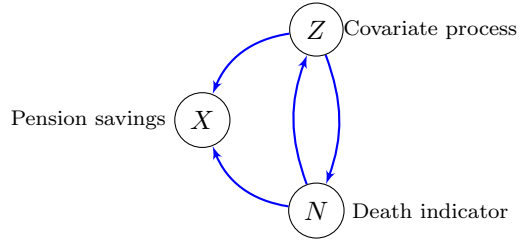
\begin{figure}
\begin{tikzpicture}[scale=0.6]
\tikzset{vertex/.style = {shape=circle,draw,minimum size=2em}}
\tikzset{vertexHid/.style = {shape=rectangle,draw,minimum size=2em}}
\tikzset{edge/.style = {->,> = latex', thick}}
\node[vertex] (Z) at  (1.5,2) {$Z$};
\node at (4,2) {\footnotesize{Covariate process}};
\node[vertex] (X) at  (-1,0) {$X$};
\node at (-3.5,0) {\footnotesize{Pension savings}};
\node[vertex] (T) at  (1.5,-2) {$N$};
\node at (4,-2) {\footnotesize{Death indicator}};

\draw[edge, bend left=20, color = blue] (Z) to (T);
\draw[edge, bend left, color = blue] (T) to (X);
\draw[edge, bend left=20, color = blue] (T) to (Z);
\draw[edge, bend right, color = blue] (Z) to (X);
\end{tikzpicture} 

\caption{Local independence graph illustrating a dependence 
structure among the three processes $X$, $Z$ and $N$. Here 
$N$ is the indicator of death for an 
individual, $X$ is their cumulative pension savings and $Z$ is a covariate process.
All nodes in this graph have implicit self-loops.
There is no edge from $X$ to $N$, which indicates that death is not directly influenced by 
pension savings. This can be formalized as $N$ being conditionally locally independent 
of $X$, which is the hypothesis we aim to test.
\label{fig:lig0}}
\end{figure}

A systematic investigation of algebraic properties of 
conditional local independence was initiated by 
\cite{didelez2006, Didelez2008, didelez2015}. She also introduced local independence 
graphs, such as the directed graph in Figure \ref{fig:lig0}, 
to graphically represent all conditional local independencies 
among several processes, and she studied the semantics of these graphs. 
This work was extended further by \cite{Mogensen:2020} to 
graphical representations of partially observed systems.
While we will not formally discuss local independence graphs, the problem 
of learning such graphs from data was an important motivation for us to 
develop a nonparametric test of conditional local
independence. A
constraint based learning algorithm of local independence graphs was given by
\cite{Mogensen:2018} in terms of a conditional local independence oracle,
but a practical algorithm requires that the oracle is replaced by 
conditional local independence tests.  

Another important motivation for considering conditional local independence 
arises from causal models.  With a
structural assumption about the stochastic process specification, a conditional
local independence has a causal interpretation \citep{aalen1987, aalen2012,
commenges2009}, and if the causal stochastic system is completely observed, a
test of conditional local independence is a test of no \emph{direct} causal
effect. If the causal stochastic system is only partially observed, a
conditional local dependency need not correspond to a direct causal effect due
to unobserved confounding, but the projected local independence graph, as
introduced by \cite{Mogensen:2020}, retains a causal interpretation, and its
Markov equivalence class can be learned by conditional local independence
testing. In addition, within the framework of structural nested models, testing
the hypothesis of no \emph{total} causal effect can also be cast as a test of
conditional local independence \citep{lok2008}.

To appreciate what conditional local independence means -- and, in particular, 
what it does not mean -- it is useful to compare with classical conditional independence. 
In our example, $N_t$ is conditionally locally independent of $X_t$ given 
$\mathcal{F}^{N,Z}_t$, but this implies neither that
\mbox{$N \!\perp\!\!\!\!\perp X \mid Z$} (as processes), nor that 
$N_t \!\perp\!\!\!\!\perp X_t \mid \mathcal{F}_t^Z$. In fact, 
these conditional independencies cannot hold in this example where 
$X_t = X_T$ for $t \geq T$ -- except in special cases such as $T$ being 
a deterministic function of $Z$. Theorem 2 in \cite{Didelez2008}
gives a sufficient condition for $N_t  \!\perp\!\!\!\!\perp X_t \mid \mathcal{F}_t^Z$
to hold in terms of the local independence graph, but this condition 
is also not fulfilled by the graph in Figure \ref{fig:lig0} due to the edge from 
$N$ to $X$. 
\cite{Didelez2008} argues that $N_t$ being conditionally locally independent of $X_t$ given 
$\mathcal{F}^{N,Z}_t$ heuristically means that $N_t \!\perp\!\!\!\!\perp 
\mathcal{F}^X_{t-} \mid \mathcal{F}_{t-}^{N,Z}$, but this is technically problematic 
in continuous time. If $T$ has a continuous distribution, then for any fixed $t$, 
$N_t = N_{t-}$ almost surely, whence $N_t$ is 
almost surely $\mathcal{F}_{t-}^{N,Z}$-measurable and conditionally independent of 
anything given $\mathcal{F}_{t-}^{N,Z}$. It is thus not possible to use 
this heuristic to formally define conditional local independence in continuous time. 
See instead the formal Definition 2 by \cite{Didelez2008} or our Definition 
\ref{dfn:cli}.

Several examples from health sciences given by \cite{Didelez2008} demonstrate  
the usefulness of conditional local independence for multivariate event systems, and more recent 
attention to event systems in the machine 
learning community \citep{Zhou:2013, Xu:2016, Achab:2017, Bacry:2018, Cai:2022} 
testifies to the relevance of conditional local independence. This line of research 
relies primarily on the linear Hawkes process model, which is 
effectively used to infer local independence graphs  
-- sometimes even interpreted causally. 
The Hawkes model is attractive because conditional local independencies can be 
inferred from corresponding kernel functions being zero -- and statistical tests can readily be based on parametric or nonparametric
estimation of kernels. A less attractive property of the Hawkes model is 
that any such test cannot be expected to maintain level if the model is misspecified.
This is compounded by the Hawkes model not being closed under marginalization 
(also known as non-collapsibility), which means that even within a 
subsystem of a linear Hawkes process, conditional local independence cannot be tested correctly using a Hawkes model. 

The challenge of model misspecification and non-collapsibility is investigated 
further in Sections \ref{sec:introex} and \ref{sec:simulations}
based on an extension of our introductory 
example and Cox's survival model. Both the Hawkes model and the Cox model 
illustrate that conditional local independence might be expressed 
and tested within a (semi-)parametric model, but non-collapsibility -- and 
model misspecification, in general -- makes us question the validity of
a model based test. Thus there is a need for a nonparametric test of 
the hypothesis of conditional local independence. Moreover, since we cannot 
translate the hypothesis into an equivalent hypothesis about 
classical conditional independence, we cannot directly use existing 
nonparametric tests, such as the GHCM \citep{lundborg2021conditional}, 
of conditional independence. 

We propose a new nonparametric test when the target process $N$ is a counting process 
and $X$ is a real valued process, and where the hypothesis is that $N$ is 
conditionally locally independent of $X$ given a filtration $\mathcal{F}_t$. 
In the context of the introductory example, $\mathcal{F}_t = \mathcal{F}_t^{N,Z}$. We 
consider a counting process target primarily because the theory of conditional 
local independence is most complete in this case, but generalizations are possible -- we 
refer to the discussion in Section \ref{sec:dis}. Within our framework we base our
test on an infinite dimensional parameter, which we call the 
Local Covariance Measure (LCM). It is a function of time, 
which is constantly equal to zero under the hypothesis. 
Our main result is that the LCM can be estimated by using the ideas of double
machine learning \citep{Chernozhukov:2018} in such a way that the estimator 
converges uniformly at a $\sqrt{n}$-rate to a mean zero Gaussian martingale under the hypothesis of conditional local independence. We use the LCM to develop the (cross-fitted)
Local Covariance Test ((X)-LCT), for which we derive uniform level and power 
results.

\subsection{Organization of the paper}
In Section \ref{sec:setup} we introduce the general framework for formulating the hypothesis of
conditional local independence. This includes the introduction in 
Section \ref{sec:setup-hyp} of an abstract residual
process, which is used to define the LCM as a functional target parameter indexed by time. The LCM equals the zero-function under the hypothesis of conditional 
local independence, and to test this hypothesis we 
introduce an estimator of the LCM in Section \ref{sec:statistic}. The estimator is a stochastic process,
and we describe how sample splitting is to be used for its computation via the
estimation of two unknown components. 

In Section \ref{sec:interpretations} we give interpretations of the LCM and its estimator. 
We show that the LCM estimator is a Neyman
orthogonalized score statistic in Section \ref{sec:neyman}, and in Section \ref{sec:copula} we relate LCM to the partial copula when $X$ is time-independent.

In Section \ref{sec:asymptotics} we state the main results of the paper. We
establish in Section \ref{sec:LCMasymptotics} that the LCM estimator generally approximates the LCM with an error of order $n^{-1/2}$. Under the hypothesis of conditional local independence, we show that the (scaled) LCM estimator converges weakly to a mean zero Gaussian martingale. The estimator requires a model of the
target process $N$ as well as the process $X$ conditionally on
$\mathcal{F}_t$ to achieve the orthogonalization at the core of double machine
learning. The model of $X$ is in this paper expressed indirectly in terms of the 
residual process, and we show that if we can learn the residual process at 
rate $g(n)$ and the model of $N$ at rate $h(n)$ such that $g(n), h(n) \to 0$ and $\sqrt{n} g(n) h(n) \to 0$ for $n \to \infty$ then we achieve a $\sqrt{n}$-rate convergence 
of the LCM estimator. We also show that the variance function of the Gaussian 
martingale can be estimated consistently, and we give a general result
on the asymptotic distribution of univariate test statistics based on the 
LCM estimator. All asymptotic results are presented in the framework of 
uniform stochastic convergence.

Section \ref{sec:lct} gives explicit examples of univariate test statistics,
including the Local Covariance Test based on the normalized supremum of 
the LCM estimator. Its asymptotic distribution is derived and we present 
results on uniform asymptotic level and power. In 
Section \ref{sec:cf} we present the generalization from the sample split estimator to the cross-fit estimator. Though this estimator and the corresponding cross-fit Local Covariance Test (X-LCT) are a bit more involved to compute and analyze, X-LCT 
is more powerful and thus our recommended test for practical usage. 

The survival example from the introduction is used and elaborated upon throughout the paper. We introduce a Cox model in terms of the 
time-varying covariate processes, and we report in Section \ref{sec:simulations} the results from a simulation study based on this model.

The paper is concluded by a discussion in Section \ref{sec:dis}, and 
Appendices \ref{sec:proofs} through \ref{sec:extrafigs} contain:
proofs of results in this paper \eqref{sec:proofs}; 
definitions and results on uniform asymptotics \eqref{app:UniformAsymptotics};  
a uniform version of Rebolledo's martingale CLT \eqref{sec:fclt}; 
an overview of achievable rate results for estimation of nuisance parameters that enter into the LCM estimator \eqref{sec:estimation}; 
and additional results from the simulation study \eqref{sec:extrafigs}.

\section{The Local Covariance Measure} \label{sec:setup}
In this section we present the general framework of the paper,
we define conditional local independence and we introduce the Local Covariance 
Measure as a means to quantify deviations from conditional local independence. 
In Section \ref{sec:statistic} we outline how the Local Covariance Measure 
can be estimated using double machine learning and sample splitting. 
We illustrate the central concepts and methods by an 
example based on Cox's survival model with time-varying covariates. 

We consider a counting process $N = (N_t)$ and another 
real value process $X = (X_t)$, 
both defined on the probability space $(\Omega, \mathbb{F}, \mathbb{P})$. All 
processes are assumed to be defined on a common compact time interval. 
We assume, without loss of generality, that the time interval is $[0,1]$. We will 
assume that $N$ is adapted w.r.t. a right continuous and complete filtration 
$\mathcal{F}_t$, and we denote by $\mathcal{G}_t$ the right continuous 
and complete filtration generated by $\mathcal{F}_t$ and $X_t$. We  
assume throughout that $X$ is \lc{} (that is, has sample paths that are continuous from the left
and with limits from the right), which will ensure bounded sample paths and 
that the process is $\mathcal{G}_t$-predictable.

In the survival example of the introduction, $N_t = \one(T \leq t)$ is the 
indicator of whether death has happened by time $t$, and there can only be one 
event per individual observed. Furthermore, $\mathcal{F}_t = \mathcal{F}_t^{N, Z}$
and $\mathcal{G}_t = \mathcal{F}_t^{N, X, Z}$. 
Our general setup works for any counting process, thus it allows for recurrent 
events and adapted censoring, and the filtration $\mathcal{F}_t$ can 
contain the histories of any number of processes in addition to the history of 
$N$ itself.

\subsection{The hypothesis of conditional local independence} \label{sec:setup-hyp}

The counting process $N$ is assumed to have an $\mathcal{F}_t$-intensity 
$\lambda_t$, that is, $\lambda_t$ is $\mathcal{F}_t$-predictable and with 
$$\Lambda_t = \int_0^t \lambda_s \mathrm{d}s$$
being the compensator of $N$,  
\begin{equation} \label{eq:basicmg}
M_t = N_t - \Lambda_t
\end{equation}
is a local $\mathcal{F}_t$-martingale. Within this framework we can define the 
hypothesis of conditional local independence precisely.

\begin{dfn}[Conditional local independence] \label{dfn:cli}
We say that $N_t$ is conditionally 
locally independent of $X_t$ given $\mathcal{F}_t$ if 
the local $\mathcal{F}_t$-martingale $M_t$ defined by \eqref{eq:basicmg} is also a 
local $\mathcal{G}_t$-martingale.
\end{dfn}

For simplicity, we will refer to this hypothesis as \emph{local independence}
and write 
\begin{equation} \label{eq:H0}
H_0: M_t = N_t - \Lambda_t \text{ is a local } \mathcal{G}_t \text{-martingale}.
\end{equation}
As argued in the introduction, the hypothesis of local
independence is the hypothesis that observing $X$ on $[0,t]$ does not add any
information to $\mathcal{F}_{t-}$ about whether an $N$-event will happen in an
infinitesimal  time interval $[t, t + \mathrm{d}t)$. Definition \ref{dfn:cli} 
captures this interpretation by requiring that the $\mathcal{F}_t$-compensator, 
$\Lambda$, of $N$ is also the $\mathcal{G}_t$-compensator. Thus, $\lambda$ is also the $\mathcal{G}_t$-intensity under $H_0$. 

If $N$ has $\mathcal{G}_t$-intensity $\blambda$, the innovation theorem, Theorem II.T14 
in \cite{Bremaud:1981}, gives that the predictable projection $\lambda_t = E(\blambda_t \mid \mathcal{F}_{t-})$ is the (predictable) $\mathcal{F}_t$-intensity. 
Local independence follows if $\blambda$ is $\mathcal{F}_t$-predictable.  
Intensities are, however, only unique almost surely, and we can have local independence 
even if $\blambda$ is not \emph{a priori} $\mathcal{F}_t$-predictable but have 
an $\mathcal{F}_t$-predictable version. When 
$N$ has $\mathcal{G}_t$-intensity $\blambda$, 
$H_0$ is thus equivalent to $\blambda$ having an $\mathcal{F}_t$-predictable version.
We find Definition \ref{dfn:cli} preferable because it directly 
gives an operational criterion for determining whether $N$ has an 
$\mathcal{F}_t$-predictable version of a $\mathcal{G}_t$-intensity.

Since $X$ is assumed \lc, and thus especially $\mathcal{G}_t$-predictable, the stochastic
integral 
\begin{equation} \label{eq:stochint}
\int_0^t X_{s} \mathrm{d}{M}_s,
\end{equation}
is under $H_0$ a local $\mathcal{G}_t$-martingale. A test could be based on 
detecting whether \eqref{eq:stochint} is, indeed, a local martingale.
We will take a slightly different approach where 
we replace the integrand $X$ by a residual process as defined below.
We do so for two reasons. First, to achieve a $\sqrt{n}$-rate via double 
machine learning we need the integrand to fulfill \eqref{eq:residualorthogonal} below. 
Second, other choices of integrands than $X$ could potentially lead to 
more powerful tests. 
\begin{dfn}[Residual Process] \label{dfn:rescond}
A residual process $G = (G_t)_{t\in[0,1]}$ of $X_t$ given $\mathcal{F}_t$ is a \lc{} 
stochastic process that is $\mathcal{G}_t$-adapted and satisfies 
\begin{align}\label{eq:residualorthogonal}
    \ex(G_t \mid \mathcal{F}_{t-}) = 0, \qquad 
    t\in [0,1].
\end{align}
\end{dfn}
The geometric interpretation is that the residual process evolves such that $G_t$ is orthogonal to $L_2(\mathcal{F}_{t-})$ within $L_2(\mathcal{G}_{t-})$ at each time $t$. One 
obvious residual process is the \emph{additive residual process} given by
\begin{align*}
    G_t =  X_t - \Pi_t = X_t - \ex(X_{t} \mid \mathcal{F}_{t-}).
\end{align*}
where $\Pi_t = E(X_t \mid \mathcal{F}_{t-})$ denotes the 
predictable projection of the \lc{} process $X_{t}$, 
 see Theorem VI.19.2 in \citep{rogers2000}. The additive residual projects $X_t$ onto the orthogonal complement of $L_2(\mathcal{F}_{t-})$, but this may not necessarily remove all $\mathcal{F}_{t}$-predictable information from $X_t$. An alternative 
choice that does so under sufficient regularity conditions is the \emph{quantile residual process} given by
$$
    G_t = F_t(X_t) - \frac{1}{2},
$$
where $F_t$ is the conditional distribution function given by $F_t(x) = \mathbb{P}(X_t \leq x \mid \mathcal{F}_{t-})$.
The quantile residual process satisfies \eqref{eq:residualorthogonal} provided that $(t,x) \mapsto F_t(x)$ is continuous. In Section~\ref{sec:neyman} we discuss additional transformations of  
$X$ that can also be applied before any residualization procedure.

We will formulate the general results in terms of an abstract residual process,
but we focus on the additive residual process in the examples. Any non-degenerate residual process will contain a predictive model of (aspects of) $X_t$ given $\mathcal{F}_{t-}$ in order to satisfy \eqref{eq:residualorthogonal}. We use $\hat{G}_t$ to
denote the residual obtained by plugging in an estimate of that predictive
model. For the additive residual process, the predictive model is $\Pi_t$ and $\hat{G}_t =
X_t - \hat{\Pi}_t$. For the quantile residual process, the predictive model is $F_t$ and $\hat{G}_t = \hat{F}_t(X_t)- \frac12$.

We can now define our functional target parameter of interest, which we call 
the \emph{Local Covariance Measure}.
\begin{dfn}[Local Covariance Measure] \label{dfn:lcm}
With $G_t$ a residual process, define for $t \in [0,1]$
\begin{equation} \label{eq:gamma}
    \gamma_t = \ex\left( I_t \right),
    \qquad
    \text{where}
    \quad
    I_t = \int_0^t G_s \mathrm{d} M_s,
\end{equation}
whenever the expectation is well defined. We call the function $t \mapsto \gamma_t$ 
the Local Covariance Measure (LCM).
\end{dfn}
The following propositions illuminate how $\gamma$ relates to the null hypothesis of $N_t$ being conditionally locally independent of $X_t$ given $\cF_t$.

\begin{prop} \label{prop:cli-mg} 
    Under $H_0$, the process $I = (I_t)$ is a local $\mathcal{G}_t$-martingale with $I_0 = 0$. If $I$ is a martingale, then $\gamma_t = 0$ for $t \in [0,1]$.
\end{prop}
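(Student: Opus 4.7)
The proof falls naturally into two independent claims, and the plan is to dispatch each using standard stochastic calculus.

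For the first claim, I would start by noting that $G$ is $\cG_t$-adapted and \lc{} by assumption, so it is $\cG_t$-predictable. Moreover, because \lc{} sample paths on the compact interval $[0,1]$ are automatically bounded, $G$ is in fact locally bounded. Under $H_0$, the process $M$ is a local $\cG_t$-martingale by Definition~\ref{dfn:cli}. The stochastic integral of a locally bounded predictable process against a local martingale is again a local martingale, so $I_t = \int_0^t G_s \, \mathrm{d}M_s$ is a local $\cG_t$-martingale. The identity $I_0 = 0$ is immediate from the convention that the integral over $\{0\}$ vanishes.

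For the second claim, if $I$ is a genuine $\cG_t$-martingale (not merely local), then $\ex(I_t) = \ex(I_0) = 0$ for every $t \in [0,1]$, which by Definition~\ref{dfn:lcm} says exactly that $\gamma_t = 0$.

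There is no real obstacle here beyond knowing that the preservation of the (local) martingale property under stochastic integration against a predictable, locally bounded integrand is standard (e.g., Theorem II.T8 in \cite{Bremaud:1981} or the corresponding result in \cite{rogers2000}). The only small subtlety worth flagging explicitly is justifying that $G$ is locally bounded; this is covered by the paper's standing assumption that \lc{} processes on $[0,1]$ have bounded paths, an assumption already used for $X$ in Section~\ref{sec:setup}.
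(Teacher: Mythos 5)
Your proof is correct and follows essentially the same route as the paper's: both rest on the fact that the stochastic integral of a $\mathcal{G}_t$-predictable, locally bounded integrand against a local $\mathcal{G}_t$-martingale is again a local $\mathcal{G}_t$-martingale, and both obtain $\gamma_t = 0$ via $\ex(I_t) = \ex(I_0) = 0$ once $I$ is a true martingale. You merely make explicit the predictability and local boundedness of $G$ (via the standing \lc{} assumption), which the paper leaves implicit.
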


To interpret $\gamma$ in the alternative, we assume that $N$ has $\cG_t$-intensity $\blambda$. 
\begin{prop} \label{prop:gammaalternative}
    If 
    $
        \int_0^1 \ex(|G_s| (\blambda_s + \lambda_s)) \mathrm{d}s
        <\infty
    $, then for every $t\in[0,1]$,
    \begin{align*}
        \gamma_t = \int_0^t \cov(G_s, \blambda_s - \lambda_s) \mathrm{d}s.
    \end{align*}
    In particular, $\gamma$ is the zero-function if and only if $\cov(G_s, \blambda_s - \lambda_s) = 0$ for almost all $s\in [0,1]$.
\end{prop}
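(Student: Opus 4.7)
The plan is to rewrite $dM_s$ in terms of the $\cG_t$-compensator of $N$, exploit that to decompose $I_t$ into a $\cG_t$-martingale part and an absolutely continuous drift part, take expectations, and finally rewrite the drift integrand as a covariance. Since $N$ has $\cG_t$-intensity $\blambda$, the process $\bar{M}_t = N_t - \int_0^t \blambda_s \, \mathrm{d}s$ is a local $\cG_t$-martingale, and writing $dM_s = d\bar{M}_s + (\blambda_s - \lambda_s)\mathrm{d}s$ yields
\begin{equation*}
I_t = \int_0^t G_s \, \mathrm{d}\bar{M}_s + \int_0^t G_s (\blambda_s - \lambda_s) \, \mathrm{d}s.
\end{equation*}

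Next I would argue that under the stated integrability hypothesis the stochastic integral $\int_0^t G_s \, \mathrm{d}\bar{M}_s$ is a genuine $\cG_t$-martingale, not just a local one. Because $X$ is \lc{} and $\cG_t$-adapted, so is $G$, hence $G$ is $\cG_t$-predictable. The hypothesis $\int_0^1 \ex(|G_s| \blambda_s)\,\mathrm{d}s < \infty$ provides that the $\cG_t$-predictable compensator of $\int_0^\cdot |G_s| \, \mathrm{d}N_s$ is integrable, so the standard criterion for stochastic integrals with respect to counting process martingales (see e.g.\ a Rebolledo-type statement or Jacod–Shiryaev I.3.11) gives that this term is a uniformly integrable martingale with expectation $0$. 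Taking expectations, applying Fubini (justified by the joint hypothesis on $|G_s|(\blambda_s+\lambda_s)$), gives
\begin{equation*}
\gamma_t = \int_0^t \ex\bigl(G_s(\blambda_s - \lambda_s)\bigr)\, \mathrm{d}s.
\end{equation*}

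To pass to the covariance representation, I use the residual property twice. First, $\ex(G_s) = \ex(\ex(G_s \mid \cF_{s-})) = 0$, so covariance and the bare expectation of the product agree. Second, since $\lambda_s$ is $\cF_{s-}$-measurable and $\ex|G_s \lambda_s| < \infty$, conditioning on $\cF_{s-}$ yields $\ex(G_s \lambda_s) = \ex(\lambda_s \ex(G_s \mid \cF_{s-})) = 0$; thus $\ex(G_s(\blambda_s - \lambda_s)) = \ex(G_s \blambda_s) = \cov(G_s, \blambda_s - \lambda_s)$. The final iff statement follows because $t \mapsto \gamma_t$ is absolutely continuous with the covariance as its density, which therefore must vanish for almost every $s$ exactly when $\gamma \equiv 0$.

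The main obstacle is the first step of the second paragraph: passing from a local $\cG_t$-martingale to a true martingale so that the expectation is $0$. A purely $L^1$ bound on $G \blambda$ is exactly what is needed, and the rest of the argument is a clean application of Fubini and the tower property using the defining orthogonality $\ex(G_s\mid \cF_{s-}) = 0$.
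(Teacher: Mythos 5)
Your proof is correct and follows essentially the same route as the paper's: decompose $\gamma_t = \ex(I_t)$ into a $\cG_t$-martingale part with zero mean and a drift part, then use $\ex(G_s) = \ex(\ex(G_s \mid \cF_{s-})) = 0$ to identify the drift integrand as a covariance. The paper reaches the same conclusion by first establishing $\ex\left(\int_0^t H\,\mathrm{d}N\right) = \ex\left(\int_0^t H\blambda\,\mathrm{d}s\right)$ for nonnegative $H$ via monotone convergence along a localizing sequence and then applying it to the positive and negative parts of $G$, whereas you invoke a standard true-martingale criterion for stochastic integrals against compensated counting processes; both justifications are standard and equivalent in substance. (Your extra observation that $\ex(G_s\lambda_s)=0$ is correct but superfluous, since $\ex(G_s)=0$ already turns the product expectation into a covariance.)
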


We note that under $H_0$, the condition $\int_0^1 \ex(|G_s| \lambda_s) \mathrm{d}s < \infty$ 
is sufficient to ensure that $I$ is a martingale and $\gamma_t = 0$ for 
all $t \in [0,1]$. By Proposition \ref{prop:gammaalternative}, the LCM quantifies 
deviations from $H_0$ in terms of the covariance between the residual process and the difference of the $\cF_{t}$- and $\cG_t$-intensities. To this end, note that if $X$ happens to be $\mathcal{F}_t$-adapted, then 
$\mathcal{G}_t = \mathcal{F}_t$ and $N$ is trivially locally independent of
$X$. 
The hypothesis of local independence is only of interest when
$\mathcal{G}_t$ is a strictly larger filtration than $\mathcal{F}_t$, that is,
when $X$ provides information not already in $\mathcal{F}_t$. 

For the additive residual process, where $G_t = X_t - \Pi_t$, 
\begin{align*}
    \gamma_t & 
    = \ex\left(\int_0^t G_s \mathrm{d} M_s \right) 
    = \ex\left(\int_0^t X_s \mathrm{d} M_s\right) - 
    \ex\left(\int_0^t \Pi_s \mathrm{d} M_s \right)
\end{align*}
provided that the expectations are well defined. 
Since the predictable projection $\Pi_t$ has a \lc{} version and is  
$\mathcal{F}_t$-predictable, and since $M_t$ is a local $\mathcal{F}_t$-martingale,
$\int_0^t \Pi_s \mathrm{d} M_s$
is  a local $\mathcal{F}_t$-martingale. If it is a martingale, it is a 
mean zero martingale, and
\begin{equation} \label{eq:gammarep}
\gamma_t = \ex\left(\int_0^t X_s \mathrm{d} M_s\right) 
=
\ex \left( \sum_{\tau \leq t: \Delta N_\tau = 1} X_\tau 
- 
\int_0^t X_s \lambda_s \mathrm{d} s \right).
\end{equation} 
The computation above shows that the additive residual process defines the same 
functional target parameter $\gamma_t$ as the stochastic integral \eqref{eq:stochint} would. It is, however, the representation of $\gamma_t$ as the
expectation of the residualized stochastic integral that will allow us
to achieve a $\sqrt{n}$-rate of convergence of the 
estimator of $\gamma_t$ in cases where the estimator of $\lambda_t$ converges at a slower rate. 

\subsection{A Cox model with a partially observed covariate process} \label{sec:introex}
To further illustrate the hypothesis of conditional local independence and 
the Local Covariance Measure we consider an example based on Cox's
survival model with time dependent covariates. This is an extension of 
the example from the introduction with $T$ being 
the time to death of an individual, and with $X$ and $Z$ being time-varying processes. 
There is, moreover, one additional time-varying process $Y$ in the full model. 

An interpretation of the processes is as follows:
\begin{align*}
    X & = \text{Pension savings} \\
    Y & = \text{Blood pressure} \\
    Z & = \text{BMI} 
\end{align*}
Periods of overweight or obesity may influence blood pressure in the long term,
and due to, e.g., job market discrimination, high BMI could influence pension
savings negatively. Death risk is influenced directly by BMI and blood pressure
but not the size of your pension savings. Figure \ref{fig:lig} illustrates 
two possible dependence structures among the three processes and the 
death time as local independence graphs, and we will use these two graphs 
to discuss the concept of conditional local independence of pension 
savings on time to death.

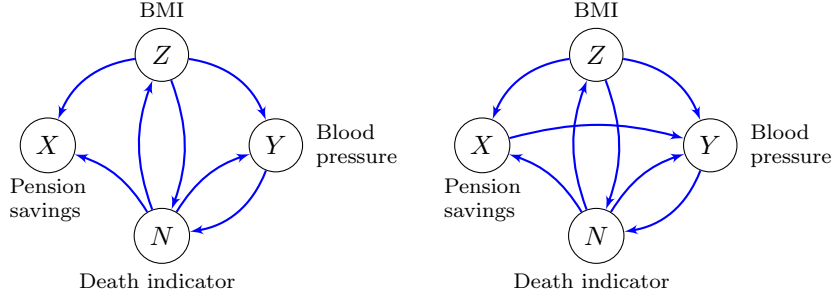
\begin{figure}
    \begin{tabular}{cc}
\begin{tikzpicture}[scale=0.6]
\tikzset{vertex/.style = {shape=circle,draw,minimum size=2em}}
\tikzset{vertexHid/.style = {shape=rectangle,draw,minimum size=2em}}
\tikzset{edge/.style = {->,> = latex', thick}}
\node[vertex] (Z) at  (1.5,2) {$Z$};
\node at (1.5,3) {\footnotesize{BMI}};
\node[vertex] (Y) at  (4,0) {$Y$};
\node[text width=1.1cm]  at (5.8,0) {\footnotesize{Blood} \vskip -1mm 
\footnotesize{pressure}};
\node[vertex] (X) at  (-1,0) {$X$};
\node[text width=1cm] at (-1,-1.2) {\footnotesize{Pension} \vskip -1mm 
\footnotesize{savings}};
\node[vertex] (T) at  (1.5,-2) {$N$};
\node at (1.4,-3) {\footnotesize{Death indicator}};

\draw[edge, bend left=20, color = blue] (Z) to (T);
\draw[edge, bend right=20, color = blue] (T) to (X);
\draw[edge, bend left=20, color = blue] (T) to (Z);
\draw[edge, bend left=20, color = blue] (T) to (Y);
\draw[edge, bend left, color = blue] (Z) to (Y);
\draw[edge, bend right, color = blue] (Z) to (X);
\draw[edge, bend left, color = blue] (Y) to (T);
\end{tikzpicture} & 
\begin{tikzpicture}[scale=0.6]
    \tikzset{vertex/.style = {shape=circle,draw,minimum size=2em}}
    \tikzset{vertexHid/.style = {shape=rectangle,draw,minimum size=2em}}
    \tikzset{edge/.style = {->,> = latex', thick}}
    \node[vertex] (Z) at  (1.5,2) {$Z$};
    \node at (1.5,3) {\footnotesize{BMI}};
    \node[vertex] (Y) at  (4,0) {$Y$};
    \node[text width=1.1cm]  at (5.8,0) {\footnotesize{Blood} \vskip -1mm 
    \footnotesize{pressure}};
    \node[vertex] (X) at  (-1,0) {$X$};
    \node[text width=1cm] at (-1,-1.2) {\footnotesize{Pension} \vskip -1mm 
    \footnotesize{savings}};
    \node[vertex] (T) at  (1.5,-2) {$N$};
    \node at (1.4,-3) {\footnotesize{Death indicator}};
    
    \draw[edge, bend left=20, color = blue] (Z) to (T);
\draw[edge, bend right=20, color = blue] (T) to (X);
\draw[edge, bend left=20, color = blue] (T) to (Z);
\draw[edge, bend left=20, color = blue] (T) to (Y);
    \draw[edge, bend left, color = blue] (Z) to (Y);
    \draw[edge, bend right, color = blue] (Z) to (X);
    \draw[edge, bend left, color = blue] (Y) to (T);
    \draw[edge, bend left=15, color = blue] (X) to (Y);
    \end{tikzpicture}
\end{tabular}

\caption{Local independence graphs illustrating how the three processes $X$,
$Y$, and $Z$ could affect each other and time of death in the Cox example.
There is no direct influence of $X$ (pension savings) on time of death in either
of the two graphs, but in the left graph the death indicator is furthermore
conditionally locally independent of $X$ given the history of $Z$ and $N$. 
In the right graph, $Z$ and $N$ does
not block all paths from $X$ to $N$, thus conditioning on the history of 
$Z$ and $N$ only would not render $N$ conditionally locally independent of $X$.
\label{fig:lig}}
\end{figure}

We assume that $T \in [0,1]$ and that $X$, $Y$ and $Z$ have continuous sample 
paths. Recall also that $N_t = \one(T \leq t)$ is the death indicator process. 
To maintain some form of realism, all processes are stopped at time of 
death, that is, $X_t = X_T$, $Y_t = Y_T$ and $Z_t = Z_T$ for $t \geq T$. This 
feedback from the death event to the other processes is reflected in Figure \ref{fig:lig} by the edges pointing out of $N$. Recall also that 
$$\mathcal{F}_t^{N,Z} = \sigma(N_s, Z_s; s \leq t)$$  
is the filtration generated by the $N$- and $Z$-processes. We use a similar notation 
for other processes and combinations of processes. For example, $\mathcal{F}_t^{N, X, Y, Z}$ 
is the filtration generated by $N$ and all three $X$-, $Y$-, and $Z$-processes. 
With $\lambda_t^{\text{full}}$ denoting the $\mathcal{F}_t^{N, X, Y, Z}$-intensity 
of time of death based on the history of all processes, we assume in this 
example a Cox model given by 
\begin{equation} \label{eq:coxex}
\lambda_t^{\text{full}} = \one(T\geq t)\lambda_t^0 e^{Y_t + \beta Z_t}
\end{equation}
with $\lambda_t^0$ a deterministic baseline intensity. It is not important that 
$\lambda_t^{\text{full}}$ is a Cox model for our general theory, 
but it allows for certain theoretical computations in this example.

The fact that $\lambda_t^{\text{full}}$ does not depend upon $X_t$ implies 
that $\lambda_t^{\text{full}}$ is also the $\mathcal{F}_t^{N, Y, Z}$-intensity,
and according to Definition \ref{dfn:cli}, $N_t$ is conditionally locally independent of $X_t$ given 
$\mathcal{F}_t^{N, Y, Z}$. This is in agreement with the local independence 
graphs in Figure~\ref{fig:lig} where there is no edge in either of them 
from $X$ to $N$.

We will take an interest in the case where $Y$ is unobserved and test the hypothesis:
\begin{center}
    $H_0:$ $N_t$ is conditionally locally independent of $X_t$ 
    given $\mathcal{F}_t^{N, Z}$.
\end{center}
That is, with $Y$ unobserved 
we want test if the intensity of time to death given the history of 
$N$, $X$ and $Z$ depends on $X$. To simplify notation let
$\mathcal{F}_t = \mathcal{F}_t^{N, Z}$ and 
$\mathcal{G}_t = \mathcal{F}_t^{N, X, Z}$ -- in accordance with the general notation.
The $\mathcal{G}_t$-intensity is by the innovation theorem given as
\begin{equation} \label{eq:lamb-marg}
\blambda_t = \ex(\lambda_t^{\text{full}} \mid \mathcal{G}_{t-})
= \one(T\geq t)\lambda_t^0 e^{\beta Z_t} \ex(e^{Y_t} \mid \mathcal{G}_{t-}),
\end{equation}
while the $\mathcal{F}_t$-intensity is 
\begin{equation} \label{eq:lamb}
\lambda_t = \ex(\lambda_t^{\text{full}} \mid \mathcal{F}_{t-}) = 
\one(T\geq t)\lambda_t^0 e^{\beta Z_t} \ex(e^{Y_t} \mid \mathcal{F}_{t-}),
\end{equation}
and $H_0$ is equivalent to $\lambda_t = \blambda_t$ almost surely.
Comparing \eqref{eq:lamb-marg}
and \eqref{eq:lamb} we see that $H_0$ holds in this example if 
$\ex(e^{Y_t} \mid \mathcal{G}_{t-}) = \ex(e^{Y_t} \mid \mathcal{F}_{t-})$,
and a sufficient condition for this to be the case is 
\begin{equation} \label{eq:exas}
\mathcal{F}_t^X \perp \!\!\!\! \perp  \mathcal{F}_t^Y 
\mid  \mathcal{F}_t.
\end{equation}
The condition \eqref{eq:exas} is in concordance with the left graph in Figure \ref{fig:lig}, see Theorem 2 in \cite{Didelez2008}, but not the right, and it implies $H_0$. We will in 
Section \ref{subsec:ex} elaborate on  condition \eqref{eq:exas}  and give explicit 
examples. 

We recall that $H_0$ can  be reformulated as $\blambda_t$ not depending on $X$,
and we could investigate the hypothesis via a marginal Cox model
\begin{equation} \label{eq:coxmarg}
\blambda_t^{\text{cox}} = \one(T\geq t) \blambda_{t}^0 e^{\alpha_1 X_t + \alpha_2 Z_t}
\end{equation}
and test if $\alpha_1 = 0$. The Cox model is, however, non-collapsible
\citep{martinussen2013collaps}, and the semi-parametric model
\eqref{eq:coxmarg} is quite likely misspecified. Consequently, the test of
$\alpha_1 = 0$ is not equivalent to a test of $H_0$. 

Our proposed nonparametric test of $H_0$ does not rely on a specific (semi-)parametric model of $\blambda_t$. To test $H_0$ we consider the LCM using the additive residual process. Then \eqref{eq:gammarep} implies that 
\begin{align*}
   \gamma_t & = \ex\left( X_{T}N_t - \int_0^{t} X_{s} \lambda_s \mathrm{d}s \right),
\end{align*}
By Proposition \ref{prop:cli-mg}, $\gamma_t = 0$ for $t \in [0,1]$ 
under $H_0$, whence conditional local independence implies $\gamma_t = 0$, and 
we test $H_0$ by estimating $\gamma_t$ and testing if it is constantly equal to $0$.

Before introducing a general estimator of the LCM in Section \ref{sec:statistic} 
we outline how to estimate the end point parameter $\gamma_1$ in this example. Due 
to $T \leq 1$ and the appearance of the indicator $\one(T\geq t)$ in \eqref{eq:lamb},
\begin{align*}
   \gamma_1 & = \ex\left( X_{T} - \int_0^{T} X_{s} \lambda_s \mathrm{d}s \right).
\end{align*}
With i.i.d. observations $(T_1, X_1, Z_1), \ldots, (T_n, X_n, Z_n)$ and 
(nonparametric) estimates, $\hat{\lambda}_{j,t}$, based on 
$(T_1, Z_1), \ldots, (T_n, Z_n)$, we could compute the plug-in estimate 
$$\hat{\gamma}_{1, \text{plug-in}}^{(n)} = \frac{1}{n} \sum_{j=1}^n \left( X_{j,T_j} -
\int_0^{T_j} X_{j,s} \hat{\lambda}_{j,s} \mathrm{d}s \right).$$ 
However,  we cannot expect the plug-in estimator to 
have a $\sqrt{n}$-rate unless $\hat{\lambda}$ has $\sqrt{n}$-rate, which 
effectively requires parametric model assumptions on the intensity. 
\begin{figure}[t]
    \includegraphics[width=.8\linewidth]{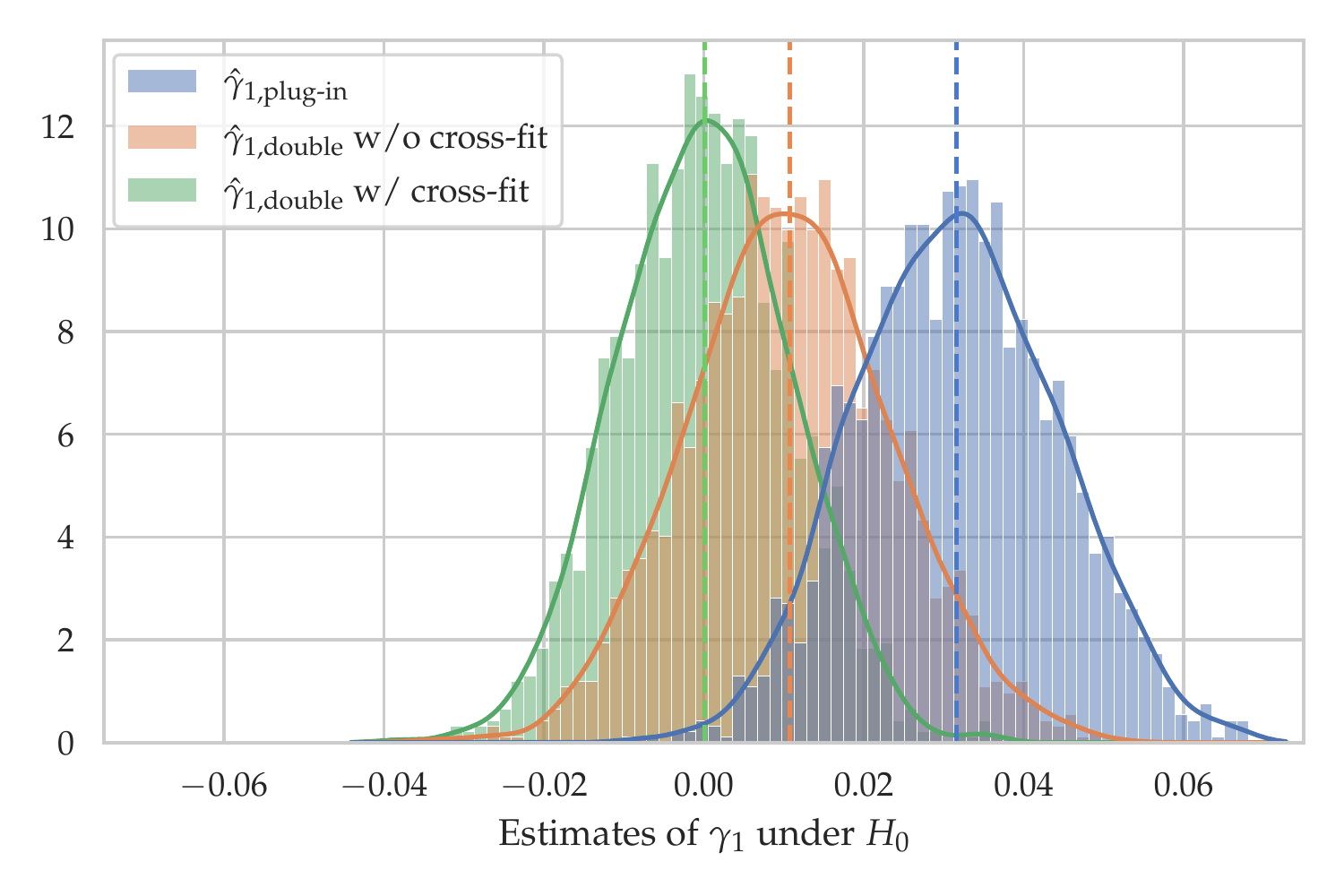}
    \caption{\small Histograms of the distributions of three different 
    estimators of $\gamma_1$.
    Each histogram contains 1000 estimates fitted to samples of size $n=500$. 
    The samples were sampled from a model that satisfies the hypothesis of 
    conditional local independence and hence the ground truth is $\gamma_1=0$. See Section \ref{sec:SamplingScheme} for further details of the data generating process.
    }
    \label{fig:endpoint_example}
\end{figure}
Using the definition of $\gamma_1$ in terms of the additive residual process 
$G_t = X_t - \Pi_t$, we also have that 
\begin{equation} \label{eq:gamrep} 
\gamma_1
= \ex\left( X_{T} - \Pi_{T} - \int_0^T (X_{s} - \Pi_s) \lambda_s \mathrm{d}s
\right).
\end{equation} 
A double machine learning estimator based on the ideas by
\cite{Chernozhukov:2018} is therefore obtained by plugging in two nonparametric estimators: 
$$\hat{\gamma}_{1,\text{double}}^{(n)} = \frac{1}{n} \sum_{j=1}^n 
\left( X_{j,T_j} - \hat{\Pi}_{j,T_j} - \int_0^{T_j} (X_{j,s} - \hat{\Pi}_{j,s}) \hat{\lambda}_{j,s} \mathrm{d}s \right).$$
To achieve a small bias and a $\sqrt{n}$-rate of convergence, we use 
sample splitting. The nonparametric estimates 
$\hat{\Pi}_j$ and $\hat{\lambda}_{j}$ are based on one part 
of the sample only, and are thus independent of the other part of 
the sample used for testing, see Section \ref{sec:statistic}. To 
obtain a fully efficient estimator, multiple sample splits can be combined,
e.g., via cross-fitting, see Section \ref{sec:cf}.

\cref{fig:endpoint_example} shows the distributions of 
$\hat{\gamma}_{1,\text{plug-in}}^{(500)}$ and 
$\hat{\gamma}_{1,\text{double}}^{(500)}$ for the Cox example  
with $\gamma_1 = 0$, see Section \ref{sec:SamplingScheme} for details on the full 
model specification. 
The latter estimator was computed using cross-fitting but also 
without using any form of sample splitting.
The figure illustrates the bias of $\hat{\gamma}_{1,\text{plug-in}}^{(500)}$, 
which is somewhat diminished by double machine learning without sample splitting and 
mostly eliminated by double machine learning in combination with 
cross-fitting.

\subsection{Estimating the Local Covariance Measure} \label{sec:statistic}

To estimate the LCM we assume that we have observed $n$ i.i.d. replications of the
processes, $(N_1, X_1, \mathcal{F}_1), \ldots, (N_n, X_n, \mathcal{F}_n)$, where
observing $\mathcal{F}_{j} = (\mathcal{F}_{j,t})$ signifies that anything
adapted to the $j$-th filtration is computable from observations. The 
process $N_j$ is adapted to $\mathcal{F}_j$, while $X_j$ is not, and 
$\mathcal{G}_j$ denotes the smallest right continuous and complete filtration 
generated by $X_j$ and $\mathcal{F}_j$.
 
For each $n$, we consider a sample split corresponding to a partition $J_n\cup
J_n^c = \{1,\ldots,n\}$ of the indices into two disjoint sets. We let
$\hat{\lambda}^{(n)}$ and $\hat{G}^{(n)}$ be estimates of the intensity and the
residualization map, respectively, fitted on data indexed by $J_n^c$ only. By an
estimate, $\hat{\lambda}^{(n)}$, of $\lambda$ we mean a (stochastic) function
that can be evaluated on the basis of $\mathcal{F}_{j,t}$ for $j \in J_n$, and
its value, denoted by $\hat{\lambda}_{j, t}^{(n)}$, is interpreted as a
prediction of $\lambda_{j,t}$. The stochasticity in $\hat{\lambda}^{(n)}$ arises from
its dependence on data indexed by $J_n^c$, from which its functional form is
completely determined. Similarly, $\hat{G}^{(n)}$ is a function that can be
evaluated on the basis of $\mathcal{G}_{j,t}$ for $j \in J_n$ to give a prediction $\hat{G}_{j,t}^{(n)}$ of
$G_{j,t}$. In Section \ref{subsec:ex} we illustrate through the Cox example how
$\hat{\lambda}^{(n)}$ and $\hat{G}^{(n)}$ are to be computed in practice when we
use sample splitting. In Section \ref{sec:estimation} we give
more examples of such estimation procedures and discuss their statistical properties 
in greater detail. 

To ease notation, we will throughout assume that $(N, X, \mathcal{F})$ 
denotes one additional process and filtration -- independent of and with the 
same distribution as the observed processes. Then the estimated 
intensity $\hat{\lambda}^{(n)}$ and estimated residual process 
$\hat{G}^{(n)}$ can be evaluated on $(N, X, \mathcal{F})$, and thus we may write 
$\hat{\lambda}_t^{(n)}$ and $\hat{G}_t^{(n)}$ to denote template copies of $\hat{\lambda}^{(n)}_{j,t}$ and $\hat{G}^{(n)}_{j,t}$ for $j \in J_n$. 

In terms of the estimates $\hat{\lambda}^{(n)}$ and $\hat{G}^{(n)}$  
we estimate LCM by the stochastic process $\hat{\gamma}^{(n)}$ given by
\begin{equation} \label{eq:LCM}
\hat{\gamma}_t^{(n)} = \frac{1}{|J_n|} \sum_{j \in J_n} \int_0^t 
\hat{G}_{j,s}^{(n)}
\mathrm{d} \hat{M}^{(n)}_{j, s},
\end{equation}
where $\hat{M}_{j, t}^{(n)} = N_{j,t} - 
\int_0^t \hat{\lambda}_{j, s}^{(n)} \mathrm{d}s$. 
We can regard $\hat{\gamma}_t^{(n)}$ as a double machine learning estimator
of $\gamma_t$, with the observations indexed by $J_n^c$ used to learn models of
$\lambda$ and $G$, and with observations indexed by $J_n$ used to estimate
$\gamma_t$ based on these models. In Section \ref{sec:cf} we define the more
efficient estimator that uses cross-fitting, but it is instructive to study the
simpler estimator based on sample splitting first. 

In practical applications, we do not directly observe the filtration $\cF_j$, but rather samples from the stochastic processes generating the filtration. In accordance with the introductory Cox example, consider $\cF_j$ and $\cG_j$ given by $\cF_{j,t} = 
\sigma(Z_{j,s}, N_{j,s}; s\leq t)$ and $\cG_{j,t} = \sigma(X_{j,s}, Z_{j,s}, N_{j,s}; s\leq t)$ for a third stochastic process $Z_j$, with $Z_j$ possibly being multivariate. 
Within this setup, a general procedure for numerically computing the LCM is described in Algorithm \ref{alg:lcm}. Here, historical regression refers to any method which regresses the outcome at a given time on the history of the regressors up to that time. For example, historical linear regression is discussed in Section \ref{sec:simulations} and various alternative methods are discussed in Appendix \ref{sec:estimation}. The choice of sample split will be discussed further in Section \ref{sec:cf} in the context of cross-fitting.

\begin{algorithm} \caption{Sample split estimator of LCM} \label{alg:lcm}
  \textbf{input}: processes $(N_j,X_j,Z_j)_{j=1,\ldots,n}$, partition $J_n \cup J_n^c$ of indices \;
  \textbf{options}: historical regression methods for estimation of $\lambda$ and $G$ given $N$ and $Z$, 
  
  discrete time grid $0 = t_0 <\cdots< t_k \leq 1$\; %
  \Begin{
    historically regress $(X_j)_{j\in J_n^c}$ on $(N_j,Z_j)_{j\in J_n^c}$ 
    to obtain a fitted model $\hat{G}^{(n)}$ \; 
    historically regress $(N_j)_{j\in J_n^c}$ on $(N_j,Z_j)_{j\in J_n^c}$ 
    to obtain a fitted model $\hat{\lambda}^{(n)}$ \; 
    compute out of sample residuals $\hat{G}_{j,t_i}^{(n)}$ and 
    $\hat{M}_{j,t_i}^{(n)}$ for $j \in J_n$ and $i = 0,\ldots,k$ \;
    for each $i=1,\ldots, k$, compute
    $$
    \widetilde{\gamma}_{t_i}^{(n)} = \frac{1}{|J_n|} \sum_{j \in J_n} 
    \sum_{1\leq l \leq i}
    \hat{G}_{j,t_l}^{(n)}
    (\hat{M}^{(n)}_{j, t_l} - \hat{M}^{(n)}_{j, t_{l-1}})
    $$
  }
  \textbf{output}: Local Covariance Measure $\widetilde{\gamma}^{(n)}$ numerically approximated on grid\;
\end{algorithm}

As in Section \ref{sec:introex} we could suggest estimating the entire function 
$t \mapsto \gamma_t$ by a simple plug-in estimator of $\lambda$ 
using the representation \eqref{eq:gammarep}.
\cref{fig:timevaryingexample} illustrates the distribution 
of estimators of the entire time dependent LCM for this plug-in 
estimator together with the double machine learning estimator with and 
without using cross-fitting. The figure also shows the distribution of 
the endpoint being the same distribution shown in Figure 
\ref{fig:endpoint_example}. The simulation is under $H_0$, 
and we see that only the double machine learning estimator with 
cross-fitting results in estimated sample paths centered around $0$.

\begin{figure}
    \includegraphics[width=0.8\linewidth]{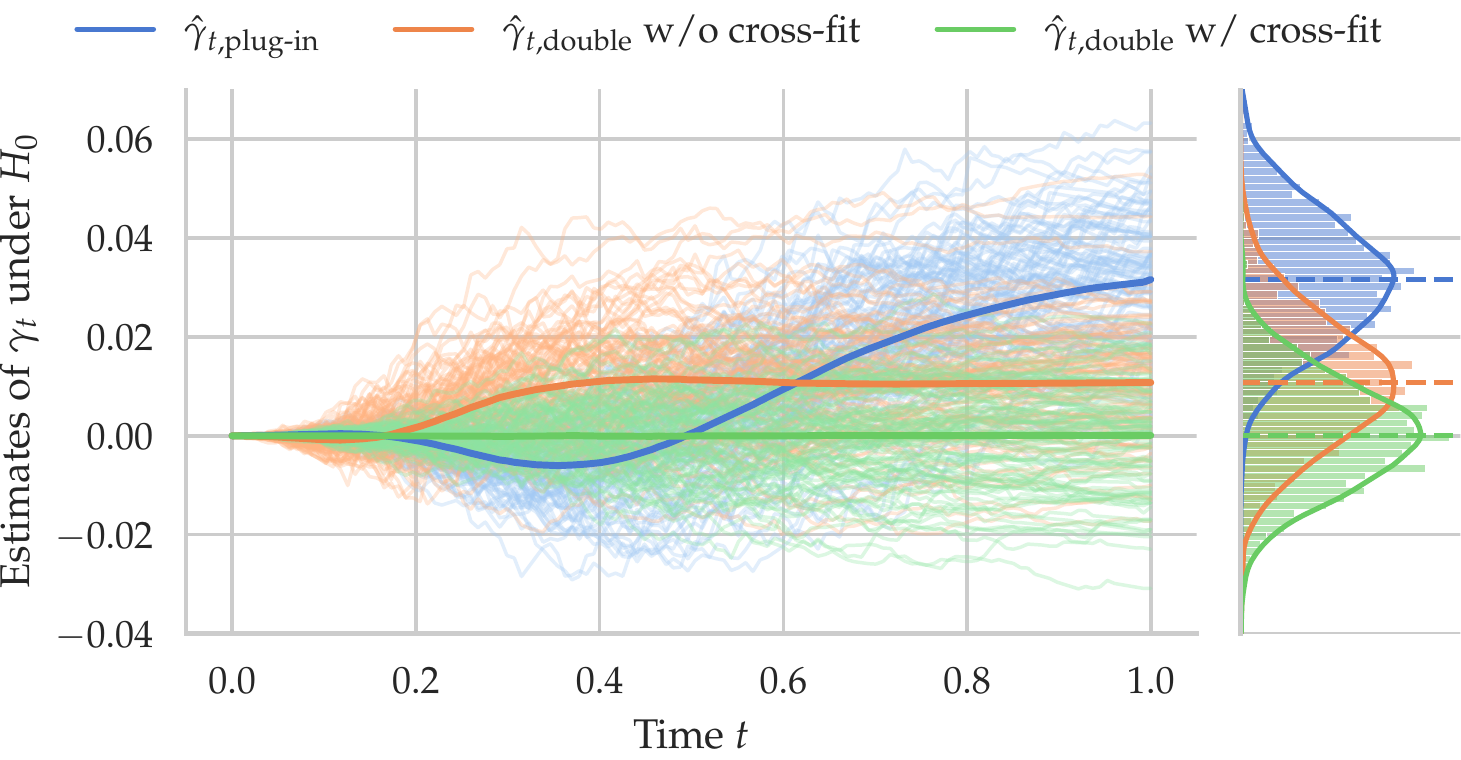}
    \caption{
        A time dependent extension of \cref{fig:endpoint_example}
        showing the distribution of the sample paths  
        $t \mapsto \hat{\gamma}_{t, \mathrm{plug-in}}^{(500)}$ and 
        $t \mapsto \hat{\gamma}_{t, \mathrm{double}}^{(500)}$, 
        the latter with and without using cross-fitting. 
        The data were simulated under $H_0$ where 
        $t\mapsto \gamma_t$ is the zero function.
        See Section \ref{sec:SamplingScheme} for further details of the data generating process.
    }
    \label{fig:timevaryingexample}
\end{figure}
%




\section{Interpretations of the LCM estimator} \label{sec:interpretations}

In this section we provide some additional perspectives on and 
interpretations of the LCM. First 
we show that the LCM estimator can be seen as a Neyman orthogonalization
of the score statistic for a particular one-parameter family. The abstract formulation of the residual process $(G_t)$ permits that we transform $X$ into another $\mathcal{G}_t$-predictable processes. Using this perspective, we may optimize the choice of the process $X$ in terms of power. 

Next we show that when $X$ is independent of time, the 
test statistic reduces in a survival context to a covariance 
between $X$-residuals and Cox-Snell-residuals, which we can link to the 
partial copula between $X$ and the survival time.

\subsection{Neyman orthogonalization of a score statistic} \label{sec:neyman}
Consider the one-parameter family of $\mathcal{G}_t$-intensities
$$\blambda^{\beta}_t = e^{\beta X_t} \lambda_t$$
for $\beta \in \mathbb{R}$. Within this one-parameter family, the hypothesis 
of conditional local independence is equivalent to $H_0 : \beta = 0$.
The normalized log-likelihood with $n$ i.i.d. observations
in the interval $[0,t]$ is  
\begin{align*} 
    \ell_t(\beta) & = \frac{1}{n} \sum_{j=1}^n \left( 
        \int_0^t \log(\blambda_{j,s}^{\beta}) \mathrm{d}N_{j,s} - 
        \int_0^t \blambda_{j,s}^{\beta} \mathrm{d}s\right) \\
    & = \frac{1}{n} \sum_{j=1}^n \left( 
        \int_0^t \beta X_{j,s} + \log(\lambda_{j,s}) \mathrm{d}N_{j,s} - 
        \int_0^t e^{\beta X_{j,s}} \lambda_{j,s} \mathrm{d}s\right).
\end{align*}
Straightforward computations show that 
\begin{align*}
    \partial_{\beta} \ell_t(0) = \frac{1}{n}\sum_{j=1}^n \int_0^t X_{j,s} \mathrm{d} M_{j,s} 
    \quad \text{and} \quad
    - \partial_{\beta}^2 \ell_t(0) = \frac{1}{n} \sum_{j=1}^n \int_0^t X_{j,s}^2 \lambda_{j,s} \mathrm{d}s. 
\end{align*}
If $\lambda$ were known, the score statistic $\partial_{\beta} \ell_t(0)$ 
satisfies $\ex(\partial_{\beta} \ell_t(0)) = \gamma_t$. Moreover, 
under $H_0: \beta = 0$ we have that
$- \partial_{\beta}^2 \ell_t(0) = \langle \partial_{\beta} \ell_t(0) \rangle $ 
is a consistent estimate of the asymptotic variance of the 
mean zero martingale $\partial_{\beta} \ell_t(0)$. The hypothesis of 
local independence -- with $\lambda$ known -- could thus be tested 
using the score test statistic 
$- \partial_{\beta} \ell_t(0)^2 / \partial_{\beta}^2 \ell_t(0)$. 

The nuisance parameter $\lambda$ is, however, unknown and we want to 
avoid restrictive parametric assumptions about $\lambda$. Replacing 
$X_{j,t}$ by the residual process $G_{j,t}$ in the score statistic 
$\partial_{\beta} \ell_t(0)$ gives a \emph{Neyman orthogonalized} 
score 
$$\frac{1}{n}\sum_{j=1}^n \int_0^t G_{j,s} \mathrm{d} M_{j,s}.$$
This score is linear in $\lambda$, and it is not difficult to show 
that it satisfies the Neyman orthogonality condition under $H_0$, cf. Definition 2.1 
in \cite{Chernozhukov:2018}. Indeed, Neyman orthogonality is implicitly 
a central part of the asymptotic results for the LCM estimator 
(in particular Lemma \ref{lem:R1}). The Neyman orthogonalized
score leads directly to the double machine learning 
estimator \eqref{eq:LCM}, and Neyman orthogonality 
in combination with sample splitting are key to showing the $\sqrt{n}$-rate 
of convergence for this estimator. 

The perspective on the LCM estimator as a Neyman orthogonalized score 
statistic suggests that a test based on the LCM has most power against alternatives 
in the one-parameter family $\blambda^{\beta}$. If it happens 
that the most important alternatives are of the form  
$$\blambda^{\beta}_t = e^{\beta \bar{X}_t} \lambda_t$$
for some $\mathcal{G}_t$-predictable process $\bar{X}_t$ different from $X_t$, 
then we should replace $X_t$ by $\bar{X}_t$ in our test statistic, 
that is, in the residualization procedure. Examples of processes 
$\bar{X}_t$ are: 
\begin{itemize}
    \item transformations, $\bar{X}_t = f(X_t)$ for a function $f$
    \item time-shifts, $\bar{X}_t = X_{t-s}$ for $s > 0$
    \item linear filters, $\bar{X}_t = \int_0^t \kappa(t - s) X_s \mathrm{d} s$ for a kernel $\kappa$
    \item non-linear filters, $\bar{X}_t = \phi\left(\int_0^t \kappa(t - s) f(X_s) \mathrm{d} s\right)$
    for a kernel $\kappa$ and functions $f$ and $\phi$.
\end{itemize}
Any finite number of such processes could, of course, also be combined 
into a vector process, and we could, indeed, generalize the LCM estimator \eqref{eq:LCM} to a vector process. 
The generalization is straightforward.

\subsection{Time-independent \texorpdfstring{$X$}{X}} \label{sec:copula}
A different perspective on the test statistic is obtained if $X$ is 
independent of time. If we consider a survival model where $T$ 
is time of death and $N_t = \one(T \leq t)$, then $X$ is
a baseline variable and 
$$\gamma_t = \ex(X (\one(T \leq t) - \Lambda_{t \wedge T})).$$
For $t = 1$, we see that 
$\gamma_1 = \ex(X (1 - \Lambda_T))$ since $T \in [0,1]$ by assumption. Now 
$\Lambda_T$ is exponentially distributed with mean $1$, thus
$$
    \gamma_1 = - \mathrm{cov}(X, \Lambda_T).
$$
Using the additive residual process, the LCM estimator for $t = 1$ is 
$$\hat{\gamma}_1^{(n)} = \frac{1}{|J_n|} \sum_{j \in J_n} 
(X_j - \hat{\Pi}_{j,1})(1 - \hat{\Lambda}_{T_j})$$
which is simply the (negative) empirical covariance between the residuals 
$X_j - \hat{\Pi}_{j,1}$ and the Cox-Snell residuals $\hat{\Lambda}_{T_j}$. 

If we use the quantile residual process $G_t = F_t(X) - \frac{1}{2}$, where 
$F_t(x) = \mathbb{P}(X \leq x \mid \mathcal{F}_{t-})$,
the residual $G_1$ is uniformly distributed and independent of 
$\mathcal{F}_{1-}$ provided $F_1$ is continuous. The LCM estimator for $t = 1$ is 
$$\hat{\gamma}_1^{(n)} = \frac{1}{|J_n|} \sum_{j \in J_n} 
\hat{G}_{j,1} (1 - \hat{\Lambda}_{T_j}),$$
which is again an empirical covariance, but now between the 
generalized residuals $\hat{G}_{j,1}$ and the Cox-Snell 
residuals. This variant of the LCM is closely related to the 
partial copula between $X$ and $T$, which can be estimated as 
$$\frac{1}{|J_n|} \sum_{j \in J_n} 
\hat{G}_{j,1} \left(\frac{1}{2} - \exp(-\hat{\Lambda}_{T_j}) \right).$$
See \cite{Petersen:2021} for further details on the partial 
copula and how this statistic can be used to test the 
(ordinary) conditional independence 
$X \perp \!\!\!\! \perp T \mid \mathcal{F}_{1-}$. 
In contrast to the test based on the partial copula, 
$\hat{\gamma}_1^{(n)}$ extends to the $t$-indexed estimator
$$\hat{\gamma}_t^{(n)} = \frac{1}{|J_n|} \sum_{j \in J_n} 
\hat{G}_{j,t} (\one(T_j \leq t) - \hat{\Lambda}_{t \wedge T_j}),$$
whose asymptotic distribution as a Gaussian martingale follows 
from the general results of this paper. 


\section{General asymptotic results} \label{sec:asymptotics}

In this section we derive uniform asymptotic results regarding the general LCM estimator as a stochastic process. In Section \ref{sec:lct} we discuss how to construct tests of $H_0$ based on the asymptotic results. 

We assume that $N$ has a $\cG_t$-intensity $\blambda_t$, we let $\bLambda_t = \int_0^t \blambda_s\mathrm{d}s$ denote the $\mathcal{G}_t$-compensator of $N$ and let $\mathbf{M}_t = N_t - \bLambda_t$ be the compensated local $\mathcal{G}_t$-martingale. We also recall that $\hat{\gamma}^{(n)}$
denotes the LCM estimator based on sample splitting as defined in Section 
\ref{sec:statistic}. Within this framework we consider the decomposition
\begin{align}\label{eq:decomposition}
    \sqrt{|J_n|}\hat{\gamma}^{(n)}
        = U^{(n)} + R_1^{(n)} + R_2^{(n)} + R_3^{(n)}
            + D_1^{(n)} + D_2^{(n)},
\end{align}
where the processes $U^{(n)}, R_1^{(n)}, R_2^{(n)}, R_3^{(n)}, D_1^{(n)}$, and $D_2^{(n)}$ are given by
    \begin{align}
    U^{(n)}_t & = \frac{1}{\sqrt{|J_n|}} \sum_{j \in J_n} 
    \int_0^t G_{j, s} \mathrm{d} \bM_{j, s}, \label{eq:oracleterm}
    \\
    R^{(n)}_{1, t} & = \frac{1}{\sqrt{|J_n|}} \sum_{j \in J_n}  \int_0^t
    G_{j, s}
    \left( \lambda_{j, s} - \hat{\lambda}_{j, s}^{(n)} \right) 
    \mathrm{d}s,
    \\
    R^{(n)}_{2, t} & = \frac{1}{\sqrt{|J_n|}} \sum_{j \in J_n}  \int_0^t 
    \left( \hat{G}_{j, s}^{(n)} - G_{j, s} \right) \mathrm{d} \bM_{j, s},
    \\
    R^{(n)}_{3, t} & = \frac{1}{\sqrt{|J_n|}} \sum_{j \in J_n}  \int_0^t 
    \left( \hat{G}_{j, s}^{(n)} - G_{j, s} \right) 
    \left( \lambda_{j, s} - \hat{\lambda}_{j, s}^{(n)} \right) 
    \mathrm{d}s,
    \\
    D^{(n)}_{1, t} & = \frac{1}{\sqrt{|J_n|}} \sum_{j \in J_n}  \int_0^t 
    G_{j, s}
    ( \blambda_{j, s} - \lambda_{j, s} ) 
    \mathrm{d}s,
    \\
    D^{(n)}_{2, t} & = \frac{1}{\sqrt{|J_n|}} \sum_{j \in J_n}  \int_0^t 
    (\hat{G}_{j, s}^{(n)} - G_{j, s})
    (\blambda_{j, s} - \lambda_{j, s}) 
    \mathrm{d}s.
    \end{align}
We proceed to show that $U^{(n)}$ and $D_1^{(n)}-\sqrt{|J_n|}\gamma$ each converge in distribution and that the remaining terms converge to the zero-process. This implies that $\sqrt{|J_n|}(\hat{\gamma}^{(n)} - \gamma)$ is stochastically bounded in general, so the LCM estimator will asymptotically detect if the LCM is non-zero. Moreover, recall that $\lambda_t$ is a version of $\blambda_t$ under $H_0$, and hence the processes $D_1^{(n)}$ and $D_2^{(n)}$ are (almost surely) the zero-process in this case. Thus it will follow that $U^{(n)}$ drives the asymptotic limit of the LCM estimator under $H_0$. Based on these general asymptotic results we derive in Section \ref{sec:lct} asymptotic error control for tests based on the LCM estimator.

\subsection{Asymptotics of the LCM estimator}\label{sec:LCMasymptotics}
Our asymptotic results are formulated in terms of uniform stochastic convergence, which has also been discussed extensively in the recent literature on hypothesis testing \citep{shah2020hardness,lundborg2021conditional,lundborg2022projected,scheidegger2022weighted,neykov2021minimax}. Uniform convergence allows us to establish uniform asymptotic level of our proposed test, as well as power under local alternatives. We have collected key definitions and results related to uniform convergence in Appendix \ref{app:UniformAsymptotics}.

To state uniform assumptions and asymptotic results we 
need to indicate a range of possible sampling distributions for which the 
assumptions apply and the results hold. For this purpose, we extend our setup and allow all data to be parametrized by a fixed parameter set $\Theta$. The set $\Theta$ is not 
\emph{a priori} assumed to have any structure, and $\theta \in \Theta$ simply indicates
that $N^{\theta}$, $X^{\theta}$, $\lambda^{\theta}$, 
$G^{\theta}$ etc. have $\theta$-dependent distributions. We generally denote evaluation of processes or derived quantities for a specific $\theta$-value by a superscript, with the LCM, $\gamma^{\theta}$, in particular, depending on $\theta$. The LCM estimator is likewise written as $\hat{\gamma}^{(n),\theta}=(\hat{\gamma}^{(n),\theta}_t)$ for $\theta \in \Theta$ to denote its dependence on the sampling distribution. The superscript notation is, however, heavy and unnecessary in many cases and we will suppress the dependency on $\theta\in \Theta$ whenever it is not needed. Any result that does not explicitly involve $\Theta$ should be understood as a pointwise result for each $\theta \in \Theta$. 

The parametrization allows us to express convergence in distribution and probability uniformly over $\Theta$, which are denoted by $\convUD$ and $\convUP$, respectively. These concepts are defined rigorously in Definition \ref{dfn:UniformConvergence}. We note that uniform convergence reduces to classical (pointwise) convergence if $\Theta$ is a singleton, which corresponds to fixing the sampling distribution. We also introduce the parameter subset
\begin{equation}
    \Theta_0 \coloneqq \{\theta \in \Theta \mid H_0 \text{ is valid}\},
\end{equation}
consisting of all parameter values for which the hypothesis of conditional local independence holds. Correspondingly, we will use $\convUDnull$ and $\convUPnull$ to denote stochastic convergences uniformly over $\Theta_0$.

We are now ready to formulate the underlying assumptions on the data required for our asymptotic results. See the discussion regarding possible relaxations in Section \ref{sec:dis}.
\begin{assump}\label{asm:UniformBounds} 
    There exist constants $C,C'>0$, such that for any parameter value $\theta\in \Theta:$
    \begin{itemize}
        \item[i)] The $\cG_t^\theta$-intensity $\blambda_t^\theta$ of $N^\theta$ is \lc{} with $\sup_{0\leq t \leq 1} \blambda_t^\theta \leq C$ almost surely.
        
        \item[ii)] The residual process $G^\theta$ is \lc{} with $\sup_{0\leq t \leq 1} |G_t^\theta| \leq C'$ almost surely.
    \end{itemize}
\end{assump}

The estimator, $\hat{\lambda}^{(n)}_t$, of $\lambda_t$ and the estimator,
$\hat{G}^{(n)}_t$, of the residual process are assumed to satisfy the same 
bounds as $\lambda_t$ and $G_t$. We note that Assumption \ref{asm:UniformBounds} i) implies that $\bM_t$ is a true $\cG_t$-martingale, and by the innovation theorem, $\lambda_t = \ex[\blambda_t \mid \cF_{t-}]$. As a consequence, the $\cF_t$-intensity $\lambda_t$ inherits the boundedness from the $\cG_t$-intensity $\blambda_t$, and $M_t$ is an $\cF_t$-martingale.
More generally, we have the following proposition ensuring that stochastic integrals 
are true martingales, e.g., that $I_t$ is a martingale under $H_0$.

\begin{prop} \label{lem:truemartingales}
    Under \cref{asm:UniformBounds} it holds that each of the processes
    \begin{equation*}
        \Big(\int_0^t f(G_s)\mathrm{d}\bM_s \Big)_{t\in[0,1]}
        \quad \text{ and } \quad
        \Big(\int_0^t f(\hat{G}_s^{(n)})\mathrm{d}\bM_s \Big)_{t\in[0,1]}
    \end{equation*}
    are mean zero, square integrable $\mathcal{G}_t$-martingales
    for any $f\in C(\mathbb{R})$.  
\end{prop}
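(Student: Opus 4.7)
The plan is to reduce the statement to a standard square integrability bound for stochastic integrals of bounded predictable processes against the compensated counting process martingale $\bM$. The two building blocks I will need are (i) that $\bM$ is itself a square integrable $\cG_t$-martingale on $[0,1]$ with $\langle \bM\rangle_t = \bLambda_t$ bounded, and (ii) that the integrands $f(G_s)$ and $f(\hat{G}_s^{(n)})$ are bounded and $\cG_t$-predictable.

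First, I would establish (i). Since $\blambda^\theta$ is \lc{} and bounded by $C$ under Assumption \ref{asm:UniformBounds} i), the $\cG_t$-compensator $\bLambda_t = \int_0^t \blambda_s\,\mathrm{d}s$ satisfies $\bLambda_t \leq C$ on $[0,1]$. In particular $\ex[N_1] = \ex[\bLambda_1] \leq C$, so $N$ and $\bM$ are integrable. A standard result for counting processes gives $\langle \bM\rangle_t = \bLambda_t \leq C$, whence $\ex[\bM_t^2] = \ex[\langle \bM \rangle_t] \leq C$, so $\bM$ is a genuine square integrable $\cG_t$-martingale on $[0,1]$.

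Next I would verify (ii). Because $G$ is \lc{} and $\cG_t$-adapted, and left-continuous adapted processes are predictable, $G$ is $\cG_t$-predictable; by continuity of $f$ the process $f(G)$ is also \lc{}, $\cG_t$-adapted, and hence $\cG_t$-predictable. Continuity of $f$ on the compact interval $[-C',C']$ yields $|f(G_s)| \leq K := \sup_{|x|\leq C'} |f(x)| <\infty$. Combining this with (i), the predictable quadratic variation of $\int_0^\cdot f(G_s)\,\mathrm{d}\bM_s$ satisfies
\begin{equation*}
    \ex\Bigl[\int_0^1 f(G_s)^2 \,\mathrm{d}\langle \bM\rangle_s\Bigr]
    = \ex\Bigl[\int_0^1 f(G_s)^2 \blambda_s\,\mathrm{d}s\Bigr] \leq K^2 C <\infty,
\end{equation*}
so by the standard martingale property of stochastic integrals with bounded predictable integrands against square integrable martingales, $\int_0^t f(G_s)\,\mathrm{d}\bM_s$ is a square integrable $\cG_t$-martingale, starting at $0$ and therefore mean zero.

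The estimator case is handled by conditioning on the training data indexed by $J_n^c$, on which $\hat{G}^{(n)}$ becomes a deterministic \lc{} function bounded by $C'$ by assumption, and is thus $\cG_t$-predictable in the template copy (which is independent of $J_n^c$); then the same argument applies verbatim with $f(G)$ replaced by $f(\hat{G}^{(n)})$, and one finally takes expectation over the training data to remove the conditioning. There is no real obstacle here beyond bookkeeping; the only subtlety is being careful about the conditioning in the estimator case to ensure predictability of $f(\hat{G}^{(n)})$ on the independent template copy, which is immediate once one recalls the sample-splitting construction from Section \ref{sec:statistic}.
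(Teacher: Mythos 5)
Your proposal is correct and follows essentially the same route as the paper: bound $|f(G)|$ by continuity of $f$ on $[-C',C']$, note predictability, bound $\ex\left[\int_0^1 f(G_s)^2\blambda_s\,\mathrm{d}s\right]$ by $K^2 C$, and invoke the standard criterion that a stochastic integral of a bounded predictable process against a square integrable martingale is a square integrable martingale. The only cosmetic differences are that the paper first isolates that criterion as an explicit intermediate lemma (citing Fleming and Harrington, Lemma 2.3.2, Theorems 2.5.3 and 2.4.4) and then applies it, whereas you invoke it directly; and that you spell out the conditioning on the training fold $J_n^c$ for the $\hat{G}^{(n)}$ case, which the paper dispatches with the remark that the proof is identical. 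Your extra care there is harmless and arguably clarifying, since $\hat{G}^{(n)}$ is only $\cG_t$-predictable after conditioning on (or enlarging the filtration by) the training data; just note that the conclusion one keeps after "taking expectation over the training data" is mean zero and square integrability, while the martingale property is really with respect to the enlarged filtration, which is the sense in which the paper uses the result downstream.
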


To express the asymptotic distribution of $U^{(n)}$ we need its \emph{variance function}. 

\begin{dfn} We define the variance function 
$\mathcal{V}\colon [0,1] \to [0,\infty]$ as
    \begin{align} \label{eq:variance}
    \mathcal{V}(t) = \ex \left(
    \int_0^t G_s^2 \mathrm{d} N_s
    \right).
    \end{align}
\end{dfn}

As everything else, the variance function, $\mathcal{V} = \mathcal{V}^\theta$, is 
also indexed by the parameter $\theta$, which we, for notational simplicity, 
suppress unless explicitly needed.  

By taking $f(x) = x^2$ in Proposition \ref{lem:truemartingales}, 
\cref{asm:UniformBounds} implies that for each $t\in [0,1]$,
    $$
        \mathcal{V}(t) 
        = \ex \left(
            \int_0^t G_s^2 \blambda_s \mathrm{d} s
            \right) < \infty.
    $$ 
Moreover, $\mathcal{V}(t)$ is the variance of  $\int_0^t G_s\mathrm{d}\bM_s$, which under $H_0$ is the same as the variance of $I_t = \int_0^t G_s\mathrm{d}M_s$.

%
%

With the assumptions above we can prove the following proposition 
about the uniform distributional limit of the process 
$U^{(n)}$ in the \emph{Skorokhod space} $D[0,1]$, the space of \cl{} functions from $[0, 1]$ to $\mathbb{R}$ endowed with the Skorokhod topology. A corresponding pointwise result is an application of Rebolledo's classical
martingale CLT. Our generalization to uniform convergence is based on 
a uniform extension of Rebolledo's theorem, see Theorem \ref{thm:URebo} 
in Appendix \ref{sec:fclt}. 
\begin{prop}\label{prop:UasymptoticGaussian}
    Under Assumption \ref{asm:UniformBounds} it holds that
    \begin{align*}
        U^{(n),\theta} \convUD U^\theta
    \end{align*}
    in $D[0,1]$ as $n\to \infty$, where for each $\theta\in \Theta$, $U^\theta$ is a mean zero continuous Gaussian martingale on $[0,1]$ with variance function $\mathcal{V}^\theta$.
\end{prop}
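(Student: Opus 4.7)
The plan is to recognize $U^{(n),\theta}$ as a normalized sum of i.i.d.\ mean-zero, square-integrable martingales and invoke the uniform Rebolledo martingale CLT (Theorem \ref{thm:URebo} in Appendix \ref{sec:fclt}). First, for each $\theta$, Proposition \ref{lem:truemartingales} (applied with $f(x)=x$) gives that each summand $t \mapsto \int_0^t G_{j,s}^\theta\,\mathrm{d}\bM_{j,s}^\theta$ is a mean-zero square-integrable $\mathcal{G}_{j,t}^\theta$-martingale. Since the $n$ replications are independent, $U^{(n),\theta}$ is a mean-zero, square-integrable martingale w.r.t.\ the joint filtration $\bigvee_{j\in J_n}\mathcal{G}_{j,t}^\theta$, with predictable quadratic variation
\begin{equation*}
    \langle U^{(n),\theta}\rangle_t \;=\; \frac{1}{|J_n|}\sum_{j\in J_n}\int_0^t (G_{j,s}^\theta)^2 \blambda_{j,s}^\theta\,\mathrm{d}s.
\end{equation*}

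Next, I would verify the two hypotheses required by Theorem \ref{thm:URebo}. For the variance condition, observe that for each fixed $t\in[0,1]$ the above display is an average of i.i.d.\ random variables with common mean $\mathcal{V}^\theta(t)$ and, by Assumption \ref{asm:UniformBounds}, with each term bounded almost surely by $(C')^2 C$ uniformly in $\theta$. Hence Chebyshev's inequality yields a bound on $\mathbb{P}^\theta(|\langle U^{(n),\theta}\rangle_t - \mathcal{V}^\theta(t)| > \epsilon)$ that does not depend on $\theta$ and vanishes as $n\to\infty$, giving $\langle U^{(n),\theta}\rangle_t \convUP \mathcal{V}^\theta(t)$ for each $t$. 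For the Lindeberg-type (jump) condition, note that the jumps of $\int_0^\cdot G_{j,s}^\theta\,\mathrm{d}\bM_{j,s}^\theta$ occur at the event times of $N_j^\theta$ and equal $G_{j,\tau}^\theta\,\Delta N_{j,\tau}^\theta$, so they are bounded by $C'$ almost surely. Consequently every jump of $U^{(n),\theta}$ is bounded by $C'/\sqrt{|J_n|}$, uniformly in $\theta$, which trivially forces the Lindeberg condition. Since $\mathcal{V}^\theta$ is continuous (the integrand $G_s^2\blambda_s$ is \lc{} and uniformly bounded, so the Lebesgue integral is absolutely continuous), the limit $U^\theta$ is a continuous Gaussian martingale with variance $\mathcal{V}^\theta$.

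The main technical content is simply ensuring that every step above is uniform in $\theta\in\Theta$. The uniform boundedness built into Assumption \ref{asm:UniformBounds} is precisely what makes this painless: the Chebyshev bound for the predictable variation and the deterministic $O(1/\sqrt{|J_n|})$ bound on jumps both hold with constants independent of $\theta$, so the uniform version of Rebolledo applies directly. The only potential subtlety is checking that the hypotheses of Theorem \ref{thm:URebo} are stated in the same pointwise-in-$t$, uniform-in-$\theta$ form I verified above, and that tightness is delivered by the theorem (as is standard in uniform FCLT formulations). Once those match, the conclusion $U^{(n),\theta}\convUD U^\theta$ in $D[0,1]$ follows immediately.
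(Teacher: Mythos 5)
Your proof is correct and follows essentially the same route as the paper: both recognize $U^{(n),\theta}$ as a sum of i.i.d.\ square-integrable martingales, compute the predictable quadratic variation via Proposition \ref{prop:Rebolledospecialcase}, and invoke the uniform Rebolledo CLT (Theorem \ref{thm:URebo}), killing the jump condition by noting that the integrands $G_{j,s}/\sqrt{|J_n|}$ are eventually smaller than $\varepsilon$ under the uniform bound on $G$. The only cosmetic differences are that the paper invokes the uniform law of large numbers of \citet{shah2020hardness} rather than your direct Chebyshev argument (which is equivalent here), and that the paper explicitly cites Lemma \ref{lem:regularityofgammasigma} to verify the uniform-equicontinuity-and-boundedness hypothesis on $(\mathcal{V}^\theta)_\theta$ required by Theorem \ref{thm:URebo}; your continuity argument in fact delivers a $\theta$-uniform Lipschitz constant, but you should state this explicitly since the theorem needs it.
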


To control the remainder terms in \eqref{eq:decomposition} we will bound 
the estimation errors in terms of the 2-norm, $\vertiii{\cdot}_{2}$, on  $L_2([0,1] \times \Omega)$, i.e.,
    \begin{align*}
    \vertiii{W}_{2}^2 = \ex \left( \int_0^1 W_s^2 \mathrm{d}s
    \right)
    \end{align*}
for any process $W\in L_2([0,1] \times \Omega)$.
We will make the following consistency assumptions on 
$\hat{\lambda}^{(n)}$ and $\hat{G}^{(n)}$. 
\begin{assump} \label{asm:UniformRates}
Assume that $|J_n| \to \infty$ when $n \to \infty$ and let
    \begin{align*}
        g^\theta(n) &= \vertiii{G^\theta - \hat{G}^{(n),\theta}}_{2},
        \\
        h^\theta(n) &= \vertiii{\lambda^\theta - \hat{\lambda}^{(n),\theta}}_{2}.
    \end{align*}
Then each of the sequences $g^\theta(n)$, $h^\theta(n)$, and $\sqrt{|J_n|}g^\theta(n)h^\theta(n)$ converge to zero uniformly over $\Theta$ as $n\to\infty$, i.e.,
\begin{align*}
    \lim_{n\to \infty}\sup_{\theta\in\Theta} \max\{ g^\theta(n), h^\theta(n), 
        \sqrt{|J_n|}g^\theta(n)h^\theta(n) \} = 0.
\end{align*}
\end{assump}

With this assumption we can establish that the remainder terms also converge uniformly
to the zero-process.
\begin{prop}\label{prop:remainderterms}
    Under Assumptions \ref{asm:UniformBounds} and \ref{asm:UniformRates}, it holds that 
    \begin{equation*}
        \sup_{t\in [0,1]} |R_{i,t}^{(n),\theta}| \convUP 0
    \end{equation*}
    as $n\to \infty$ for $i=1,2,3$. 
\end{prop}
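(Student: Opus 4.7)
The plan is to bound each of $R_1^{(n)}$, $R_2^{(n)}$, $R_3^{(n)}$ by a quantity whose expectation (or second moment) vanishes uniformly over $\Theta$ by Assumption \ref{asm:UniformRates}, and then to conclude $\sup_t |R_{i,t}^{(n),\theta}| \convUP 0$ via Markov's inequality. Throughout, the key structural ingredient is sample splitting: since $\hat{\lambda}^{(n)}$ and $\hat{G}^{(n)}$ are built from $J_n^c$ only, conditioning on $J_n^c$ makes them deterministic functionals acting on the independent data $(N_j,X_j,\mathcal{F}_j)_{j\in J_n}$, which in turn makes Proposition \ref{lem:truemartingales}, the It\^o isometry, and the residual property $\ex[G_s\mid \mathcal{F}_{s-}]=0$ readily applicable.

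For $R_3^{(n)}$, which is the simplest, bound $\sup_t |R_{3,t}^{(n)}|$ by $\frac{1}{\sqrt{|J_n|}} \sum_{j\in J_n} \int_0^1 |\hat G^{(n)}_{j,s} - G_{j,s}|\,|\lambda_{j,s} - \hat\lambda^{(n)}_{j,s}|\,\mathrm{d}s$, apply Cauchy--Schwarz in $s$ to each summand, and take expectation, exploiting that $(N_j,X_j,\mathcal{F}_j)$ is independent of $J_n^c$ for each $j\in J_n$, to obtain $\ex[\sup_t |R_{3,t}^{(n)}|] \leq \sqrt{|J_n|}\, g(n)\, h(n)$. For $R_2^{(n)}$, conditioning on $J_n^c$ makes $\hat G^{(n)} - G$ a \lc{} $\mathcal{G}$-adapted process on each $j$; by linearity and Proposition \ref{lem:truemartingales}, $R_{2,t}^{(n)}$ is a square-integrable $\mathcal{G}_t$-martingale whose predictable quadratic variation equals $\frac{1}{|J_n|} \sum_j \int_0^t (\hat G^{(n)}_{j,s} - G_{j,s})^2 \blambda_{j,s}\,\mathrm{d}s$, with cross terms in $j\ne k$ vanishing by independence of individuals. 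Doob's $L^2$-maximal inequality combined with $\blambda\leq C$ from Assumption \ref{asm:UniformBounds} and Fubini then yields $\ex[\sup_t |R_{2,t}^{(n)}|^2] \leq 4C\, g(n)^2$.

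The term $R_1^{(n)}$ is the main obstacle, because it is not a martingale in $t$ and a direct absolute-value bound would produce the divergent quantity $\sqrt{|J_n|}\, h(n)$. The fix is to exploit the Neyman orthogonality encoded in the residual property. Write $R_{1,t}^{(n)} = \int_0^t A_s^{(n)}\,\mathrm{d}s$ with $A_s^{(n)} = \frac{1}{\sqrt{|J_n|}} \sum_{j\in J_n} G_{j,s}(\lambda_{j,s} - \hat\lambda^{(n)}_{j,s})$. After conditioning on $J_n^c$, the estimate $\hat\lambda^{(n)}_{j,s}$ is an $\mathcal{F}_{j,s-}$-predictable functional of individual $j$'s data, and since the data of $j\in J_n$ is independent of $J_n^c$, iterated conditioning on $\mathcal{F}_{j,s-}$ together with $\ex[G_{j,s}\mid \mathcal{F}_{j,s-}]=0$ shows that $A_s^{(n)}$ is a $1/\sqrt{|J_n|}$-scaled sum of conditionally i.i.d.\ zero-mean terms. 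Hence $\ex[A_s^{(n)2}] \leq C'^2\, \ex[(\lambda_s - \hat\lambda_s^{(n)})^2]$, and combining Fubini, Jensen, and Cauchy--Schwarz in $s$ gives $\ex[\sup_t |R_{1,t}^{(n)}|] \leq \ex\bigl[\int_0^1 |A_s^{(n)}|\,\mathrm{d}s\bigr] \leq \bigl(\int_0^1 \ex[A_s^{(n)2}]\,\mathrm{d}s\bigr)^{1/2} \leq C'\, h(n)$. In each case the bound depends on $\theta$ only through $g^\theta(n)$, $h^\theta(n)$, or $\sqrt{|J_n|}\,g^\theta(n)\,h^\theta(n)$, whose suprema over $\Theta$ vanish by Assumption \ref{asm:UniformRates}, so Markov's inequality delivers the claimed uniform convergence for $i=1,2,3$.
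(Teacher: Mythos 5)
Your proof is correct. For $R_3^{(n)}$ it coincides with the paper's Lemma~\ref{lem:R3}, and for $R_2^{(n)}$ it is essentially the paper's Lemma~\ref{lemma:R2} (the paper applies Doob's submartingale inequality to $(R_2^{(n)})^2$, you apply the $L^2$ maximal inequality; same mechanism, same rate $g(n)^2$). The genuine departure is $R_1^{(n)}$. The paper proves pointwise convergence $R_{1,t}^{(n)} \convUP 0$ via a Chebyshev bound, and then separately establishes stochastic equicontinuity through a conditional-Hoeffding sub-Gaussian tail bound (Lemma~\ref{lemma:subgaussian}) and the chaining lemma, finishing with the Newey-style Lemma~\ref{lem:uniformequcont}. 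You instead observe that $R_{1,t}^{(n)} = \int_0^t A_s^{(n)}\,\mathrm{d}s$ is absolutely continuous in $t$, so $\sup_{t}|R_{1,t}^{(n)}| \le \int_0^1 |A_s^{(n)}|\,\mathrm{d}s$; the pointwise second-moment bound $\ex[(A_s^{(n)})^2] \le (C')^2\,\ex[(\lambda_s - \hat\lambda_s^{(n)})^2]$ — which rests on precisely the same conditional mean-zero structure from the residual property and sample splitting that the paper records in Equation~\eqref{eq:R1meanzero} — then controls the $L^1$ norm of the supremum directly, yielding $\ex[\sup_t |R_{1,t}^{(n)}|] \le C' h(n)$ with no chaining at all. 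This is cleaner and fully uniform in $\theta$ since the constant is $C'$. The paper's chaining route is more general (it would still apply if the remainder were not an indefinite Lebesgue integral in $t$), and the sub-Gaussian machinery it builds up is reused for $D_2^{(n)}$ in Lemma~\ref{lem:D2asymptotics}; but for the statement as posed, your more elementary argument suffices and is arguably preferable.
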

%
%

To control the asymptotic behavior of the LCM estimator in the alternative we 
need to control the two terms $D_1^{(n)}$ and $D_2^{(n)}$.

\begin{prop}\label{prop:Dasymptotics}
    Let Assumptions \ref{asm:UniformBounds} and \ref{asm:UniformRates} hold true. 
    \begin{enumerate}
    \item[i)] The stochastic process
    $D_1^{(n),\theta}-\sqrt{|J_n|} \gamma^{\theta} $
    converges in distribution in $(C[0,1],\|\cdot\|_\infty)$ uniformly over $\Theta$ as $n\to \infty$.
    \item[ii)] If $G_t^\theta = X_t^\theta - \Pi_t^\theta$ is the additive residual process, then $D_2^{(n),\theta}\convUP 0$ in $D[0,1]$ as $n\to \infty$.
    \end{enumerate} 
\end{prop}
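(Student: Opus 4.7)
Both parts exploit that the sums defining $D_1^{(n),\theta}$ and $D_2^{(n),\theta}$ are over i.i.d.\ summands (conditionally i.i.d.\ given the auxiliary $J_n^c$-sample in the latter case). For part (i), put $\xi_{j,t}^\theta = \int_0^t G_{j,s}^\theta(\blambda_{j,s}^\theta - \lambda_{j,s}^\theta)\,\mathrm{d}s$, so $D_{1,t}^{(n),\theta} = |J_n|^{-1/2}\sum_{j\in J_n}\xi_{j,t}^\theta$. Because $\ex(G_s^\theta\mid\cF_{s-}^\theta)=0$ and $\lambda_s^\theta$ is $\cF_{s-}^\theta$-predictable, $\ex(G_s^\theta\lambda_s^\theta)=0$, so \cref{prop:gammaalternative} identifies $\ex(\xi_{j,t}^\theta)=\gamma_t^\theta$. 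Therefore
\begin{equation*}
D_{1}^{(n),\theta} - \sqrt{|J_n|}\,\gamma^\theta \;=\; \frac{1}{\sqrt{|J_n|}}\sum_{j\in J_n}\bigl(\xi_{j,\cdot}^\theta - \gamma_\cdot^\theta\bigr)
\end{equation*}
is a normalized sum of i.i.d.\ centered $C[0,1]$-valued random variables. \cref{asm:UniformBounds} supplies the $\theta$-uniform bounds $|\xi_{j,t}^\theta|\leq 2CC'$ and $|\xi_{j,t}^\theta-\xi_{j,u}^\theta|\leq 2CC'|t-u|$, so the summands almost surely lie in one deterministic compact subset of $C[0,1]$ by Arzel\`a--Ascoli. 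The functional CLT for i.i.d.\ $C[0,1]$-valued elements then yields convergence to a continuous mean-zero Gaussian process, and uniformity over $\Theta$ follows because the sup-norm and Lipschitz bounds are $\theta$-free, furnishing both uniform tightness and a uniform (Lindeberg) finite-dimensional CLT.

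For part (ii), put $\eta_{j,t}^{(n),\theta}=\int_0^t(\hat{G}^{(n),\theta}_{j,s}-G^\theta_{j,s})(\blambda^\theta_{j,s}-\lambda^\theta_{j,s})\,\mathrm{d}s$, so that $D_{2,t}^{(n),\theta}=|J_n|^{-1/2}\sum_{j\in J_n}\eta_{j,t}^{(n),\theta}$. For the additive residual, $\hat{G}^{(n),\theta}_{j,s}-G^\theta_{j,s}=\Pi^\theta_{j,s}-\hat{\Pi}^{(n),\theta}_{j,s}$; conditionally on the $J_n^c$-sample used to fit $\hat{\Pi}^{(n)}$, this difference is $\cF^\theta_{j,s-}$-measurable. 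Combined with $\ex(\blambda^\theta_s-\lambda^\theta_s\mid\cF^\theta_{s-})=0$, every $\eta_{j,t}^{(n),\theta}$ therefore has mean zero conditionally on the $J_n^c$-sample. Setting $A^{(n),\theta}_s=|J_n|^{-1/2}\sum_{j\in J_n}(\Pi^\theta_{j,s}-\hat{\Pi}^{(n),\theta}_{j,s})(\blambda^\theta_{j,s}-\lambda^\theta_{j,s})$ so that $D_{2,t}^{(n),\theta}=\int_0^t A_s^{(n),\theta}\,\mathrm{d}s$, Cauchy--Schwarz gives $\sup_{t\in[0,1]}|D_{2,t}^{(n),\theta}|^2\leq \int_0^1(A_s^{(n),\theta})^2\mathrm{d}s$. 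Conditional independence across $j\in J_n$ together with the conditional mean-zero property and the bound $|\blambda^\theta-\lambda^\theta|\leq 2C$ from \cref{asm:UniformBounds} yield $\ex\bigl((A_s^{(n),\theta})^2\bigr)\leq 4C^2\,\ex\bigl((\Pi^\theta_s-\hat{\Pi}^{(n),\theta}_s)^2\bigr)$, so integrating in $s$ gives
\begin{equation*}
\ex\Bigl(\sup_{t\in[0,1]}|D^{(n),\theta}_{2,t}|^2\Bigr)\;\leq\;4C^2\,g^\theta(n)^2.
\end{equation*}
Since $g^\theta(n)\to 0$ uniformly over $\Theta$ by \cref{asm:UniformRates}, Markov's inequality delivers $\sup_{t\in[0,1]}|D_{2,t}^{(n),\theta}|\convUP 0$, and since $D_2^{(n),\theta}$ has continuous sample paths this is the required convergence in $D[0,1]$.

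The moment calculation in (ii) is routine once the conditional orthogonality afforded by the additive residual is identified as the key structure. The genuinely delicate step is (i): promoting the pointwise functional CLT in $(C[0,1],\|\cdot\|_\infty)$ to the uniform-over-$\Theta$ statement in the sense of \cref{dfn:UniformConvergence} requires a uniform Lindeberg condition on every finite-dimensional marginal together with a uniform modulus-of-continuity estimate. Both are underwritten by the $\theta$-free constants of \cref{asm:UniformBounds}, but stitching them into a uniform Prokhorov-type tightness argument is where the technical care concentrates.
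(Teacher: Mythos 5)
Your argument for part (ii) is correct and in fact cleaner than the paper's own proof. You identify the same key structural observation (for the additive residual, $\hat G - G = \Pi - \hat\Pi$ is $\cF_t$-predictable given the training sample, so the conditional mean of each summand vanishes by the innovation-theorem identity $\ex(\blambda_s - \lambda_s\mid \cF_{s-})=0$), but instead of proving pointwise convergence by Chebyshev and then running a separate chaining argument for stochastic equicontinuity, you absorb the supremum over $t$ into a single Cauchy--Schwarz bound $\sup_t |D_{2,t}^{(n)}|^2 \le \int_0^1 (A_s^{(n)})^2\,\mathrm{d}s$, and then the conditional i.i.d.\ structure plus the bound $|\blambda-\lambda|\le 2C$ gives $\ex\sup_t|D_{2,t}^{(n)}|^2\le 4C^2 g(n)^2$ directly. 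This gives the uniform-over-$\Theta$ convergence in one Markov step, avoiding the conditional Hoeffding/chaining machinery the paper deploys for $D_2^{(n)}$. This is a genuine and worthwhile simplification.

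For part (i), your outline matches the paper's strategy (identify the process as a normalized sum of i.i.d.\ centered $C[0,1]$-valued variables, establish a uniform finite-dimensional CLT, and establish uniform tightness), but a real gap remains: you do not carry out the uniform modulus-of-continuity / tightness estimate, and the device you point to — Arzel\`a--Ascoli applied to the summands — does not deliver it. That the summands $\xi_{j,\cdot}^\theta - \gamma^\theta$ live in a common compact Lipschitz ball of $C[0,1]$ does not, by itself, control the modulus of continuity of the normalized sum $|J_n|^{-1/2}\sum_j(\xi_{j,\cdot}^\theta-\gamma^\theta)$, since the na\"ive Lipschitz bound on the sum grows like $\sqrt{|J_n|}$. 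What actually makes tightness work is that the deterministic increment bound $|\xi_{j,t}-\xi_{j,s}|\le 2CC'|t-s|$ feeds through Hoeffding's lemma to make $\frac{1}{t-s}\cdot\frac{1}{\sqrt{|J_n|}}\sum_j(\xi_{j,t}-\xi_{j,s}-(\gamma_t-\gamma_s))$ sub-Gaussian with a $\theta$-free variance factor, after which a chaining argument (and a Prokhorov-style ``tightness + finite-dimensional identification'' lemma) closes the proof. You correctly flag that this is ``where the technical care concentrates,'' but you assert rather than supply the argument, and you also need a citable uniform multivariate CLT (the paper uses a Lyapunov-type result) for the finite-dimensional marginals, not just the observation that the third moments are bounded. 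So part (i) is a correct blueprint with the hardest step left blank.
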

We note that $D_2^{(n)}$ might not vanish without an assumption like $G_t$ 
being the additive residual process, and it is not clear if $D_2^{(n)}$ will even converge in general. We will not pursue an analysis of the asymptotic behavior of $D_2^{(n)}$ in the general case. We note, however, that if we can estimate $G$ with 
a parametric rate, that is, $\sqrt{|J_n|} g(n) = O(1)$, then 
it follows from the Cauchy-Schwarz 
inequality that $D_2^{(n)}$ is stochastically bounded, and $D_1^{(n)}$ 
still dominates in the alternative where $\gamma \neq 0$. 

We can combine all of the propositions into a single theorem regarding the asymptotics of the LCM estimator, which we consider as our main result.

\begin{thm}\label{thm:main}
    Let Assumptions \ref{asm:UniformBounds} and \ref{asm:UniformRates} hold true.
    \begin{enumerate}
        \item[i)] It holds that
            \begin{align*}
                \sqrt{|J_n|}\hat \gamma^{(n),\theta} \convUDnull U^\theta
            \end{align*}
        in $D[0,1]$ as $n\to \infty$, where for each $\theta\in \Theta_0$, $U^\theta$ is a mean zero continuous Gaussian martingale on $[0,1]$ with variance function $\mathcal{V}^\theta$.

        \item[ii)] For the additive residual process it holds that
        for every $\varepsilon>0$ there exists $K>0$ such that
        \begin{align}\label{eq:stochboundedness}
            \limsup_{n\to \infty}\sup_{\theta\in \Theta} \mathbb{P}\pa{
            \sqrt{|J_n|} \cdot \|\hat{\gamma}^{(n),\theta} - \gamma^\theta\|_\infty > K} < \varepsilon.
        \end{align}
    \end{enumerate}
\end{thm}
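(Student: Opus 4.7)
The plan is to derive both statements directly from the decomposition \eqref{eq:decomposition} by combining the limit results already established for each summand.

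For part (i), observe that under $H_0$ the $\mathcal{F}_t^\theta$-intensity $\lambda^\theta$ is almost surely a version of the $\mathcal{G}_t^\theta$-intensity $\blambda^\theta$. Consequently the drift-type terms $D_1^{(n),\theta}$ and $D_2^{(n),\theta}$ are identically zero, so the decomposition reduces to
\begin{align*}
    \sqrt{|J_n|}\hat{\gamma}^{(n),\theta}
    = U^{(n),\theta} + R_1^{(n),\theta} + R_2^{(n),\theta} + R_3^{(n),\theta}.
\end{align*}
Proposition \ref{prop:UasymptoticGaussian} gives $U^{(n),\theta} \convUDnull U^\theta$ in $D[0,1]$ with a continuous Gaussian martingale limit, and Proposition \ref{prop:remainderterms} gives $\sup_t |R_{i,t}^{(n),\theta}| \convUPnull 0$ for $i=1,2,3$. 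Since the limit $U^\theta$ has almost surely continuous sample paths, the Skorokhod distance between $U^{(n),\theta}+\sum_i R_i^{(n),\theta}$ and $U^{(n),\theta}$ is bounded by $\sum_i \sup_t |R_{i,t}^{(n),\theta}|$, so an appeal to a uniform Slutsky/continuous-mapping theorem (Appendix \ref{app:UniformAsymptotics}) yields the claimed uniform weak convergence.

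For part (ii), we rewrite the decomposition as
\begin{align*}
    \sqrt{|J_n|}\bigl(\hat{\gamma}^{(n),\theta} - \gamma^\theta\bigr)
    = U^{(n),\theta} + \sum_{i=1}^{3} R_i^{(n),\theta}
    + \bigl(D_1^{(n),\theta} - \sqrt{|J_n|}\gamma^\theta\bigr) + D_2^{(n),\theta}.
\end{align*}
It suffices to verify that each summand is stochastically bounded in sup-norm uniformly over $\theta \in \Theta$. The two terms that converge in distribution, namely $U^{(n),\theta}$ (Proposition \ref{prop:UasymptoticGaussian}) and $D_1^{(n),\theta} - \sqrt{|J_n|}\gamma^\theta$ (Proposition \ref{prop:Dasymptotics} i)), both have limits living in $C[0,1]$, so by a uniform portmanteau argument their sup-norms are uniformly tight. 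The four remaining terms $R_1^{(n),\theta}, R_2^{(n),\theta}, R_3^{(n),\theta}$, and $D_2^{(n),\theta}$ (with the additive residual process) converge to the zero-process in sup-norm uniformly in probability by Propositions \ref{prop:remainderterms} and \ref{prop:Dasymptotics} ii). Since finite sums of uniformly stochastically bounded processes are again uniformly stochastically bounded, \eqref{eq:stochboundedness} follows.

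The main technical hurdle is the passage from weak convergence in the Skorokhod space $D[0,1]$ to uniform stochastic boundedness of the sup-norm. This step is legitimate precisely because both weak limits have almost surely continuous sample paths, so that the sup-norm functional is a.s. continuous with respect to the Skorokhod topology and the standard uniform weak-convergence machinery of Appendix \ref{app:UniformAsymptotics} applies without modification.
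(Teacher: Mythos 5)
Your proof of part (i) follows the paper's argument exactly: under $H_0$ the drift terms vanish, and the conclusion follows from Propositions \ref{prop:UasymptoticGaussian} and \ref{prop:remainderterms} together with the uniform Slutsky lemma for $D[0,1]$ (Lemma \ref{lem:SkorokhodSlutsky}, using that $d^\circ(x+y,x)\leq\|y\|_\infty$). Nothing to add there.

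In part (ii), however, there is a genuine gap in the sentence ``both have limits living in $C[0,1]$, so by a uniform portmanteau argument their sup-norms are uniformly tight.'' A uniform portmanteau bound (e.g.\ Theorem 4.1 of \citet{bengs2019uniform}) gives you
\[
\limsup_{n\to\infty}\sup_{\theta\in\Theta}\mathbb{P}\bigl(\|U^{(n),\theta}\|_\infty > K\bigr)
\;\leq\;
\sup_{\theta\in\Theta}\mathbb{P}\bigl(\|U^{\theta}\|_\infty > K\bigr),
\]
but uniform stochastic boundedness of the left side additionally requires that the right side tends to zero as $K\to\infty$, i.e.\ that the \emph{family of limit variables} $(\|U^{\theta}\|_\infty)_{\theta\in\Theta}$ is itself uniformly tight in $\mathbb{R}_{\geq 0}$. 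That does not come for free from the limits merely being continuous: $\Theta$ is an arbitrary parameter set, and without a uniform control on the limit distributions, $\sup_\theta\mathbb{P}(\|U^\theta\|_\infty>K)$ could be bounded away from zero for every $K$. The paper closes exactly this gap with two separate lemmas: Lemma \ref{lem:Ulimitistight} shows that $(U^\theta)_{\theta\in\Theta}$ is uniformly tight in $C[0,1]$ by verifying Billingsley's tightness criterion, using the representation $U^\theta\overset{\mathcal{D}}{=}(B_{\mathcal{V}^\theta(t)})_t$ and the uniform Lipschitz bound on $(\mathcal{V}^\theta)_\theta$ from Lemma \ref{lem:regularityofgammasigma}; and the proof of Lemma \ref{lem:D1bounded} bounds $\ex\|\Gamma^\theta\|_\infty$ uniformly over $\Theta$ via the Sudakov--Fernique comparison inequality, using the uniform increment bound \eqref{eq:squareincrement}. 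To repair your argument you would need to supply both of these uniform-tightness estimates for the limit families; the decomposition, the triangle inequality, and the treatment of the remainder terms and of $D_2^{(n)}$ are otherwise the same as in the paper.
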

Thus we have established the weak asymptotic limit of 
$\sqrt{|J_n|} \hat{\gamma}^{(n)}$ under $H_0$. However, the variance function $\mathcal{V}$ of the limiting Gaussian 
martingale is unknown and must be estimated from data. We 
propose to use the empirical version of \eqref{eq:variance},
    \begin{align}\label{eq:empiricalvariance}
    \hat{\mathcal{V}}_n(t) = 
    \frac{1}{|J_n|} \sum_{j \in J_n} 
    \int_0^t \left(\hat{G}_{j,s}^{(n)}\right)^2 \mathrm{d} N_{j,s} 
    = \frac{1}{|J_n|} \sum_{j \in J_n} 
    \sum_{\tau \leq t: \Delta N_{j,s} = 1} \left(\hat{G}_{j,\tau}^{(n)}\right)^2,
    \end{align}
for which we have the following consistency result.

\begin{prop}\label{prop:varianceconsistent}
    Under Assumptions \ref{asm:UniformBounds} and \ref{asm:UniformRates} it holds that 
    $$
        \sup_{t\in[0,1]}|\hat{\mathcal{V}}_n^\theta(t) - \mathcal{V}^\theta(t)| \convUP 0,
    $$
    as $n \to \infty$.
\end{prop}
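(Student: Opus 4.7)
I would start from the decomposition
\begin{equation*}
\hat{\mathcal{V}}_n(t) - \mathcal{V}(t) = A_n(t) + B_n(t),
\end{equation*}
where
\begin{align*}
A_n(t) &= \frac{1}{|J_n|}\sum_{j\in J_n}\int_0^t \bigl[(\hat{G}^{(n)}_{j,s})^2 - G_{j,s}^2\bigr]\,\mathrm{d}N_{j,s}, \\
B_n(t) &= \frac{1}{|J_n|}\sum_{j\in J_n}\int_0^t G_{j,s}^2\,\mathrm{d}N_{j,s} - \mathcal{V}(t),
\end{align*}
and separately show that $\sup_t|A_n(t)|\convUP 0$ and $\sup_t|B_n(t)|\convUP 0$.

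For $A_n$ I would factor $(\hat{G}^{(n)})^2 - G^2 = (\hat{G}^{(n)}-G)(\hat{G}^{(n)}+G)$ and apply the uniform bounds $|\hat{G}^{(n)}|,|G|\leq C'$ granted by \cref{asm:UniformBounds} (the same bound is assumed for $\hat{G}^{(n)}$). Since $t\mapsto \int_0^t|(\hat{G}_{s}^{(n)})^2-G_{s}^2|\,\mathrm{d}N_{s}$ is monotone, the sup over $t$ is attained at $t=1$. Conditioning on $J_n^c$ (on which $\hat{G}^{(n)}$ depends) and invoking the independence and identical distribution of the test points via the template-copy device from \cref{sec:statistic}, the expected value of the sup is bounded by
\begin{equation*}
2C'\,\ex\!\left[\int_0^1 \bigl|\hat{G}^{(n)}_s - G_s\bigr|\,\blambda_s\,\mathrm{d}s\right] \leq 2C'C\cdot g(n)
\end{equation*}
after applying Cauchy--Schwarz. \cref{asm:UniformRates} yields $g(n)\to 0$ uniformly over $\Theta$, so Markov's inequality delivers $\sup_t|A_n(t)|\convUP 0$.

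For $B_n$ I would carry out a uniform Glivenko--Cantelli type argument. Setting $V_j(t):=\int_0^t G_{j,s}^2\,\mathrm{d}N_{j,s}$, the summands are i.i.d., non-decreasing in $t$, with $\ex[V_j(t)]=\mathcal{V}(t)$ by \cref{lem:truemartingales}. The bound $V_j(1)\leq C'^2 N_j(1)$ combined with $N=\bLambda+\bM$, $\bLambda_j(1)\leq C$ and $\ex[\bM_j(1)^2]=\ex[\bLambda_j(1)]\leq C$ shows that $\ex[V_j(1)^2]$ is bounded by a constant depending only on $C$ and $C'$. Hence $\var(B_n(t))$ is of order $1/|J_n|$ uniformly in $t$ and $\theta$, and Chebyshev's inequality provides pointwise-in-$t$ convergence uniformly over $\Theta$. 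Since $\mathcal{V}^\theta(t)=\ex[\int_0^t G_s^2\blambda_s\,\mathrm{d}s]$ is Lipschitz with constant $C'^2 C$ uniformly in $\theta$, and $\frac{1}{|J_n|}\sum_j V_j(\cdot)$ is monotone, a standard grid-of-mesh-$\delta$ approximation with a union bound over the grid promotes the pointwise control to the uniform statement $\sup_t|B_n(t)|\convUP 0$. The main obstacle to address is that this last step must work simultaneously in $t$ and in $\theta$; this is resolved precisely because the Lipschitz constant $C'^2C$ of $\mathcal{V}^\theta$ and the second-moment bound on $V_j$ do not depend on $\theta$, owing to the uniform constants in \cref{asm:UniformBounds}.
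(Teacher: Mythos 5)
Your proof is correct and takes essentially the same route as the paper: your $A_n$ combines the paper's terms $B^{(n)}$ and $2C^{(n)}$ by factoring $(\hat{G}^{(n)})^2-G^2=(\hat{G}^{(n)}-G)(\hat{G}^{(n)}+G)$, bounded in expectation by $O(g(n))$, and your Glivenko--Cantelli grid argument for $B_n$ is exactly the content of the paper's Lemma \ref{lem:convergenceofincreasing} applied after the uniform LLN.
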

We emphasize that $\mathcal{V}$ is only the asymptotic variance function of the 
LCM estimator under $H_0$. It is always the asymptotic variance function of 
$U^{(n)}$, but in the alternative the asymptotic distribution of $\hat{\gamma}^{(n)}$
also involves the asymptotic distribution of $D_1^{(n)}$ and is thus more complicated.

Tests of conditional local independence can now be 
constructed in terms of univariate functionals of 
$\hat{\gamma}^{(n)}$ and $\hat{\mathcal{V}}_n$ that quantify the magnitude of the LCM. The asymptotics of such test statistics under $H_0$ are described in the following corollary, which is essentially an application of the continuous mapping theorem.
\begin{cor} \label{cor:statisticsdistribution}
    Let $\mathcal{J}\colon D[0,1]\times D[0,1] \to \mathbb{R}$ be a functional that is continuous on the closed subset $C[0,1]\times \overline{\{\mathcal{V}^\theta \colon \theta \in \Theta_0\}}$
    with respect the uniform topology, i.e., the topology generated by the norm $\|(f_1,f_2)\|= \max\{\|f_1\|_\infty,\|f_2\|_\infty\}$ for $f_1,f_2\in D[0,1]$. Define the test statistic 
    $$
        \hat{D}_n^\theta = \mathcal{J}\left( 
            \sqrt{|J_n|}\hat{\gamma}^{(n),\theta}, \; \hat{\mathcal{V}}_n^\theta\right).
    $$ 
    Under Assumptions \ref{asm:UniformBounds} and \ref{asm:UniformRates},
    it holds that
        \begin{align}
            \hat{D}_n^\theta \convUDnull \mathcal{J}(U^\theta,\mathcal{V}^\theta),
            \qquad n \to \infty,
        \end{align}
    where $U^\theta$ is a mean zero continuous Gaussian martingale with variance function $\mathcal{V}^\theta$.
\end{cor}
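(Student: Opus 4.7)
The plan is to deduce this corollary from the preceding theorem and propositions by an application of the uniform continuous mapping theorem. The two inputs already available are Theorem \ref{thm:main}~i), giving $\sqrt{|J_n|}\hat{\gamma}^{(n),\theta} \convUDnull U^\theta$ in $D[0,1]$, and Proposition \ref{prop:varianceconsistent}, giving $\|\hat{\mathcal{V}}_n^\theta - \mathcal{V}^\theta\|_\infty \convUP 0$ uniformly in $\Theta$ (and hence also uniformly in $\Theta_0$). The task is to lift these to a joint convergence in the product space $D[0,1]\times D[0,1]$ equipped with the norm $\|(f_1,f_2)\| = \max\{\|f_1\|_\infty,\|f_2\|_\infty\}$, and then feed the joint sequence into $\mathcal{J}$.

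First I would establish the joint uniform weak convergence
\[
    \bigl(\sqrt{|J_n|}\hat{\gamma}^{(n),\theta},\, \hat{\mathcal{V}}_n^\theta\bigr)
    \convUDnull \bigl(U^\theta, \mathcal{V}^\theta\bigr)
\]
in the product space. Since the second limit $\mathcal{V}^\theta$ is deterministic, this is a uniform Slutsky-type argument: convergence in distribution of the first coordinate combined with convergence in probability of the second coordinate to a deterministic limit yields joint convergence in distribution, with the limit being the product of the marginal laws. I would invoke the uniform Slutsky lemma from Appendix \ref{app:UniformAsymptotics} (or verify it directly from the definition of $\convUD$ by showing that for any bounded Lipschitz function on the product space, the expectations of the difference vanish uniformly in $\theta\in\Theta_0$, using the triangle inequality to separate the two coordinates).

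Next, I would apply the uniform continuous mapping theorem to the functional $\mathcal{J}$. The hypothesis states that $\mathcal{J}$ is continuous on $C[0,1]\times \overline{\{\mathcal{V}^\theta:\theta\in\Theta_0\}}$ with respect to the product uniform norm. For each $\theta\in\Theta_0$, the limit $U^\theta$ is a continuous Gaussian martingale by Proposition \ref{prop:UasymptoticGaussian}, so it takes values in $C[0,1]$ almost surely, and $\mathcal{V}^\theta$ trivially lies in its own closure. Hence the limiting law $(U^\theta,\mathcal{V}^\theta)$ is supported on the continuity set of $\mathcal{J}$, and the uniform CMT (Appendix \ref{app:UniformAsymptotics}) yields
\[
    \hat{D}_n^\theta = \mathcal{J}\bigl(\sqrt{|J_n|}\hat{\gamma}^{(n),\theta},\,\hat{\mathcal{V}}_n^\theta\bigr)
    \convUDnull \mathcal{J}(U^\theta,\mathcal{V}^\theta),
\]
as claimed.

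The only genuine subtlety is whether the uniform Slutsky and CMT statements in the appendix are phrased in exactly the form used here. I expect this to be a routine check rather than a real obstacle: the classical pointwise proofs rely on portmanteau-style characterizations that transfer to the uniform setting by taking suprema over $\theta$ before passing to the limit. The boundedness of the Lipschitz test functions and the continuity-set hypothesis on $\mathcal{J}$ ensure that all manipulations are uniform in $\theta\in\Theta_0$, so no further assumptions beyond those already invoked for Theorem \ref{thm:main} and Proposition \ref{prop:varianceconsistent} are needed.
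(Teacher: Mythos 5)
Your overall strategy — establish joint uniform weak convergence of $\bigl(\sqrt{|J_n|}\hat\gamma^{(n),\theta},\hat{\mathcal V}_n^\theta\bigr)$ and then push through a uniform continuous mapping theorem — is genuinely different from the paper's route, which instead appeals to the subsequence characterization in Proposition \ref{prop:seqUni}: extract a subsequence $\theta_{b(n)}$ along which $U^{\theta_{b(n)}}\xrightarrow{\mathcal D}\tilde U$ (Prokhorov, via tightness from Lemma \ref{lem:Ulimitistight}) and $\mathcal V^{\theta_{b(n)}}\to\tilde{\mathcal V}$ uniformly (Arzel\`a--Ascoli, via Lemma \ref{lem:regularityofgammasigma}), then apply the \emph{classical} CMT to the fixed limit $(\tilde U,\tilde{\mathcal V})$. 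Your joint-convergence step, via the bounded-Lipschitz characterization with the triangle inequality separating the two coordinates, is fine and does not appear as a standalone lemma in the appendix but is straightforward to carry out.

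The genuine gap is in the final step. You would need a uniform CMT for a functional $\mathcal J$ that is merely continuous (not Lipschitz) on a subset; the only such statement in the paper is Proposition \ref{prop:UctsmappingT}, which requires \emph{Polish} domain spaces and \emph{uniform tightness} of the limiting collection. Neither hypothesis is addressed in your proposal, and neither is automatic: $(D[0,1],\|\cdot\|_\infty)\times(D[0,1],\|\cdot\|_\infty)$ is not separable, hence not Polish. To repair this you must work in the Skorokhod topology (where the product is Polish), invoke the equivalence of Skorokhod and uniform topologies for continuous limits (Proposition \ref{prop:convergencetoCont2}, or Theorem 23.9 of Kallenberg as the paper does) to transfer both the hypotheses on $\mathcal J$ and the convergences in \eqref{eq:LCMandVarconvergent}, and verify uniform tightness of $(U^\theta,\mathcal V^\theta)_{\theta\in\Theta_0}$ — which follows from Lemma \ref{lem:Ulimitistight} for the first component and Arzel\`a--Ascoli for the second, but must be said. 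These are precisely the pieces the paper assembles by hand inside its subsequence argument, so the ``routine check'' you anticipate at the end of your proposal is in fact the entire technical content of the proof; once the gap is filled, the two approaches are comparable in length and both rest on the same ingredients (tightness of $U^\theta$, precompactness of $\{\mathcal V^\theta\}$, and the Skorokhod--uniform equivalence on $C[0,1]$).
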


\section{The Local Covariance Test} \label{sec:lct}
In this section we introduce a practically applicable 
test based on the LCM estimator. Using the asymptotic distribution of the
LCM estimator we show that the asymptotic distribution of our proposed test 
is independent of the sampling distribution under $H_0$ and has an explicit representation. 
We show, in addition, uniform asymptotic level of the test, and we give a uniform 
power result for the additive residual process. Finally, we modify the test to be 
based on a cross-fitted estimator of the LCM instead of using sample splitting, 
and we show uniform level of that test.

To construct a test statistic based on the LCM estimator it is beneficial that
its distributional limit does not depend on the variance function. As a simple example,
consider the endpoint test statistic:
    \begin{align} \label{eq:endpointstatistic}
        \big(\hat{\mathcal{V}}_n(1)\big)^{-\frac{1}{2}}\sqrt{|J_n|}\hat{\gamma}^{(n)}_1,
    \end{align}
which under $H_0$ converges in distribution to 
$\mathcal{V}(1)^{-\frac{1}{2}}U_1$ by Corollary \ref{cor:statisticsdistribution}. The distribution of the latter is the standard normal distribution, and in particular it does not depend on $\mathcal{V}$. 

Any test statistic constructed from $\hat{\gamma}^{(n)}$ should  
capture deviations of $\gamma_t$ away from $0$. The test statistic in
\eqref{eq:endpointstatistic} does, however, only consider the endpoint of the
process, and since $\gamma$ is not necessarily monotone, $\gamma_t$ may 
deviate more from $0$ for other $t \in [0,1]$. 
Thus in order to increase power against such alternatives we consider the test statistic
    \begin{align} \label{eq:supremumstatistic}
        \hat{T}_n = 
        \frac{\sqrt{|J_n|}}{\sqrt{\hat{\mathcal{V}}_n(1)}}  \sup_{0 \leq t \leq 1} 
        \big|
            \hat{\gamma}^{(n)}_{t}
        \big|.
    \end{align}
We refer to $\hat{T}_n$ as the \emph{Local Covariance Test statistic} (LCT statistic). We proceed to show that the LCT statistic can be calibrated to obtain a test of $H_0$ with asymptotic level, and which has asymptotic power against any alternative with a non-zero LCM. This is the best we can hope for of any test based on the LCM estimator. 

We note that it might be possible to establish similar results for other norms of the LCM, for example, a statistic based on a weighted $L_2$-norm\footnote{such statistics are known as Anderson-Darling type statistics.}. However, since other norms of the distributional limit $U$ will generally have a distribution with a complicated dependency on $\mathcal{V}$, we believe that the LCT statistic is the simplest to calibrate.

To establish uniform asymptotic level via Corollary
\ref{cor:statisticsdistribution} for tests based on test statistics such as \eqref{eq:supremumstatistic} we need to assume that the asymptotic variances in $t = 1$ 
are uniformly bounded away from zero.

\begin{assump}  \label{asm:UniformVariance}
    There exists a $\delta_1 > 0$ such that for all $\theta \in \Theta$ it holds 
    that $\mathcal{V}^{\theta}(1) \geq \delta_1$.
\end{assump}

\subsection{Type I and type II error control}
We proceed to show that under $H_0$, the LCT statistic is distributed as $\sup_{0\le t \le 1} |B_t|$, where $(B_t)$ is a standard Brownian motion. From this point onwards, we let $S$ denote a random variable with such a distribution and note that its CDF can be written as:
    \begin{align} \label{eq:supdistribution}
        F_S(x) = \mathbb{P}(S \leq x ) = \frac{4}{\pi} \sum_{k=0}^\infty \frac{(-1)^k}{2k+1}
        \exp\left(- \frac{\pi^2(2k+1)^2}{8x^2}\right),
        \qquad x>0.
    \end{align}
See, for example, Section 12.2 in \citet{SchillingPartzsch:2012} where the formula is derived from Lévy's triple law. 

The $p$-value for a test of $H_0$ equals $1 - F_S(\hat{T}_n)$, and since the series in \eqref{eq:supdistribution}
converges at an exponential rate, the $p$-value can be computed with high numerical precision by truncating the series. Given a significance level $\alpha \in (0,1)$, we also let $z_{1-\alpha}$ denote the $(1-\alpha)$-quantile of $F_S$, which exists and is unique since the right-hand side of \eqref{eq:supdistribution} is strictly increasing and continuous. The \emph{Local Covariance Test} (LCT) with significance level $\alpha$ is then defined by
\begin{align}\label{eq:LCT}
    \Psi_n 
    = \Psi_n^\alpha
    = \one(F_S(\hat{T}_n)> 1-{\alpha}) 
    = \one(\hat{T}_n > z_{1-{\alpha}}).
\end{align}

From Theorem \ref{thm:main} we can now deduce the asymptotic properties of the LCT under 
the hypothesis of conditional local independence. 
Recall that $\convUDnull$ denotes uniform convergence in distribution under $H_0$.
\begin{thm}\label{thm:LCTlevel}
    Let Assumptions \ref{asm:UniformBounds}, \ref{asm:UniformRates} and \ref{asm:UniformVariance} hold true. Then it holds that
    $$
        \hat T_n^\theta \convUDnull S
    $$
    as $n\to \infty$. As a consequence, for any $\alpha \in (0,1)$,
    \begin{align*}
        \limsup_{n\to \infty} \sup_{\theta \in \Theta_0} \mathbb{P}(\Psi_n^{\alpha, \theta} =1) \leq \alpha.
    \end{align*}
    In other words, the Local Covariance Test defined in \eqref{eq:LCT} has uniform asymptotic level $\alpha$.
\end{thm}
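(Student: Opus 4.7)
The plan is to apply \cref{cor:statisticsdistribution} to the functional $\mathcal{J}\colon D[0,1]\times D[0,1]\to \mathbb{R}$ given by
\[
    \mathcal{J}(f_1,f_2) = \frac{\|f_1\|_\infty}{\sqrt{f_2(1)\vee \delta_1}},
\]
so that $\hat{T}_n^\theta = \mathcal{J}\bigl(\sqrt{|J_n|}\hat{\gamma}^{(n),\theta},\hat{\mathcal{V}}_n^\theta\bigr)$ whenever $\hat{\mathcal{V}}_n^\theta(1)\geq \delta_1$. On the closed set $C[0,1]\times \overline{\{\mathcal{V}^\theta : \theta\in\Theta_0\}}$, the second coordinate satisfies $f_2(1)\geq \delta_1$ by \cref{asm:UniformVariance} (uniform limits of such $f_2$ retain this lower bound), so on this subset $\mathcal{J}$ reduces to $\|f_1\|_\infty/\sqrt{f_2(1)}$. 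Continuity of $\mathcal{J}$ there is immediate from continuity of $f_1\mapsto \|f_1\|_\infty$ on $C[0,1]$ and of $f_2\mapsto 1/\sqrt{f_2(1)}$ away from zero. Finally, \cref{prop:varianceconsistent} combined with \cref{asm:UniformVariance} yields $\mathbb{P}_\theta(\hat{\mathcal{V}}_n^\theta(1)<\delta_1)\to 0$ uniformly in $\theta$, so replacing $\hat{T}_n^\theta$ by its truncated counterpart $\mathcal{J}(\sqrt{|J_n|}\hat{\gamma}^{(n),\theta},\hat{\mathcal{V}}_n^\theta)$ introduces an error that vanishes uniformly in probability.

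Granting the above, \cref{cor:statisticsdistribution} yields
\[
    \hat{T}_n^\theta \convUDnull \frac{\|U^\theta\|_\infty}{\sqrt{\mathcal{V}^\theta(1)}}.
\]
The key step is then to identify this limit with the distribution of $S$. For each $\theta$, $U^\theta$ is a continuous mean zero Gaussian martingale on $[0,1]$ with $U^\theta_0=0$ and deterministic quadratic variation $\mathcal{V}^\theta$, so the Dambis--Dubins--Schwarz time-change gives $(U^\theta_t)_{t\in[0,1]}\overset{d}{=} (B_{\mathcal{V}^\theta(t)})_{t\in[0,1]}$ for a standard Brownian motion $B$. Monotonicity and continuity of $\mathcal{V}^\theta$ then yield $\|U^\theta\|_\infty \overset{d}{=} \sup_{0\le s \le \mathcal{V}^\theta(1)}|B_s|$, and Brownian scaling ($s\mapsto c^{-1/2}B_{cs}$ is again a standard Brownian motion for any $c>0$) implies $\sup_{0\le s\le \mathcal{V}^\theta(1)}|B_s|/\sqrt{\mathcal{V}^\theta(1)} \overset{d}{=} S$. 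The limit distribution is therefore $\theta$-free, establishing the first assertion.

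The level claim follows immediately. Since $F_S$ is continuous, $z_{1-\alpha}$ is a continuity point, and uniform convergence in distribution (\cref{dfn:UniformConvergence}) implies
\[
    \sup_{\theta\in\Theta_0}\bigl|\mathbb{P}_\theta(\hat{T}_n^\theta\leq z_{1-\alpha}) - F_S(z_{1-\alpha})\bigr| \to 0.
\]
Rearranging and using $F_S(z_{1-\alpha}) = 1-\alpha$ yields $\sup_{\theta\in\Theta_0}\mathbb{P}_\theta(\Psi_n^{\alpha,\theta}=1)\to \alpha$, which is the required bound. The only nontrivial ingredient is the distributional identification of the limit via the time-change representation of continuous Gaussian martingales; the continuous-mapping step and the level conclusion amount to bookkeeping given \cref{thm:main} and \cref{prop:varianceconsistent}.
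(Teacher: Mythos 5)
Your approach is essentially the paper's: apply Corollary~\ref{cor:statisticsdistribution} to a suitable functional $\mathcal{J}$, identify the limit $\|U^\theta\|_\infty/\sqrt{\mathcal{V}^\theta(1)}$ with $S$ via the time-change representation of continuous Gaussian martingales (cf.~Proposition~\ref{prop:BMrepresentation}) and Brownian scaling, and conclude the level statement from continuity of $F_S$ at $z_{1-\alpha}$. The only difference is the precise form of $\mathcal{J}$: the paper uses $\mathcal{J}(f_1,f_2)=\one(f_2\neq 0)\|f_1\|_\infty/\sqrt{|f_2(1)|}$, so that $\mathcal{J}(\sqrt{|J_n|}\hat{\gamma}^{(n)},\hat{\mathcal{V}}_n)=\hat T_n$ identically, whereas you truncate the denominator.

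There is one small gap in the truncation step. You claim $\mathbb{P}_\theta(\hat{\mathcal{V}}_n^\theta(1)<\delta_1)\to 0$ uniformly. But Assumption~\ref{asm:UniformVariance} only gives $\mathcal{V}^\theta(1)\geq\delta_1$, not a strict inequality, and Proposition~\ref{prop:varianceconsistent} gives $\hat{\mathcal{V}}_n^\theta(1)\convUP\mathcal{V}^\theta(1)$. If $\mathcal{V}^\theta(1)=\delta_1$ exactly for some $\theta\in\Theta_0$, then $\hat{\mathcal{V}}_n^\theta(1)$ can fluctuate below $\delta_1$ with non-vanishing probability, so $\mathbb{P}_\theta(\hat{T}_n^\theta\neq\mathcal{J}(\cdot))$ need not vanish. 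The fix is cheap: truncate at any fixed level strictly below $\delta_1$, e.g.\ set $\mathcal{J}(f_1,f_2)=\|f_1\|_\infty/\sqrt{f_2(1)\vee(\delta_1/2)}$. Then $\mathbb{P}_\theta(\hat{\mathcal{V}}_n^\theta(1)<\delta_1/2)\leq\mathbb{P}_\theta(|\hat{\mathcal{V}}_n^\theta(1)-\mathcal{V}^\theta(1)|>\delta_1/2)\to 0$ uniformly over $\Theta_0$ by Proposition~\ref{prop:varianceconsistent}, the continuity set $C[0,1]\times\overline{\{\mathcal{V}^\theta:\theta\in\Theta_0\}}$ still lies in the region $f_2(1)\geq\delta_1>\delta_1/2$ where $\mathcal{J}$ reduces to $\|f_1\|_\infty/\sqrt{f_2(1)}$, and the Slutsky-type comparison $d_{BL}(\hat{T}_n^\theta,\mathcal{J}(\cdot))\leq 2\mathbb{P}_\theta(\hat{T}_n^\theta\neq\mathcal{J}(\cdot))$ closes the argument. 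With that correction, your proof is complete and equivalent to the paper's.
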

In general, we cannot expect that the test has power against alternatives to $H_0$ for which the LCM is the zero-function. This is analogous to other types of conditional independence tests based on conditional 
covariances, e.g., GCM \citep{shah2020hardness}. However, we do have the following result that establishes power against local alternatives with $\|\gamma\|_\infty$ decaying at an order of at most $|J_n|^{-1/2}$.
\begin{thm}\label{thm:rootNpower}
    Let Assumptions \ref{asm:UniformBounds} and \ref{asm:UniformRates} hold true. 
    Using the additive residual process it holds that for any $0<\alpha<\beta<1$ there exists $c>0$ such that
    \begin{align*}
        \liminf_{n\to \infty}\inf_{\theta \in \mathcal{A}_{c,n}} \mathbb{P}(\Psi_n^{\alpha,\theta} = 1) \geq \beta,
    \end{align*}
    where $\mathcal{A}_{c,n} = \{\theta \in \Theta \mid \|\gamma^\theta\|_\infty \geq c |J_n|^{-1/2}\}$.
\end{thm}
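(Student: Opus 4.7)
The plan is to combine the uniform $\sqrt{n}$-stochastic boundedness of $\hat{\gamma}^{(n)} - \gamma$ supplied by \cref{thm:main}(ii) with a uniform-in-probability upper bound on $\hat{\mathcal{V}}_n(1)$, and then to invoke a reverse triangle inequality to push $\hat{T}_n$ above the critical value $z_{1-\alpha}$ whenever $\sqrt{|J_n|}\,\|\gamma^\theta\|_\infty \geq c$, provided $c$ is chosen large enough. This strategy reduces the power statement to objects that are already stochastically controlled uniformly over $\Theta$.

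To carry this out, first fix $\delta = 1 - \beta > 0$. From \cref{thm:main}(ii), applied with $\varepsilon = \delta/2$, I extract a constant $K>0$ such that
\begin{equation*}
\limsup_{n\to\infty}\,\sup_{\theta\in\Theta}\,\mathbb{P}\!\left(\sqrt{|J_n|}\,\|\hat{\gamma}^{(n),\theta} - \gamma^\theta\|_\infty > K\right) < \delta/2.
\end{equation*}
Second, because $|\hat{G}^{(n)}_s| \leq C'$ and $\ex(N_{j,1}) = \ex(\bLambda_{j,1}) \leq C$ by \cref{asm:UniformBounds}, the estimator satisfies $\ex(\hat{\mathcal{V}}_n^\theta(1)) \leq (C')^2 C$ uniformly over $n$ and $\theta$, so Markov's inequality furnishes a constant $V = 2(C')^2 C/\delta$ with
\begin{equation*}
\sup_{n}\,\sup_{\theta\in\Theta}\,\mathbb{P}(\hat{\mathcal{V}}_n^\theta(1) > V) < \delta/2.
\end{equation*}

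Let $A_n^\theta$ and $B_n^\theta$ denote the two good events above. On $A_n^\theta \cap B_n^\theta$, and for any $\theta \in \mathcal{A}_{c,n}$, the reverse triangle inequality yields
\begin{equation*}
\sup_{0 \leq t \leq 1}\bigl|\sqrt{|J_n|}\,\hat{\gamma}^{(n),\theta}_t\bigr| \;\geq\; \sqrt{|J_n|}\,\|\gamma^\theta\|_\infty - \sqrt{|J_n|}\,\|\hat{\gamma}^{(n),\theta} - \gamma^\theta\|_\infty \;\geq\; c - K,
\end{equation*}
so $\hat{T}_n^\theta \geq (c-K)/\sqrt{V}$. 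Picking $c = K + z_{1-\alpha}\sqrt{V} + 1$ then forces $\hat{T}_n^\theta > z_{1-\alpha}$, hence $\Psi_n^{\alpha,\theta} = 1$, on the good event. A union bound gives
\begin{equation*}
\sup_{\theta \in \mathcal{A}_{c,n}}\mathbb{P}(\Psi_n^{\alpha,\theta} = 0) \leq \sup_{\theta \in \Theta} \mathbb{P}((A_n^\theta)^c) + \sup_{\theta \in \Theta} \mathbb{P}((B_n^\theta)^c),
\end{equation*}
and taking $\limsup_n$ bounds the right-hand side by $\delta$, so $\liminf_{n\to \infty} \inf_{\theta \in \mathcal{A}_{c,n}} \mathbb{P}(\Psi_n^{\alpha,\theta} = 1) \geq 1 - \delta = \beta$, as desired.

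The argument has essentially no conceptual obstacle: the genuinely hard analytic work, namely the Neyman orthogonality, the $\sqrt{|J_n|}\,g(n)h(n)\to 0$ trade-off of \cref{asm:UniformRates}, and the resulting uniform $\sqrt{n}$-rate, is already encapsulated in \cref{thm:main}(ii). The only mildly delicate point is the uniform-in-$\theta$ control of $\hat{\mathcal{V}}_n(1)$, which however follows immediately from the boundedness assumptions in \cref{asm:UniformBounds}; alternatively, one could invoke \cref{prop:varianceconsistent} in conjunction with $\mathcal{V}^\theta(1) \leq (C')^2 C$ to reach the same conclusion.
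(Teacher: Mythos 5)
Your proof is correct and follows the same overall strategy as the paper: extract a uniform stochastic bound $K$ from Theorem \ref{thm:main}(ii), combine it with a tail bound on $\hat{\mathcal{V}}_n(1)$, apply the reverse triangle inequality, and conclude via a union bound. The only difference is in how the variance estimator is controlled: you bound $\ex(\hat{\mathcal{V}}_n^\theta(1)) \leq (C')^2 C$ directly and apply Markov's inequality, splitting the error budget $\delta/2 + \delta/2$, whereas the paper invokes Proposition \ref{prop:varianceconsistent} to show $\mathbb{P}(\hat{\mathcal{V}}_n^\theta(1) > 1 + \mathcal{V}^\theta(1))$ vanishes, which lets it allocate the entire $1-\beta$ budget to the $\|\hat{\gamma} - \gamma\|_\infty$ term; both are valid and you even note the alternative route at the end.
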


\subsection{Extension to cross-fitting} \label{sec:cf} 
In Section \ref{sec:asymptotics} we considered sample splitting with observations indexed by
$J_n^c$ used to estimate the two models and with observations indexed by $J_n$
used to estimate $\gamma$. Following \citet{Chernozhukov:2018}, we can improve
efficiency by cross-fitting, i.e., by flipping the roles of $J_n$ and $J_n^c$ to
obtain a second equivalent estimator of $\gamma$. Heuristically, the two
estimators are approximately independent, and thus their average should be a more
efficient estimator. This procedure generalizes directly to a partition $J_n^1
\cup \cdots \cup J_n^K = \{1,\ldots,n\}$ of the indices into $K$ disjoint folds.
The partition is assumed to have a uniform asymptotic density, meaning that
$|J_n^k|/n \to \frac{1}{K}$ as $n \to \infty$ for each $k$. 

We estimate $G$ and $\lambda$ using $(J_n^k)^c=\{1,\ldots, n\}\setminus
J_n^k$ and subsequently estimate $\gamma$ using $J_n^k$. Then the $K$-fold \emph{Cross-fitted LCM estimator}, abbreviated as X-LCM, is defined as the average LCM estimator over the $K$ folds, i.e.,
    \begin{align} \label{eq:KfoldI}
        \check{\gamma}_t^{K,(n)} 
        = \frac{1}{K} \sum_{k=1}^K \frac{1}{|J_n^k|}\sum_{j \in J_n^k} 
        \int_0^t \hat{G}_{j,s}^{k,(n)}\mathrm{d} \hat{M}^{k,(n)}_{j, s},
    \end{align}
where for each $j \in J_n^k$, the processes $\hat{G}_{j}^{k,(n)}$ and $\hat{M}^{k,(n)}_{j}$ are the model predictions of $G_j$ and $M_j$, respectively, based on training data indexed by $(J_n^k)^c$. 
We also define a $K$-fold version of the variance estimator:
    \begin{align} \label{eq:Kfoldsig}
        \check{\mathcal{V}}_{n}^K(t)
        = 
        \frac{1}{K} \sum_{k=1}^K \frac{1}{|J_n^k|} \sum_{j \in J_n^k} 
        \int_0^t \left(
            \hat{G}_{j,s}^{k,(n)}\right)^2 \mathrm{d} N_{j,s}.
    \end{align}
Now, similarly to the LCT statistic, the cross-fitted estimator can be used to construct a test statistic,
    \begin{align} \label{eq:DMLtest}
        \check{T}_{n}^K 
        = \sqrt{\frac{n}{\check{\mathcal{V}}_{K,n}(1)}} \sup_{0\leq t \leq 1}\left| \check{\gamma}_t^{K,(n)} \right|,
    \end{align}
from which we define the following test of conditional local independence.
\begin{dfn} \label{dfn:test} Let $\alpha \in (0,1)$ and let $\check{T}_{n}^K$ be
    the test statistic from \eqref{eq:DMLtest}. The $K$-fold
    \emph{Cross-fitted Local Covariance Test} (X-LCT) with
    significance level $\alpha$ is defined by
        \begin{align*}
            \check{\Psi}_n^K 
            = \one(F_S(\check{T}_{n}^K) > 1-\alpha)
            = \one(\check{T}_{n}^K > z_{1-\alpha}),
        \end{align*}
    where $z_{1-\alpha}$ is the $(1-\alpha)$-quantile of the distribution
    function $F_S$ given in \eqref{eq:supdistribution}.
\end{dfn}

We provide a summary of the computation of the X-LCT in Algorithm \ref{alg:X-lct}. The asymptotic analysis of $\hat{\gamma}^{(n)}$ generalizes to $\check{\gamma}^{K,(n)}$, but we will refrain from restating all results for the $K$-fold cross-fitted estimator. For simplicity, we focus on the fact that the X-LCT is well calibrated.
\begin{thm} \label{thm:LCTXlevel}
    Suppose that \cref{asm:UniformRates} is satisfied for every sample split $J_n^k \cup (J_n^k)^c, k=1,\ldots,K$. Under \cref{asm:UniformBounds,asm:UniformVariance}, the X-LCT statistic satisfies
    $$
        \check{T}_{n}^{K,\theta} \convUDnull S
    $$
    for $n\to \infty$. In particular, the X-LCT has uniform asymptotic level.
\end{thm}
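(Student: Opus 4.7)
The plan is to reduce \cref{thm:LCTXlevel} to per-fold applications of the sample-split results, exploiting that the $K$ evaluation folds $J_n^k$ are pairwise disjoint. I will start from the identity
\[
    \sqrt{n}\,\check\gamma_t^{K,(n)}
    = \frac{1}{K}\sum_{k=1}^K \sqrt{\tfrac{n}{|J_n^k|}} \cdot \sqrt{|J_n^k|}\,\hat\gamma_t^{k,(n)},
\]
and apply the decomposition \eqref{eq:decomposition} to each summand. Under $H_0$ the drift terms $D_1^{k,(n)}$ and $D_2^{k,(n)}$ vanish identically because $\lambda^\theta$ is a version of $\blambda^\theta$, while \cref{prop:remainderterms}, applied to each of the $K$ splits $J_n^k \cup (J_n^k)^c$ as permitted by the hypothesis, gives $\sup_t |R_{i,t}^{k,(n),\theta}|\convUPnull 0$ for $i=1,2,3$ and every $k$. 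Since the asymptotic density condition yields $\sqrt{n/|J_n^k|}\to\sqrt{K}$, the total contribution of the remainders to $\sqrt{n}\,\check\gamma^{K,(n)}$ is uniformly negligible.

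Next I will identify the leading oracle term. The weighted sum of the per-fold $U^{k,(n)}$'s reduces to
\[
    \frac{1}{\sqrt{n}}\sum_{j=1}^{n} \omega_{n,j}\int_0^t G_{j,s}\,\mathrm{d}\bM_{j,s},
\]
with fold-membership weights $\omega_{n,j}=n/(K|J_n^k|)$ tending uniformly to one. Thanks to the uniform bounds in \cref{asm:UniformBounds}, the difference between this and the global unweighted oracle statistic based on all $n$ observations is uniformly $o_P(1)$, so \cref{prop:UasymptoticGaussian} applied to the full sample yields uniform weak convergence of the main term to the Gaussian martingale $U^\theta$ with variance function $\mathcal{V}^\theta$. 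Combined with the remainder analysis this delivers $\sqrt{n}\,\check\gamma^{K,(n),\theta}\convUDnull U^\theta$ in $D[0,1]$.

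For the variance estimator I will argue analogously by expressing $\check{\mathcal{V}}_n^K$ as a weighted average of per-fold variance estimators, applying \cref{prop:varianceconsistent} to each split and controlling the weights as above to get $\sup_t|\check{\mathcal{V}}_n^{K,\theta}(t)-\mathcal{V}^\theta(t)|\convUPnull 0$. \cref{asm:UniformVariance} then ensures that $(f,v)\mapsto v(1)^{-1/2}\|f\|_\infty$ is continuous on the relevant closed subset, so the uniform continuous-mapping argument underlying \cref{cor:statisticsdistribution} gives
\[
    \check T_n^{K,\theta} \convUDnull \frac{1}{\sqrt{\mathcal{V}^\theta(1)}}\sup_{0\leq t \leq 1}|U^\theta_t|,
\]
which by the Dambis--Dubins--Schwarz time change equals $\sup_{0\le s \le 1}|B_s|=S$ in distribution, independently of $\theta$. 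The uniform asymptotic level statement follows immediately from continuity of $F_S$ at $z_{1-\alpha}$ and the standard characterization of uniform convergence in distribution in \cref{app:UniformAsymptotics}.

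The main obstacle is the careful uniform handling of cross-fold dependence. The residual estimators $\hat G^{k,(n)}$ share overlapping training data, so the per-fold decompositions \eqref{eq:decomposition} are correlated across $k$; however, I only need the remainders to vanish in probability in supremum norm, which is secured by imposing \cref{asm:UniformRates} on every split. The independence required for the oracle Gaussian limit, on the other hand, is automatic from the disjointness of the evaluation folds combined with the i.i.d.\ assumption across $j$, which lets the sum of $U^{k,(n)}$'s collapse to a single $n$-sample oracle statistic. Ensuring all bounds are genuinely uniform over $\theta\in\Theta_0$ rather than merely pointwise is the pervasive technical ingredient throughout.
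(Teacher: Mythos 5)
Your proposal is correct, and it takes a genuinely different route from the paper for the key step: establishing the uniform Gaussian limit of the cross-fitted oracle term. The paper keeps the $K$ per-fold oracle processes $U^{k,(n)}$ separate, observes that they are mutually independent because each is constructed only from $(G_j,M_j)_{j\in J_n^k}$, and then invokes Lemma~\ref{lem:sumofindependent} iteratively together with the convolution property of the Gaussian to show that $\check{U}^{K,(n)}\convUDnull U$. You instead collapse the weighted sum of the $U^{k,(n)}$ into a single $n$-sample oracle statistic $\frac{1}{\sqrt n}\sum_{j=1}^n\omega_{n,j}\int_0^t G_{j,s}\,\mathrm{d}\bM_{j,s}$ with deterministic fold weights $\omega_{n,j}=n/(K|J_n^k|)\to 1$, and then discard the weights by a Doob maximal-inequality bound (the second moment of the supremum of the difference is at most $4\,\mathcal V(1)\max_j(\omega_{n,j}-1)^2\le 4C(C')^2\max_j(\omega_{n,j}-1)^2\to 0$, using the uniform bound on $\mathcal V(1)$ from Lemma~\ref{lem:regularityofgammasigma}), so that a single application of Proposition~\ref{prop:UasymptoticGaussian} to the full sample delivers the limit. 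This avoids Lemma~\ref{lem:sumofindependent} and the convolution argument entirely and is a bit more elementary; what the paper's modular route buys is that each fold's contribution is treated as a black box via the already-established sample-split result, which is perhaps more robust to variants where the folds do not have equal asymptotic density. The rest of your argument -- the per-fold decomposition, vanishing of $D_1,D_2$ under $H_0$, uniform control of remainders per split, consistency of the cross-fitted variance, the continuous mapping and time-change step, and the Type~I control via continuity of $F_S$ -- matches the paper's proof. Your observation that cross-fold correlation of the remainder estimators is irrelevant because you only need convergence in probability per fold is exactly the point the paper exploits implicitly.
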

Note that cross-fitting recovers full efficiency in the sense that the scaling
factor is $\sqrt{n}$ rather than $\sqrt{|J_n|}$, which leads to a more powerful test.
Moreover, the asymptotic
distribution of $\check{T}_{n}^K$ does not depend on the number of folds $K$, and any difference between various choices of $K$ can thus be attributed to finite sample errors. Larger values of $K$ will allocate more data to estimation of $G$ and $\lambda$, which intuitively should be the harder estimation problem. Following Remark 3.1 in \citet{Chernozhukov:2018}, we believe that a default choice of $K=4$ or $K=5$ should be reasonable in practice.


\begin{algorithm} \caption{$K$-fold cross-fitted local covariance test (X-LCT)} \label{alg:X-lct}
  \textbf{input}: processes $(N_j,X_j,Z_j)_{j=1,\ldots,n}$, partition $J_n^1 \cup \cdots \cup J_n^K$ of indices into $K$ folds\;
  \textbf{options}: historical regression methods for estimation of $\lambda$ and $G$ given $N$ and $Z$, 
  
  discrete time grid $\mathbb{T}\subset [0,1]$, significance level $\alpha\in (0,1)$\; %
  \Begin{
    \For{$k = 1,\ldots, K$}{
        apply Algorithm \ref{alg:lcm} on sample split $J_n^k \cup (J_n^k)^c$ to compute $\widetilde{\gamma}^{k,(n)}$ on grid $\mathbb{T}$\;
        use Equation \eqref{eq:empiricalvariance} on sample split $J_n^k \cup (J_n^k)^c$ to compute $\widetilde{\mathcal{V}}_{k,n}(1)$ \;
    }
    compute 
    $\check{\gamma}^{K,(n)} 
        = \frac{1}{K} \sum_{k=1}^K \widetilde{\gamma}^{k,(n)}$
    on grid $\mathbb{T}$ \;
    compute  
        $\check{\mathcal{V}}_{K,n}(1) = \frac{1}{K} \sum_{k=1}^K \widetilde{\mathcal{V}}_{k,n}(1)$\; 
    compute the X-LCT statistic
    $\check{T}_n^K = \sqrt{n} \cdot \max_{t\in \mathbb{T}}|\check{\gamma}_t^{K,(n)}| / \sqrt{\check{\mathcal{V}}_{K,n}(1)}$ \;
    compute $p$-value $\check{p}=1-F_S(\check{T}_n^K)$ by truncating the series in Equation \eqref{eq:supdistribution}.
  }
  \textbf{output}: 
  the X-LCT $\check{\Psi}_n^K = \one(\check{p} < \alpha)$, and optionally the $p$-value $\check{p}$\;
\end{algorithm}

\section{Simulation study} \label{sec:simulations} In this section we present
the results from a simulation study based on the Cox example introduced in
Section \ref{sec:introex}. We elaborate in Section \ref{subsec:ex} on the 
full model specification used for the simulation study -- which will also 
illuminate how $\Pi$ and $\lambda$ can be modeled and estimated. 
The results from the simulation study  
focus on the distribution of the X-LCT statistic $\check{T}_{n}^K$ and validate the asymptotic level and power of the X-LCT $\check{\Psi}_n^K$. The latter is also compared to a hazard ratio test based on the marginal Cox model \eqref{eq:coxmarg}. The simulations were implemented in Python and
the code is
available\footnote{\url{https://github.com/AlexanderChristgau/nonparametric-cli-test}}.

\subsection{Cox model continued} \label{subsec:ex}
Consider the same setup as in Section \ref{sec:introex}. To fully specify the 
model we need to specify the distribution of the processes $X$, $Y$ and $Z$.
We suppose that $X$ and $Y$ can be written in terms of $Z$ as
\begin{align}\label{eq:historicallinear}
X_t = \int_0^t Z_s \rho_X(s, t) \mathrm{d}s + V_t,
\qquad \text{and} \qquad
Y_t = \int_0^t Z_s \rho_Y(s, t) \mathrm{d}s + W_t,
\end{align}
where $\rho_X$ and $\rho_Y$ are two functions defined on the triangle $\{(s,t)
\in [0,1]^2 \mid s \leq t \}$, and where $V = (V_t)_{0\leq t \leq 1}$ and $W =
(W_t)_{0\leq t \leq 1}$ are two noise processes with mean zero. The processes
$Z$, $V$ and $W$ are assumed independent, which implies \eqref{eq:exas} and thus 
that $N$ is conditionally locally independent of $X$ given $\mathcal{F}_t = \mathcal{F}_t^{N,Z}$.

The specific dependency of $X$ and $Y$ on $Z$ is known as the \emph{historical functional linear
model} in functional data analysis \citep{Malfait:2003}.
Within this model, 
\begin{equation} \label{eq:piModel}
\Pi_t = \ex(X_t \mid \mathcal{F}_{t-}) = \int_0^t Z_s \rho_X(s, t) \mathrm{d} s,
\end{equation}
and on $(T \geq t)$
\begin{align*}
    \ex(e^{Y_t} \mid \mathcal{F}_t) 
    & = e^{\int_{0}^t Z_s \rho_Y(s, t) \mathrm{d} s} 
    \ex(e^{W_t} \mid T \geq t) \\
    & = e^{\tilde{\beta}_0(t) 
        + \int_{0}^t Z_s \rho_Y(s, t) \mathrm{d} s},
\end{align*}
where $\tilde{\beta}_0(t) = \log (\ex(e^{W_t} \mid T \geq t))$. Since
$$
    \lambda_t 
    = \one(T\geq t)\lambda_t^0 
        e^{\beta Z_t} \ex(e^{Y_t} \mid \mathcal{F}_t)
$$
it follows that on $(T \geq t)$,
\begin{align} 
\log(\lambda_t) 
    & = \log(\lambda_t^0) + \tilde{\beta}_0(t) + \beta Z_t 
        + \int_{0}^t Z_s \rho_Y(s, t) \mathrm{d} s \nonumber \\
    & = \beta_0(t) + \beta Z_t 
        + \int_{0}^t Z_s \rho_Y(s, t) \mathrm{d} s,
\label{eq:lambdaModel}
\end{align}
where the two baseline terms depending only on time have been merged into
$\beta_0$. 

The computations above suggest how the estimators $\hat \lambda^{(n)}$ and $\hat \Pi^{(n)}$ 
could be constructed. That is, $\hat \lambda^{(n)}$ could be based on estimates of
$\beta$, $\beta_0$ and $\rho_Y$ from the
observations $(T_j,Z_j)_{j\in J_n^c}$, and $\hat \Pi^{(n)}$ could be based on estimates of  
$\rho_X$ from $(X_j,Z_j)_{j\in J_n^c}$. We would then have 
$$\hat{\Pi}^{(n)}_{j,t} =  \int_0^t Z_{j,s} \hat{\rho}_X^{(n)}(s, t) \mathrm{d} s$$
for $j \in J_n$ where $\hat{\rho}_{X}^{(n)}$ denotes the estimate of $\rho_X$, 
and similarly for $\hat \lambda^{(n)}_{j,t}$.
Particular choices of estimators $\hat{\rho}_{X}^{(n)}$ and $\hat{\rho}_{Y}^{(n)}$ and their 
theoretical properties are reviewed in \cref{sec:estimation}. Our conclusion from this 
review is that for the historical functional linear model, sufficient rate results 
should be possible but have not yet been established rigorously.

More seriously, we found the available implementations limiting. Specifically, 
the historical linear regression estimator from the \texttt{scikit-fda} library was 
considered initially, but we found that fitting this model was too
computationally expensive for a simulation study with cross-fitting. 
In principle, in our time-continuous setting, we would like to use a functional estimator 
of $\Pi$ that would utilize the regularity along $s$ and $t$. Initial experiments, 
however, suggested that the simpler historical regression described in Section 
\ref{sec:implementation} gave similar results as using the \texttt{scikit-fda} library,
and we went with the less time consuming implementation.

\subsection{Sampling scheme}\label{sec:SamplingScheme}
The actual time-discretized simulations and computations were implemented using an
equidistant grid $\mathbb{T} = (t_i)_{i=1}^{q}$ with $q=128$ time points $0=t_1
< \cdots < t_q = 1$. Inspired by \citet{harezlak2007penalized}, we generated the
processes as follows: let $\xi\in \mathbb{R}^3$ and $V,W,\VVV \in
\mathbb{R}^\mathbb{T}$ be independent random variables such that $\xi
\sim \mathcal{N}(0,\operatorname{I}_3)$ and such that $V,W$, and $\VVV$ are identically
distributed with 
    \begin{align*}
        V_{t_1},V_{t_2} - V_{t_1}, \ldots, V_{t_q}-V_{t_{q-1}} 
        \stackrel{\mathrm{i.i.d.}}{\sim} \mathcal{N}(0,1 / q).
    \end{align*}
Then the process $Z$ is determined by
    \begin{align*}
        Z_t = \xi_1 + \xi_2 t + \sin(2\pi \xi_3 t) + \VVV_t
    \end{align*}
for $t\in \mathbb{T}$. The processes $X$ and $Y$ were then given by the
historical linear model \eqref{eq:historicallinear} with kernels $\rho_X$ and
$\rho_Y$ being one of the following four kernels:
\begin{align*}
    \begin{array}{l l}
        \text{zero: } (s,t)\mapsto 0, 
        & \text{constant: }(s,t)\mapsto 1, \\
        \text{Gaussian: } (s,t)\mapsto e^{-2(t-s)^2}, \qquad
        &\text{sine: } (s,t)\mapsto \sin(4t-20s).
    \end{array}
\end{align*}
To compute $X$ and $Y$, we evaluated the kernels on $\{(s,t)\in \mathbb{T}^2 \mid
s\leq t\}$ and approximated the integrals by Riemann sums. The full intensity for $N_t=\one(T\leq t)$ was specified with a Weibull baseline of the form
\begin{equation*}
    \lambda_t^{\text{full}} = \one(T\geq t) \beta_1 t^2 
        \exp\left(\beta_2 Z_t + Y_t\right),
\end{equation*}
for $\beta_1>0$ and a choice of $\beta_2 \in \{-1,1\}$. 
To sample $T$ we applied the inverse hazard method, which utilizes that $\Lambda_T^{\text{full}}$ is standard exponentially distributed. That is, we sampled $E \sim \mathrm{Exp}(1)$ and numerically computed $T = \max\{ t\in \mathbb{T} \mid \Lambda_t^{\text{full}} < E\}$ as a discretized approximation.
For any given parameter setting, the baseline coefficient $\beta_1$ was chosen sufficiently large to ensure that 
$\Lambda_t^{\text{full}} \geq E$
would occur before time $t=1$ in more that $\frac{q-1}{q}\cdot n$ samples.

With this setup, \cref{asm:UniformBounds} is satisfied if $V$, $W$ 
and $\VVV$ were bounded. Since we use the Gaussian distribution, they are 
technically not bounded, but they could be made bounded by introducing a 
lower and upper cap. Due to the light tails of the Gaussian distribution
such caps would have no noticeable effect on the simulation results, and 
the results we report are generated without a cap. 

The simulation setting used to sample the data in Figures \ref{fig:endpoint_example} and \ref{fig:timevaryingexample} was
\begin{align*}\label{eq:simsettings}
    (\beta_2,\rho_X,\rho_Y)
    = (-1, \text{constant}, \text{constant}).
\end{align*}

\subsection{Implementation of estimators and tests} \label{sec:implementation}
For our proof-of-concept implementation we used two simple off-the-shelf
estimators.

To estimate $\lambda$ we used the \texttt{BoXHED2.0} estimator from
\citet{pakbin2021boxhed2}, based on the works of \citet{wang2020boxhed} and
\citet{lee2021boosted}. In essence, the estimator is a gradient boosted forest
adapted to the setting of hazard estimation with time-dependent covariates. The
maximum depth and number of trees were tuned by 5-fold cross-validation over
the same grid as in \citet{pakbin2021boxhed2}. For computational ease, the
hyperparameters were tuned once on the entire dataset instead of tuning them on
each fold $J_n^k$. In principle, this may invalidate the asymptotic properties of $\check{\Psi}_n^K$ since it breaks the independence between $\hat{\lambda}^{k,(n)}$ and $(T_j,X_j,Z_j)_{j\in J_n^k}$, but we believe that this dependency is negligible.

To estimate the predictable projection $\Pi_t = \ex(X_t \mid \mathcal{F}_{t-})$,
we fitted a series of linear least squares estimators by regressing $X_t$ on
$(Z_s)_{s\in \mathbb{T}:s<t}$ for each $t\in \mathbb{T}$. To stabilize the
estimation error $g(n)$, we added a small $L_2$-penalty with coefficient $0.001$
fixed across all experiments for simplicity. Since $X_t$ was sampled from a discretized historical linear model, the error $g(n)$ should in principle converge with a classical $n^{-1/2}$-rate. However, the finite sample error is expected to be large since it accounts for $q$ linear regressions with up to $q$ predictors.

Based on these estimators, the X-LCT was implemented based on Algorithm \ref{alg:X-lct}. Following the
recommendation by \citet[Remark 3.1.]{Chernozhukov:2018}, we computed the X-LCT with $K=5$ folds. The associated $p$-value was
computed with the series representation of $F_S$ truncated to the first $1000$ terms.

We compared our results for X-LCT with a hazard ratio test in the possibly misspecified 
marginal Cox model given by \eqref{eq:coxmarg}. This test was computed using the lifelines library
\citep{davidson2021lifelines}, specifically the \texttt{CoxTimeVaryingFitter}
model. The model was fitted with an $L_2$-penalty with a coefficient set
to $0.1$ (the default), and as a consequence the hazard ratio test is expected to be conservative.

\begin{figure}
    \includegraphics[width=\linewidth]{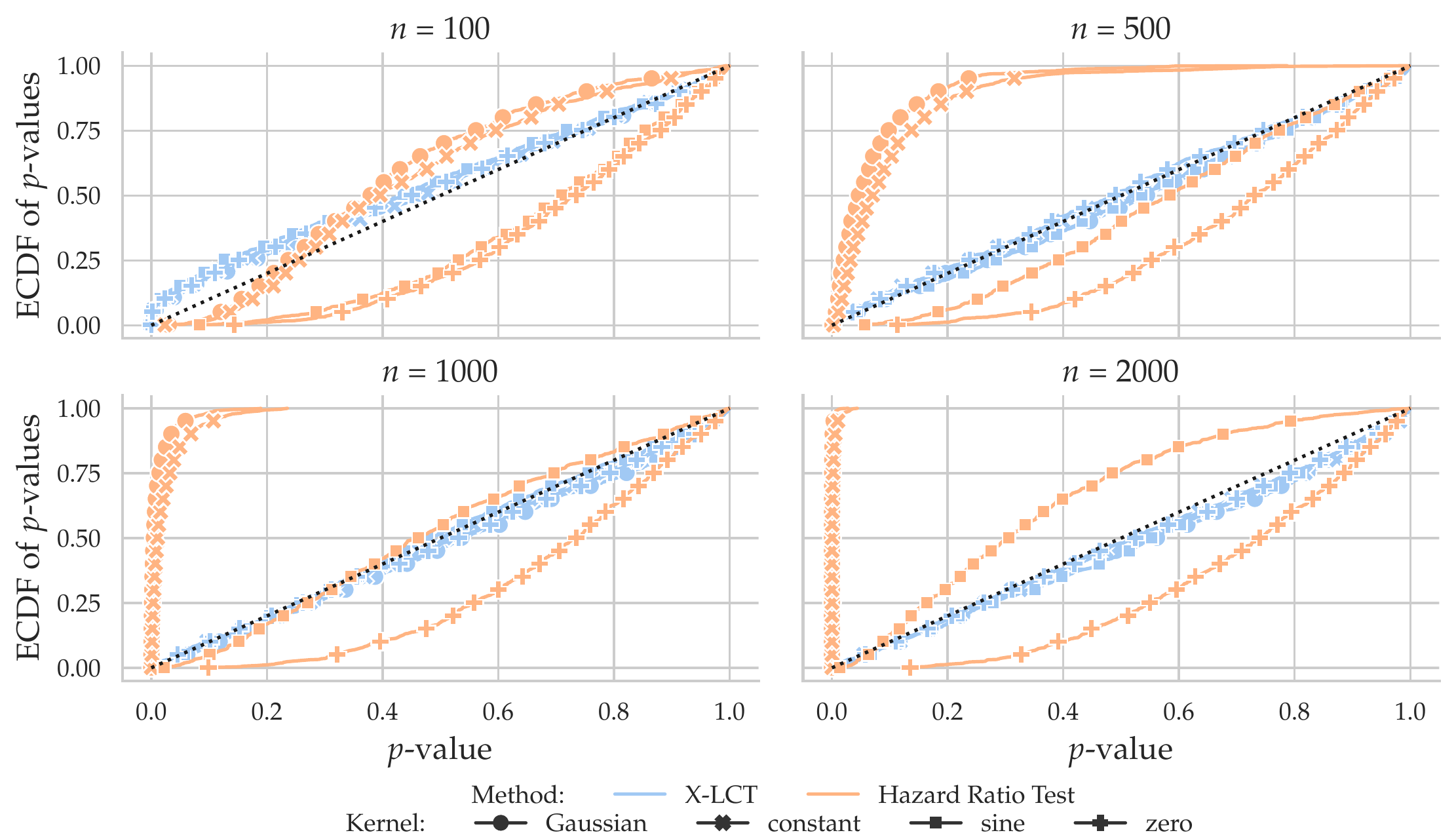}
        \caption{Empirical cumulative distribution functions of simulated
        $p$-values for the cross-fitted local covariance test and the hazard ratio test.
        The simulated data
        satisfies the hypothesis of conditional local independence, so the
        $p$-values are supposed to be uniformly distributed, and the CDF 
        should fall on the diagonal dotted line.}\label{fig:H0pvals2}
\end{figure}

\subsection{Distributions of \texorpdfstring{$p$}{p}-values under \texorpdfstring{$H_0$}{H0}}
We examine the distributional approximation $\check{T}_{n}^K \stackrel{\mathrm{as.}}{\sim} S$, cf. Theorem \ref{thm:LCTXlevel}, by comparing the $p$-values
$1-F_S(\check{T}_{n}^K)$ to a uniform distribution. Figure \ref{fig:H0pvals2}
shows the empirical distribution functions of the $p$-values computed from data
simulated according to the scheme described in the previous section. 
The results are aggregated over the two choices of $\beta_2 \in \{-1,1\}$ since
these two settings were found to be similar. For more detailed results from
the experiment, see Figure \ref{fig:H0pvals_full} in Appendix \ref{sec:extrafigs}, 
which also includes the $p$-values corresponding to the endpoint test statistic.

For the hazard ratio test, Figure \ref{fig:H0pvals2} shows that the $p$-values
are sub-uniform for the zero-kernel. In this case, the marginal Cox model is
correct, and the non-uniformity of the $p$-values can be explained by the
$L_2$-penalization. For the constant and Gaussian kernels the hazard ratio test
fails completely, whereas for the sine kernel, the mediated effect of $Z$ on $T$
through $Y$ is more subtle, and the model misspecification only becomes apparent
for $n=2000$. Overall, these results are consistent with the reasoning in the
Section \ref{sec:introex}: a test based on the misspecified Cox model will wrongly 
reject the hypothesis of conditional local independence. 

For the proposed X-LCT, Figure \ref{fig:H0pvals2} shows that the associated $p$-values are slightly anti-conservative for $n=100$. This is to be expected, and can be
explained by the finite sample errors leading to more extreme values of $\check{T}_{n}^K$ than the approximation by $S$. As $n$ increases, these errors
become smaller -- and for $n=2000$ the $p$-values actually seem to be
sub-uniform. The sub-uniformity may be explained by the time discretization,
since the maximum of the process is taken over $\mathbb{T}$ rather than $[0,1]$.
\cref{fig:supremumapprox} in Appendix \ref{sec:extrafigs} illustrates the asymptotic effect of
the time discretization which supports this claim. Another support of this claim
is that the endpoint test does not appear to give sub-uniform $p$-values for
large $n$, see Figure \ref{fig:H0pvals_full}. We finally note that the 
distributions of the $p$-values for our proposed test is largely unaffected 
by the kernel used to generate the data. 

\begin{figure}
    \includegraphics[width = \linewidth]{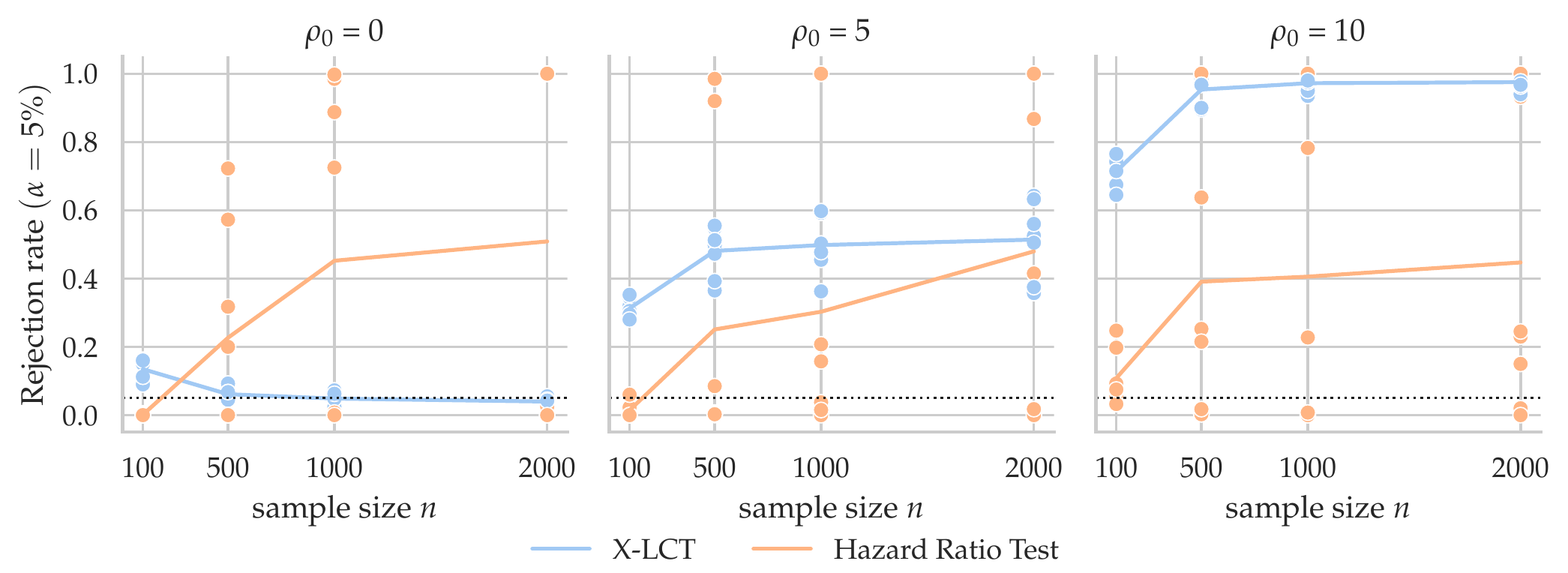}
    \caption{For each $\rho_0 \in \{0,5,10\}$, the lines show the average 
    rejection rates of our proposed test X-LCT (blue) and the hazard ratio test (orange)
    as functions of sample size, with each average taken over $8$ different 
    settings. For each setting, the rejection rate is computed from
    400 simulated datasets at a $5\%$ significance level 
    and the rejection rate is displayed with a dot.}
    \label{fig:alternatives}
\end{figure}

\subsection{Power against local alternatives} \label{subsec:localalternatives}
To investigate the power of the X-LCT we construct local alternatives to $H_0$ in
accordance with the right graph in Figure \ref{fig:lig} by replacing $Y_t$ by
the process $Y_t + \frac{\rho_0}{\sqrt{n}} X_t$. That is, for $\rho_0 \neq 0$,
blood pressure is then directly affected by pension savings, and $N_t$ is no
longer conditionally locally independent of $X_t$ given $\mathcal{F}_t$. In
terms of the full intensity, these local alternatives are equivalent to 
\begin{equation} \label{eq:alternative}
    \lambda_t^{\text{full}} = \one(T\geq t) \beta_1 t^2 
        \exp\left(\beta_2 Z_t + Y_t + \frac{\rho_0}{\sqrt{n}} X_t\right).
\end{equation}
We simulated data for the dependency parameter $\rho_0 \in \{0,5,10\}$. Note
that $\rho_0 = 0$ corresponds to our previous sampling scheme with conditional
local independence. For each of the $96 = 4\times 2 \times 4 \times 3$ choices
of kernel, $\beta_2$, $n$ and $\rho_0$ we ran the tests $400$ times and computed the
$p$-values. For simplicity, we report the rejection rate at an $\alpha = 5\%$
significance level and the results are shown in \cref{fig:alternatives}.

In the leftmost panel, the data was generated under $H_0$ and the plot shows what
we noted previously, namely that the X-LCT holds level for large $n$,
whereas the hazard ratio test does not. 

For the local alternatives, $\rho_0=5$
and $\rho_0 = 10$, we note that the power of the hazard ratio test is quite
sensitive to the simulation settings. For some settings it has no power, while
for others it has some power. 

In contrast, the proposed X-LCT has power against all of the 
local alternatives. The power increases with $n$ initially but 
stabilizes from around $n=1000$. This is similar to the 
behavior observed under the null hypothesis and is not surprising.
We expect that the sample size needs to be sufficiently large for the 
nonparametric estimators to work sufficiently well, and we expect 
the sufficient sample size to be mostly 
unaffected by the value of $\rho_0$. For fixed $n$, we also note that the power 
of $\check{\Psi}_n^K$ is fairly robust with respect to the choice of $\beta_2$ 
and the choice of kernel. Overall, we find that the X-LCT
is applicable in these settings with historical effects: 
it has consistent power against the $\sqrt{n}$ alternatives while maintaining level for $n$ reasonably large.

We now compare the X-LCT, which is based on the uniform norm of the X-LCM, with its endpoint counterpart. More precisely, we consider the test statistic
$$
    \left(\check{\mathcal{V}}_{K,n}(1)\right)^{-\frac{1}{2}}
    \sqrt{n}\check{\gamma}_1^{K,(n)},
$$
which is asymptotically standard normal under $H_0$.
With the simulation settings in Section \ref{subsec:localalternatives}, the X-LCT turns out to be more 
or less indistinguishable from the corresponding endpoint test. 
This is because the alternatives considered have corresponding parameters 
$t \mapsto \gamma_t$, which are most extreme towards $t=1$.
Therefore, the supremum and the endpoint behave similarly in these cases.

\begin{figure}
    \includegraphics[width=.8\linewidth]{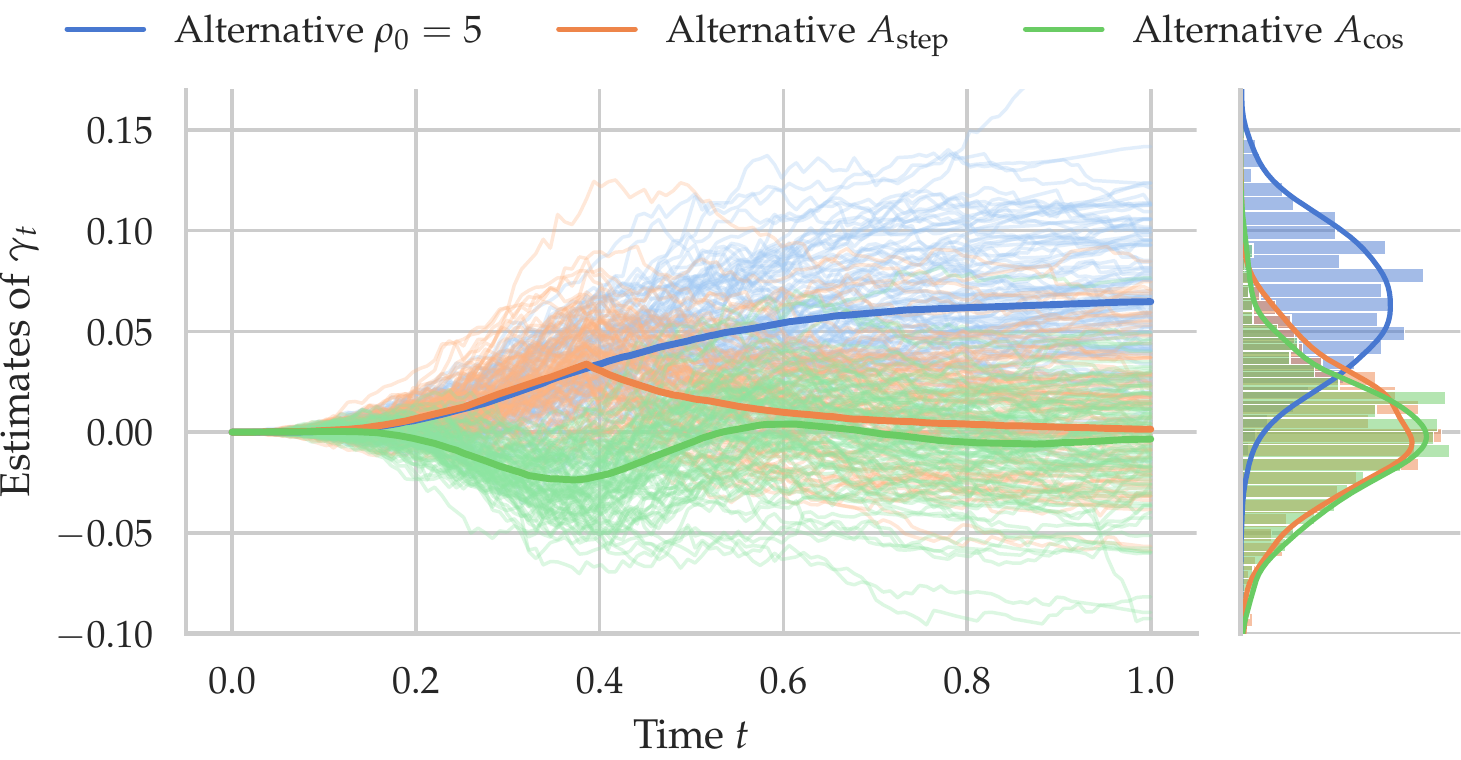}
    \caption{Sample paths of $\check{\gamma}^{K,(500)}$ fitted on data
    sampled from three different alternatives as described in Section 
    \ref{subsec:localalternatives}. 
    Here $(X,Y,Z)$ are sampled from the scheme described in \cref{sec:simulations},
    with both $\rho_X$ and $\rho_Y$ being the constant kernel and with $\beta = -1$.
    For each alternative, 100 paths are shown. The 
    empirical mean functions and the endpoint distributions
    are highlighted and computed based on 500 samples.}
     \label{fig:pathsalternative}
\end{figure}

For this reason we consider local alternatives that result in a non-monotonic 
parameter $t\mapsto \gamma_t$. Using the same expression for the 
intensity \eqref{eq:alternative}, but with a time-varying $\rho_0$, 
we consider the alternatives
    \begin{align*}
        A_\text{step}\colon& 
            \rho_0(t) = 5 \cdot \one(t\leq 0.4) - 5 \cdot \one(t>0.4),\\
        A_\text{cos} \colon& 
            \rho_0(t) = 7 \cdot \cos(4\pi \cdot t).
    \end{align*}
The idea behind the alternative $A_\text{step}$ is that $t\mapsto \gamma_t$
should be increasing on $[0,0.4]$ and decreasing on $(0.4,1]$. 
\cref{fig:pathsalternative} shows sample paths
of $\check{\gamma}^{K,(n)}$ for data simulated under each of the alternatives
$\rho_0=5$, $A_\text{step}$ and $A_\text{cos}$. The figure illustrates that
$t \mapsto |\check{\gamma}_t^{K,(n)}|$ is, indeed, mostly maximal towards $t=1$ 
for the alternative $\rho_0=5$, but not for the time-varying alternatives 
$A_\text{step}$ and $A_\text{cos}$.

With the same sampling scheme for $(X,Y,Z)$ as in Section \ref{sec:SamplingScheme}, we conducted
an analogous experiment with 400 runs for each setting.
\cref{fig:timevaryingrho} shows the rejection rates for the 
two tests.

Under the hypothesis of conditional local independence, the left plot 
in \cref{fig:timevaryingrho} shows that
the endpoint test behaves similarly to $\check{\Psi}_n^K$ as expected. Both
tests have power against the local alternatives, but for $A_\text{step}$ the
power does not seem to stabilize before $n=2000$. This is different from the
previous settings, and can be explained by a slower convergence of the intensity
estimator due to the more complex dependency on $X$. For both of the local
alternatives, we observe that $\check{\Psi}_n^K$ is more powerful than the
endpoint test, with the difference being largest for $A_\text{step}$. 
In conclusion, these results show that the supremum test dominates the 
endpoint test in certain situations.

\begin{figure}
    \includegraphics[width=\linewidth]{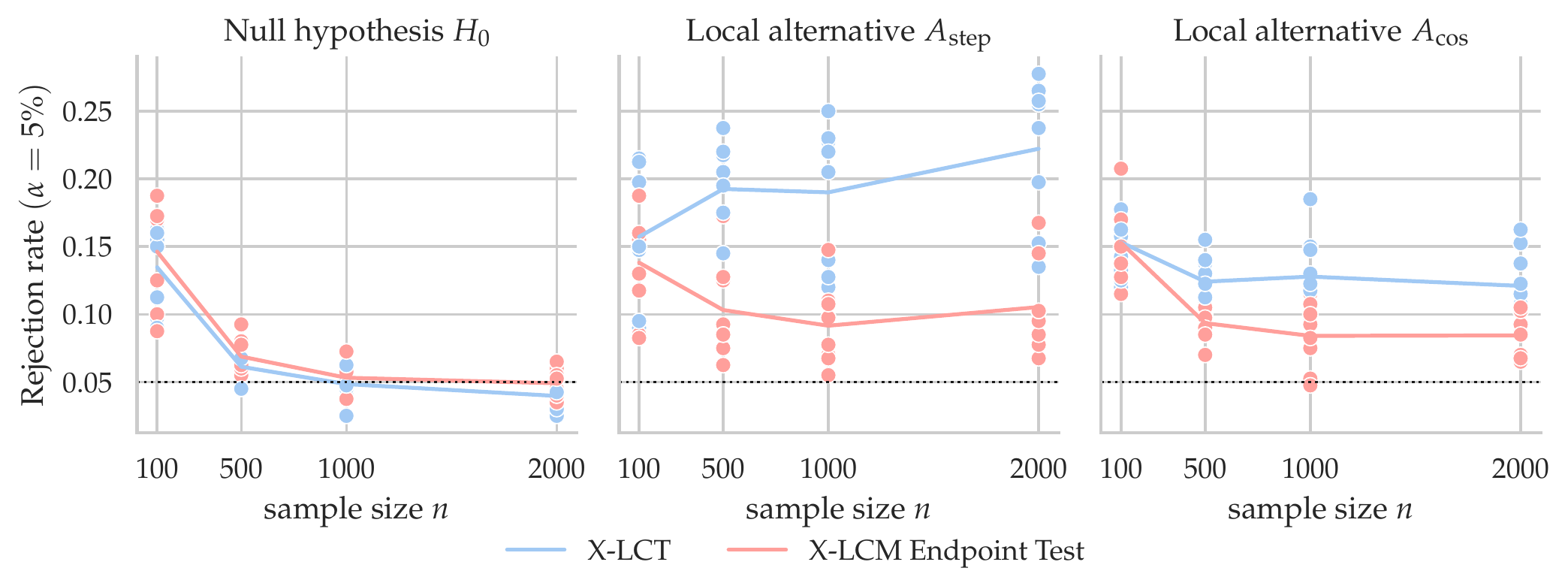}
    \caption{The plots show the average rejection rate of the 
    double machine learning tests based on the supremum statistic (blue)
    and the endpoint statistic (red).} \label{fig:timevaryingrho}
\end{figure}

\section{Discussion} \label{sec:dis} 
The LCM was introduced as a functional parameter that quantifies deviations from the hypothesis $H_0$ of conditional local independence. 
We showed how the parameter may be expressed in several ways, but that it is the representation in terms of the residual process that allows us to estimate the LCM with a $\sqrt{n}$-rate under $H_0$ without parametric model assumptions. The residual process was introduced as an abstract model of $X_t$ for each $t$ given the history up to time $t$, and we showed that such a residualization could be viewed as a form of orthogonalization. Similar ideas have been used recently for classical conditional independence testing, such as GCM \citep{shah2020hardness}, tests based on the partial 
copula \citep{Petersen:2021}, and GHCM \citep{lundborg2021conditional}. It is, however,  not possible to use any of these to test $H_0$, which 
cannot be expressed as a classical conditional independence. Our test based on the LCM 
is the first nonparametric test of conditional local independence with substantial 
theoretical support, and we propose to test $H_0$ in practice by using X-LCT based on the cross-fitted estimator of LCM.

Contrary to the tests of conditional independence mentioned above, we need
sample splitting -- even under $H_0$ -- to achieve our asymptotic results. 
We do not believe that this can be avoided. The standard argument to avoid this uses 
classical conditional independence in a crucial way, which does not translate 
into our framework -- basically because we condition on information that
changes with time. Our simulation study also indicates 
that sample splitting 
or cross-fitting is needed in practice for the LCM estimator to be unbiased 
under $H_0$.

While our cross-fitted estimator of the LCM, the X-LCM, share 
some of the general patterns of other double machine learning procedures -- including 
the overall decomposition \eqref{eq:decomposition} -- our analysis and results required 
a range of generalizations of known results and some novel ideas. 
The asymptotic distribution of the leading term, $U^{(n)}$, is also a well 
known consequence of Rebolledo's CLT, see, e.g., Section V.4 in \citep{AndersenBorganGillKeiding:1993} for related results in the context of survival analysis. However, we generalized this result to uniform convergence in the 
Skorokhod space $D[0,1]$, and we introduced new techniques for handling 
the remainder terms. These novel techniques are made necessary by the 
decomposition \eqref{eq:decomposition} being a decomposition of 
stochastic processes indexed by time. 
We outline below the three most important technical contributions we made.


First, to obtain uniform control of level and power, all asymptotic results in Section \ref{sec:asymptotics} are formulated in terms of uniform stochastic convergence. Since this notion of convergence had not previously been considered on general metric spaces, and especially not on the Skorokhod space, we had to develop the necessary theory. This development could be of 
independent interest, and we have collected the general definitions and main results on uniform stochastic convergence in metric spaces in Appendix \ref{app:UniformAsymptotics}. 
This framework also allowed us to show a uniform version of Rebolledo's martingale CLT 
in Appendix \ref{sec:fclt}.

Second, to establish distributional convergence under $H_0$, we need to control the remainder terms $R_{i,t}^{(n)}$ uniformly over $t$. The third term, $R_{3}^{(n)}$, is simple to bound, and by exploiting Doob's submartingale inequality, the second term, $R_{2}^{(n)}$, can also be bounded. The most difficult first term, $R_{1}^{(n)}$, was controlled using  
stochastic equicontinuity via an exponential tail bound and the use of
the chaining lemma. The necessary general uniform stochastic equicontinuity and chaining arguments are collected in Section \ref{app:UniformChaining} of Appendix \ref{app:UniformAsymptotics}.

Third, to achieve rate results in the alternative, the processes $D_1^{(n)}$ and $D_2^{(n)}$ must be controlled. The process $D_1^{(n)}$ does, like $U^{(n)}$, not involve any estimation, and its distributional convergence follows from a general CLT argument for continuous stochastic processes. The term $D_{2}^{(n)}$ is more difficult to handle, as it may not have mean zero if $G_t$ is not the additive residual process. However,  $X_t$ cancels out in $\hat{G}_t^{(n)} - G_t$ for the additive residual process, which makes the difference $\mathcal{F}_t$-predictable, and $D_{2}^{(n)}$ can then be bounded similarly to $R_1^{(n)}$. For a general residual process, it seems possible for $D_{2}^{(n)}$ to have a bias of order $\sqrt{|J_n|}g(n)$.

Our main result, Theorem \ref{thm:main}, is stated under two assumptions. 
The second, Assumption \ref{asm:UniformRates}, is a straightforward 
generalization to our setup of similar assumptions in the double machine 
learning literature on rates of convergence for the two estimators used. 
Both estimation errors are measured using a $2$-norm, and it is plausible 
that we can relax one norm to a weaker form of convergence if we simultaneously 
strengthen the other norm. The first assumption, Assumption \ref{asm:UniformBounds},
requires uniform bounds on both $\lambda$ and $G$. This is a strong assumption but 
perhaps not particularly problematic from a practical viewpoint. Indeed, 
$G$ is a process we can choose, and we can thus make it bounded if necessary. 
And though many theoretically interesting counting process models have unbounded 
intensities, a large cap on the intensity will make no difference in 
practice. We believe, nevertheless, that it is possible to relax Assumption \ref{asm:UniformBounds} to a weaker form of control on the 
magnitudes of $\lambda$ and $G$ as functions of time, e.g., moment bounds
uniform in $\theta$. However, such a generalization will come at the  
expense of considerably more technical proofs, and we did not pursue this 
line of research.

A major practical question is whether we can estimate $\lambda$ and $G$ 
with sufficient rates, e.g. $n^{- \frac{1}{4} + \epsilon}$. 
In Appendix \ref{sec:estimation} we give an overview of some known and 
some conjectured rate results for specific forms of $\lambda$ and $\Pi$. 
Beyond parametric models we conclude that the existing rate results are scarce, and we regard it is as an independent research project to establish rates for general historical regression methods.

Another question is whether we can replace the counting process $N$ by a more general 
semimartingale. \cite{commenges2009} define conditional local independence for a 
class of special semimartingales, and \cite{Mogensen:2018} and \cite{Mogensen:2022} 
show global Markov properties for local independence graphs of certain Itô processes,
which are, in particular, special semimartingales.
Thus conditional local independence is well defined beyond counting 
processes, and we believe that most definitions and results of this paper would  
generalize beyond $N$ being a counting process. Besides some additional 
technical challenges, the major practical obstacle with such a generalization 
is that we cannot realistically assume to have completely observed sample paths 
of  Itô processes, say. The discrete time nature of the observations should 
then be included in the analysis, and this is beyond the scope of the present
paper. 

Irrespectively of the remaining open problems, the simulation 
study demonstrated some important properties of our proposed test, the X-LCT. 
First, it was fairly simple to implement for the specific example 
considered using some standard estimation techniques that were not 
tailored to the specific model class. Second, it had good level 
and power properties and clearly outperformed the test based on the 
misspecified marginal Cox model. Third, both Neyman orthogonalization 
as well as cross-fitting were pivotal for achieving
the good properties of the test.

\subsection*{Funding}
The work was supported by Novo Nordisk Foundation Grant NNF20OC0062897.

\bibliographystyle{agsm}
\bibliography{bibliography}

@article{harezlak2007penalized,
	author = {Harezlak, Jaroslaw and Coull, Brent A. and Laird, Nan M and Magari, Shannon R and Christiani, David C},
	journal = {Computational statistics \& data analysis},
	number = {10},
	pages = {4911--4925},
	rating = {0},
	read = {0},
	title = {Penalized solutions to functional regression problems},
	volume = {51},
	year = {2007}}

@book{fleming2011counting,
	author = {Fleming, Thomas R and Harrington, David P},
	publisher = {John Wiley \& Sons},
	title = {Counting processes and survival analysis},
	volume = {169},
	year = {2011}}

@article{martinussen2013collaps,
	author = {Martinussen, Torben and Vansteelandt, Stijn},
	doi = {10.1007/s10985-013-9242-z},
	fjournal = {Lifetime Data Analysis. An International Journal Devoted to Statistical Methods and Applications for Time-to-Event Data},
	issn = {1380-7870},
	journal = {Lifetime Data Analysis},
	mrclass = {62N02 (62P10)},
	mrnumber = {3084024},
	number = {3},
	pages = {279--296},
	title = {On collapsibility and confounding bias in {C}ox and {A}alen regression models},
	volume = {19},
	year = {2013},
	Bdsk-Url-1 = {https://doi.org/10.1007/s10985-013-9242-z}}

@article{Newey:1991,
	author = {Whitney K. Newey},
	journal = {Econometrica},
	number = {4},
	pages = {1161--1167},
	title = {Uniform Convergence in Probability and Stochastic Equicontinuity},
	volume = {59},
	year = {1991}}

@book{Pollard:1984,
	author = {Pollard, David},
	publisher = {Springer-Verlag, New York},
	series = {Springer Series in Statistics},
	title = {Convergence of stochastic processes},
	year = {1984}}

@article{Chernozhukov:2018,
	author = {Chernozhukov, Victor and Chetverikov, Denis and Demirer, Mert and Duflo, Esther and Hansen, Christian and Newey, Whitney and Robins, James},
	journal = {The Econometrics Journal},
	number = {1},
	pages = {C1-C68},
	title = {Double/debiased machine learning for treatment and structural parameters},
	volume = {21},
	year = {2018}}

@article{Malfait:2003,
	author = {Nicole Malfait and James O. Ramsay},
	journal = {The Canadian Journal of Statistics},
	number = {2},
	pages = {115--128},
	title = {The Historical Functional Linear Model},
	volume = {31},
	year = {2003}}

@book{boucheron2013concentration,
	author = {Boucheron, St{\'e}phane and Lugosi, G{\'a}bor and Massart, Pascal},
	publisher = {Oxford University Press},
	title = {Concentration inequalities: A nonasymptotic theory of independence},
	year = {2013}}

@book{AndersenBorganGillKeiding:1993,
	address = {New York},
	author = {Andersen, Per Kragh and Borgan, {\O}rnulf and Gill, Richard D. and Keiding, Niels},
	publisher = {Springer-Verlag},
	series = {Springer Series in Statistics},
	title = {Statistical models based on counting processes},
	year = {1993}}

@article{Yao:2005,
	author = {Fang Yao and Hans-Georg M{\"u}ller and Jane-Ling Wang},
	journal = {The Annals of Statistics},
	number = {6},
	pages = {2873 -- 2903},
	title = {{Functional linear regression analysis for longitudinal data}},
	volume = {33},
	year = {2005}}

@book{revuz2013continuous,
	author = {Revuz, Daniel and Yor, Marc},
	publisher = {Springer Science \& Business Media},
	title = {Continuous martingales and Brownian motion},
	volume = {293},
	year = {2013}}

@article{Granger:1969,
	author = {C. W. J. Granger},
	journal = {Econometrica},
	number = {3},
	pages = {424--438},
	title = {Investigating Causal Relations by Econometric Models and Cross-spectral Methods},
	volume = {37},
	year = {1969}}

@article{aalen1987,
	author = {Odd O. Aalen},
	journal = {Scandinavian Actuarial Journal},
	owner = {swmo},
	pages = {177-190},
	timestamp = {2017.12.07},
	title = {Dynamic modelling and causality},
	year = {1987}}

@article{aalen2012,
	author = {Odd O. Aalen AND Kjetil R{\o}ysland AND Jon Michael Gran AND Bruno Ledergerber},
	journal = {Journal of the Royal Statistical Society. Series A (Statistics in Society)},
	number = {4},
	owner = {swmo},
	pages = {831-861},
	timestamp = {2017.05.13},
	title = {Causality, mediation and time: a dynamic viewpoint},
	volume = {175},
	year = {2012}}

@article{commenges2009,
	author = {Commenges, Daniel and G{\'e}gout-Petit, Anne},
	journal = {Journal of the Royal Statistical Society. Series B (Statistical Methodology)},
	number = {3},
	pages = {719-736},
	title = {A General Dynamical Statistical Model with Causal Interpretation},
	volume = {71},
	year = {2009}}

@inproceedings{didelez2015,
	author = {Vanessa Didelez},
	booktitle = {Proceedings of the UAI 2015 Workshop on Advances in Causal Inference},
	title = {Causal Reasoning for Events in Continuous Time: A Decision-Theoretic Approach},
	year = {2015}}

@article{didelez2008,
	author = {Vanessa Didelez},
	journal = {Journal of the Royal Statistical Society. Series B (Statistical Methodology)},
	number = {1},
	owner = {swmo},
	pages = {245-264},
	timestamp = {2017.04.10},
	title = {Graphical models for marked point processes based on local independence},
	volume = {70},
	year = {2008}}

@article{didelez2006,
	author = {Vanessa Didelez},
	journal = {Scandinavian Journal of Statistics},
	number = {1},
	owner = {swmo},
	pages = {169-185},
	timestamp = {2017.11.13},
	title = {Graphical models for composable finite {M}arkov processes},
	volume = {34},
	year = {2006}}

@article{Mogensen:2020,
	author = {S{\o}ren Wengel Mogensen and Niels Richard Hansen},
	journal = {The Annals of Statistics},
	number = {1},
	pages = {539-559},
	title = {Markov equivalence of marginalized local independence graphs},
	volume = {48},
	year = {2020}}

@inproceedings{Mogensen:2018,
	author = {S{\o}ren Wengel Mogensen and Daniel Malinsky and Niels Richard Hansen},
	booktitle = {Proceedings of the 34th conference on Uncertainty in Artificial Intelligence},
	pages = {350-360},
	title = {Causal Learning for Partially Observed Stochastic Dynamical Systems},
	year = {2018}}

@article{schweder1970,
	author = {Tore Schweder},
	journal = {Journal of Applied Probability},
	number = {2},
	owner = {swmo},
	pages = {400-410},
	timestamp = {2017.11.13},
	title = {Composable {M}arkov Processes},
	volume = {7},
	year = {1970}}

@inproceedings{Xu:2016,
	author = {Hongteng Xu and Mehrdad Farajtabar and Hongyuan Zha},
	booktitle = {Proceedings of The 33rd International Conference on Machine Learning},
	pages = {1717-1726},
	title = {Learning {G}ranger Causality for {H}awkes Processes},
	volume = {48},
	year = {2016}}

@article{Xiao:2019,
	author = {S. {Xiao} and J. {Yan} and M. {Farajtabar} and L. {Song} and X. {Yang} and H. {Zha}},
	journal = {IEEE Transactions on Neural Networks and Learning Systems},
	number = {10},
	pages = {3124-3136},
	title = {Learning Time Series Associated Event Sequences With Recurrent Point Process Networks},
	volume = {30},
	year = {2019}}

@inproceedings{Achab:2017,
	author = {M. Achab and E. Bacry and S. Ga\"{\i}ffas and I. Mastromatteo and J.-F. Muzy},
	booktitle = {Proceedings of the 34th International Conference on Machine Learning},
	month = {06--11 Aug},
	pages = {1--10},
	publisher = {PMLR},
	title = {Uncovering Causality from Multivariate {H}awkes Integrated Cumulants},
	volume = {70},
	year = {2017}}

@book{rogers2000,
	author = {Rogers, L. C. G. and Williams, David},
	pages = {xiv+480},
	publisher = {Cambridge University Press, Cambridge},
	title = {Diffusions, {M}arkov processes, and martingales},
	volume = {2},
	year = {2000}}

@article{lok2008,
	author = {Lok, Judith J.},
	journal = {The Annals of Statistics},
	month = {06},
	number = {3},
	pages = {1464--1507},
	title = {Statistical modeling of causal effects in continuous time},
	volume = {36},
	year = {2008}}

@article{Bacry:2018,
	author = {Emmanuel Bacry and Martin Bompaire and Philip Deegan and St{{\'e}}phane Ga{{\"\i}}ffas and S{{\o}}ren V. Poulsen},
	journal = {Journal of Machine Learning Research},
	number = {214},
	pages = {1-5},
	title = {tick: a {P}ython Library for Statistical Learning, with an emphasis on {H}awkes Processes and Time-Dependent Models},
	volume = {18},
	year = {2018},
	Bdsk-Url-1 = {http://jmlr.org/papers/v18/17-381.html}}

@book{SchillingPartzsch:2012,
	author = {Ren{\'e} L. Schilling and Lothar Partzsch},
	doi = {doi:10.1515/9783110278989},
	isbn = {9783110278989},
	publisher = {De Gruyter},
	title = {Brownian Motion: An Introduction to Stochastic Processes},
	year = {2012},
	Bdsk-Url-1 = {https://doi.org/10.1515/9783110278989}}

@article{pakbin2021boxhed2,
  title={Bo{XHED}2.0: Scalable boosting of dynamic survival analysis},
  author={Pakbin, Arash and Wang, Xiaochen and Mortazavi, Bobak J and Lee, Donald KK},
  journal={arXiv preprint arXiv:2103.12591},
  year={2021}
}

@inproceedings{wang2020boxhed,
  title={Bo{XHED}: Boosted e{X}act {H}azard {E}stimator with {D}ynamic covariates},
  author={Wang, Xiaochen and Pakbin, Arash and Mortazavi, Bobak and Zhao, Hongyu and Lee, Donald},
  booktitle={International Conference on Machine Learning},
  pages={9973--9982},
  year={2020},
  organization={PMLR}
}

@article{lee2021boosted,
  title={Boosted nonparametric hazards with time-dependent covariates},
  author={Lee, Donald KK and Chen, Ningyuan and Ishwaran, Hemant},
  journal={The Annals of statistics},
  volume={49},
  number={4},
  pages={2101},
  year={2021},
  publisher={NIH Public Access}
}

@article{yuan2010reproducing,
  title={A reproducing kernel {H}ilbert space approach to functional linear regression},
  author={Yuan, Ming and Cai, T Tony},
  journal={The Annals of Statistics},
  volume={38},
  number={6},
  pages={3412--3444},
  year={2010},
  publisher={Institute of Mathematical Statistics}
}

@article{cai2012minimax,
  title={Minimax and adaptive prediction for functional linear regression},
  author={Cai, T Tony and Yuan, Ming},
  journal={Journal of the American Statistical Association},
  volume={107},
  number={499},
  pages={1201--1216},
  year={2012},
  publisher={Taylor \& Francis}
}

@article{rytgaard2022continuous,
  title={Continuous-time targeted minimum loss-based estimation of intervention-specific mean outcomes},
  author={Rytgaard, Helene C and Gerds, Thomas A and van der Laan, Mark J},
  journal={The Annals of Statistics},
  volume={50},
  number={5},
  pages={2469--2491},
  year={2022},
  publisher={Institute of Mathematical Statistics}
}

@article{rytgaard2021estimation,
  title={Estimation of time-specific intervention effects on continuously distributed time-to-event outcomes by targeted maximum likelihood estimation},
  author={Rytgaard, Helene Charlotte Wiese and Eriksson, Frank and van der Laan, Mark},
  journal={arXiv:2106.11009},
  year={2021}
}

@misc{davidson2021lifelines,
  author       = {Davidson-Pilon, Cameron},
  title        = {Lifelines, survival analysis in {P}ython},
  year         = 2021,
  publisher    = {Zenodo},
  version      = {v0.26.4},
  doi          = {10.5281/zenodo.5745573}
}

@article{rebolledo1980central,
  title={Central limit theorems for local martingales},
  author={Rebolledo, Rolando},
  journal={Zeitschrift f{\"u}r Wahrscheinlichkeitstheorie und verwandte Gebiete},
  volume={51},
  number={3},
  pages={269--286},
  year={1980},
  publisher={Springer-Verlag}
}

@article{maity2017nonparametric,
  title={Nonparametric functional concurrent regression models},
  author={Maity, Arnab},
  journal={Wiley Interdisciplinary Reviews: Computational Statistics},
  volume={9},
  number={2},
  pages={e1394},
  year={2017},
  publisher={Wiley Online Library}
}

@article{jiang2011functional,
  title={Functional single index models for longitudinal data},
  author={Jiang, Ci-Ren and Wang, Jane-Ling},
  journal={The Annals of Statistics},
  volume={39},
  number={1},
  pages={362--388},
  year={2011},
  publisher={Institute of Mathematical Statistics}
}

@phdthesis{manrique2016functional,
  title={Functional linear regression models: application to high-throughput plant phenotyping functional data},
  author={Manrique, Tito},
  School = {Université de Montpellier},
  year={2016}
}

@article{manrique2018ridge,
  title={Ridge regression for the functional concurrent model},
  author={Manrique, Tito and Crambes, Christophe and Hilgert, Nadine},
  journal={Electronic Journal of Statistics},
  volume={12},
  number={1},
  pages={985--1018},
  year={2018},
  publisher={Institute of Mathematical Statistics and Bernoulli Society}
}

@article{csenturk2010functional,
  title={Functional varying coefficient models for longitudinal data},
  author={{\c{S}}ent{\"u}rk, Damla and M{\"u}ller, Hans-Georg},
  journal={Journal of the American Statistical Association},
  volume={105},
  number={491},
  pages={1256--1264},
  year={2010},
  publisher={Taylor \& Francis}
}

@inproceedings{bender2020general,
  title={A general machine learning framework for survival analysis},
  author={Bender, Andreas and R{\"u}gamer, David and Scheipl, Fabian and Bischl, Bernd},
  booktitle={Joint European Conference on Machine Learning and Knowledge Discovery in Databases},
  pages={158--173},
  year={2020},
  organization={Springer}
}

@article{Petersen:2021,
	author = {Lasse Petersen and Niels Richard Hansen},
	journal = {Journal of Machine Learning Research},
	number = {70},
	pages = {1-47},
	title = {Testing Conditional Independence via Quantile Regression Based Partial Copulas},
	volume = {22},
	year = {2021}}

@article{Wells:1994,
	author = {Martin T. Wells},
	journal = {Biometrika},
	number = {4},
	pages = {795--801},
	title = {Nonparametric Kernel Estimation in Counting Processes with Explanatory Variables},
	volume = {81},
	year = {1994}}

@book{Vaart:1996,
	author = {van der Vaart, Aad W. and Wellner, Jon A.},
	publisher = {Springer-Verlag, New York},
	series = {Springer Series in Statistics},
	title = {Weak convergence and empirical processes},
	year = {1996}}

@article{lundborg2021conditional,
    title = {Conditional independence testing in Hilbert spaces with applications to functional data analysis},
    author = {Lundborg, Anton Rask and Shah, Rajen D. and Peters, Jonas},
    journal = {Journal of the Royal Statistical Society: Series B (Statistical Methodology)},
    volume = {84},
    number = {5},
    pages = {1821-1850},
    keywords = {functional graphical model, function-on-function regression, significance testing, truncated functional linear model, uniform type I error control},
    doi = {https://doi.org/10.1111/rssb.12544},
    url = {https://rss.onlinelibrary.wiley.com/doi/abs/10.1111/rssb.12544},
    eprint = {https://rss.onlinelibrary.wiley.com/doi/pdf/10.1111/rssb.12544},
    year = {2022}
}

@article{lundborg2022projected,
  title={The Projected Covariance Measure for assumption-lean variable significance testing},
  author={Lundborg, Anton Rask and Kim, Ilmun and Shah, Rajen D and Samworth, Richard J},
  journal={arXiv preprint arXiv:2211.02039},
  year={2022}
}

@article{scheidegger2022weighted,
  title={The weighted generalised covariance measure},
  author={Scheidegger, Cyrill and H{\"o}rrmann, Julia and B{\"u}hlmann, Peter},
  journal={Journal of Machine Learning Research},
  volume={23},
  number={273},
  pages={1--68},
  year={2022}
}

@article{shah2020hardness,
  title={The hardness of conditional independence testing and the generalised covariance measure},
  author={Shah, Rajen D and Peters, Jonas},
  journal={The Annals of Statistics},
  volume={48},
  number={3},
  pages={1514--1538},
  year={2020},
  publisher={Institute of Mathematical Statistics}
}

@article{bengs2019uniform,
  title={Uniform approximation in classical weak convergence theory},
  author={Bengs, Viktor and Holzmann, Hajo},
  journal={arXiv preprint arXiv:1903.09864},
  year={2019}
}

@article{kasy2019uniformity,
  title={Uniformity and the delta method},
  author={Kasy, Maximilian},
  journal={Journal of Econometric Methods},
  volume={8},
  number={1},
  year={2019},
  publisher={De Gruyter}
}

@book{billingsley2013convergence,
  title={Convergence of probability measures},
  author={Billingsley, Patrick},
  year={2013},
  publisher={John Wiley \& Sons}
}

@MISC{saz2019expectedsup,
    title = {Calculating the expecation of the supremum of absolute value of a {B}rownian motion},
    author = {saz},
    howpublished = {Mathematics Stack Exchange},
    year = {2019},
    NOTE = {URL: https://math.stackexchange.com/q/3252132 (version: 2019-06-06)}
}

@book{kallenberg2021foundations,
  title={Foundations of modern probability},
  author={Kallenberg, Olav},
  volume={3},
  year={2021},
  publisher={Springer}
}

@book{schilling2017measures,
  title={Measures, integrals and martingales},
  author={Schilling, Ren{\'e} L},
  year={2017},
  publisher={Cambridge University Press}
}

@book{adler2007random,
  title={Random fields and geometry},
  author={Adler, Robert J and Taylor, Jonathan E and others},
  volume={80},
  year={2007},
  publisher={Springer}
}

@article{Cai:2022,
	author = {Cai, Ruichu and Wu, Siyu and Qiao, Jie and Hao, Zhifeng and Zhang, Keli and Zhang, Xi},
	journal = {IEEE Transactions on Neural Networks and Learning Systems},
	pages = {1-15},
	title = {{THP}s: Topological {H}awkes Processes for Learning Causal Structure on Event Sequences},
	year = {2022}
}

@inproceedings{Zhou:2013,
	author = {Ke Zhou and Hongyuan Zha and Le Song},
	booktitle = {Proceedings of the 16th International Conference on Artificial Intelligence and Statistics},
	title = {Learning Social Infectivity in Sparse Low-rank Networks Using Multi-dimensional {H}awkes Processes},
	year = {2013}
}

@article{neykov2021minimax,
  title={Minimax optimal conditional independence testing},
  author={Neykov, Matey and Balakrishnan, Sivaraman and Wasserman, Larry},
  journal={The Annals of Statistics},
  volume={49},
  number={4},
  pages={2151--2177},
  year={2021},
  publisher={Institute of Mathematical Statistics}
}

@book{Bremaud:1981,
	address = {New York},
	author = {Br{\'e}maud, Pierre},
	publisher = {Springer-Verlag},
	title = {Point processes and queues},
	year = {1981}
}

@article{Mogensen:2022,
	author = {S{\o}ren Wengel Mogensen and Niels Richard Hansen},
	journal = {Bernoulli},
	number = {4},
	pages = {3023-3050},
	title = {Graphical modeling of stochastic processes driven by correlated noise},
	volume = {28},
	year = {2022}
}

\newpage

\section*{Supplementary material}
In Appendix~\ref{sec:proofs},
    we give the proofs of the results of the paper.
    In Appendix~\ref{app:UniformAsymptotics}, we formulate a general uniform asymptotic theory for metric spaces, whereafter we specialize the theory to the Skorokhod space $D[0,1]$ and chaining of stochastic processes.
    In Appendix~\ref{sec:fclt}, we state Rebolledo's martingale central limit theorem, and then we generalize the result to a uniform version that is used in the proofs.
    In Appendix~\ref{sec:estimation}, we discuss estimation of the intensity $\lambda$ and the residual process $G$ in practice. In particular, we compare known rate results with the rates required in Assumption \ref{asm:UniformRates}.
    Finally, Appendix~\ref{sec:extrafigs} contains additional figures from the simulation study.

\appendix

\section{Proofs} \label{sec:proofs}
This appendix contains proofs of the results stated in the paper.

\subsection{Proof of Proposition \ref{prop:cli-mg}}

The process $G_t$ is \lc{} and $\mathcal{G}_t$-predictable by assumption, and the process
$I = (I_t)$ is a stochastic integral of $G_t$ w.r.t. a local
$\mathcal{G}_t$-martingale under the hypothesis $H_0$. It is thus 
also a local $\mathcal{G}_t$-martingale under $H_0$. By definition, 
$I_0 = 0$, and if $I$ is a martingale, $\gamma_t = \ex(I_t) =
\ex(I_0) = 0.$ \hfill $\square$ 

\subsection{Proof of proposition \ref{prop:gammaalternative}}
Suppose that $H$ is non-negative, \lc{} and $\mathcal{G}_t$-predictable, then since $\int_0^t H_s \mathrm{d}\bM_s$ is a local $\cG_t$-martingale it follows by monotone convergence along a localizing sequence that 
\begin{equation} \label{eq:Gid}
\ex\left(\int_0^t H_{s} \mathrm{d}N_{s}\right) = \ex\left(\int_0^t H_{s}\blambda_s\mathrm{d}s\right) = 
\int_0^t \ex(H_{s}\blambda_s) \mathrm{d}s
\end{equation}
for all $t\in[0,1]$. We can apply the identity 
above with $H$ the positive and negative part of $G$, respectively, 
and the integrability assumption ensures that \eqref{eq:Gid} also holds with $H = G$. It follows that
\begin{align*}
    \gamma_t = \ex (I_t)
    = \ex\left(\int_0^t G_{s}(\blambda_s 
        - \lambda_s)\mathrm{d}s\right) 
    = \int_0^t \ex\pa{G_{s}(\blambda_s 
        - \lambda_s)}\mathrm{d}s.
\end{align*}
The latter expectation is indeed a covariance since $\ex(G_s) = \ex(\ex(G_s\mid\cF_{s-})) = 0$. \hfill $\square$

\subsection{Proof of Lemma \ref{lem:truemartingales}}
Before proving Lemma \ref{lem:truemartingales}, we first state general martingale criteria in the context of counting processes.
\begin{lem}\label{lem:countingmartingales}
    Let $(H_t)$ be a locally bounded $\cG_t$-predictable process, 
    let $N$ be a counting process with a $\cG_t$-intensity $\blambda_t$, and let $\bM_t = N_t - \int_0^t \blambda_s \mathrm{d}s$.
    
    If $\int_0^1 \blambda_s \mathrm{d}s$ 
    (or equivalently $N_1$)
    is integrable, then $\bM_t$ and $\bM_t^2-\int_0^t \blambda_s \mathrm{d}s$ are each $\cG_t$-martingales. 
    If, in addition, $\int_0^1 H_s^2 \blambda_s \mathrm{d}s$ is integrable, then
    $\int_0^t H_s \mathrm{d} \bM_s$
    is a mean zero square integrable martingale.
\end{lem}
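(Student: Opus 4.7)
The plan is a three-step localization argument that upgrades each claimed local martingale property into a true martingale property using the integrability hypotheses. Write $\bLambda_t = \int_0^t \blambda_s\,\mathrm{d}s$ throughout.

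First I would show that $\bM = N - \bLambda$ is a true $\cG_t$-martingale. It is a local martingale by the defining property of the intensity, and it satisfies the pathwise domination $|\bM_t| \leq N_1 + \bLambda_1$ uniformly in $t\in [0,1]$, where the right hand side is integrable by hypothesis. A local martingale dominated by an integrable random variable is uniformly integrable, hence a true martingale, which gives the first assertion.

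Next, for $\bM_t^2 - \bLambda_t$, I would localize with $T_n = \inf\{t\in[0,1] : N_t \geq n\} \wedge 1$; since $N_1$ is a.s. finite, $T_n \nearrow 1$ a.s. By Itô's formula applied to the semimartingale $\bM$, using $[\bM]_t = N_t$ (since $\bLambda$ is continuous of finite variation and jumps of $N$ have size one), one obtains
\begin{equation*}
    \bM_t^2 - \bLambda_t = 2\int_0^t \bM_{s-}\,\mathrm{d}\bM_s + \bM_t,
\end{equation*}
which is a local $\cG_t$-martingale. Stopped at $T_n$ this process is bounded, so $\ex\pa{(\bM_{t\wedge T_n})^2} = \ex(\bLambda_{t\wedge T_n})$. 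Letting $n\to\infty$, with Fatou on the left and monotone convergence on the right, yields $\ex(\bM_t^2) \leq \ex(\bLambda_1) < \infty$. Hence $\bM$ is an $\mathcal{H}^2$-martingale, and uniqueness of the predictable quadratic variation identifies $\langle\bM\rangle = \bLambda$, so $\bM^2 - \bLambda$ is a true martingale.

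Finally, for the stochastic integral I would invoke the standard $\mathcal{H}^2$-construction of the Itô integral: because $\bM \in \mathcal{H}^2$ with $\mathrm{d}\langle\bM\rangle_s = \blambda_s\,\mathrm{d}s$, and $H$ is $\cG_t$-predictable with $\ex\pa{\int_0^1 H_s^2 \blambda_s\,\mathrm{d}s} < \infty$, the Itô isometry shows that $\int_0^\cdot H_s\,\mathrm{d}\bM_s$ is a square integrable mean-zero $\cG_t$-martingale with second moment $\ex\pa{\int_0^t H_s^2 \blambda_s\,\mathrm{d}s}$ at time $t$; local boundedness of $H$ is used to ensure the stochastic integral is first well defined as a local martingale, which the $L^2$-hypothesis then upgrades to a true one. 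The main subtlety is the second step: only $L^1$ control of $\bLambda_1$ is available, which does not directly yield $L^2$ control of $\bM$, and the localization/Fatou argument above is essentially the minimal way to bridge this gap.
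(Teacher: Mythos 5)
Your proof is a self-contained reconstruction, whereas the paper's proof simply cites Lemma 2.3.2, Theorem 2.5.3, and Theorem 2.4.4 of Fleming and Harrington. The structure you lay out is sound --- dominate $\bM$ by $N_1 + \bLambda_1$ for the first claim, use It\^o's formula and localization to place $\bM$ in $\mathcal{H}^2$ and identify $\langle\bM\rangle = \bLambda$ for the second, and invoke the $L^2$ theory of the stochastic integral for the third --- and your It\^o formula $\bM_t^2 - \bLambda_t = 2\int_0^t \bM_{s-}\,\mathrm{d}\bM_s + \bM_t$ is correct.

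There is, however, a genuine gap in the localization step. You claim that $\bM^2 - \bLambda$ stopped at $T_n = \inf\{t : N_t \geq n\}\wedge 1$ is bounded, but this is false under the lemma's hypotheses: on $[0,T_n]$ you have $N \leq n$, but $\bLambda$ is only assumed integrable, not bounded, so $|\bM_{t\wedge T_n}| \leq n + \bLambda_1$ is finite a.s. yet unbounded, and $(n+\bLambda_1)^2$ need not even be integrable since $\bLambda_1$ is only in $L^1$. Stopping a local martingale at an arbitrary stopping time does not make it a true martingale, so the identity $\ex(\bM_{t\wedge T_n}^2) = \ex(\bLambda_{t\wedge T_n})$ does not follow as stated. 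The fix is easy: localize also in $\bLambda$, say with $\sigma_n = T_n \wedge \inf\{t : \bLambda_t \geq n\}$, which makes the stopped process bounded by $(2n)^2 + n$ while still $\sigma_n \nearrow 1$ a.s.; or take a localizing sequence $\tau_k$ for the local martingale $\bM^2 - \bLambda$ itself, note $\bLambda_{t\wedge\tau_k} \leq \bLambda_1 \in L^1$ so the expectation splits, and proceed with Fatou and monotone convergence exactly as you describe. With that repair the remainder of the argument goes through.
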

\begin{proof}
    The first part is Lemma 2.3.2 and Theorem 2.5.3 in \citet{fleming2011counting}. 
    For the second part, assume that $\int_0^1 \blambda_s \mathrm{d}s$
    and $\int_0^1 H_s^2 \blambda_s \mathrm{d}s$ are both integrable.
    In this case, the $\cG_t$-predictable quadratic variation of $\bM$ is $\langle \bM \rangle(t) = \int_0^t\blambda_s\mathrm{d}s$ by the first part.
    Then it remains to note that $(H_t)$ is a locally bounded $\cG_t$-predictable process, so the conditions of Theorem 2.4.4 in \citet{fleming2011counting} are satisfied if $\int_0^1 H_s^2 \blambda_s \mathrm{d}s$ is integrable. This establishes the second part.
\end{proof}
We now return to the proof of Lemma \ref{lem:truemartingales}. 
Let $f\in C(\mathbb{R})$, and we shall prove that $\int_0^t f(G_s)\mathrm{d}\bM_s$ is a mean zero, square integrable $\mathcal{G}_t$-martingale. The proof for the integral with $f(\hat{G}_s^{(n)})$ is identical. 

Continuity of $f$ implies that $C_f \coloneqq \sup_{x\in [-C',C']}|f(x)| <\infty$ and that $(f(G_t))$ is a $\mathcal{G}_t$-predictable process. By Assumption \ref{asm:UniformBounds}, the process $(f(G_t))$ is almost surely bounded by $C_f$ and therefore
\begin{align*}
    \ex\left(\int_0^1 f(G_s)^2 \blambda_s \mathrm{d}s \right) 
    \leq C_f^2 C < \infty.
\end{align*}
Thus we can apply Lemma \ref{lem:countingmartingales} to conclude that $\int_0^t f(G_s)\mathrm{d}\bM_s$ is a mean zero, square integrable $\mathcal{G}_t$-martingale.
\hfill $\square$

\subsection{Proof of Proposition \ref{prop:UasymptoticGaussian}}

As noted elsewhere, the explicit parametrization of all objects by $\theta$ is 
notationally heavy, and there will thus be an implicit parameter value $\theta \in \Theta$
in most of the subsequent constructions and arguments.

To simplify notation we write
    \begin{align*}
    U^{(n)}_t = \sum_{j \in J_n} \int_0^t H_{j, s}^{(n)} \mathrm{d} \bM_{j, s},
    \quad \text{where} \quad 
    H_{j, s}^{(n)} = \frac{G_{j, s}}{\sqrt{|J_n|}}.
    \end{align*}
We will use a uniform extension of Rebolledo's martingale central limit theorem on the sequence $(U^{(n)})_{n \geq 1}$ to show the result. See Appendix \ref{sec:fclt} for a discussion of Rebolledo's CLT and Theorem \ref{thm:URebo} for its uniform extension.

Define $\tilde{\mathcal{G}}_{t}^n$ be the smallest right continuous and complete filtration generated by the filtrations $\{\mathcal{G}_{j, t} \mid j \in J_n\}$. We can apply \cref{lem:truemartingales} to each of the terms of $U^{(n)}$ to conclude that the $j$-th term is a square integrable, mean zero $\mathcal{G}_{j,t}^n$-martingale.
By independence of the observations for each $j$, we can enlarge the filtration for each term and conclude that they are also square integrable, mean zero $\mathcal{\tilde{G}}_t^n$-martingales. Thus $U^{(n)}$ is also a square integrable, mean zero $\mathcal{\tilde{G}}_t^n$-martingale.

To apply Theorem \ref{thm:URebo} first establish that the conditions in Equation \eqref{eq:RebolledoConditions} are fulfilled. By Proposition \ref{prop:Rebolledospecialcase}, we have that
    \begin{align*}
    \big\langle 
    U^{(n)}
    \big\rangle (t) 
    & = 
    \sum_{j \in J_n} \int_0^t \left( H_{j, s}^{(n)} \right)^2 \blambda_{j, s}
    \mathrm{d} s
    = 
    \frac{1}{|J_n|} \sum_{j \in J_n}
        \int_0^t G_{j, s}^2 \blambda_{j, s} \mathrm{d}s.
    \end{align*}
By directly applying the bounds from Assumption \ref{asm:UniformBounds}, we see that the square mean of $\int_0^t G_s^2 \blambda_s \mathrm{d}s$ is bounded by $C^2(C')^4$. 
Thus, for fixed $t\in[0,1]$, the uniform law of large numbers \citep[Lemma 19]{shah2020hardness} gives that
    \begin{align*}
        \big\langle 
        U^{(n)}
        \big\rangle (t)
        = \frac{1}{|J_n|} \sum_{j \in J_n}
        \int_0^t G_{j, s}^2 \blambda_{j, s} \mathrm{d} s 
        \convUP
        \ex \left( \int_0^t G_{s}^2 \blambda_{s}\mathrm{d} s\right) 
        = \mathcal{V}(t)
    \end{align*}
for $n \to \infty$, since the integrals are i.i.d. with the 
same distribution as $\int_0^t G_{s}^2 \blambda_{s}\mathrm{d} s$. 
This establishes the first part of the condition in Equation \eqref{eq:RebolledoConditions}.
For the second part, we also have from Proposition \ref{prop:Rebolledospecialcase} that
    \begin{align} \label{eq:rarefactionterms}
    \big\langle 
    U_{\epsilon}^{(n)}
    \big\rangle (t) 
    & =
    \sum_{j \in J_n} \int_0^t 
    \left( H_{j, s}^{(n)} \right)^2 \one \left( |H_{j, s}^{(n)}| \geq \epsilon \right)
    \mathrm{d} \bLambda_{j, s}
        \nonumber \\
    & = 
    \frac{1}{|J_n|} \sum_{j \in J_n}
    \int_0^t
    G_{j, s}^2 
    \one \left(
    \left|
    G_{j, s}
    \right| \geq \epsilon \sqrt{|J_n|}
    \right)
    \blambda_{j, s} \mathrm{d} s
    \end{align}
for each $t \in [0,1]$ and $\epsilon > 0$. 
From Assumption \ref{asm:UniformBounds}, we note that 
for $n$ sufficiently large such that $|J_n| > (C')^2/\epsilon^2$, it holds that
\(
    \mathbb{P}
    \left(
        \left|
        G_{j, s}
        \right| \geq \epsilon \sqrt{|J_n|}
    \right) = 0
\)
for all $j\in J_n$. As a consequence, the terms in \eqref{eq:rarefactionterms} are almost surely zero for $n$ sufficiently large uniformly over $\Theta$. It follows that
\(
    \big\langle 
    U_{\epsilon}^{(n)}
    \big\rangle (t) \convUP 0
\), which establishes the second part of \eqref{eq:RebolledoConditions}.

We finally note that the collection of variance functions, $( \mathcal{V}^\theta)_{\theta \in \Theta}$, is uniformly equicontinuous and bounded above under Assumption \ref{asm:UniformBounds}. This is established in Lemma \ref{lem:regularityofgammasigma} below. We have thus verified all the conditions of \cref{thm:URebo}, so we conclude that
    \begin{align*}
    U^{(n),\theta} \convUD U^\theta
    \end{align*}
in $D[0,1]$ as $n\to \infty$, where $U^\theta$ is a mean zero continuous Gaussian martingale with variance function $\mathcal{V}^\theta$. 
\hfill $\square$ \\

Note that the convergence of \eqref{eq:rarefactionterms} is established directly from the uniform bounds in Assumption \ref{asm:UniformBounds}. However, the convergence could also be established under a milder conditions with alternative arguments. For example, under the weaker assumption of uniformly bounded variance functions, dominated convergence can be used to establish $L_1$-convergence. 

In the proof above we invoked the following lemma, which we will also use in several proofs in the sequel.

\begin{lem}\label{lem:regularityofgammasigma}
Under Assumption \ref{asm:UniformBounds}, the collections $(\gamma^\theta)_{\theta\in \Theta}$ and $(\mathcal{V}^\theta)_{\theta \in \Theta}$ are each uniformly Lipschitz and in particular uniformly equicontinuous. Moreover, it holds almost surely that
\begin{align*}
        \sup_{t\in [0,1]} |\gamma_t| \leq 2CC' 
        \qquad \text{and} \qquad
        \mathcal{V}(1) = \ex \left(\int_0^1 G_s^2 \blambda_s \mathrm{d}s\right) \leq C(C')^2.
    \end{align*}
\end{lem}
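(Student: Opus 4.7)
The plan is to establish the two bounds and Lipschitz estimates by reducing everything to the almost sure bounds provided by \cref{asm:UniformBounds}. Throughout, the bounds $\sup_{t}\blambda_t \leq C$ and $\sup_t|G_t|\leq C'$ hold almost surely with constants that do not depend on $\theta$, which is what will deliver uniformity over $\Theta$.

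First I would handle the variance function $\mathcal{V}$, which is the easier case. By \cref{lem:truemartingales} (taking $f(x)=x^2$) together with the innovation theorem, one has the identity $\mathcal{V}(t)=\ex\bigl(\int_0^t G_s^2 \blambda_s\,\mathrm{d}s\bigr)$. Since the integrand is non-negative and bounded by $C(C')^2$, monotonicity gives both $\mathcal{V}(1)\leq C(C')^2$ and
\begin{equation*}
    |\mathcal{V}(t)-\mathcal{V}(s)| \;=\; \ex\left(\int_s^t G_u^2 \blambda_u\,\mathrm{d}u\right) \;\leq\; C(C')^2 |t-s|,
\end{equation*}
so $\mathcal{V}^\theta$ is Lipschitz with a constant depending only on $C,C'$, hence uniformly Lipschitz and uniformly equicontinuous over $\Theta$.

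Next I would turn to $\gamma$. The key is to use the representation from \cref{prop:gammaalternative},
$\gamma_t=\int_0^t \cov(G_s,\blambda_s-\lambda_s)\,\mathrm{d}s$, whose hypothesis is trivially satisfied under \cref{asm:UniformBounds}. Because $\ex(G_s)=\ex(\ex(G_s\mid\cF_{s-}))=0$, the covariance reduces to $\ex(G_s(\blambda_s-\lambda_s))$. Using $\lambda_s = \ex(\blambda_s\mid\cF_{s-})$, tower property, and the a.s. bounds on $|G_s|$ and $\blambda_s$, one obtains $|\ex(G_s(\blambda_s-\lambda_s))|\leq 2CC'$, from which
\begin{equation*}
    |\gamma_t-\gamma_s|\leq 2CC'|t-s|, \qquad \sup_{t\in[0,1]}|\gamma_t|\leq 2CC',
\end{equation*}
since $\gamma_0=0$. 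This gives the uniform Lipschitz property and the uniform bound for the family $(\gamma^\theta)_{\theta\in\Theta}$.

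There is no real obstacle here: the whole argument is a bookkeeping exercise once one invokes \cref{prop:gammaalternative} and the $L^\infty$-bounds in \cref{asm:UniformBounds}. The only mild subtlety to flag is that the a.s. boundedness of $\blambda$ transfers to $\lambda$ via the conditional expectation, which is why $|\blambda_s-\lambda_s|\leq 2C$ almost surely — this is what makes the covariance estimate cheap and makes the Lipschitz constants depend only on $C,C'$ (and thus uniform over $\Theta$).
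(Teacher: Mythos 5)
Your proof is correct and arrives at the same Lipschitz constants and bounds as the paper, but you route the argument for $\gamma$ through Proposition \ref{prop:gammaalternative}, i.e., the representation $\gamma_t=\int_0^t\cov(G_s,\blambda_s-\lambda_s)\,\mathrm{d}s$, whereas the paper works directly from $\gamma_t-\gamma_s=\ex\bigl(\int_s^t G_u\,\mathrm{d}N_u-\int_s^t G_u\lambda_u\,\mathrm{d}u\bigr)$ and applies Proposition \ref{lem:truemartingales} together with the triangle inequality to bound $\ex\bigl(\int_s^t|G_u|(\blambda_u+\lambda_u)\,\mathrm{d}u\bigr)\leq 2CC'(t-s)$. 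The two routes are essentially equivalent in content — both ultimately reduce to the a.s.\ bounds $|G|\leq C'$ and $\blambda,\lambda\leq C$ (the latter via the innovation theorem) — but yours has the small extra overhead of first checking the integrability hypothesis of Proposition \ref{prop:gammaalternative}, which you correctly observe is trivial here. The treatment of $\mathcal{V}$ is identical to the paper's. No gaps.
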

\begin{proof}
For any $0\leq s < t \leq 1$, a direct application of Assumption~\ref{asm:UniformBounds} and Proposition \ref{lem:truemartingales} yields
\begin{align*}
    |\gamma_t-\gamma_s| 
        \leq \ex \left\lvert
                \int_s^t G_u\mathrm{d}N_u 
                    - \int_s^t G_u\lambda_u \mathrm{d}u
            \right \rvert 
        \leq \ex \left(\int_s^t |G_u|(\blambda_u  + \lambda_u)\mathrm{d}u\right)
        \leq 2CC'(t-s),
\end{align*}
and similarly,
\begin{align*}
    \mathcal{V}(t) - \mathcal{V}(s) 
        = \ex \left(\int_s^t G_u^2 \blambda_u \mathrm{d}u\right)
        \leq C(C')^2(t-s).
\end{align*}
This establishes the first part. The bounds follow from inserting $(s,t)=(0,t)$ in the first inequality and $(s,t)=(0,1)$ in the second inequality. 
\end{proof}

\subsection{Proof of Proposition \ref{prop:remainderterms}}

We will divide the proof into three lemmas for each of the remainder terms $R_1^{(n)}, R_2^{(n)}$ and $R_3^{(n)}$, where we establish convergence to the zero-process uniformly over $t$ and $\theta$. However, note that the notion of uniform convergence differs for the process index, $t\in [0,1]$, and the parameter, $\theta \in \Theta$, as we need to show that

\begin{align*}
        \forall i \in \{1,2,3\}\forall \epsilon>0: \quad 
        \lim_{n\to \infty}
        \sup_{\theta \in \Theta} \mathbb{P}\Big(
            \sup_{t\in[0,1]}|R_{i,t}^\theta|>\epsilon
        \Big) = 0.
\end{align*}

For a general discussion of the relation between weak convergence and convergence in probability uniformly as a stochastic process, see \cite{Newey:1991}. For a general discussion of uniform stochastic convergence over a distribution parameter, see Appendix \ref{app:UniformAsymptotics} and the references contained therein. In Appendix \ref{app:UniformChaining}, we discuss the combination of both convergences.

As in the proof of Proposition \ref{prop:UasymptoticGaussian}, 
$\tilde{\mathcal{G}}_t^n$ denotes the smallest right continuous and complete filtration 
generated by the filtrations $\{\mathcal{G}_{j, t} \mid j \in J_n\}$. 
Analogously, we let $\mathcal{\tilde{G}}_t^{n,c}$ be the smallest 
right continuous and complete filtration generated by the filtrations 
$\{ \mathcal{G}_{j, t} \mid j \in J_n^c \}$.
We start by considering $R_3^{(n)}$, since this is the easiest case. 

\begin{lem} \label{lem:R3}
Under \cref{asm:UniformRates} it holds that $\sup_{t\in [0,1]} |R_{3,t}^{(n)}| \convUP 0$. 
\end{lem}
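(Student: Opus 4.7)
The plan is to dispose of the supremum by passing to absolute values and integrating up to $t=1$, and then to bound the resulting bilinear quantity using Cauchy--Schwarz in $L_2([0,1]\times \Omega)$ together with Markov's inequality.

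First, since the integrand of $R_{3,t}^{(n),\theta}$ may change sign but the supremum only needs an upper bound, I would estimate
\begin{equation*}
\sup_{t\in[0,1]} |R_{3,t}^{(n),\theta}| \;\leq\; \frac{1}{\sqrt{|J_n|}} \sum_{j \in J_n} \int_0^1 \bigl|\hat{G}^{(n),\theta}_{j,s} - G^\theta_{j,s}\bigr| \cdot \bigl|\hat{\lambda}^{(n),\theta}_{j,s} - \lambda^\theta_{j,s}\bigr| \,\mathrm{d}s.
\end{equation*}

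Second, using that $(N_j, X_j, \mathcal{F}_j)_{j \in J_n}$ are i.i.d.\ and independent of the estimators $\hat{G}^{(n)}$ and $\hat{\lambda}^{(n)}$, which are constructed only from indices in $J_n^c$, each summand has the same distribution as the corresponding integral evaluated on the template copy $(N, X, \mathcal{F})$. Taking expectations term by term and applying the Cauchy--Schwarz inequality in $L_2([0,1] \times \Omega)$ then gives
\begin{equation*}
\ex \sup_{t\in[0,1]} |R_{3,t}^{(n),\theta}| \;\leq\; \sqrt{|J_n|}\; \vertiii{\hat{G}^{(n),\theta} - G^\theta}_2 \; \vertiii{\hat{\lambda}^{(n),\theta} - \lambda^\theta}_2 \;=\; \sqrt{|J_n|}\, g^\theta(n)\, h^\theta(n).
\end{equation*}

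Third, by Assumption \ref{asm:UniformRates}, $\sup_{\theta \in \Theta} \sqrt{|J_n|}\, g^\theta(n)\, h^\theta(n) \to 0$ as $n \to \infty$. Combining this with Markov's inequality yields, for every $\epsilon > 0$,
\begin{equation*}
\sup_{\theta \in \Theta} \mathbb{P}\!\left(\sup_{t\in[0,1]} |R_{3,t}^{(n),\theta}| > \epsilon\right) \;\leq\; \frac{1}{\epsilon}\, \sup_{\theta \in \Theta} \sqrt{|J_n|}\, g^\theta(n)\, h^\theta(n) \;\longrightarrow\; 0,
\end{equation*}
which is exactly the uniform convergence $\sup_t |R_{3,t}^{(n)}| \convUP 0$ in the sense of Definition \ref{dfn:UniformConvergence}.

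There is no real obstacle here. The term $R_3^{(n)}$ is bilinear in the two estimation errors and carries no martingale structure, so brute Cauchy--Schwarz is both the natural and the sharpest tool; the factor $\sqrt{|J_n|}$ is precisely absorbed by the product rate built into Assumption \ref{asm:UniformRates}, in analogy with the familiar $\sqrt{n}\, g(n) h(n) \to 0$ condition from the double machine learning literature. The difficulty shifts to the remaining remainders $R_1^{(n)}$ and $R_2^{(n)}$, which are only linear in a single estimation error and will therefore require martingale concentration and a uniform chaining argument to recover the additional $|J_n|^{-1/2}$ decay.
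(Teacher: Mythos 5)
Your proof is correct and follows essentially the same approach as the paper: bound the supremum by the integral over $[0,1]$ of the absolute product of the two estimation errors, reduce to a single template copy by the i.i.d.\ structure, apply Cauchy--Schwarz to obtain $\sqrt{|J_n|}\,g^\theta(n)\,h^\theta(n)$, and invoke Assumption \ref{asm:UniformRates}. The only cosmetic difference is that you apply a single Cauchy--Schwarz in $L_2([0,1]\times\Omega)$ and make Markov's inequality explicit, whereas the paper applies Cauchy--Schwarz in two stages and leaves the passage from $L_1$-convergence to convergence in probability implicit.
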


\begin{proof}
We will show the result by showing that
    \begin{align*}
    \sup_{\theta \in \Theta}
    \ex \left(
        \sup_{0 \leq t \leq 1} |R_{3, t}^{(n),\theta} |
    \right) \to 0
    \end{align*}
as $n \to \infty$. Using that the random variables 
    \begin{align*}
            \sup_{0 \leq t \leq 1} \left|
            G_{j, t} - \hat{G}_{j, t}^{(n)}
            \right|
            \cdot 
            \sup_{0 \leq t \leq 1} \left|
            \lambda_{j, t} - \hat{\lambda}_{j, t}^{(n)}
            \right|
    \end{align*}
for $j \in J_n$ are identically distributed for each fixed $n \geq 2$, we have that
    \begin{align*}
        & \ex \left(
        \sup_{0 \leq t \leq 1} | R_{3, t}^{(n)} |
        \right) 
        \\
        & = 
        \ex \left( \sup_{0 \leq t \leq 1} \left|
        \frac{1}{\sqrt{|J_n|}} 
        \sum_{j \in J_n}
        \int_0^t 
        \left( G_{j, s} - \hat{G}_{j, s}^{(n)} \right) 
        \left( \lambda_{j, s} - \hat{\lambda}_{j, s}^{(n)} \right) 
        \mathrm{d}s \right|
        \right)
        \\
        &   \leq \frac{1}{\sqrt{|J_n|}} 
            \sum_{j \in J_n}
            \ex \left( \sup_{0 \leq t \leq 1} 
            \int_0^t \left|
            G_{j, s} - \hat{G}_{j, s}^{(n)} \right| \cdot
            \left| \lambda_{j, s} - \hat{\lambda}_{j, s}^{(n)} \right|
            \mathrm{d}s
            \right) \\
        &   = \sqrt{|J_n|} \ex \left(  
            \int_0^1 \left|
            G_{s} - \hat{G}_{s}^{(n)} \right| \cdot
            \left| \lambda_{s} - \hat{\lambda}_{s}^{(n)} \right|
            \mathrm{d}s
            \right) \\
        &   \leq \sqrt{|J_n|} \ex \left( 
            \sqrt{ \int_0^1 \left( 
                G_{s} - \hat{G}_{s}^{(n)} \right)^2 \mathrm{d}s}
            \sqrt{ \int_0^1 \left( 
                \lambda_{s} - \hat{\lambda}_{s}^{(n)} \right)^2 \mathrm{d}s}
            \right) \\
        &   \leq \sqrt{|J_n|}
            \sqrt{\ex \left( 
                 \int_0^1 \left( 
                    G_{s} - \hat{G}_{s}^{(n)} \right)^2 \mathrm{d}s\right)}
            \sqrt{\ex \left( \int_0^1 \left( 
                    \lambda_{s} - \hat{\lambda}_{s}^{(n)} \right)^2 
                    \mathrm{d}s\right)} \\
        &   = \sqrt{|J_n|} g(n)h(n).
    \end{align*}
By Assumption \ref{asm:UniformRates}, $\sqrt{|J_n|} g(n)h(n)\to 0$ uniformly over $\Theta$ as $n \to \infty$, so the result follows.
\end{proof}


Next we proceed to the remainder process $R_2^{(n)}$.

\begin{lem} \label{lemma:R2}
Under \cref{asm:UniformBounds,asm:UniformRates}, it holds 
that 
\(
    \sup_{t\in [0,1]} |R_{2,t}^{(n)}| \convUP 0
\).
\end{lem}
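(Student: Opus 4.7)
The plan is to reduce $R_2^{(n)}$ to a conditional martingale via sample splitting and then apply Doob's $L^2$ inequality. Since $\hat{G}^{(n)}$ is, by construction, a deterministic function of the training data, conditional on $\tilde{\mathcal{G}}_1^{n,c}$ the process $s \mapsto \hat{G}_{j,s}^{(n)} - G_{j,s}$ is \lc{} and $\mathcal{G}_{j,s}$-predictable for every $j \in J_n$, and the bound on $\hat{G}^{(n)}$ assumed after Definition~\ref{dfn:rescond} gives $|\hat{G}_{j,s}^{(n)} - G_{j,s}| \leq 2C'$. Because the test samples in $J_n$ are independent of $\tilde{\mathcal{G}}_1^{n,c}$, each $\bM_j$ with $j \in J_n$ retains its $\mathcal{G}_{j,t}$-martingale property conditional on $\tilde{\mathcal{G}}_1^{n,c}$, and \cref{lem:truemartingales} applied pathwise to each bounded integrand shows that $\int_0^\cdot (\hat{G}_{j,s}^{(n)} - G_{j,s})\,\mathrm{d}\bM_{j,s}$ is a conditionally mean-zero, square-integrable $\mathcal{G}_{j,t}$-martingale. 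Enlarging to $\tilde{\mathcal{G}}_t^n$ as in the proof of \cref{prop:UasymptoticGaussian}, the whole sum $R_{2,\cdot}^{(n)}$ is itself a conditionally mean-zero, square-integrable $\tilde{\mathcal{G}}_t^n$-martingale on $[0,1]$.

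Doob's $L^2$ inequality applied conditionally and then integrated therefore gives
\[
\mathbb{P}\!\left(\sup_{t\in[0,1]}|R_{2,t}^{(n)}|>\epsilon\right) \;\leq\; \frac{1}{\epsilon^2}\,\ex\!\left((R_{2,1}^{(n)})^2\right).
\]
Conditional orthogonality of the independent summands eliminates the off-diagonal contributions to the expansion of $(R_{2,1}^{(n)})^2$, and the quadratic-variation formula together with the i.i.d.\ structure collapses the resulting $|J_n|$-fold sum against the $1/|J_n|$ normalisation into a single template expectation:
\[
\ex\!\left((R_{2,1}^{(n)})^2\right) \;=\; \ex\!\left(\int_0^1 (\hat{G}_s^{(n)} - G_s)^2\, \blambda_s\, \mathrm{d}s\right) \;\leq\; C\, g(n)^2,
\]
where the inequality uses the bound $\blambda \leq C$ from \cref{asm:UniformBounds}. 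Taking the supremum over $\theta \in \Theta$ and invoking the uniform rate $g^\theta(n) \to 0$ from \cref{asm:UniformRates} yields $\sup_{\theta\in\Theta}\mathbb{P}(\sup_t |R_{2,t}^{(n),\theta}| > \epsilon) \to 0$, which is precisely $\sup_t |R_{2,t}^{(n)}| \convUP 0$.

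The main technical point to spell out carefully is that conditioning on $\tilde{\mathcal{G}}_1^{n,c}$ preserves both the $\mathcal{G}_{j,t}$-intensity $\blambda_j$ and the martingale property of $\bM_j$ for $j \in J_n$, so that \cref{lem:truemartingales} is available after conditioning. This follows from the independence of the two folds, but deserves an explicit sentence because $\hat{G}^{(n)}$ is $\tilde{\mathcal{G}}_1^{n,c}$-measurable whereas $G_{j,\cdot}$ is not, and the enlargement of filtration from the individual $\mathcal{G}_{j,t}$ to $\tilde{\mathcal{G}}_t^n$ relies on the same independence. Beyond this observation, the argument is a routine combination of Doob's inequality, conditional orthogonality across folds, and the $L^2$-rate bound $g(n)$ on the residual estimator; no stochastic equicontinuity or chaining is needed for $R_2^{(n)}$, in contrast to the treatment of $R_1^{(n)}$ announced in the text.
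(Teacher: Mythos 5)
Your proof is correct and follows essentially the same route as the paper: condition on $\tilde{\mathcal{G}}_1^{n,c}$ to render $\hat{G}^{(n)}$ deterministic and make $R_2^{(n)}$ a conditional square-integrable martingale, apply Doob's $L^2$ inequality, and then use the i.i.d.\ structure and the bound $\blambda \leq C$ to reduce the variance to $C\,g(n)^2$, which vanishes uniformly by Assumption~\ref{asm:UniformRates}. The only cosmetic difference is that the paper phrases the Doob step via $\ex(\var(R_{2,1}^{(n)}\mid\tilde{\mathcal{G}}_1^{n,c}))$ rather than $\ex((R_{2,1}^{(n)})^2)$, but these coincide since the conditional mean is zero.
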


\begin{proof}
We first write
    \begin{align*}
        R^{(n)}_{2, t} 
        &= 
            \frac{1}{\sqrt{|J_n|}} \sum_{j \in J_n}  
            \int_0^t \left( G_{j, s} - \hat{G}_{j, s}^{(n)} \right) 
            \mathrm{d} \bM_{j, s}, 
    \end{align*}
and note that $R^{(n)}_{2, t}$ is a square integrable, mean zero $\tilde{\mathcal{G}}_t^n$-martingale conditionally on $\tilde{\mathcal{G}}_1^{n,c}$.
This follows by applying \cref{lem:truemartingales} to each of the terms, which are i.i.d. conditionally on $\tilde{\mathcal{G}}_1^{n,c}$. 
We conclude that the squared process
$(R_{2,t}^{(n)} )^2$ is a $\tilde{\mathcal{G}}_t^n$-submartingale conditionally on $\tilde{\mathcal{G}}_1^{n,c}$. By Doob's
submartingale inequality we have that 
    \begin{align*}
    \mathbb{P} \left(
        \sup_{0 \leq t \leq 1} |  R_{2, t}^{(n)} | \geq \epsilon
    \right)
    & =
    \mathbb{P} \left( 
        \sup_{0 \leq t \leq 1} \left( R_{2, t}^{(n)} \right)^2
        \geq \epsilon^2
    \right) 
    \\
    & = 
    \ex \left( 
    \mathbb{P} \left(
        \sup_{0 \leq t \leq 1} \left(  R_{2, t}^{(n)} \right)^2 \geq \epsilon^2
        \mid
        \tilde{\mathcal{G}}_1^{n,c}
    \right)
    \right)
    \\
    & \leq 
    \frac{\ex \left( \var \left( R_{2, 1}^{(n)} \mid
    \tilde{\mathcal{G}}_1^{n,c} \right) \right)}{\epsilon^2} 
    \end{align*}
for $\epsilon > 0$. The collection of random variables
    \begin{align*}
        \left(
            \int_0^1 \left( G_{j, s} - \hat{G}_{j, s}^{(n)} \right) 
        \mathrm{d} \bM_{j, s}
        \right)_{j \in J_n}
    \end{align*}
are i.i.d. conditionally on $\tilde{\mathcal{G}}_1^{n,c}$. Therefore,
    \begin{align*}
        \var \left( R_{2, 1}^{(n)} \mid \tilde{\mathcal{G}}_1^{n,c} \right)
        & = 
        \frac{1}{|J_n|} \sum_{j \in J_n} \var \left( 
            \int_0^1 \left( G_{j, s} - \hat{G}_{j, s}^{(n)} \right) 
            \mathrm{d} \bM_{j, s} 
            \mid \tilde{\mathcal{G}}_1^{n,c}
        \right) 
        \\
        & = 
        \ex \left( \int_0^1
        \left( G_{s} - \hat{G}_{s}^{(n)} \right)^2 
        \mathrm{d} \langle \bM \rangle_s 
        \mid \tilde{\mathcal{G}}_1^{n,c} \right)
        \\
        & =
        \ex \left(
        \int_0^1 \left( G_{s} - \hat{G}_{s}^{(n)} \right)^2 \blambda_{s}
        \mathrm{d}s 
        \mid \tilde{\mathcal{G}}_1^{n,c}
        \right) \\
        & \leq C \cdot \ex \left(
                \int_0^1 \left( G_{s} - \hat{G}_{s}^{(n)} \right)^2
                \mathrm{d}s 
                \mid \tilde{\mathcal{G}}_1^{n,c}
                \right)
    \end{align*}
where we have used that $\blambda_t$ is bounded by \cref{asm:UniformBounds} (i). Thus 
\begin{align*}
    \ex \left(
        \var \left( R_{2, 1}^{(n)} \mid \tilde{\mathcal{G}}_1^{n,c} \right)
    \right) 
    \leq 
    C \cdot \ex \left(
    \int_0^1 \left( G_{s} - \hat{G}_{s}^{(n)} \right)^2 \mathrm{d}s
    \right) 
    = C \cdot g(n)^2,
\end{align*}
and we conclude that 
    \begin{align*}
        \mathbb{P} \left(
        \sup_{0 \leq t \leq 1} | R_{2, t}^{(n)} | \geq \epsilon
    \right)
    \leq 
    \frac{C\cdot g(n)^2}{\epsilon^2} \to 0,
    \end{align*}
as $n \to \infty$ uniformly over $\Theta$ by \cref{asm:UniformRates}.
\end{proof}

Before proving that $R_1^{(n)}$ converges weakly to the zero-process, we will need two auxiliary lemmas. The first is a conditional version of Hoeffding's lemma, which lets us conclude conditional sub-Gaussianity. Recall that a mean zero random variable $A$ is sub-Gaussian with variance factor $\nu>0$ if 
    \begin{align*}
        \log \ex(e^{xA}) \leq \frac{x^2 \nu}{2}
    \end{align*}
for all $x \in \mathbb{R}$. See, for example, \citet{boucheron2013concentration}, Lemma 2.2, for the classical unconditional version.

\begin{lem}[conditional Hoeffding's lemma]\label{lem:conditionalHoeffding}
    Let $Y$ be a random variable taking values on a bounded interval $[a,b]$, satisfying $\ex[Y|\mathcal{G}]=0$ for a $\sigma$-algebra $\mathcal{G}$. 
    
    Then $\log \ex(e^{xY}\mid\mathcal{G}) \leq (b-a)^2 x^2 /8$ almost surely for all $x\in \mathbb{R}$.
\end{lem}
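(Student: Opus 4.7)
The plan is to mimic the classical proof of Hoeffding's lemma, replacing unconditional expectations by conditional ones and checking that the deterministic ingredients still apply pointwise. Fix $x \in \mathbb{R}$. The first step exploits convexity of $y \mapsto e^{xy}$ on $[a,b]$: for every $y \in [a,b]$,
\begin{equation*}
    e^{xy} \le \frac{b-y}{b-a}\,e^{xa} + \frac{y-a}{b-a}\,e^{xb}.
\end{equation*}
Since $Y \in [a,b]$ almost surely, I can substitute $Y$ for $y$ and take conditional expectation given $\mathcal{G}$ on both sides. By monotonicity, linearity, and the hypothesis $\ex[Y \mid \mathcal{G}] = 0$, this gives, almost surely,
\begin{equation*}
    \ex[e^{xY} \mid \mathcal{G}] \;\le\; \frac{b}{b-a}\,e^{xa} - \frac{a}{b-a}\,e^{xb}.
\end{equation*}

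The second step reduces the bound to a deterministic calculus inequality. Note that $a \le 0 \le b$ because $\ex[Y\mid \mathcal{G}]=0$ forces the interval $[a,b]$ to contain zero (if $a=b$ then $Y\equiv 0$ and the claim is trivial, so assume $a<b$). Set $p = -a/(b-a) \in [0,1]$ and $u = x(b-a) \in \mathbb{R}$. The right-hand side above can be rewritten as $e^{-pu}\bigl((1-p)+p e^u\bigr)$, so taking logarithms reduces the claim to proving, for all $u \in \mathbb{R}$,
\begin{equation*}
    \varphi(u) \;:=\; -pu + \log\!\bigl((1-p) + p e^u\bigr) \;\le\; \frac{u^2}{8}.
\end{equation*}

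The third step is standard: compute $\varphi(0)=0$ and $\varphi'(0)=0$, and then
\begin{equation*}
    \varphi''(u) \;=\; \frac{p(1-p)e^u}{((1-p)+pe^u)^2} \;=\; r(u)\bigl(1-r(u)\bigr) \;\le\; \tfrac{1}{4},
\end{equation*}
where $r(u) = pe^u/((1-p)+pe^u) \in [0,1]$. Taylor's theorem with integral remainder then yields $\varphi(u) \le u^2/8$. Substituting back, the bound becomes $\log \ex[e^{xY} \mid \mathcal{G}] \le x^2(b-a)^2/8$ almost surely, as required.

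I do not foresee a substantial obstacle: the proof is essentially the textbook proof of Hoeffding's lemma, and the only point requiring mild care is that the convexity inequality holds pointwise on $\{Y \in [a,b]\}$, a set of full probability, so the resulting conditional inequality holds on a set of full probability. Since the bound $\varphi(u) \le u^2/8$ is purely deterministic and independent of $\omega$, no measurability issue arises when combining the two steps.
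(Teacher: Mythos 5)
Your proof is correct and follows essentially the same route as the paper's: convexity of the exponential to bound $\ex[e^{xY}\mid\mathcal G]$, then a second-order Taylor bound on the logarithm of the resulting deterministic expression, with the only cosmetic difference being that you reparametrize via $p=-a/(b-a)$ and bound $\varphi''=r(1-r)\le 1/4$ directly rather than invoking AM--GM on $L''(h)=-abe^h/(b-ae^h)^2$ as the paper does. Your explicit observation that $\ex[Y\mid\mathcal G]=0$ forces $a\le 0\le b$ (so $p\in[0,1]$) is a point the paper leaves implicit, so this is a small improvement in completeness.
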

\begin{proof}
    Fix $x\in \mathbb{R}$. By convexity of the exponential function we have
    \begin{align*}
        e^{xy} \leq \frac{b-y}{b-a} e^{xa} + \frac{y-a}{b-a} e^{xb},
        \qquad y\in[a,b].
    \end{align*}
    Inserting $Y$ in place of $y$ and taking the conditional expectation yields
    \begin{align*}
        \ex[e^{xY}\mid\mathcal{G}]
        \leq \frac{b}{b-a} e^{xa} - \frac{a}{b-a} e^{xb}
        = e^{L(x(b-a))}
    \end{align*}
    almost surely,
    where $L(h) = \frac{ha}{b-a} + \log(1+\frac{a-e^ha}{b-a})$.
    Standard calculations show that $L(0)=L'(0)=0$, and the AM-GM inequality implies
    \begin{align*}
        L''(h) = - \frac{abe^h}{(b-ae^h)^2} \leq \frac{1}{4}.
    \end{align*}
    Thus, a second order Taylor expansion yields that $L(h)\leq \frac{1}{8}h^2$, and it follows that
    $\log \ex[e^{xY}\mid\mathcal{G}] \leq 
    \frac{(b-a)^2}{8}x^2$ as desired.  
\end{proof}

For the next lemma define for $s, t \in [0,1]$ with $s < t$ 
    \begin{align*}
        W^{s, t} = \frac{1}{t - s} \int_s^t G_u (\lambda_u - \hat{\lambda}^{(n)}_u)
        \mathrm{d}u. 
    \end{align*}

\begin{lem} \label{lemma:subgaussian}
Let \cref{asm:UniformBounds} hold true. Then, for any $0\leq s < t \leq 1$, it holds that $\ex(W^{s,t}\mid\tilde{\mathcal{G}}_1^{n,c}) = 0$ and 
that $W^{s,t}$ is sub-Gaussian conditionally on $\tilde{\mathcal{G}}_1^{n,c}$ with variance factor $\nu = (2CC')^2$, that is,
    \begin{align*}
        \log \ex (e^{xW^{s, t}} \mid \tilde{\mathcal{G}}_1^{n,c}) 
        \leq 2 (xCC')^2
    \end{align*}
for all $s < t$ and $x \in \mathbb{R}$. 
\end{lem}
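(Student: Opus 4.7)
The plan is to reduce $W^{s,t}$ to a conditionally mean-zero, uniformly bounded random variable so that the conditional Hoeffding lemma (\cref{lem:conditionalHoeffding}) delivers the sub-Gaussian bound with exactly the right constant. First I would exploit the sample-splitting structure: since $\hat{\lambda}^{(n)}$ is fitted from data indexed by $J_n^c$, it is measurable with respect to $\tilde{\mathcal{G}}_1^{n,c}$, and when evaluated on the independent template its value $\hat{\lambda}_u^{(n)}$ is measurable with respect to $\tilde{\mathcal{G}}_1^{n,c} \vee \sigma(\mathcal{F}_{u-})$. Recall that $\lambda_u$ is $\mathcal{F}_{u-}$-predictable, and by \cref{asm:UniformBounds} together with the stated analogous bound on $\hat{\lambda}^{(n)}$ we have $|\lambda_u - \hat{\lambda}_u^{(n)}| \leq 2C$ and $|G_u| \leq C'$ almost surely. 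Consequently the integrand in $W^{s,t}$ is bounded in absolute value by $2CC'$, and $|W^{s,t}| \leq 2CC'$ a.s., so $W^{s,t}$ takes values in the interval $[-2CC', 2CC']$.

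For the conditional mean-zero statement I would apply Fubini (legitimate by the uniform boundedness of the integrand) to exchange the conditional expectation with the time integral, and then use tower conditioning with the enlarged $\sigma$-algebra $\tilde{\mathcal{G}}_1^{n,c} \vee \sigma(\mathcal{F}_{u-})$. With respect to this enlarged $\sigma$-algebra the factor $\lambda_u - \hat{\lambda}_u^{(n)}$ is deterministic by the measurability noted above, while by the independence of the template from the training data and the defining property \eqref{eq:residualorthogonal} of the residual process,
\begin{equation*}
    \ex\!\left(G_u \,\big|\, \tilde{\mathcal{G}}_1^{n,c} \vee \sigma(\mathcal{F}_{u-})\right) = \ex(G_u \mid \mathcal{F}_{u-}) = 0.
\end{equation*}
Integrating over $u \in (s,t)$ and dividing by $t-s$ yields $\ex(W^{s,t} \mid \tilde{\mathcal{G}}_1^{n,c}) = 0$.

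Having established conditional mean zero and a.s.\ boundedness in $[-2CC',2CC']$, I would invoke \cref{lem:conditionalHoeffding} with $a = -2CC'$ and $b = 2CC'$, so that $(b-a)^2/8 = (4CC')^2/8 = 2(CC')^2$, giving
\begin{equation*}
    \log \ex\!\left(e^{xW^{s,t}} \,\big|\, \tilde{\mathcal{G}}_1^{n,c}\right) \leq \frac{(b-a)^2 x^2}{8} = 2(xCC')^2,
\end{equation*}
which is precisely sub-Gaussianity with variance factor $\nu = (2CC')^2$. The main technical subtlety, and the only non-mechanical step, lies in the measurability bookkeeping in the middle paragraph: one must carefully justify that conditioning on $\tilde{\mathcal{G}}_1^{n,c} \vee \sigma(\mathcal{F}_{u-})$ both renders $\lambda_u - \hat{\lambda}_u^{(n)}$ deterministic and collapses the inner conditional expectation of $G_u$ back to $\ex(G_u \mid \mathcal{F}_{u-})$ via independence of the template from the training sample. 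Once this is in place, the result follows from a direct bounded–random-variable application of Hoeffding.
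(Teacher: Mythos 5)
Your proposal is correct and takes essentially the same approach as the paper: establish $|W^{s,t}| \leq 2CC'$ from \cref{asm:UniformBounds}, prove conditional mean zero via the tower property on the enlarged $\sigma$-algebra $\tilde{\mathcal{G}}_1^{n,c} \vee \mathcal{F}_{u-}$ (where $\lambda_u - \hat{\lambda}_u^{(n)}$ becomes measurable and $\ex(G_u \mid \mathcal{F}_{u-}) = 0$ does the work), and then apply the conditional Hoeffding lemma with $b-a = 4CC'$. The only difference is cosmetic: you consistently write $\mathcal{F}_{u-}$ for the integration variable, whereas the paper's display uses $\mathcal{F}_{s-}$ (an apparent typo there).
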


\begin{proof}
For fixed $u\in [0,1]$, note that
\begin{align} \label{eq:R1meanzero}
    \ex\left( G_u
    \left( 
    \lambda_u - \hat{\lambda}^{(n)}_u \right) 
    \mid
    \tilde{\mathcal{G}}_1^{n,c}
    \right) 
    & = 
    \ex\left( 
    \ex\left( 
    G_u
    \left( 
    \lambda_u - \hat{\lambda}^{(n)}_u \right) 
    \mid 
    \mathcal{F}_{s-} \vee \tilde{\mathcal{G}}_1^{n,c}
    \right) 
    \mid
    \tilde{\mathcal{G}}_1^{n,c}
    \right) \nonumber \\
    & = 
    \ex\left(
    \ex\left( G_u \mid \mathcal{F}_{s-} \right)
     \left( 
        \lambda_u - \hat{\lambda}^{(n)}_u \right) 
    \mid 
    \tilde{\mathcal{G}}_1^{n,c}
    \right) = 0,
    \end{align}
where we have used that $\lambda_{t} - \hat{\lambda}^{(n)}_{t}$ is 
$\mathcal{F}_{t}$-predictable conditionally on $\tilde{\mathcal{G}}_1^{n,c}$, 
that $G_{t}$ is independent of $\tilde{\mathcal{G}}_1^{n,c}$ since it is $\mathcal{G}_{t}$-predictable, and that 
$\ex\left( G_{s} \mid \mathcal{F}_{s-} \right) = 0$ per definition. 
By applying the conditional Fubini theorem \citep[Theorem 27.17]{schilling2017measures}, we conclude that $\ex(W^{s,t}\mid\tilde{\mathcal{G}}_1^{n,c}) = 0$.

We can now use the conditional version of Hoeffding's lemma formulated in Lemma \ref{lem:conditionalHoeffding}. 
Indeed, we have that for all $s<t$
    \begin{align*}
        |W^{s, t}| & \leq \frac{1}{t - s} \int_s^t | G_u |
        | (\lambda_u - \hat{\lambda}^{(n)}_u) |
        \mathrm{d}u
        \\
        & \leq \sup_{0 \leq u \leq 1}|G_u|
        \sup_{0 \leq u \leq 1}|(\lambda_u - \hat{\lambda}^{(n)}_u)|
        \leq 2 C C' 
    \end{align*}
by \cref{asm:UniformBounds}. Hence, for all $s < t$, Lemma \ref{lem:conditionalHoeffding} lets us conclude that 

\(
        \log \ex (e^{xW^{s, t}} \mid \tilde{\mathcal{G}}_1^{n,c}) 
        \leq 2 (xCC')^2
\), 
$x \in \mathbb{R}$.
\end{proof}

Then we have the following regarding $R_1^{(n)}$.

\begin{lem} \label{lem:R1}
Under \cref{asm:UniformRates,asm:UniformBounds} it holds that $\sup_{t\in [0,1]} |R_{1,t}^{(n)}| \convUP 0$. 
\end{lem}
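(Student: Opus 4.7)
The plan is to combine a pointwise $L_2$-bound on $R_{1,t}^{(n)}$ (of order $h(n)$) with a sub-Gaussian chaining argument that furnishes stochastic equicontinuity in $t$. This is the approach flagged in the discussion following Proposition \ref{prop:remainderterms}, and the proof essentially instantiates the chaining lemma from Appendix \ref{app:UniformChaining}.

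First I would establish a pointwise second-moment bound, uniform in $t$. A minor adaptation of the conditional mean-zero computation in \eqref{eq:R1meanzero} shows $\ex[R_{1,t}^{(n)}] = 0$, and the Cauchy--Schwarz inequality together with independence of the summands and \cref{asm:UniformBounds} give
\begin{align*}
    \var(R_{1,t}^{(n)})
        = \ex\left[\left(\int_0^t G_s(\lambda_s-\hat{\lambda}_s^{(n)})\mathrm{d}s\right)^2\right]
        \leq (C')^2 h(n)^2.
\end{align*}
By \cref{asm:UniformRates} this bound vanishes uniformly over $\Theta$, with the bound itself independent of $t$.

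Next I would derive a Lipschitz-type sub-Gaussian modulus for the increments. By Lemma \ref{lemma:subgaussian}, each $W_j^{s,t}$ is, conditionally on $\tilde{\mathcal{G}}_1^{n,c}$, mean-zero sub-Gaussian with variance factor $4(CC')^2$, and the $W_j^{s,t}$ are conditionally i.i.d. across $j \in J_n$. Hence
\begin{align*}
    R_{1,t}^{(n)}-R_{1,s}^{(n)} = \frac{t-s}{\sqrt{|J_n|}}\sum_{j\in J_n} W_j^{s,t}
\end{align*}
is conditionally sub-Gaussian with variance factor $4(CC')^2(t-s)^2$, so $(R_{1,t}^{(n)})_t$ is a conditionally sub-Gaussian process with respect to the deterministic Lipschitz metric $d(s,t) = 2CC'|s-t|$, whose constants do not depend on $n$ or $\theta$. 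Applying the chaining lemma from Appendix \ref{app:UniformChaining} then yields an unconditional tail bound of the form
\begin{align*}
    \mathbb{P}\left(\sup_{|t-s|\leq \delta}|R_{1,t}^{(n)}-R_{1,s}^{(n)}|>\epsilon\right) \leq \Phi(\epsilon, \delta),
\end{align*}
where $\Phi$ is deterministic and $\Phi(\epsilon,\delta)\to 0$ as $\delta \to 0$ for any fixed $\epsilon>0$.

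To combine the two ingredients, I would choose a mesh $0=t_0<\cdots<t_{K_n}=1$ of spacing $\delta_n=h(n)$ (so $K_n \sim h(n)^{-1}$). Since $t\mapsto R_{1,t}^{(n)}$ is absolutely continuous,
\begin{align*}
    \sup_{t\in[0,1]}|R_{1,t}^{(n)}| \leq \max_k|R_{1,t_k}^{(n)}| + \sup_{|t-s|\leq \delta_n}|R_{1,t}^{(n)}-R_{1,s}^{(n)}|.
\end{align*}
A union bound and Markov's inequality applied to the pointwise variance bound control the first summand by $(K_n+1)(C')^2 h(n)^2/\epsilon^2 = O(h(n))$, while the chaining tail bound controls the second summand by $\Phi(\epsilon, h(n))\to 0$; both vanish uniformly over $\Theta$ as $n\to \infty$. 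The main technical obstacle is ensuring the chaining tail bound holds uniformly in both $n$ and $\theta$; this is resolved by the fact that the sub-Gaussian metric $d(s,t)=2CC'|s-t|$ depends only on the universal constants $C, C'$ from \cref{asm:UniformBounds}, and the consistency rate $h(n)$ enters only through the pointwise variance, where uniformity is immediate from \cref{asm:UniformRates}.
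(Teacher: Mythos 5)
Your proof is correct and takes essentially the same route as the paper: a conditional mean-zero computation plus Chebyshev gives the pointwise rate, and the sub-Gaussian bound from Lemma \ref{lemma:subgaussian} feeds into a chaining argument to control the modulus of continuity. The only difference is organizational: you inline an $n$-dependent mesh of spacing $\delta_n = h(n)$ and combine the two bounds by hand, whereas the paper fixes $\delta$ once $\eta$ is chosen and cites the abstract equicontinuity-plus-pointwise lemma (Lemma~\ref{lem:uniformequcont}); both yield the same conclusion, and your shrinking mesh is a harmless (if unnecessary) refinement.
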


\begin{proof}
The proof consists of two parts. First we show that for each $t \in [0,1]$
it holds that
    \begin{align*}
    R_{1,t}^{(n)} \convUP 0
    \end{align*}
for $n \to \infty$. Then we show 
\emph{stochastic equicontinuity} of the process 
$R_{1}^{(n)}$ uniformly over $\Theta$, and by Lemma \ref{lem:uniformequcont} it follows
that 
    \begin{align*}
    \sup_{t \in [0,1]} |R_{1,t}^{(n)}| \convUP 0.
    \end{align*}
This is a direct generalization of Theorem 2.1 in \cite{Newey:1991}.
The collection of random variables 
    \begin{align*}
    \left(
        G_{j,s}
        \left( \lambda_{j, s} - \hat{\lambda}^{(n)}_{j, s} \right)
    \right)_{j \in J_n}
    \end{align*}
are i.i.d. conditionally on $\tilde{\mathcal{G}}_1^{n,c}$. Therefore, an application of the conditional Fubini theorem yields
    \begin{align*}
    \ex( R_{1, t} \mid \tilde{\mathcal{G}}_1^{n,c}) 
    & = 
    \frac{1}{\sqrt{|J_n|}} \sum_{j \in J_n} \int_0^t 
    \ex\left( 
    G_{j,s}
    \left( \lambda_{j, s} - \hat{\lambda}^{(n)}_{j, s} \right) 
    \mid 
    \tilde{\mathcal{G}}_1^{n,c}
    \right)
    \mathrm{d}s
    = 0
    \end{align*}
where the last equality follows from the computation in \eqref{eq:R1meanzero}.
Whence $\ex(R^{(n)}_{1, t}) = 0$, and 
$\var(R^{(n)}_{1, t}) = \ex(\var(R^{(n)}_{1, t} \mid \tilde{\mathcal{G}}_1^{n,c}))$, so
    \begin{align*}
    \var(R_{1, t}^{(n)}) 
    & =
    \ex \left( 
    \frac{1}{|J_n|} \sum_{j \in J_n} \var \left( 
        \int_0^t 
        G_{j,s}
        \left( \lambda_{j, s} - \hat{\lambda}^{(n)}_{j, s} \right) \mathrm{d}s 
        \mid 
        \tilde{\mathcal{G}}_1^{n,c}
    \right)
    \right)  
    \\
    & =
    \ex \left(
    \ex \left(
    \left(
        \int_0^t 
        G_{s}
        \left( \lambda_{s} - \hat{\lambda}^{(n)}_{s} \right) \mathrm{d}s 
    \right)^2
    \mid 
    \tilde{\mathcal{G}}_1^{n,c}
    \right)
    \right)
    \\
    & = 
    \ex \left(
    \left(
        \int_0^t 
        G_{s}
        \left( \lambda_{s} - \hat{\lambda}^{(n)}_{s} \right) \mathrm{d}s 
    \right)^2
    \right)
    \\
    & \leq (C')^2 \ex \left( \int_0^t 
        \left( \lambda_{s} - \hat{\lambda}^{(n)}_{s} \right)^2 
            \mathrm{d}s \right)\\
    & \leq (C')^2 h(n)^2
    \end{align*}
where we have used \cref{asm:UniformBounds} (ii). Hence by Chebychev's inequality, it holds for all $\epsilon>0$ that
    \begin{align*}
    \mathbb{P}(|R^{(n)}_{1, t}| > \epsilon)
    \leq 
    \frac{\var(R^{(n)}_{1, t})}{\epsilon^2} 
    \leq \frac{(C')^2 h(n)^2}{\epsilon^2}  \longrightarrow 0
    \end{align*}
as $n \to \infty$ uniformly over $\Theta$ by \cref{asm:UniformRates}. This completes the first part of the proof. 
For the second part, we use a chaining argument based on the exponential inequality in 
\cref{lemma:subgaussian}. We let 
    \begin{align*}
    W^{s, t}_j = \frac{1}{t - s} \int_s^t G_{j, u}
    \left(
    \lambda_{j, u} - \hat{\lambda}_{j, u}^{(n)}
    \right) \mathrm{d}u
    \end{align*}
and 
    \begin{align*}
        A
        =  \frac{1}{\sqrt{|J_n|}} 
            \sum_{j \in J_n} W_j^{s, t}
        = 
        \frac{1}{t - s} (R_{1, t}^{(n)} - R_{1, s}^{(n)}).
    \end{align*}
Using that $(W_j^{s, t})_{j \in J_n}$ are i.i.d. conditionally on $\tilde{\mathcal{G}}_1^{n,c}$ 
we have by \cref{lemma:subgaussian} that
$\ex(A)=0$ and that
    \begin{align*}
    \log \ex \left( e^{x A} \right)
    & = 
    \log \ex \left( \ex \left( e^{x A} \mid \tilde{\mathcal{G}}_1^{n,c} \right) \right) 
    \\
    & = \log  \ex \left( \prod_{j \in J_n}
    \ex\left( e^{\frac{x}{\sqrt{|J_n|}} W_j^{s, t}} 
    \mid 
    \tilde{\mathcal{G}}_1^{n,c}
    \right)
    \right) 
    \\
    & \leq \log  \ex \left( e^{\frac{x^2 \nu}{2}}
    \right) 
    \\
    & = \frac{x^2 \nu}{2}.
    \end{align*}
Hence $A$ is also sub-Gaussian with variance factor $\nu$. This implies that
    \begin{align*}
        \mathbb{P}(|A| > \eta) \leq 2 e^{- \frac{\eta^2 \nu}{2}}
    \end{align*}
for all $\eta > 0$. Rephrased in terms of $R_1^{(n)}$ this bound reads
    \begin{align*}
    \mathbb{P} \left(
    |R_{1, t}^{(n)} - R_{1, s}^{(n)}| > \eta (t - s)
    \right) \leq 2 e^{-\frac{\eta^2 \nu}{2}}
    \end{align*}
for all $\eta > 0$ and $s < t$. It now follows from the chaining lemma, \cite{Pollard:1984} Lemma 
VII.9, that $R_1^{(n)}$ is stochastic equicontinuous. 
Since the variance factor $\nu=(2CC')^2$ does not depend on $\theta \in \Theta$, we have stochastic equicontinuity uniformly over $\Theta$ by Corollary \ref{cor:uniformchaining}. 
This completes the second part of the proof and we are done.
\end{proof}

Note that the second part of the proof above establishes stochastic 
equicontinuity by a bound on the probability that the increments 
of the process are large. This is a well known technique, see, e.g.,
Example 2.2.12 in \cite{Vaart:1996}, from which the same conclusion 
will follow if 
$$\ex(|R_{1, t}^{(n)} - R_{1, s}^{(n)}|^p) \leq K |t - s|^{1 + r}$$
for $K,p,r > 0$.

\cref{prop:remainderterms} now follows from combining the Lemmas \ref{lem:R1}, \ref{lemma:R2}, and \ref{lem:R3}.
\hfill \qed{}

\subsection{Proof of Proposition \ref{prop:Dasymptotics}}
We separate the discussion of $D_1^{(n)}$ and $D_2^{(n)}$ into the Lemmas \ref{lem:D1asymptotics} and \ref{lem:D2asymptotics}, respectively, which together amount to Proposition \ref{prop:Dasymptotics}.

\begin{lem}\label{lem:D1asymptotics}
    Suppose that Assumptions \ref{asm:UniformBounds} and \ref{asm:UniformRates} hold. Then the stochastic process
    $\overline{D}^{(n)} \coloneqq D_1^{(n)}-\sqrt{|J_n|} \cdot \gamma$
    converges in distribution in $C[0,1]$ uniformly over $\Theta$.
\end{lem}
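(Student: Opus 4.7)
The plan is to represent $\overline{D}^{(n),\theta}$ as a centered, normalized i.i.d.\ sum of $C[0,1]$-valued random elements and then invoke a functional central limit theorem made uniform in $\theta$. Setting
\[
    \psi_j^\theta(t) = \int_0^t G_{j,s}^\theta\bigl(\blambda_{j,s}^\theta - \lambda_{j,s}^\theta\bigr)\,\mathrm{d}s,
\]
Proposition \ref{prop:gammaalternative} together with $\ex(G_s^\theta)=0$ gives $\ex\psi_j^\theta(t)=\gamma_t^\theta$, so
\[
    \overline{D}^{(n),\theta}_t
    = \frac{1}{\sqrt{|J_n|}}\sum_{j\in J_n}\bigl(\psi_j^\theta(t) - \ex\psi_j^\theta(t)\bigr).
\]
Assumption \ref{asm:UniformBounds} and the innovation theorem bound the integrand by $2CC'$, so each $\psi_j^\theta$ is a $C[0,1]$-valued random element that is $2CC'$-Lipschitz almost surely, uniformly in $\theta$.

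Uniform convergence in $(C[0,1],\|\cdot\|_\infty)$ will follow from the standard two-step program: uniform convergence of finite-dimensional margins plus uniform tightness. For the margins, fix $t_1,\dots,t_k\in[0,1]$. The summands $\psi_j^\theta(t_i)-\gamma_{t_i}^\theta$ are i.i.d., bounded by $4CC'$, and hence have third moments bounded independently of $\theta$. A multivariate Berry--Esseen bound then yields convergence in distribution to the centered Gaussian with covariance $K^\theta(t_i,t_l)=\cov(\psi^\theta(t_i),\psi^\theta(t_l))$ at a rate $O(|J_n|^{-1/2})$ with constant depending only on $C,C'$ and $k$. This provides the required uniform fidi convergence over $\Theta$.

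For tightness, the increment $\overline{D}^{(n),\theta}_t-\overline{D}^{(n),\theta}_s$ is a normalized sum of centered i.i.d.\ variables bounded in absolute value by $4CC'|t-s|$. Hoeffding's lemma (Lemma \ref{lem:conditionalHoeffding}) then yields the sub-Gaussian tail
\[
    \mathbb{P}\bigl(|\overline{D}^{(n),\theta}_t - \overline{D}^{(n),\theta}_s|>\eta\bigr)
    \leq 2\exp\Bigl(-\frac{\eta^2}{2\nu(t-s)^2}\Bigr),
\]
with $\nu=(4CC')^2$ independent of $n$, $\theta$, $s$ and $t$. Feeding this uniform increment bound into the chaining argument from Corollary \ref{cor:uniformchaining} delivers stochastic equicontinuity uniformly in $\theta$, and since the sample paths are continuous, uniform tightness in $(C[0,1],\|\cdot\|_\infty)$ follows.

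Combining the two ingredients via the uniform Prokhorov-type framework of Appendix \ref{app:UniformAsymptotics} gives the desired uniform convergence in distribution in $C[0,1]$. The main technical obstacle is the fidi step: a pointwise multivariate CLT is not enough for uniformity over $\Theta$, and a quantitative version such as Berry--Esseen is essential. The uniform moment control from Assumption \ref{asm:UniformBounds} is precisely what makes that step go through, and it also yields the uniform sub-Gaussian increment bound driving tightness, so no use of Assumption \ref{asm:UniformRates} is needed here.
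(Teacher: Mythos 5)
Your architecture matches the paper's: write $\overline{D}^{(n),\theta}$ as a normalized i.i.d.\ sum of centered $C[0,1]$-valued summands, prove uniform finite-dimensional convergence, prove uniform stochastic equicontinuity via Hoeffding and chaining, and combine by the uniform Prokhorov-type result (Lemma~\ref{lem:ProkhorovsPrinciple}). Two points deserve care.

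First, the uniform Prokhorov lemma requires \emph{both} sequential tightness of the approximating family $(\overline{D}^{(n),\theta})$ \emph{and} uniform tightness of the candidate limit family $(\Gamma^\theta)_{\theta\in\Theta}$. You establish the former but do not address the latter. The paper fills this in by noting, via the same bound on $\ex[(\Gamma^\theta_t-\Gamma^\theta_s)^2]\le (2CC')^2(t-s)^2$, that the Gaussian increments $\frac{1}{t-s}(\Gamma^\theta_t-\Gamma^\theta_s)$ are sub-Gaussian with a variance factor uniform in $\theta$ and $s<t$, so the identical chaining argument yields uniform stochastic equicontinuity, hence uniform tightness, of $(\Gamma^\theta)$. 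Without this step Lemma~\ref{lem:ProkhorovsPrinciple} cannot be invoked.

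Second, the fidi step needs a quantitative CLT whose error bound is controlled by moment bounds \emph{alone}. A generic ``multivariate Berry--Esseen bound'' is not quite enough: standard Kolmogorov-distance or convex-set-distance versions degrade as the limiting covariance becomes singular, and here $\var(\mathbf{W}^\theta)$ is bounded above but never assumed bounded below (it vanishes identically under $H_0$, for instance). The paper sidesteps this by citing the uniform CLT of \citet[Proposition 19]{lundborg2021conditional}, which gives a bounded-Lipschitz-metric bound under only trace and third-moment conditions and hence matches the $d_{BL}$ definition of uniform weak convergence used throughout. If you want to keep the Berry--Esseen phrasing, you should specify a version stated in the bounded-Lipschitz or Wasserstein metric, for which no covariance lower bound is needed. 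Your observation that Assumption~\ref{asm:UniformRates} plays essentially no role is correct, modulo the fact that it is what supplies $|J_n|\to\infty$.
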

\begin{proof}
Let $\overline{D}^{(n)} \coloneqq D_1^{(n)}-\sqrt{|J_n|} \cdot \gamma$ and note that 
$$
    \overline{D}^{(n)} = |J_n|^{-\frac{1}{2}} \sum_{j\in J_n} W_j,
$$
where $W_j$ is given by $W_{j,t} \coloneqq \int_0^t G_{j,s} (\blambda_{j,s} - \lambda_{j,s})\mathrm{d}s - \gamma_t$ for each $j\in J_n$. By assumption, the variables $\{W_j\colon j\in J_n\}$ are i.i.d. with the same distribution as the process $W$ given by $W_{t} \coloneqq \int_0^t G_{s} (\blambda_{s} - \lambda_{s})\mathrm{d}s - \gamma_t$.
For each $\theta\in \Theta$, let $\Gamma^\theta$ be a Gaussian process with mean zero and covariance function $(s,t)\mapsto \cov(W_s^\theta,W_t^\theta)$, which is well-defined by computations shown below.

We will show that $\overline{D}^{(n),\theta} \convUD \Gamma^\theta$ in $C[0,1]$ by applying Lemma \ref{lem:ProkhorovsPrinciple}, which is an example of Prokhorov's method of "tightness + identification of limit".
We first prove that for any given $k\in \mathbb{N}$ and $0\leq t_1<t_2<\cdots<t_k\leq 1$,
\begin{align*}
    \mathbf{D}^{(n)} \coloneqq
    (\overline{D}_{t_1}^{(n)},\overline{D}_{t_2}^{(n)},\ldots, \overline{D}_{t_k}^{(n)})
    \convUD
    (\Gamma_{t_1}^\theta,\Gamma_{t_2}^\theta,\ldots, \Gamma_{t_k}^\theta).
\end{align*}
To this end we will apply the uniform CLT of \citet[Proposition 19]{lundborg2021conditional}
to the sequence of random vectors $\mathbf{D}^{(n)}\in \mathbb{R}^k$, i.e., the sequence of normalized sums of i.i.d. copies of $\mathbf{W} \coloneqq (W_{t_1},\ldots , W_{t_k})$.
The process $(W_t)$ is mean zero and hence $\mathbf{W}$ is also mean zero. For any $t\in [0,1]$ we observe that
\begin{align*}
    \var(W_{t}) = \var(W_{t}+\gamma_t)
    \leq \ex\left[\pa{\int_0^t |G_s|\cdot |\blambda_s - \lambda_s|\mathrm{d}s}^2
        \right]
    \leq 2C^2(C')^2.
\end{align*}
Therefore the trace of $\var(\mathbf{W})$ is uniformly bounded, which is implies the trace condition in Proposition 19 of \citet{lundborg2021conditional}. 
From Hölder's inequality and Minkowski's inequality, we note that for any $\mathbf{a},\mathbf{b}\in \mathbb{R}^k$
$$
    \|\mathbf{a} + \mathbf{b}\|_2^3 
        \leq k^{3/2}\|\mathbf{a} + \mathbf{b}\|_3^3 
        \leq k^{3/2} (\|\mathbf{a}\|_3 + \|\mathbf{b}\|_3)^3 
        \leq 8 k^{3/2} (\|\mathbf{a}\|_3^3 + \|\mathbf{b}\|_3^3).
$$
Combining the above with Assumption \ref{asm:UniformBounds} and Lemma \ref{lem:regularityofgammasigma} yields that 
$$
    \ex[\|\mathbf{W}\|_2^3] 
    \leq 
    C_k \ex\left[\pa{\int_0^1 |G_s|\cdot|\blambda_s - \lambda_s|\mathrm{d}s}^3\right]
    + C_k \sup_{t\in[0,1]} |\gamma_t|^3
    \leq 16 C_k C^3(C')^3,
$$
where $C_k=8k^{5/2}$. Hence Proposition 19 of \citet{lundborg2021conditional} lets us conclude that $\mathbf{D}^{(n)} \convUD \mathcal{N}(0,\var(\mathbf{W}))$. By definition of $\Gamma^\theta$, this is equivalent to 
$\mathbf{D}^{(n)} \convUD (\Gamma_{t_1}^\theta,\Gamma_{t_2}^\theta,\ldots, \Gamma_{t_k}^\theta)$.
 
We now argue that $(\overline{D}^{(n)})$ and $(\Gamma^\theta)$ are stochastically equicontinuous uniformly over $\Theta$. From the definition of $\Gamma^\theta$ and by Assumption \ref{asm:UniformBounds}, it follows that 
\begin{align} \label{eq:squareincrement}
    \ex[(\Gamma_t^\theta-\Gamma_s^\theta)^2] 
    = \ex[(W_t-W_s)^2] 
    \leq (2CC' (t-s))^2.
\end{align}
Hence $\frac{1}{t-s}(\Gamma_t^\theta-\Gamma_s^\theta)$ is Gaussian with a variance bounded over $\Theta$ and $0\leq s<t\leq 1$. In particular, it is sub-Gaussian with a uniform variance factor over $\Theta$ and $0\leq s<t\leq 1$.
Since $W$ is uniformly bounded over $\Theta$, an application of Hoeffding's Lemma yields that $A_j^{t,s}\coloneqq \frac{1}{t-s}(W_{j,t}-W_{j,s})$ is also sub-Gaussian with a variance factor $\nu$ that is uniform over $\Theta$, $0\leq s<t\leq 1$, and $j\in J_n$. Letting $A_\bullet^{s,t} =\frac{1}{t-s}(\overline{D}_t^{(n)}-\overline{D}_s^{(n)})$, we have
\begin{align*}
    \ex e^{x A_\bullet^{s,t}} 
    =   \prod_{j\in J_n}  
        \ex[e^{x|J_n|^{-1/2}A_j^{s,t}}]
    \leq \prod_{j\in J_n} e^{\frac{x^2\nu}{2|J_n|}} 
    = e^{x^2\nu/2}.
\end{align*}
Hence $A_\bullet^{s,t}$ is also sub-Gaussian with a variance factor uniformly over $\Theta$ and $0\leq s<t\leq 1$.

From the uniform chaining lemma, Corollary \ref{cor:uniformchaining}, we now conclude that both $(\Gamma^\theta)$ and $(\overline{D}^{(n)})$ are stochastically equicontinuous uniformly over $\Theta$. By Proposition~\ref{prop:equicontinuityistightness}, this means that the collection $(\overline{D}^{(n),\theta})$ is sequentially tight
and that $(\Gamma^\theta)$, which is constant in $n$, is uniformly tight. 

Now we have shown convergence of the finite-dimensional marginals and appropriate tightness conditions, so Lemma \ref{lem:ProkhorovsPrinciple} lets us conclude that $\overline{D}^{(n)} \convUD \Gamma^\theta$ weakly in $C[0,1]$.
\end{proof}

Before moving on to the term $D_2^{(n)}$, we first note that Lemma \ref{lem:D1asymptotics} implies that stochastic boundedness, as we will use this result in the proof of Theorem \ref{thm:main}.

\begin{lem}\label{lem:D1bounded}
    Suppose that Assumptions \ref{asm:UniformBounds} and \ref{asm:UniformRates} hold.
    Then $\overline{D}^{(n)} \coloneqq D_1^{(n)}-\sqrt{|J_n|} \cdot \gamma$ is stochastically bounded uniformly over $\Theta$, i.e., 
    for every $\varepsilon>0$ there exists $K>0$ such that
    \begin{align*}
        \limsup_{n\to \infty}\sup_{\theta\in \Theta} \mathbb{P}\pa{
        \|\overline{D}^{(n),\theta}\|_\infty > K} < \varepsilon.
    \end{align*}
\end{lem}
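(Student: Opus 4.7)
The plan is to derive uniform stochastic boundedness of $\overline{D}^{(n)}$ as a direct corollary of the uniform distributional convergence already established in Lemma \ref{lem:D1asymptotics}, combined with uniform tightness of the limiting family. Since $\|\cdot\|_\infty \colon (C[0,1],\|\cdot\|_\infty) \to \mathbb{R}$ is a continuous functional, I would first invoke a uniform continuous mapping theorem (from Appendix \ref{app:UniformAsymptotics}) to promote the process-level convergence $\overline{D}^{(n),\theta}\convUD \Gamma^\theta$ to the real-valued convergence
\begin{equation*}
    \|\overline{D}^{(n),\theta}\|_\infty \convUD \|\Gamma^\theta\|_\infty
\end{equation*}
uniformly over $\theta\in \Theta$.

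Second, I would establish that the family $\{\|\Gamma^\theta\|_\infty : \theta\in \Theta\}$ is uniformly tight in $\mathbb{R}$. This is essentially free from what is already in the proof of Lemma \ref{lem:D1asymptotics}: that proof shows that $(\Gamma^\theta)_{\theta\in \Theta}$ is stochastically equicontinuous uniformly over $\Theta$, with $\Gamma_0^\theta \equiv 0$ and uniform control of increments as in \eqref{eq:squareincrement}. By Proposition \ref{prop:equicontinuityistightness}, the family is then uniformly tight in $C[0,1]$, and continuity of $\|\cdot\|_\infty$ preserves tightness under the push-forward.

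Finally, I would combine the two ingredients via the Portmanteau direction of uniform weak convergence applied to the half-lines $(K,\infty)$ in $\mathbb{R}$: for every $K>0$ one obtains
\begin{equation*}
    \limsup_{n\to \infty}\sup_{\theta\in \Theta} \mathbb{P}\pa{\|\overline{D}^{(n),\theta}\|_\infty > K}
    \;\leq\; \sup_{\theta\in \Theta}\mathbb{P}\pa{\|\Gamma^\theta\|_\infty \geq K},
\end{equation*}
and uniform tightness of $\{\|\Gamma^\theta\|_\infty\}$ permits choosing $K$ large enough that the right-hand side is strictly less than $\varepsilon$. The only delicate point is the Portmanteau-type step in the uniform setting (for which one may have to restrict to $K$ at which the limiting tail is continuous uniformly in $\theta$, easily arranged since the distribution functions of $\|\Gamma^\theta\|_\infty$ have at most countably many atoms each and we only need a single such $K$). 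Everything else is bookkeeping on facts established in Lemma \ref{lem:D1asymptotics} and Appendix \ref{app:UniformAsymptotics}.
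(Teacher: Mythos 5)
Your proposal is correct and follows essentially the same skeleton as the paper's proof: promote the uniform distributional convergence $\overline{D}^{(n),\theta}\convUD \Gamma^\theta$ from Lemma~\ref{lem:D1asymptotics} to $\|\overline{D}^{(n),\theta}\|_\infty \convUD \|\Gamma^\theta\|_\infty$ via the uniform continuous mapping theorem, then apply a uniform Portmanteau inequality to pass the limsup onto a tail of $\|\Gamma^\theta\|_\infty$.

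Where you diverge is in the final step of controlling $\sup_{\theta}\mathbb{P}(\|\Gamma^\theta\|_\infty > K)$. The paper does this concretely: it applies Markov's inequality and then the Sudakov--Fernique comparison inequality, using the bound $\ex[(\Gamma_t^\theta-\Gamma_s^\theta)^2] \leq (2CC'(t-s))^2$ from \eqref{eq:squareincrement} to compare $\Gamma^\theta$ with a scaled Brownian motion, yielding the explicit uniform bound $\ex\|\Gamma^\theta\|_\infty \leq 2CC'\sqrt{\pi/2}$. You instead reuse the fact, already established in the proof of Lemma~\ref{lem:D1asymptotics} via Proposition~\ref{prop:equicontinuityistightness}, that $(\Gamma^\theta)_{\theta\in\Theta}$ is uniformly tight in $C[0,1]$, and observe that tightness is preserved under the continuous push-forward $\|\cdot\|_\infty$. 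Both are valid. Your route is more economical: it avoids invoking any Gaussian-specific machinery and simply recycles an intermediate fact from the preceding lemma. The paper's route produces an explicit constant and is therefore more informative if one wants quantitative tail bounds, but for the purposes of this lemma (pure stochastic boundedness) nothing is gained. Your parenthetical worry about continuity points for the Portmanteau step is unnecessary given the closed-set form of the inequality (used implicitly also in the paper via \citet{bengs2019uniform} Theorem 4.1), but it does no harm.
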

\begin{proof}
    We have established in the proof of Lemma \ref{lem:D1asymptotics}, under the same conditions, that $\overline{D}^{(n),\theta} \convUD \Gamma^\theta$ weakly in $C[0,1]$.
    By the uniform continuous mapping theorem formulated in Proposition \ref{prop:Uctsmapping}, it follows that $\|\overline{D}^{(n),\theta}\|_\infty \convUD \|\Gamma^\theta\|_\infty$.
    From \citet{bengs2019uniform} Theorem 4.1 we then obtain that
    \begin{align*}
        \limsup_{n\to \infty} \sup_{\theta \in \Theta}
            \mathbb{P}( \|\overline{D}^{(n),\theta}\|_\infty > K) 
        \leq  \sup_{\theta \in \Theta}\mathbb{P}(\|\Gamma^\theta\|_\infty > K) 
        \leq \frac{\ex\|\Gamma^\theta\|_\infty}{K}.
    \end{align*}
    Hence it suffices to argue that $\ex \|\Gamma^\theta\|_\infty$ is uniformly bounded over $\Theta$.
    To this end, we note that Equation \eqref{eq:squareincrement} 
    shows that square means of the increments of $\Gamma^\theta$
    are smaller that those of a standard Brownian motion scaled by $2CC'$. Then the Sudakov–Fernique comparison inequality \citep[Theorem 2.2.3]{adler2007random} allows us to leverage this relationship to the expected uniform norms, i.e., $\ex\|\Gamma^\theta\|_\infty \leq 2CC' \ex (\sup_{t\in[0,1]}|B_t|)$. It can be verified that $\ex (\sup_{t\in[0,1]}|B_t|)$ is finite, and in fact, equal to $\sqrt{\pi/2}$ as shown in \citet{saz2019expectedsup}. 
\end{proof}

\begin{lem}\label{lem:D2asymptotics}
    Suppose that Assumptions \ref{asm:UniformBounds} and \ref{asm:UniformRates}
    hold, and that $G_t = X_t - \Pi_t$ is the additive residual process. 
    Then $D_2^{(n)}\convUP 0$ in $D[0,1]$ as $n\to \infty$.
\end{lem}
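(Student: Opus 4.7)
The plan is to exploit the structural feature of the additive residual process to reduce $D_2^{(n)}$ to an object structurally identical to $R_1^{(n)}$, and then to adapt the proof of Lemma \ref{lem:R1} almost verbatim. Specifically, because $G_t = X_t - \Pi_t$ and $\hat G_t^{(n)} = X_t - \hat\Pi_t^{(n)}$, the increment $\eta_s := \hat G_s^{(n)} - G_s = \Pi_s - \hat\Pi_s^{(n)}$ no longer involves $X$ and is, conditionally on $\tilde{\mathcal{G}}_1^{n,c}$, $\mathcal{F}_{s-}$-predictable. Writing
\begin{align*}
D_{2,t}^{(n)} = \frac{1}{\sqrt{|J_n|}} \sum_{j\in J_n} \int_0^t \eta_{j,s}(\blambda_{j,s}-\lambda_{j,s})\,\mathrm{d}s,
\end{align*}
the innovation theorem gives $\ex(\blambda_s - \lambda_s \mid \mathcal{F}_{s-}) = 0$, so conditioning first on $\mathcal{F}_{s-} \vee \tilde{\mathcal{G}}_1^{n,c}$ and using the predictability of $\eta_{j,s}$ yields $\ex(D_{2,t}^{(n)}\mid \tilde{\mathcal{G}}_1^{n,c}) = 0$, exactly as in \eqref{eq:R1meanzero}.

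Next, I would obtain pointwise convergence in probability, uniformly over $\Theta$, via a variance bound. Using conditional independence across $j\in J_n$, Cauchy--Schwarz in the time variable, and the bound $|\blambda_s - \lambda_s|\le 2C$ from \cref{asm:UniformBounds}, one gets
\begin{align*}
\var(D_{2,t}^{(n)}) \le 4C^2 \ex\!\left(\int_0^1 \eta_s^2\,\mathrm{d}s\right) = 4C^2 g(n)^2,
\end{align*}
which tends to $0$ uniformly over $\Theta$ by \cref{asm:UniformRates}. Chebyshev's inequality then gives $D_{2,t}^{(n)} \convUP 0$ for each fixed $t$.

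To upgrade pointwise convergence to uniform convergence in $D[0,1]$, I would verify stochastic equicontinuity uniformly over $\Theta$ by the same sub-Gaussian chaining strategy used in Lemma \ref{lem:R1}. Fix $0\le s<t\le 1$ and set
\begin{align*}
\tilde W_j^{s,t} = \frac{1}{t-s}\int_s^t \eta_{j,u}(\blambda_{j,u}-\lambda_{j,u})\,\mathrm{d}u,
\end{align*}
so that $(D_{2,t}^{(n)}-D_{2,s}^{(n)})/(t-s) = |J_n|^{-1/2}\sum_{j\in J_n}\tilde W_j^{s,t}$. Because $\hat G^{(n)}$ is assumed to satisfy the same uniform bound as $G$, we have $|\eta_{j,u}|\le 2C'$, hence $|\tilde W_j^{s,t}|\le 4CC'$; combined with $\ex(\tilde W_j^{s,t}\mid \tilde{\mathcal{G}}_1^{n,c})=0$ (from the same conditioning argument above), the conditional Hoeffding lemma (Lemma \ref{lem:conditionalHoeffding}) yields conditional sub-Gaussianity with a variance factor depending only on $C,C'$. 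Averaging independent conditionally sub-Gaussian variables preserves the variance factor, so $(D_{2,t}^{(n)}-D_{2,s}^{(n)})/(t-s)$ is sub-Gaussian with a constant, $\theta$-independent variance factor. The uniform chaining lemma (Corollary \ref{cor:uniformchaining}) then delivers stochastic equicontinuity uniformly over $\Theta$, and combining this with pointwise convergence via Lemma \ref{lem:uniformequcont} gives $\sup_{t\in[0,1]}|D_{2,t}^{(n)}|\convUP 0$, which is the desired convergence in $D[0,1]$.

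The only step that requires genuine care is the observation that makes everything work, namely the cancellation of $X_s$ in $\eta_s$ for the additive residual process; once predictability is in hand, the remainder of the argument is a direct translation of the proof of Lemma \ref{lem:R1}. For a general residual process this cancellation fails, $\eta_s$ is only $\mathcal{G}_{s-}$-predictable, and the conditional mean of the integrand need not vanish, which is precisely why the authors restrict to the additive case in this lemma.
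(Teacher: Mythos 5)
Your proposal is correct and follows the same strategy as the paper's proof: exploit that for the additive residual process $\hat G_s^{(n)} - G_s = \Pi_s - \hat\Pi_s^{(n)}$ is (conditionally) $\mathcal{F}_s$-predictable, invoke the innovation theorem to obtain conditional mean zero, derive pointwise convergence via a conditional variance bound of order $g(n)^2$ and Chebyshev, and then establish uniform stochastic equicontinuity by conditional Hoeffding plus the uniform chaining lemma before applying Lemma~\ref{lem:uniformequcont}. The paper's own argument is a near-verbatim instance of the same reduction to the $R_1^{(n)}$-type structure that you describe.
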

\begin{proof}
Note first that the terms in $D_2^{(n)}$ are i.i.d. conditionally on $\tilde{\cG}_1^{n,c}$, with the same distribution as the process $\xi$ given by
\begin{align*}
    \xi_t = \frac{1}{\sqrt{|J_n|}}\int_0^t (\hat{G}_s^{(n)} - G_s)(\blambda_s - \lambda_s)\mathrm{d}s.
\end{align*}
Since $\blambda_t$ is independent of $\tilde{\cG}_1^{n,c}$, we have from the innovation theorem that
$$
    \ex(\blambda_t \mid \mathcal{F}_{t-}\vee \tilde{\cG}_1^{n,c})
    = \ex(\blambda_t \mid \mathcal{F}_{t-}) = \lambda_t.
$$
For the additive residual process we also note that
$G_t-\hat{G}_t^{(n)} = \hat{\Pi}_t^{(n)} - \Pi_t$ is $\mathcal{F}_{t}$-predictable conditionally on $\tilde{\cG}_1^{n,c}$. It now follows that
\begin{align*}
    \sqrt{|J_n|} \cdot \ex[\xi_t\mid\tilde{\cG}_1^{n,c}]
    &= 
        \int_0^t \ex[(\hat{G}_s^{(n)} - G_s)(\blambda_s - \lambda_s)\mid\tilde{\cG}^{n,c}]\mathrm{d}s \\
    &= 
        \int_0^t \ex[(\hat{G}_s^{(n)} - G_s)(\ex[\blambda_s\mid\mathcal{F}_{s-}\vee \tilde{\cG}^{n,c}]- \lambda_s)\mid\tilde{\cG}^{n,c}]\mathrm{d}s
    = 0.
\end{align*}
We can therefore conclude that $D_2^{(n)}$ is mean zero conditionally on $\tilde{\cG}^{n,c}$. 
Using that the terms of $D_2^{(n)}$ are i.i.d. conditionally on $\tilde{\cG}^{n,c}$ once more, we now obtain that
\begin{align*}
    \var(D_{2,t}^{(n)}\mid\tilde{\cG}^{n,c}) 
    = |J_n| \cdot \var( \xi_t \mid\tilde{\cG}^{n,c} )
    &= \ex\left[\pa{
        \int_0^t (\hat{G}_s^{(n)} - G_s)(\blambda_s - \lambda_s)\mathrm{d}s
        }^2 \Big|\tilde{\cG}^{n,c} \right] \\
    &\leq 4C^2 \cdot \ex\left(
        \int_0^1 (\hat{G}_s^{(n)} - G_s)^2 \mathrm{d}s
        \Big|\tilde{\cG}^{n,c} 
        \right).
\end{align*}
Taking expectation of the above we have 
$\var(D_{2,t}^{(n)}) = \ex(\var(D_{2,t}^{(n)}\mid\tilde{\cG}^{n,c}))
\leq 4 C^2 g(n)^2$.
By Chebyshev's inequality we get for all $\epsilon>0$
\begin{align*}
    \mathbb{P}\pa{|D_{2,t}^{(n)}|>\epsilon} 
        \leq \frac{4C^2 g(n)^2}{\epsilon^2},
\end{align*}
and by Assumption \ref{asm:UniformRates} we conclude that $D_{2,t}^{(n)} \convUP 0$ for each $t\in [0,1]$. 

We know apply the same chaining argument used in the proofs of Lemma \ref{lem:R1} and Lemma \ref{lem:D1asymptotics}. From Assumption \ref{asm:UniformBounds}, we have for $0\leq s<t\leq 1$ that $|\xi_t - \xi_s| \leq 4 \sqrt{|J_n|}CC' (t-s)$. Hence the conditional Hoeffding's lemma (Lemma \ref{lem:conditionalHoeffding}) yields that
$$
    A_j^{s,t} 
        = \frac{1}{t-s}
        \int_s^t (\hat{G}_{j,s}^{(n)} - G_{j,s})(\blambda_{j,s} - \lambda_{j,s})\mathrm{d}s
$$
is sub-Gaussian conditionally on $\tilde{\cG}_1^{n,c}$ with a variance factor $\nu$ that is uniform over $\Theta$ and $s<t$ (cf. the proof of Lemma \ref{lemma:subgaussian}). Letting $A_\bullet^{s,t} =\frac{1}{t-s}(D_{2,t}^{(n)}-D_{2,s}^{(n)})$, we have for any $x\in \mathbb{R}$
\begin{align*}
    \ex \left(e^{x A_\bullet^{s,t}} \right)
    &= \ex\left(\ex\left[e^{x A_\bullet^{s,t}}
        \mid\tilde{\cG}_1^{n,c}\right]\right) \\
    &= \ex\bigg(\prod_{j\in J_n}  
            \ex\left[e^{x|J_n|^{-1/2}A_j^{s,t}}\mid\tilde{\cG}_1^{n,c}
        \right]\bigg)
    \leq \prod_{j\in J_n} e^{\frac{x^2\nu}{2|J_n|}} 
    = e^{x^2\nu/2},
\end{align*}
so $A_\bullet^{s,t}$ is also sub-Gaussian uniformly over $s<t$ and 
$\Theta$. In terms of $D_2^{(n)}$, this means that we can apply the uniform chaining lemma, Corollary \ref{cor:uniformchaining}, and conclude that it is stochastically equicontinuous uniformly over $\Theta$. 

Since $D_{2,t}^{(n)} \convUP 0$ for each $t\in [0,1]$ and $(D_2^{(n)})$ is stochastically equicontinuous uniformly over $\Theta$, Lemma \ref{lem:uniformequcont} now lets us conclude that $\sup_{t\in [0,1]} |D_{2,t}^{(n)}| \convUP 0$ and we are done.
\end{proof}

\subsection{Proof of Theorem \ref{thm:main}}
Before proving Theorem \ref{thm:main}, we first prove that the collection of Gaussian martingales from Proposition \ref{prop:UasymptoticGaussian} is tight in $C[0,1]$ (see Definition \ref{dfn:tight}). 
\begin{lem}\label{lem:Ulimitistight}
    Let $(U^\theta)_{\theta \in \Theta}$ be the collection of Gaussian martingales from Proposition \ref{prop:UasymptoticGaussian}, i.e., $U^\theta$ is a mean zero continuous Gaussian martingale with variance function $\mathcal{V}^\theta$. 
    Under Assumption \ref{asm:UniformBounds}, $(U^\theta)_{\theta \in \Theta}$ is uniformly tight in $C[0,1]$.
\end{lem}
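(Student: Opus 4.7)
The plan is to invoke the time-change representation of continuous Gaussian martingales (Dambis--Dubins--Schwarz): for each $\theta \in \Theta$, $U^\theta$ has the same distribution as $B \circ \mathcal{V}^\theta$, where $B$ is a standard Brownian motion. By Lemma \ref{lem:regularityofgammasigma}, the variance functions $\mathcal{V}^\theta$ are uniformly Lipschitz with constant $L := C(C')^2$ and uniformly bounded by $L$ on $[0,1]$. This uniform regularity of the time-change is exactly what decouples the dependence on $\theta$, and is the main reason the argument works at all.

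The key step is to deduce stochastic equicontinuity uniformly over $\Theta$. For any $\delta>0$, the Lipschitz bound on $\mathcal{V}^\theta$ gives
\begin{align*}
    \sup_{|t-s|\leq \delta}\bigl|U_t^\theta - U_s^\theta\bigr|
    \stackrel{d}{=} \sup_{|t-s|\leq \delta}\bigl|B_{\mathcal{V}^\theta(t)} - B_{\mathcal{V}^\theta(s)}\bigr|
    \leq \omega_B(L\delta),
\end{align*}
where $\omega_B(h) := \sup\{|B_u - B_v| : u,v\in [0,L],\; |u-v|\leq h\}$ is the uniform modulus of continuity of $B$ on the compact interval $[0,L]$. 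The crucial point is that the dominating quantity $\omega_B(L\delta)$ does not depend on $\theta$, and $\omega_B(L\delta) \to 0$ almost surely as $\delta \to 0$ by continuity of Brownian sample paths on $[0,L]$. Hence for any $\eta,\epsilon>0$ one may choose $\delta$ small enough that $\mathbb{P}(\omega_B(L\delta)>\eta)<\epsilon$, yielding
\begin{align*}
    \sup_{\theta\in\Theta}\mathbb{P}\Bigl(\sup_{|t-s|\leq \delta}|U_t^\theta - U_s^\theta| > \eta\Bigr) < \epsilon.
\end{align*}

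Since $U_0^\theta = 0$ for all $\theta$, the initial marginals are trivially tight, and the above display is precisely uniform stochastic equicontinuity as defined in Appendix \ref{app:UniformChaining}. Applying Proposition \ref{prop:equicontinuityistightness} then gives that $(U^\theta)_{\theta\in\Theta}$ is uniformly tight in $C[0,1]$. I do not expect any serious obstacle; the only mildly delicate point is that the time-change representation is invoked in distribution only, but this suffices because tightness depends only on the laws of the processes.
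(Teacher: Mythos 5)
Your proof is essentially the same as the paper's: both use the Dambis--Dubins--Schwarz representation $U^\theta \stackrel{d}{=} B\circ\mathcal{V}^\theta$ (Proposition~\ref{prop:BMrepresentation}), appeal to the uniform Lipschitz bound on $\mathcal{V}^\theta$ from Lemma~\ref{lem:regularityofgammasigma}, and derive a $\theta$-free bound on the oscillation of $B\circ\mathcal{V}^\theta$ over short time windows. Two small variations: you bound the oscillation by the a.s.\ modulus of continuity $\omega_B(L\delta)$ on the compact interval $[0,L]$, whereas the paper uses a Hölder constant $K(\alpha)$; your formulation is marginally cleaner (and technically more careful, since the paper's $K(\alpha)$ is written as a supremum over all of $[0,\infty)$ where Brownian motion is not globally Hölder, though this is harmless because $\mathcal{V}^\theta$ lands in $[0,L]$). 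You finish by invoking Proposition~\ref{prop:equicontinuityistightness}, which strictly speaking yields sequential tightness rather than uniform tightness; for a constant-in-$n$ family in the Polish space $C[0,1]$ these coincide via Prokhorov's theorem, and the paper itself relies on this same identification in the proof of Lemma~\ref{lem:D1asymptotics}, while the proof of Lemma~\ref{lem:Ulimitistight} in the paper instead verifies the two conditions of Theorem~7.3 in Billingsley directly.
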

\begin{proof}
We will use Theorem 7.3 in \citet{billingsley2013convergence}, which characterizes tightness of measures in $C[0,1]$. The first condition of the theorem is trivially satisfied for $(U^\theta)_{\theta \in \Theta}$ since $\mathbb{P}(U_0^\theta = 0)=1$ for all $\theta \in \Theta$. 

By Proposition \ref{prop:BMrepresentation}, $U^\theta$ has a distributional representation as a time-transformed Brownian motion such that $(U_t^\theta)_{t\in[0,1]} \overset{\mathcal{D}}{=} (B_{\mathcal{V}^\theta(t)})_{t\in [0,1]}$, where $B$ is a Brownian motion.
Recall that Brownian motion is $\alpha$-Hölder continuous for $\alpha \in (0,\frac12)$, which means 
that 
$$K(\alpha) = \sup_{s \neq t} \frac{|B_t - B_s|}{|t - s|^\alpha} < \infty.$$
Note also that the collection of variance functions is uniformly Lipschitz by Lemma \ref{lem:regularityofgammasigma} with uniform Lipschitz constant $C_0$, say. It follows that for every $\epsilon>0$,
\begin{align*}
    \lim_{\delta\to 0^+} \sup_{\theta \in \Theta}
    \mathbb{P}\Big(\sup_{|t-s|<\delta}|U_t^\theta 
        - U_s^\theta|>\epsilon \Big)
    &= \lim_{\delta\to 0^+} \sup_{\theta \in \Theta}\mathbb{P}\Big(\sup_{|t-s|<\delta}|B_{\mathcal{V}^\theta(t)} 
        - B_{\mathcal{V}^\theta(s)}|>\epsilon \Big)\\
    &\leq 
    \lim_{\delta\to 0^+} \sup_{\theta \in \Theta}\mathbb{P}\Big(K(\alpha) \sup_{|t-s|<\delta}|\mathcal{V}^\theta(t) 
        - \mathcal{V}^\theta(s)|^\alpha >\epsilon \Big) \\
    &= \lim_{\delta\to 0^+} \mathbb{P}\Big(K(\alpha) C_0^\alpha \delta^\alpha >\epsilon \Big) = 0.
\end{align*}
This establishes the second condition of Theorem 7.3 in \citet{billingsley2013convergence}, and we thus conclude that $(U^\theta)_{\theta \in \Theta}$ is uniformly tight in $C[0,1]$. 
\end{proof}

We now return to the proof of Theorem \ref{thm:main}.

For part i), we first note that under $H_0$ we can take $\blambda_t = \lambda_t$,
which implies that both $D_1^{(n)}$ and $D_2^{(n)}$ equal the zero-process. 

Combining Propositions \ref{prop:UasymptoticGaussian} and \ref{prop:remainderterms} with the uniform version of Slutsky's theorem formulated in Lemma \ref{lem:GeneralSlutsky}, we conclude that
\begin{align*}
    \sqrt{|J_n|}\hat{\gamma}^{(n)}
        = 
        \underbrace{U^{(n)}}_{\convUDnull U^\theta} 
        + \underbrace{R_1^{(n)}+R_2^{(n)}+R_3^{(n)}}_{
            \convUP 0}
        + \underbrace{D_1^{(n)}+D_2^{(n)}}_{
            =0 \, \text{under } H_0}
        \convUPnull U^\theta,
\end{align*}
in $D[0,1]$ as $n\to \infty$, where $U^\theta$ is the Gaussian martingale from Proposition \ref{prop:UasymptoticGaussian}.

For part ii) we can, in addition to Propositions \ref{prop:UasymptoticGaussian} and \ref{prop:remainderterms}, apply Proposition \ref{prop:Dasymptotics} and Lemma~\ref{lem:D1bounded}. Using the triangle inequality on the decomposition \eqref{eq:decomposition} yields that
\begin{align*}
    \sqrt{|J_n|}\cdot \|\hat{\gamma}^{(n)} - \gamma\|_\infty
        \leq
        &\|U^{(n)}\|_\infty + \|D_1^{(n)} - \sqrt{|J_n|}\gamma \|_\infty \\
            &+ \|R_1^{(n)}\|_\infty 
            + \|R_2^{(n)}\|_\infty 
            + \|R_3^{(n)}\|_\infty 
            + \|D_2^{(n)}\|_\infty.
\end{align*}
All the terms in the second line converge in probability to zero uniformly over $\Theta$. Combined with the convergences established in Proposition \ref{prop:UasymptoticGaussian} and Lemma \ref{lem:D1asymptotics}, we obtain that
\begin{align}\label{eq:LCMboundednes}
    &\limsup_{n\to \infty}\sup_{\theta\in \Theta} \mathbb{P}\pa{
    \sqrt{|J_n|} \cdot \|\hat{\gamma}^{(n),\theta} - \gamma^\theta\|_\infty > K} \nonumber \\
    & \qquad \leq 
    \sup_{\theta\in \Theta} 
        \mathbb{P}\pa{ \| U^\theta \|_\infty > K/6}
    + \sup_{\theta\in \Theta} 
        \mathbb{P}\pa{ \| \Gamma^\theta \|_\infty > K/6},
\end{align}
where $\Gamma^\theta$ is the limiting Gaussian process from (the proof of) Lemma \ref{lem:D1asymptotics}. 
The last term in \eqref{eq:LCMboundednes} can be made arbitrarily small for $K$ sufficiently large by Lemma \ref{lem:D1bounded}. 
Lemma~\ref{lem:Ulimitistight} states that the family $(U^\theta)_{\theta \in \Theta}$ is tight in $C[0,1]$, and hence the family $(\|U^\theta\|_\infty)_{\theta \in \Theta}$ is tight in $\mathbb{R}_{\geq 0}$. This implies that the first term in \eqref{eq:LCMboundednes} can also be made arbitrarily small for $K$ sufficiently large.
This establishes \eqref{eq:stochboundedness} and we are done.
\hfill \qed{}

\subsection{Proof of Proposition \ref{prop:varianceconsistent}}
Consider the decomposition of the variance function estimator given by
    \begin{align*}
        \hat{\mathcal{V}}_n(t) & = A^{(n)}_t + B^{(n)}_t + 2 C^{(n)}_t
    \end{align*}
where 
    \begin{align*}
        A^{(n)}_t & =
        \frac{1}{|J_n|} \sum_{j \in J_n} 
        \int_0^t G_{j,s}^2 \mathrm{d} N_{j,s},
        \\
        B^{(n)}_t & = 
        \frac{1}{|J_n|} \sum_{j \in J_n} 
        \int_0^t \left( G_{j,s} - \hat{G}_{j,s}^{(n)} \right)^2 \mathrm{d} N_{j,s}, 
        \\
        C^{(n)}_t & = 
        \frac{1}{|J_n|} \sum_{j \in J_n}
        \int_0^t G_{j,s} \left(G_{j,s} - \hat{G}_{j,s}^{(n)} \right) 
        \mathrm{d} N_{j,s}.
    \end{align*}
We first consider the asymptotic limit of $A^{(n)}$, which is the empirical mean of $|J_n|$ i.i.d. samples of the process $\int_0^t G_s^2\mathrm{d}N_s$.
Under Assumption \ref{asm:UniformBounds}, we can apply the first part of Lemma \ref{lem:countingmartingales} which states $\bM_t^2 - \bLambda_t$ is a martingale. We use this fact to note that
\begin{align*}
    \ex(N_1^2) 
        = \ex((\bM_1 + \bLambda_1)^2)
        \leq 2\left(\ex(\bM_1^2) + \ex(\bLambda_1^2)\right)
        = 4\, \ex\left(\Big(\int_0^1 \blambda_s \mathrm{d}s\Big)^2\right)
        \leq 4 C^2.
\end{align*}
Now, another use of Assumption \ref{asm:UniformBounds} shows that $\int_0^t G_s^2\mathrm{d}N_s$ has a second moment bounded by $4(CC')^2$. 
Thus we can apply the uniform law of large numbers \cite[Lemma 19]{shah2020hardness} to conclude for each $t\in [0,1]$,
\begin{align*}
    A_t^{(n)} 
        = \frac{1}{|J_n|} \sum_{j \in J_n} 
            \int_0^t G_{j,s}^2 \mathrm{d} N_{j,s}
        \convUP 
        \ex\left(\int_0^t G_{s}^2 \mathrm{d} N_s \right) = \mathcal{V}(t).
\end{align*}
Note also that $A^{(n)}$ and $\mathcal{V}$ are non-decreasing and that the collection $(\mathcal{V}^\theta)_{\theta\in\Theta}$ is uniformly equicontinuous by Lemma \ref{lem:regularityofgammasigma}. These are exactly the conditions for Lemma \ref{lem:convergenceofincreasing}, so we can automatically conclude that $\sup_{t\in [0,1]}|A_t^{(n)} - \mathcal{V}(t)|\convUP 0$.

Next we show that the remainder terms  
$B^{(n)}$ and $C^{(n)}$ converge uniformly 
to zero in expectation. Similarly to the proof of \cref{lemma:R2}, we have under Assumptions \ref{asm:UniformBounds} and \ref{asm:UniformRates},
    \begin{align*}
        \ex \left( \sup_{0 \leq t \leq 1} B_t^{(n)} \right) 
        = \ex (B_1^{(n)}) 
        & = \ex \left(
            \frac{1}{|J_n|} \sum_{j \in J_n} 
            \int_0^1 \left( G_{j,s} - \hat{G}_{j,s}^{(n)} \right)^2 \blambda_{j, s} \mathrm{d}s
        \right) 
        \\
        & = \ex \left(
        \ex \left( 
        \frac{1}{|J_n|} \sum_{j \in J_n} 
            \int_0^1 \left( G_{j,s} - \hat{G}_{j,s}^{(n)} \right)^2 
            \blambda_{j, s} \mathrm{d}s
        \mid 
        \tilde{\mathcal{G}}_1^c
        \right)
        \right)
        \\
        & = 
        \ex \left( 
            \int_0^1 \left( G_{s} - \hat{G}_{s}^{(n)} \right)^2 \blambda_{s} \mathrm{d}s
        \right)
        \\
        & \leq C \cdot g(n)^2 \longrightarrow 0
    \end{align*}
as $n \to \infty$ uniformly over $\Theta$. 
Lastly, we see that
    \begin{align*}
        \ex \left| \sup_{0 \leq t \leq 1} C_t^{(n)} \right| 
        & \leq 
        \ex \left( \sup_{0 \leq t \leq 1} |C_t^{(n)}| \right)
        \\
        & \leq 
        \ex \left( 
            \frac{1}{|J_n|} \sum_{j \in J_n}
            \sup_{0 \leq t \leq 1}
            \int_0^t |G_{j,s}| |G_{j,s} - \hat{G}_{j,s}^{(n)}| \blambda_{j, s}
            \mathrm{d} s
        \right)
        \\
        & =
        \ex \left(
            \frac{1}{|J_n|} \sum_{j \in J_n}
            \int_0^1 |G_{j,s}| |G_{j,s} - \hat{G}_{j,s}^{(n)}| \blambda_{j, s}
            \mathrm{d} s
        \right)
        \\
        & = 
        \ex \left( 
        \ex \left( 
            \frac{1}{|J_n|} \sum_{j \in J_n}
            \int_0^1 |G_{j,s}| |G_{j,s} - \hat{G}_{j,s}^{(n)}| \blambda_{j, s}
            \mathrm{d} s
        \mid
        \tilde{\mathcal{G}}_1^c
        \right) 
        \right) 
        \\
        & = 
        \ex \left( 
            \int_0^1 |G_{s}||G_{s} - \hat{G}_{s}^{(n)}| \blambda_{s}
            \mathrm{d} s
        \right)
        \\
        & \leq CC' \ex \left( 
                \int_0^1 |G_{s} - \hat{G}_{s}^{(n)}|
                \mathrm{d} s
            \right) \\
        & \leq CC' \cdot g(n) \longrightarrow 0
    \end{align*}
as $n \to \infty$ uniformly over $\Theta$ by \cref{asm:UniformRates}. 
Combining the convergences established for $A^{(n)}$, $B^{(n)}$, and $C^{(n)}$, we get by a generalized Slutsky (Lemma \ref{lem:SkorokhodSlutsky}) that 
$$
    \sup_{t\in[0,1]}|\hat{\mathcal{V}}_n(t) - \mathcal{V}(t)| \convUP 0.
$$
\hfill \qed{}

\subsection{Proof of Corollary \ref{cor:statisticsdistribution}}
Under \cref{asm:UniformBounds,asm:UniformRates} we know by \cref{thm:main} and \cref{prop:varianceconsistent} that
    \begin{align}\label{eq:LCMandVarconvergent}
    \sqrt{|J_n|} \hat{\gamma}^{(n),\theta} \convUDnull U^\theta
    \qquad \text{and} \qquad
    \hat{\mathcal{V}}_n^\theta \convUPnull \mathcal{V}^\theta
    \end{align}
in $D[0,1]$ as $n \to \infty$. If we were to show pointwise convergence of the test statistic, this would now be a straightforward consequence of the continuous mapping theorem. 
However, to show uniform convergence, we will need an additional tightness argument. 

Let $(\theta_n)_{n\in\mathbb{N}}\subset \Theta_0$ be an arbitrary sequence. Proposition \ref{prop:seqUni} then states that it suffices to show that there exists a subsequence $(\theta_{k(n)})_{n\in \mathbb{N}} \subseteq (\theta_{n})_{n\in \mathbb{N}}$, with $k\colon \mathbb{N} \to \mathbb{N}$ strictly increasing, such that
\begin{equation}\label{eq:ProofOfTeststatistic}
    \lim_{n\to \infty} d_{BL}\big(\hat{D}_{k(n)}^{\theta_{k(n)}}, 
    \mathcal{J}(U^{\theta_{k(n)}},\mathcal{V}^{\theta_{k(n)}})\big)
    = 0.
\end{equation}
Here $d_{BL}$ denotes the bounded Lipschitz metric defined in Appendix \ref{app:UniformAsymptotics}. By Lemma~\ref{lem:Ulimitistight}, the collection $(U^\theta)_{\theta\in \Theta}$ is tight in $C[0,1]$ under Assumption \ref{asm:UniformBounds}. Therefore, Prokhorov's theorem \citep[Theorem 23.2]{kallenberg2021foundations} asserts that there exists a subsequence $(\theta_{a(n)}) \subset (\theta_n)$, and a $C[0,1]$-valued random variable $\tilde{U}$ such that $U^{\theta_{a(n)}} \xrightarrow{\mathcal{D}} \tilde U$ in $C[0,1]$.

Likewise, Lemma \ref{lem:regularityofgammasigma} states that the collection $(V^\theta)_{\theta\in \Theta}$ is uniformly bounded and uniformly equicontinuous under Assumption \ref{asm:UniformBounds}. Thus the Arzelà-Ascoli theorem yields that there exists a further subsequence $(\theta_{b(n)})\subset (\theta_{a(n)})$ and a function $\tilde{\mathcal{V}}\in C[0,1]$ such that $\|\mathcal{V}^{\theta_{b(n)}} - \tilde{\mathcal{V}}\|_\infty \to 0$. 

Combining the convergences of $U^{\theta_{b(n)}}$ and $\mathcal{V}^{\theta_{b(n)}}$ with those in Equation \eqref{eq:LCMandVarconvergent}, it follows from the triangle inequality of the metric $d_{BL}$ that also
    \begin{align*}
    \sqrt{|J_{b(n)}|} \hat{\gamma}^{(b(n)),{\theta_{b(n)}}} \xrightarrow{\mathcal{D}} \tilde{U}
    \qquad \text{and} \qquad
    \hat{\mathcal{V}}_{b(n)}^{\theta_{b(n)}}\xrightarrow{P} \tilde{\mathcal{V}},
    \end{align*}
in $D[0,1]$ as $n\to \infty$. Now we may use that convergence in Skorokhod topology is equivalent to convergence in uniform topology whenever the limit variable continuous, see e.g. \citet[Theorem 23.9]{kallenberg2021foundations}. Hence the convergences above also hold in $(D[0,1], \|\cdot\|_\infty)$.

Since $\mathcal{V}$ is deterministic, this implies the joint convergences
    \begin{align*}
        (U^{\theta_{b(n)}},\mathcal{V}^{\theta_{b(n)}})
        \xrightarrow{\mathcal{D}}
        (\tilde{U},\tilde{\mathcal{V}})
            \qquad \text{and} \qquad
        \Big(\sqrt{|J_{b(n)}|} \hat{\gamma}^{(b(n)),{\theta_{b(n)}}}, \hat{\mathcal{V}}_{b(n)}^{\theta_{b(n)}}\Big) 
        \xrightarrow{\mathcal{D}} 
        (\tilde{U},\tilde{\mathcal{V}})
    \end{align*}
in the product space $D[0,1]\times D[0,1]$ endowed with the uniform topology. 
Since $(\tilde{U},\tilde{\mathcal{V}}) \in C[0,1]\times \overline{\{\mathcal{V}^\theta \colon \theta \in \Theta_0\}}$ takes values in the continuity set of $\mathcal{J}$ by assumption, the classical continuous mapping theorem lets us conclude that
\begin{align*}
    \mathcal{J}(U^{\theta_{b(n)}},\mathcal{V}^{\theta_{b(n)}})
    \xrightarrow{\mathcal{D}}
    \mathcal{J}(\tilde{U},\tilde{\mathcal{V}})
        \quad \text{and} \quad
    \hat{D}_{b(n)}^{\theta_{b(n)}}=\mathcal{J}\Big(\sqrt{|J_{b(n)}|} \hat{\gamma}^{(b(n)),{\theta_{b(n)}}}, \hat{\mathcal{V}}_{b(n)}^{\theta_{b(n)}}\Big) 
    \xrightarrow{\mathcal{D}} 
    \mathcal{J}(\tilde{U},\tilde{\mathcal{V}})
\end{align*}
as $n \to \infty$. Now another application of the triangle inequality with $\mathcal{J}(\tilde{U},\tilde{\mathcal{V}})$ as intermediate value shows that \eqref{eq:ProofOfTeststatistic} holds with $k(n)=b(n)$, so we are done.
\hfill $\square$


\subsection{Proof of Theorem \ref{thm:LCTlevel}}
We will apply Corollary \ref{cor:statisticsdistribution} with the functional $\mathcal{J}$ given by
$$
    \mathcal{J}(f_1,f_2) = 
    \one(f_2 \neq 0)\frac{\|f_1\|_\infty }{\sqrt{|f_2(1)|}}, \qquad f_1,f_2 \in D[0,1].
$$
Under Assumption \ref{asm:UniformVariance}, it suffices to check continuity of $\mathcal{J}$ on the set $\Upsilon$ given by
$$ 
    \Upsilon \coloneqq 
    C[0,1] \times \{f \in C[0,1] \mid \delta_1 \leq |f(1)| \} 
    \supset 
    C[0,1]\times \overline{\{\mathcal{V}^\theta \colon \theta \in \Theta_0\}} .
$$
To see that $\mathcal{J}$ is continuous on $\Upsilon$ in the uniform topology, we note that it can be written as a composition of the continuous maps
\begin{align*}
    \Upsilon \longrightarrow [0,\infty)\times [\delta_1, \infty), &
        \qquad (f_1,f_2) \mapsto (\|f_1\|_\infty,|f_2(1)|), \\
    [0,\infty)\times [\delta_1, \infty) \longrightarrow \mathbb{R}, &
        \qquad (x_1,x_2)\mapsto \frac{x_1}{\sqrt{x_2}}.
\end{align*}
Thus it follows from Corollary \ref{cor:statisticsdistribution} that
\begin{align*}
    \hat T_n = \frac{\sqrt{|J_n|}\sup_{t\in [0,1]} |\hat \gamma_t^{(n)}|
                }{\sqrt{\hat{\mathcal{V}}_n(1)}}
     = \mathcal{J}\left( 
        \sqrt{|J_n|}\hat{\gamma}^{(n)}, \; \hat{\mathcal{V}}_n\right)
    \convUDnull  
    \mathcal{J}(U, \mathcal{V})
    = \frac{\|U\|_\infty}{\mathcal{V}(1)}.
\end{align*}
With $(B_u)$ a Brownian motion it follows by Proposition \ref{prop:BMrepresentation} 
that 
\begin{align}\label{eq:UnormalizedIsS}
    \dfrac{\|U\|_\infty}{\sqrt{\mathcal{V}(1)}}
    \overset{\mathcal{D}}{=}
        \dfrac{\sup_{0\leq t\leq 1}|B_{\mathcal{V}(t)}|}{\sqrt{\mathcal{V}(1)}}
    =   \dfrac{\sup_{0\leq u\leq \mathcal{V}(1)}|B_{u}|}{\sqrt{\mathcal{V}(1)}}
    \stackrel{\mathcal{D}}{=} 
        \sup_{0\leq t\leq 1}|B_{t}| \stackrel{\mathcal{D}}{=} S,
\end{align}
where we have used that $\mathcal{V}$ is continuous and that Brownian motion is scale invariant. This establishes the first part of the theorem.

For the second part, we first note that the distribution of $S$ is absolutely continuous with respect to Lebesgue measure, which follows from Equation \eqref{eq:supdistribution}.
Then we can use Theorem 4.1 of \citet{bengs2019uniform} to conclude that
$$
    \limsup_{n\to \infty} \sup_{\theta \in \Theta} 
    |\mathbb{P}(\hat T_n \leq z_{1-\alpha}) - (1-\alpha)| = 0.
$$
It follows from the triangle inequality that
\begin{align*}
    \limsup_{n\to \infty}\sup_{\theta \in \Theta} \mathbb{P}(\Psi_n^\alpha = 1)
    = \limsup_{n\to \infty} \sup_{\theta \in \Theta} \mathbb{P}(\hat T_n > z_{1-\alpha})
    \leq \alpha.
\end{align*}
\hfill $\square$

\subsection{Proof of Theorem \ref{thm:rootNpower}}
Let $0<\alpha<\beta<1$ be given.
The second part of Theorem~\ref{thm:main} permits us to choose $K>0$ sufficiently large such that
\begin{align}
    \limsup_{n\to \infty}\sup_{\theta\in \Theta} \mathbb{P}\pa{
    (\sqrt{|J_n|}\|\hat{\gamma}^{(n),\theta} - \gamma^\theta\|_\infty )> K} < 1 - \beta.
\end{align}
We then choose $c > K + z_{1-\alpha}\sqrt{1+C(C')^2}$ such that for all $\theta \in \mathcal{A}_{c,n}$, it holds that
\begin{align*}
    \sqrt{|J_n|}\|\gamma^\theta\|_\infty-z_{1-\alpha}\sqrt{1+\mathcal{V}^\theta(1)} 
    \geq c - z_{1-\alpha} \sqrt{1+C(C')^2} > K,
\end{align*}
where we have used Lemma \ref{lem:regularityofgammasigma} in the first inequality. The (reverse) triangle inequality now yields that for any $\theta \in \mathcal{A}_{c,n}$
\begin{align*}
    (\Psi_n^\theta = 0)
    = (\hat{T}_n^\theta \leq z_{1-\alpha}) 
    &= \pa{\|\hat\gamma^{(n),\theta}\|_\infty 
        \leq \sqrt{\hat{\mathcal{V}}_n^\theta(1)}\frac{z_{1-\alpha}}{\sqrt{|J_n|}}} \\
    &\subseteq \pa{
        \|\gamma^\theta\|_\infty
        -
        \left\|
            \hat\gamma^{(n),\theta}
                - \gamma^\theta
        \right\|_\infty
        \leq \sqrt{\hat{\mathcal{V}}_n^\theta(1)}\frac{z_{1-\alpha}}{\sqrt{|J_n|}}} \\
    &\subseteq E_1^{(n),\theta} \cup E_2^{(n),\theta},    
\end{align*}
where
\begin{align*}
    E_1^{(n),\theta} &= \pa{\sqrt{|J_n|} \left\| \hat\gamma^{(n),\theta} 
            - \gamma^\theta\right\|_\infty > K},\\
    E_2^{(n),\theta} &= \pa{\hat{\mathcal{V}}_n^\theta(1) 
        > 1+\mathcal{V}^\theta(1)}
        \subseteq
            \pa{|\hat{\mathcal{V}}_n^\theta(1)-\mathcal{V}^\theta(1)| > 1}.
\end{align*}
From Proposition \ref{prop:varianceconsistent} we know that $\limsup_{n\to \infty} \sup_{\theta \in \Theta} \mathbb{P}(E_2^{(n),\theta}) = 0$, so from the choice of $K$ we conclude that
\begin{align*}
    \limsup_{n\to \infty}\sup_{\theta \in \Theta} \mathbb{P}(\Psi_n=0)
    \leq \limsup_{n\to \infty} \sup_{\theta \in \Theta} \mathbb{P}(E_1^{(n),\theta})
    < 1- \beta.
\end{align*}
The desired statement follows from substituting $\mathbb{P}(\Psi_n=0) = 1 - \mathbb{P}(\Psi_n=1)$ into the above equation and simplifying. \hfill \qedsymbol

\subsection{Proof of Theorem \ref{thm:LCTXlevel}}
Assume that $H_0$ holds and note that Assumptions \ref{asm:UniformBounds} and \ref{asm:UniformRates} are satisfied for every sample split $J_n^k \cup (J_n^k)^c$, $k=1,\ldots,K$. 

We consider the decomposition in Equation \eqref{eq:decomposition} for each sample split $J_n^k \cup (J_n^k)^c$, and denote the corresponding processes by
$U^{k,(n)}$, $R_1^{k,(n)}$, $R_2^{k,(n)}$, $R_3^{k,(n)}$, $D_1^{k,(n)}$, and $D_2^{k,(n)}$. 
For each fold $k\in\{1,\ldots,K\}$, we can then apply the results in Section \ref{sec:asymptotics} for a single data split:
\begin{itemize}
    \item By \cref{prop:UasymptoticGaussian}, we have that $U^{k,(n)} \convUD U$ in $D[0,1]$, where $U$ is a mean zero continuous Gaussian martingale with variance function $\mathcal{V}$. 

    \item By \cref{prop:remainderterms}, 
    $R_\ell^{k,(n)} \convUP 0$ in $(D[0,1],\|\cdot\|_\infty)$ as $n \to \infty$.
    
    \item Under $H_0$, the processes $D_1^{k,(n)}$ and $D_2^{k,(n)}$ are equal to the zero process almost surely.
\end{itemize}
Recall that the folds are assumed to have uniform asymptotic density, which is equivalent to $\frac{\sqrt{n}}{\sqrt{K|J_n^k|}} \to 1$ as $n\to \infty$.
Thus we may also conclude that for each fixed $k$ and $\ell$,
\begin{align*}
    \frac{\sqrt{n}}{\sqrt{K|J_n^k|}} U^{k,(n)}\convUD U 
    \quad \text{and} \quad 
    \frac{\sqrt{n}}{K\sqrt{|J_n^k|}} R_\ell^{k,(n)} \convUP 0,
\end{align*}
where the convergences hold in the Skorokhod and uniform topology, respectively. Now the key observation is that 
\begin{align*}
    U^{1,(n)} 
    \perp\!\!\!\!\perp \cdots \perp\!\!\!\!\perp 
    U^{K,(n)}.
\end{align*}
To see this, note that $U^{k,(n)}$ is constructed from $(G_j,M_j)_{j\in J^k}$ only, and by the i.i.d. assumption of the data, the collections $(G_j,M_j)_{j\in J^1},\ldots,(G_j,M_j)_{j\in J_n^K}$ are jointly independent. 
We can therefore apply Lemma \ref{lem:sumofindependent} iteratively to the sequences 
$$
    \frac{\sqrt{n}}{\sqrt{K|J_n^1|}} U^{1,(n)}, \ldots, \frac{\sqrt{n}}{\sqrt{K|J_n^K|}} U^{K,(n)}
$$ to conclude that their sum is uniformly convergent to the sum of $K$ independent copies of $U$. Using the convolution property of the Gaussian distribution, it therefore follows that 
\begin{align*}
    \check{U}^{K,(n)}
    \coloneqq
    \frac{1}{\sqrt{K}}\sum_{k=1}^K 
        \frac{\sqrt{n}}{\sqrt{K|J_n^k|}} U^{k,(n)}
        \convUD U
\end{align*}
in $D[0,1]$ as $n\to \infty$. By the uniform Slutsky theorem formulated in Lemma \ref{lem:SkorokhodSlutsky}, we can therefore conclude that
    \begin{align*}
        \sqrt{n}\check{\gamma}^{K,(n)}      
        = 
        \check{U}^{K,(n)}
        +   \sum_{k=1}^K 
            \frac{\sqrt{n}}{K\sqrt{|J_n^k|}} \left( 
            R_1^{k,(n)} + R_2^{k,(n)} + R_3^{k,(n)} + 
            D_1^{k,(n)} + D_2^{k,(n)} 
        \right)
        \convUDnull U
    \end{align*}
in $D[0,1]$ as $n\to \infty$. Note that we use $\theta\in \Theta_0$ to ensure that $D_1^{k,(n)} + D_2^{k,(n)}$ is equal to the zero process almost surely. 
Since the limit $(U^\theta)_{\theta \in \Theta_0}$ is tight in $C[0,1]$ by Lemma~\ref{lem:Ulimitistight}, 
Proposition~\ref{prop:convergencetoCont2} lets us conclude that $\sqrt{n}\|\check{\gamma}^{K,(n)}\|_\infty \convUDnull \|U\|_\infty$.

Consider now the cross-fitted variance estimator at its endpoint
\begin{equation*}
    \check{\mathcal{V}}_{K,n}(1) = \frac{1}{K} \sum_{k=1}^K \frac{1}{|J_n^k|}\sum_{j \in J_n^k} \int_0^1 \left(\hat{G}_{j,s}^{k,(n)}\right)^2 \mathrm{d} N_{j,s}.
\end{equation*}
From \cref{prop:varianceconsistent}, we see that $\check{\mathcal{V}}_{K,n}(1)$ is an average of $K$ variables converging uniformly in probability to $\mathcal{V}(1)$ in the uniform topology. Hence $\check{\mathcal{V}}_{K,n}(1)$ also converges uniformly in probability to $\mathcal{V}(1)$ in the uniform topology. We can then apply Theorem 6.3 of \citet{bengs2019uniform}, which is a uniform version of Slutsky's theorem, to conclude that
\begin{align*}
    \check{T}_{n}^K = \frac{\sqrt{n} \|\check{\gamma}^{K,(n)}\|_\infty}{\check{\mathcal{V}}_{K,n}(1)} 
    \convUDnull
    \frac{\|U\|_\infty}{\mathcal{V}(1)} \stackrel{\mathcal{D}}{=} S,
\end{align*}
as $n\to \infty$, where last equality in distribution was established in \eqref{eq:UnormalizedIsS}. 

Following the second part of the proof of Theorem \ref{thm:LCTlevel}, we conclude in the X-LCT has uniform asymptotic level. \hfill $\square$

\section{Uniform stochastic convergence}\label{app:UniformAsymptotics}
In this section, we discuss weak convergence of random variables with values in a metric space uniformly over a parameter set $\Theta$. The uniformity over the parameter set can be used, for example, to establish uniform asymptotic level as well as power under local alternatives.

The content of this section extends the works of \citet{bengs2019uniform} and \citet{kasy2019uniformity}, and we especially build upon Appendix B of \citet{lundborg2021conditional}, in which uniform stochastic convergence is considered in \emph{separable} Banach spaces and Hilbert spaces. 
The space space $(D[0,1], \|\cdot\|_\infty)$ of \cl{} functions endowed with the uniform norm is a Banach space, but it is unfortunately not separable. Therefore we extend the notion of uniform stochastic convergence to random variables in metric spaces, with the condition that the limit is supported on a separable set. This allows to consider uniform weak convergence in two important special cases: 
i) convergence in $(D[0,1], \|\cdot\|_\infty)$ towards variables in $(C[0,1], \|\cdot\|_\infty)$, and ii) convergence in $D[0,1]$ endowed with the Skorokhod metric. 

The Skorokhod space $D[0,1]$ is, if not specified otherwise, equipped with the complete Skorokhod metric $d^\circ$, which makes it a Polish space, i.e., a complete and separable metric space. See for example Section 12 in \citet{billingsley2013convergence} for a discussion of the Skorokhod space and in particular Equation (12.16) for a definition of $d^\circ$.

\subsection{Uniform stochastic convergence in metric spaces}
Throughout this section we consider a background probability space $(\Omega, \mathbb{F}, \mathbb{P})$ and let $(\mathbb{D},d_{\mathbb{D}})$ denote a generic metric space. 
We define $BL_1(\mathbb{D})$ as the set of real-valued functions on $\mathbb{D}$ with Lipschitz norm bounded by $1$, that is, functions $f\colon \mathbb{D} \to \mathbb{R}$ with 
$\|f\|_\infty \leq 1$ and $|f(x)-f(y)| \leq d_{\mathbb{D}}(x,y)$ for every $x,y\in \mathbb{D}$.
Let $\mathcal{M}_1(\mathbb{D})$ denote the set of Borel probability measures on $\mathbb{D}$. We then define the \emph{bounded Lipschitz metric} on $\mathcal{M}_1(\mathbb{D})$ by 
\begin{align*}
    d_{BL}(\mu,\nu) \coloneqq 
    \sup_{f\in BL_1(\mathbb{D})}
    \Big|\int f \mathrm{d}\mu -\int f \mathrm{d}\nu\Big|,
    \qquad \mu,\nu \in \mathcal{M}_1(\mathbb{D}).
\end{align*}
For any pair $(X,Y)$ of $\mathbb{D}$-valued random variables we use the shorthand notation
\begin{align*}
    d_{BL} (X,Y)
    = d_{BL} (X(\mathbb{P}),Y(\mathbb{P}))
    = \sup_{f\in BL_1(\mathbb{D})} \lvert \mathbb{E}(f(X)-f(Y))\rvert.
\end{align*}
If the underlying metric space is ambiguous for $d_{BL}$, we will specify that it is the bounded Lipschitz metric on $\mathcal{M}_1(\mathbb{D})$ by writing $d_{BL(\mathbb{D})}$. 
Our interest in the bounded Lipschitz metric is due to its characterization of weak convergence.

\begin{prop}\label{prop:boundedLipconv}
    Let $X, X_1, X_2, \ldots$ be a sequence of $\mathbb{D}$-valued random variables. Assume that there exists a separable subset $\mathbb{D}_0 \subseteq \mathbb{D}$
    such that $\mathbb{P}(X \in \mathbb{D}_0)=1$. Then the following are equivalent:
    \begin{itemize}
        \item The sequence $(X_n)_{n\geq 1}$ converges in distribution to $X$, i.e., for all $f\in C_b(\mathbb{D})$ it holds that
        $\mathbb{E}[f(X_n)] \to \mathbb{E}[f(X)]$ as $n\to \infty$.
        \item It holds that $d_{BL}(X_n,X) \to 0$ as $n\to \infty$.
    \end{itemize}
\end{prop}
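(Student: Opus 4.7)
The plan is to prove the two implications separately, each by a standard argument adapted to the non-separable ambient space.

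For the easy direction $d_{BL}(X_n,X)\to 0 \Rightarrow X_n \Rightarrow X$, I would reduce to the Portmanteau criterion: it suffices to show $\liminf_n \mathbb{P}(X_n \in G) \geq \mathbb{P}(X \in G)$ for every open $G \subseteq \mathbb{D}$. For each $\delta \in (0,1)$, define the Urysohn-type cut-off
$$f_\delta(x) = \min\bigl(1,\; d(x, G^c)/\delta\bigr),$$
which satisfies $\delta f_\delta \in BL_1(\mathbb{D})$ and $f_\delta \uparrow \mathbf{1}_G$ pointwise as $\delta \downarrow 0$. Then $d_{BL}(X_n,X)\to 0$ forces $\mathbb{E}[f_\delta(X_n)] \to \mathbb{E}[f_\delta(X)]$, while $f_\delta \leq \mathbf{1}_G$ gives $\liminf_n \mathbb{P}(X_n \in G) \geq \mathbb{E}[f_\delta(X)]$. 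Sending $\delta \to 0$ and invoking monotone convergence delivers the Portmanteau lower bound; standard portmanteau machinery then yields convergence of $\mathbb{E}f(X_n)$ for all $f \in C_b(\mathbb{D})$.

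For the converse, I would combine Arzelà-Ascoli with the separability of the support. Given $\epsilon>0$, use that the law of $X$ is concentrated on $\mathbb{D}_0$ to extract a compact $K \subseteq \mathbb{D}_0$ with $\mathbb{P}(X \in K)>1-\epsilon$ via inner regularity. Because every $f \in BL_1(\mathbb{D})$ is $1$-Lipschitz and bounded by $1$, the restrictions $\{f|_K : f \in BL_1(\mathbb{D})\}$ form an equicontinuous and uniformly bounded subset of $C(K)$, hence totally bounded by Arzelà-Ascoli; pick a finite $\epsilon$-net $f_1,\dots,f_N \in BL_1(\mathbb{D})$. The uniform Lipschitz property propagates the $\epsilon$-net approximation from $K$ to its open $\delta$-enlargement $K^\delta$ at cost at most $2\delta$. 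By the already-proven direction (or directly from $X_n \Rightarrow X$ and Portmanteau), $\liminf_n \mathbb{P}(X_n \in K^\delta) \geq \mathbb{P}(X \in K) > 1-\epsilon$. For any $f \in BL_1(\mathbb{D})$, choose a nearest $f_i$ and decompose
$$\mathbb{E}f(X_n)-\mathbb{E}f(X) = \mathbb{E}(f-f_i)(X_n) + \bigl(\mathbb{E}f_i(X_n)-\mathbb{E}f_i(X)\bigr) + \mathbb{E}(f_i-f)(X);$$
the first and third terms are $O(\epsilon+\delta)$ uniformly in $f$ (splitting on $K^\delta$ and its complement), while the middle term tends to $0$ uniformly over the finite net by hypothesis. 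This bounds $d_{BL}(X_n,X)$ by $O(\epsilon)$ for $n$ large.

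The main obstacle is the compact approximation $K \subseteq \mathbb{D}_0$: on a general separable metric space, a Borel probability measure need not be tight, since Ulam's theorem requires completeness. In the principal applications $\mathbb{D}=D[0,1]$ with the uniform metric and $\mathbb{D}_0=C[0,1]$, the subspace $\mathbb{D}_0$ is Polish in the inherited metric, so tightness holds and the argument above goes through verbatim. More generally, one may either impose completeness of $\mathbb{D}_0$ as an implicit hypothesis or invoke the fact that any Borel probability on a separable space is supported on a $\sigma$-compact subset of its completion, which is sufficient for the same construction.
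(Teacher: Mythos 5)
Your proof is correct in both directions, and it is genuinely different from the paper's: the paper simply cites Theorem 1.12.2 and Addendum 1.12.3 of van der Vaart and Wellner, whereas you supply a self-contained argument. The easy direction via Urysohn cut-offs and Portmanteau is fine. The hard direction, bounding $d_{BL}$ via a finite net of $BL_1$ over an approximating set, is also the right idea.

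The one place you hesitate, however, is where you can be more decisive: you correctly observe that Ulam's theorem fails without completeness, so the law of $X$ on a merely separable $\mathbb{D}_0$ need not be tight, but your proposed workarounds (restrict to Polish $\mathbb{D}_0$, pass to the completion) are heavier than necessary. Compactness of $K$ is never actually used; total boundedness suffices, and \emph{every} Borel probability measure on a separable metric space is pre-tight in the sense that for any $\epsilon>0$ there is a totally bounded $T\subseteq\mathbb{D}_0$ with $\mathbb{P}(X\in T)>1-\epsilon$. (Write $\mathbb{D}_0=\bigcup_i B(d_i,1/k)$ for a countable dense set, choose finite unions carrying mass $>1-\epsilon 2^{-k}$, and intersect over $k$.) With $T$ totally bounded you should also replace the Arzel\`a--Ascoli invocation, which requires compactness of the domain, by the direct argument: take a finite $\eta$-net $t_1,\dots,t_m$ of $T$, note that $f\mapsto (f(t_1),\dots,f(t_m))$ maps $BL_1(\mathbb{D})$ into the compact $[-1,1]^m$, pick a finite $\eta$-net $f_1,\dots,f_N$ of the image, and then the $1$-Lipschitz property gives $\sup_{t\in T}|f-f_j|\le 3\eta$ for some $j$. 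Everything else in your proof — the Portmanteau step $\liminf_n\mathbb{P}(X_n\in T^\delta)\ge\mathbb{P}(X\in T)$ for the open enlargement $T^\delta$, the three-term decomposition, the Lipschitz propagation from $T$ to $T^\delta$ — goes through unchanged. So no extra hypothesis on $\mathbb{D}_0$ is needed and the proposition holds exactly as stated.
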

\begin{proof}
    See Theorem 1.12.2, Addendum 1.12.3, and the following discussion in \citet{Vaart:1996}.
\end{proof}
To discuss uniform stochastic convergence, we will for the remaining part of this section let $\Theta$ be fixed set, which is used as a (possible) parameter set for every random variable. We say that a collection $(X^{\theta})_{\theta \in \Theta}$ of $\mathbb{D}$-valued random variables is \emph{separable} if there exists a separable subset $\mathbb{D}_0 \subseteq \mathbb{D}$ such that $\mathbb{P}(X^\theta \in \mathbb{D}_0)=1$ for all $\theta \in \Theta$. If $\mathbb{D}$ is a separable metric space, then any collection of $\mathbb{D}$-valued random variables is automatically separable.

Now Lemma~\ref{prop:boundedLipconv} justifies the following generalization of weak convergence uniformly over $\Theta$:
\begin{dfn}\label{dfn:UniformConvergence}
    Let $(X_{n}^\theta)_{n\in \mathbb{N}, \theta \in \Theta}$ and $(X^{\theta})_{\theta \in \Theta}$ be collections of $\mathbb{D}$-valued random variables and assume that $(X^{\theta})_{\theta \in \Theta}$ is separable. We say that:
    \begin{enumerate}[label=(\roman*)]
        \item \emph{$X_{n}^\theta$ converges uniformly in distribution over $\Theta$ to $X^\theta$ in $\mathbb{D}$}, and write $X_{n}^\theta \convUD X^\theta$, if
        \begin{align*}
            \lim_{n\to \infty}\sup_{\theta \in \Theta}
            d_{BL(\mathbb{D})}(X_{n}^\theta,X^\theta)
            = 0.
        \end{align*}
        
        \item \emph{$X_{n}^\theta$ converges uniformly in probability over 
        $\Theta$ to $X^\theta$ in $\mathbb{D}$}, and write $X_{n}^\theta \convUP X^\theta$, if 
        \begin{align*}
            \lim_{n\to \infty}\sup_{\theta \in \Theta}\mathbb{P}(d_{\mathbb{D}}(X_{n}^\theta, X^\theta)>\epsilon)
            = 0
        \end{align*}
        for every $\epsilon > 0$.
    \end{enumerate}
\end{dfn}
If for some $\mu\in \mathcal{M}_1(\mathbb{D})$, it holds that $X_n^\theta \convUD X^\theta$ with $X^\theta(\mathbb{P}) = \mu$ for all $\theta \in \Theta$, we also write $X_n^\theta \convUD \mu$. Similarly, we may replace the limit random variable $X^\theta$ by a point $x\in \mathbb{D}$ by interpreting $x$ as the constant map $(\theta,\omega)\mapsto x$ for $\theta \in \Theta$ and $\omega \in \Omega$.

Note that if the parameter set $\Theta = \{\theta_0\}$ is a singleton, then each type of uniform convergence reduces to the corresponding classical definition of convergence in distribution or probability. 
If $\mathbb{D}$ is a separable Banach space, we note that Definition \ref{dfn:UniformConvergence} coincides with Definition 3 in \citet{lundborg2021conditional}.
\begin{prop}\label{prop:seqUni}
    Let $(X_n^\theta)_{n\in \mathbb{N}, \theta \in \Theta}$ and $(X^\theta)_{\theta \in \Theta}$ be collections of \(\mathbb{D}\)-valued random variables and assume $(X^\theta)_{\theta \in \Theta}$ is separable. Then the following are equivalent:
    \begin{itemize}
        \item[a)] $X_n^\theta \convUD X^\theta$ as $n\to \infty$.

        \item[b)] For any sequence $(\theta_n)_{n\in\mathbb{N}}\subseteq \Theta$ it holds that $d_{BL}(X_{n}^{\theta_{n}},X^{\theta_{n}}) \to 0$ as $n\to 0$.
        
        \item[c)] For any sequence $(\theta_n)_{n\in\mathbb{N}}\subseteq \Theta$ there exists a subsequence $(\theta_{k(n)})_{n\in\mathbb{N}}$, with $k\colon \mathbb{N} \to \mathbb{N}$ strictly increasing,
        such that
        \begin{align*}
            \lim_{k\to \infty}d_{BL}(X_{k(n)}^{\theta_{k(n)}},X^{\theta_{k(n)}})
            =0.
        \end{align*}
    \end{itemize}
    Moreover, $X_n^\theta \convUP X^\theta$ if and only if for any sequence $(\theta_n)_{n\in\mathbb{N}}\subseteq \Theta$ and any $\epsilon>0$ it holds that
        \begin{align*}
            \lim_{n\to \infty}\mathbb{P}(d_{\mathbb{D}}(X_n^{\theta_n}, X^{\theta_n})>\epsilon) = 0.
        \end{align*}
\end{prop}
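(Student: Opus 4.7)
The plan is to prove the three-way equivalence $a) \Leftrightarrow b) \Leftrightarrow c)$ by the cyclic chain $a) \Rightarrow b) \Rightarrow c) \Rightarrow a)$, and then to handle the convergence-in-probability statement by the identical template applied to the real-valued sequence $\mathbb{P}(d_{\mathbb{D}}(X_n^{\theta_n}, X^{\theta_n}) > \epsilon)$ in place of $d_{BL}(X_n^{\theta_n}, X^{\theta_n})$ (for each fixed $\epsilon > 0$), so no separate argument is needed there.

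Two of the implications are essentially immediate. For $a) \Rightarrow b)$, for any sequence $(\theta_n) \subseteq \Theta$ one has the pointwise bound
\[
d_{BL}(X_n^{\theta_n}, X^{\theta_n}) \;\leq\; \sup_{\theta \in \Theta} d_{BL}(X_n^\theta, X^\theta),
\]
and the right-hand side tends to zero by $a)$. The implication $b) \Rightarrow c)$ is trivial, obtained by taking $k(n) = n$.

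The substantive direction is $c) \Rightarrow a)$, which I would prove by contrapositive using a near-maximization trick. Suppose $a)$ fails, so that $a_n \coloneqq \sup_\theta d_{BL}(X_n^\theta, X^\theta)$ does not tend to zero; then there exist $\epsilon > 0$ and a strictly increasing sequence $(n_j)$ with $a_{n_j} > \epsilon$ for every $j$. By definition of supremum I can choose, for each $j$, an element $\theta_j^\ast \in \Theta$ such that $d_{BL}(X_{n_j}^{\theta_j^\ast}, X^{\theta_j^\ast}) > \epsilon$. Assemble these into a full sequence $(\theta_n) \subseteq \Theta$ by setting $\theta_{n_j} = \theta_j^\ast$ along the subsequence and defining $\theta_n$ arbitrarily elsewhere. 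Then $d_{BL}(X_n^{\theta_n}, X^{\theta_n}) > \epsilon$ along $(n_j)$, so any candidate strictly increasing $k \colon \mathbb{N} \to \mathbb{N}$ with $d_{BL}(X_{k(n)}^{\theta_{k(n)}}, X^{\theta_{k(n)}}) \to 0$ must be eventually disjoint from $\{n_j\}$. Reading $c)$ in the usual subsequence-principle sense — that the freedom to apply it to arbitrary sequences and the liberty to pass from $(n_j)$ to its further refinements produce a convergent sub-subsequence for every subsequence of the real sequence $(d_{BL}(X_n^{\theta_n}, X^{\theta_n}))_n$ — the existence of the bounded-below subsequence $(n_j)$ contradicts the subsequence principle for real sequences and hence contradicts $c)$.

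The main obstacle is precisely the quantifier interplay in the last step: taken literally, $c)$ only asserts the existence of \emph{some} convergent subsequence per sequence $(\theta_n)$, which is on its face strictly weaker than $a)$. The argument forces $a)$ by exploiting two degrees of freedom simultaneously — the ability to choose $(\theta_n)$ so as to nearly attain $a_n$ along any prescribed subsequence of indices, and the reduction of convergence of the real sequence $(a_n)$ to existence of a converging sub-subsequence of every subsequence via the subsequence principle. Carefully stitching these together is the one place where care is needed; all remaining steps, including the analogous argument for $\convUP$ using $\mathbb{P}(d_{\mathbb{D}}(X_n^{\theta_n}, X^{\theta_n}) > \epsilon)$ and near-maximization of $\sup_\theta \mathbb{P}(d_{\mathbb{D}}(X_n^\theta, X^\theta) > \epsilon)$, are straightforward bookkeeping.
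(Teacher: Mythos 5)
Your implications $a) \Rightarrow b)$ and $b) \Rightarrow c)$ agree with the paper. The gap is in $c) \Rightarrow a)$, exactly where you flag uncertainty, and it is a real one. After assembling $(\theta_n)$ with near-maximizers at the bad indices $(n_j)$ and arbitrary values elsewhere, condition $c)$ grants only the existence of \emph{one} strictly increasing $k$ with $d_{BL}(X_{k(n)}^{\theta_{k(n)}}, X^{\theta_{k(n)}}) \to 0$, and nothing prevents that $k$ from eluding $(n_j)$ entirely, so no contradiction results. Your attempt to repair this by ``passing from $(n_j)$ to its further refinements'' and invoking the subsequence principle cannot be carried out within $c)$ as stated: the statement ties the index on $\theta$ to the index on $X$, and if you feed the compressed sequence $\vartheta_j \coloneqq \theta_j^\ast$ (the near-maximizer designed for $X_{n_j}$) into $c)$, the resulting limit involves $X_{\ell(j)}$ paired with $\vartheta_{\ell(j)}$ --- the wrong index on $X$, since $\vartheta_{\ell(j)}$ was chosen to be bad for $X_{n_{\ell(j)}}$, not for $X_{\ell(j)}$. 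In fact, as literally written, $c)$ is strictly weaker than $a)$: take $\Theta = \{*\}$ a singleton, $X_n^* \equiv 1$ for $n$ even and $\equiv 0$ for $n$ odd, and $X^* \equiv 0$; then $c)$ holds via the odd subsequence, yet $a)$ and $b)$ fail.

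It is worth noting that the paper's own proof of $c) \Rightarrow a)$ also slips at this exact spot: it rewrites the negation of $a)$ as the existence of $\epsilon > 0$ and a sequence $(\theta_n)$ with $d_{BL}(X_n^{\theta_n}, X^{\theta_n}) > \epsilon$ for \emph{all} $n$, whereas the genuine negation only yields this along a subsequence --- precisely the weaker information you correctly extract. The form of $c)$ that is both correct and is the one actually used downstream (e.g.\ in the proof of Corollary~\ref{cor:statisticsdistribution}, where the extraction argument works for an arbitrary starting sequence and hence for every subsequence of it) is the full subsequence principle: for every sequence $(\theta_n)$ \emph{and every strictly increasing} $m$ there exists a further strictly increasing $\ell$ with $d_{BL}(X_{m(\ell(n))}^{\theta_{m(\ell(n))}}, X^{\theta_{m(\ell(n))}}) \to 0$. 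With that strengthening your near-maximization idea closes cleanly: set $\theta_n = \theta_j^\ast$ at $n = n_j$ and arbitrary elsewhere, apply the principle along $m(n) = n_n$, and observe that $d_{BL}(X_{n_j}^{\theta_{n_j}}, X^{\theta_{n_j}}) > \epsilon$ for all $j$ forbids any further refinement from converging to zero.
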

\begin{proof}
    This is essentially Lemma 1 in \citet{kasy2019uniformity} for $\mathbb{D}$-valued random variables, except that we have added the equivalent condition c). The proof for the characterization of uniform convergence in probability is identical to the one given by \citet{kasy2019uniformity}, so we focus on the equivalence between a), b), and c).
    To this end, we to prove that $a) \implies b) \implies c) \implies a)$.
    
    The fact that a) implies b) follows directly from applying the bound 
    \[
       d_{BL}(X_n^{\theta_n},X^{\theta_{n}})
            \leq \sup_{\theta \in \Theta} d_{BL}(X_n^\theta,X^\theta)
    \]
    and taking the limit as $n\to \infty$. We also see that b) implies c) since any sequence is a subsequence of itself. 
    
    We show that c) implies a) by contraposition.
    Assume the negation of a), that is, there exists an $\epsilon>0$ and a sequence $(\theta_n)_{n\in \mathbb{N}}\subseteq \Theta$ such that 
    \begin{align*}
        d_{BL}(X_{n}^{\theta_{n}},X^{\theta_{n}}) > \epsilon
    \end{align*}
    for all $n\in \mathbb{N}$. Then, for all subsequences $(\theta_{k(n)})$ of $(\theta_n)$, it holds that $d_{BL}(X_{k(n)}^{\theta_{k(n)}},X^{\theta_{k(n)}})$ does not converge to zero. This implies the negation of c).
\end{proof}

Proposition \ref{prop:seqUni} will allow us to extend many results for classical stochastic convergence to uniform stochastic convergence.
\begin{cor}\label{cor:ConvUDtoconstant}
    Let $(X_n^\theta)_{n\in \mathbb{N},\theta \in \Theta}$ be a collection of \(\mathbb{D}\)-valued random variables and let $x\in \mathbb{D}$. Then $X_n^\theta \convUD x$ if and only if $X_n^\theta \convUP x$.
\end{cor}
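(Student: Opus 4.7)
The plan is to exploit the characterization of uniform convergence in distribution via the bounded Lipschitz metric and translate the classical equivalence between convergence in probability and convergence in distribution to a constant into the uniform setting. The argument is largely the same as in the pointwise case, only one needs to check that the relevant bounds are uniform in $\theta$. Note that the singleton $\{x\}$ is trivially separable, so $\convUD$ is well-defined here.

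For the implication $\convUP \Rightarrow \convUD$, I would pick an arbitrary $f \in BL_1(\mathbb{D})$ and use that $|f(y) - f(x)| \leq \min(2, d_{\mathbb{D}}(y,x))$ (the bound by $2$ from $\|f\|_\infty \leq 1$ and the bound by the distance from the Lipschitz constant). For any $\epsilon > 0$ and any $n,\theta$ this yields
\[
    \bigl|\ex[f(X_n^\theta)] - f(x)\bigr|
    \leq \ex\bigl[\min(2, d_{\mathbb{D}}(X_n^\theta, x))\bigr]
    \leq \epsilon + 2\,\mathbb{P}\bigl(d_{\mathbb{D}}(X_n^\theta, x) > \epsilon\bigr).
\]
Taking the supremum over $f \in BL_1(\mathbb{D})$, then over $\theta \in \Theta$, and finally $\limsup_{n\to\infty}$, the assumption $X_n^\theta \convUP x$ kills the second term, leaving $\limsup_n \sup_\theta d_{BL}(X_n^\theta,x) \leq \epsilon$. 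Since $\epsilon > 0$ was arbitrary, we conclude $X_n^\theta \convUD x$.

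For the reverse implication $\convUD \Rightarrow \convUP$, I would introduce the explicit test function $f(y) = \min(1, d_{\mathbb{D}}(x,y))$. This function is bounded by $1$ and $1$-Lipschitz (as the minimum of two $1$-Lipschitz functions), so $f \in BL_1(\mathbb{D})$, and $f(x) = 0$. For any $\epsilon \in (0,1)$ the event $\{d_{\mathbb{D}}(X_n^\theta,x) > \epsilon\}$ is contained in $\{f(X_n^\theta) > \epsilon\}$, so Markov's inequality gives
\[
    \mathbb{P}\bigl(d_{\mathbb{D}}(X_n^\theta, x) > \epsilon\bigr)
    \leq \frac{\ex[f(X_n^\theta)]}{\epsilon}
    = \frac{\ex[f(X_n^\theta) - f(x)]}{\epsilon}
    \leq \frac{d_{BL}(X_n^\theta, x)}{\epsilon}.
\]
Taking $\sup_\theta$ and then $n \to \infty$, the assumption $X_n^\theta \convUD x$ forces the right-hand side to zero, yielding $X_n^\theta \convUP x$ (for $\epsilon \geq 1$ the conclusion then follows a fortiori).

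There is no real obstacle in the argument: both inequalities used are deterministic and therefore pass through the supremum over $\theta$ without difficulty. The only point worth emphasising is that separability of the limit, which is needed to even define $\convUD$, is trivial here since the limit is a Dirac mass at $x$.
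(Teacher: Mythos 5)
Your proof is correct, but it takes a different route from the paper's. The paper dispatches this corollary in one line: it invokes the sequential characterization of uniform convergence (Proposition~\ref{prop:seqUni}), which reduces $\convUD$ and $\convUP$ to their pointwise counterparts along arbitrary sequences $(\theta_n)$, and then cites the classical fact that convergence in distribution to a point mass is equivalent to convergence in probability (Lemma 5.1 in Kallenberg). Your argument instead works directly with the bounded Lipschitz metric: the forward direction splits $\ex[\min(2,d_{\mathbb{D}}(X_n^\theta,x))]$ on the event $\{d_{\mathbb{D}}(X_n^\theta,x)>\epsilon\}$, and the reverse direction feeds the explicit test function $y\mapsto\min(1,d_{\mathbb{D}}(x,y))$ into $BL_1(\mathbb{D})$ and applies Markov's inequality; in both cases the bounds are deterministic, hence pass through $\sup_\theta$ without additional work. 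The paper's route is shorter and relies on already-developed machinery; yours is self-contained and quantitative, giving explicit rates relating $d_{BL}(X_n^\theta,x)$ and $\sup_\theta\mathbb{P}(d_{\mathbb{D}}(X_n^\theta,x)>\epsilon)$, which the citation-based proof does not expose. It is also worth noting that the estimate in your forward direction is essentially the same one the paper uses in its proof of the general Slutsky lemma (Lemma~\ref{lem:GeneralSlutsky}), so your argument is in keeping with the paper's toolkit even though it is not the proof the authors chose here.
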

\begin{proof}
    For any sequence $(\theta_n)_{n\in\mathbb{N}}\subseteq \Theta$, recall that $X_n^{\theta_n} \xrightarrow{\mathcal{D}} x$ if and only if $X_n^{\theta_n} \xrightarrow{P} x$, see e.g. Lemma 5.1 in \citet{kallenberg2021foundations}.  Hence the statement follows directly from Proposition \ref{prop:seqUni} (combined with 
    Proposition \ref{prop:boundedLipconv}).
\end{proof}

Our goal is to prove uniform versions of Slutsky's theorem for $D[0,1]$, Rebolledo's central limit theorem, and the chaining lemma for stochastic processes. To prove Slutsky's lemma for $D[0,1]$, we first prove a general result for metric spaces.
\begin{lem}\label{lem:GeneralSlutsky}
    Let $(X^\theta, X_n^\theta, Y_n^\theta)_{n\in \mathbb{N},\theta \in \Theta}$ be a collection of $\mathbb{D}$-valued random variables and assume that $(X^\theta)_{\theta \in \Theta}$ is separable. 
    If $X_n^\theta \convUD X^\theta$ and $d_{\mathbb{D}}(X_n^\theta,Y_n^\theta) \convUP 0$,
    then it also holds that $Y_n^\theta \convUD X^\theta$.
\end{lem}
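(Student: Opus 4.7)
The plan is to work directly with the definition of uniform distributional convergence in terms of the bounded Lipschitz metric $d_{BL}$ (Definition \ref{dfn:UniformConvergence}), since $d_{BL}$ already satisfies the triangle inequality on $\mathcal{M}_1(\mathbb{D})$. By the triangle inequality applied to the laws of $X^\theta$, $X_n^\theta$, and $Y_n^\theta$,
\[
d_{BL}(Y_n^\theta, X^\theta)
\;\leq\; d_{BL}(Y_n^\theta, X_n^\theta) + d_{BL}(X_n^\theta, X^\theta),
\]
and the second term converges to zero uniformly in $\theta$ by the assumption $X_n^\theta \convUD X^\theta$. Hence it suffices to show that $\sup_\theta d_{BL}(Y_n^\theta, X_n^\theta) \to 0$ as $n\to \infty$.

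For this first term, the key observation is that every $f\in BL_1(\mathbb{D})$ satisfies $|f(x)-f(y)| \leq \min(2, d_{\mathbb{D}}(x,y))$ because $\|f\|_\infty \leq 1$ and $f$ is $1$-Lipschitz. Therefore, for any $\epsilon>0$,
\[
|f(Y_n^\theta) - f(X_n^\theta)|
\;\leq\; \epsilon\,\mathbf{1}\{d_{\mathbb{D}}(X_n^\theta,Y_n^\theta)\leq \epsilon\}
   + 2\,\mathbf{1}\{d_{\mathbb{D}}(X_n^\theta,Y_n^\theta)> \epsilon\}.
\]
Taking expectation and then the supremum over $f\in BL_1(\mathbb{D})$ yields
\[
d_{BL}(Y_n^\theta,X_n^\theta)
\;\leq\; \epsilon + 2\,\mathbb{P}\big(d_{\mathbb{D}}(X_n^\theta,Y_n^\theta) > \epsilon\big),
\]
and this bound is uniform in $\theta$.

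Taking the supremum over $\theta\in \Theta$ and then $\limsup_{n\to\infty}$, the second term vanishes because $d_{\mathbb{D}}(X_n^\theta,Y_n^\theta) \convUP 0$, leaving $\limsup_{n\to\infty}\sup_\theta d_{BL}(Y_n^\theta,X_n^\theta) \leq \epsilon$. Since $\epsilon>0$ was arbitrary, this limsup is zero, and combined with the triangle inequality above we obtain $Y_n^\theta \convUD X^\theta$, as required. Since the separability hypothesis on $(X^\theta)_{\theta\in\Theta}$ is already embedded in the definition of $\convUD$ via Definition \ref{dfn:UniformConvergence}, no additional handling is required. There is no genuine obstacle here; the argument is essentially the classical metric-space Slutsky proof, with the only subtlety being to keep the $\epsilon$-splitting uniform in $\theta$, which is automatic because both bounding quantities depend on $\theta$ only through quantities already known to converge uniformly.
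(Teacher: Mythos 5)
Your proof is correct and follows essentially the same route as the paper's: triangle inequality for $d_{BL}$, followed by an $\epsilon$-splitting of $|f(X_n^\theta)-f(Y_n^\theta)|$ on the event $\{d_{\mathbb{D}}(X_n^\theta,Y_n^\theta)>\epsilon\}$, with uniformity in $\theta$ coming for free. (The only cosmetic difference is your explicit factor of $2$ from $\|f\|_\infty\le 1$, where the paper implicitly uses the cruder bound $\le 1$; this is immaterial.)
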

\begin{proof}
    By the triangle inequality of the bounded Lipschitz metric, we observe that
    \begin{align*}
        \sup_{\theta \in \Theta}d_{BL}(Y_n^\theta, X^\theta) 
            \leq \sup_{\theta \in \Theta} d_{BL}(Y_n^\theta, X_n^\theta) 
                + \sup_{\theta \in \Theta} d_{BL}(X_n^\theta, X^\theta).
    \end{align*}
    The last term converges to zero by the assumption of $X_n^\theta \convUD X^\theta$. 
    For the other term, let $\epsilon>0$ and use the partition
    $$
        (d_{\mathbb{D}}(X_n^\theta,Y_n^\theta)\leq \epsilon) \cup (d_{\mathbb{D}}(X_n^\theta,Y_n^\theta)>\epsilon)
    $$
    to obtain that
    \begin{align*}
        d_{BL}(X_n^\theta, Y_n^\theta) 
            &= \sup_{f\in BL_1(\mathbb{D})} |\mathbb{E}[f(X_n^\theta)-f(Y_n^\theta)]|\\
            &\leq \epsilon 
                + \sup_{f\in BL_1(\mathbb{D})} 
             \mathbb{E}[|f(X_n^\theta)-f(Y_n^\theta)|\one(d_{\mathbb{D}}(X_n^\theta,Y_n^\theta)>\epsilon)]\\
            &\leq \epsilon 
            + \mathbb{P}(d_{\mathbb{D}}(X_n^\theta,Y_n^\theta)>\epsilon).
    \end{align*}
    Taking the supremum over $\Theta$ and the limit superior for $n\to \infty$ finishes the proof.
\end{proof}
The following formulation of the continuous mapping theorem is analogous to Theorem 1 in \citet{kasy2019uniformity}. The proof is almost identical, but we repeat it here for completeness.

\begin{prop}\label{prop:Uctsmapping}
    Let $(\mathbb{D}_1,d_1)$ and $(\mathbb{D}_2,d_2)$ be metric spaces, and let $\Phi \colon \mathbb{D}_1 \longrightarrow \mathbb{D}_2$ be a Lipschitz continuous map.
    Let $(X_n^\theta)_{n\in \mathbb{N}, \theta \in \Theta}$ and $(X^\theta)_{\theta \in \Theta}$ be collections of $\mathbb{D}_1$-valued random variables, and assume $(X^\theta)_{\theta \in \Theta}$ is separable. 
    
    If $X_n^\theta\convUD X^\theta$ in $\mathbb{D}_1$, then $\Phi(X_n^\theta)\convUD \Phi(X^\theta)$ in $\mathbb{D}_2$.
\end{prop}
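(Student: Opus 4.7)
The plan is to reduce the statement to the defining inequality for the bounded Lipschitz metric and exploit the Lipschitz constant of $\Phi$. Let $L$ denote the Lipschitz constant of $\Phi$, and set $K = \max(1, L)$.

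First, I would verify the implicit separability hypothesis needed for Definition \ref{dfn:UniformConvergence} to apply on the $\mathbb{D}_2$ side. Since $(X^\theta)_{\theta \in \Theta}$ is separable, there exists a separable set $\mathbb{D}_0 \subseteq \mathbb{D}_1$ with $\mathbb{P}(X^\theta \in \mathbb{D}_0) = 1$ for every $\theta$. By continuity of $\Phi$, the image $\Phi(\mathbb{D}_0)$ is a separable subset of $\mathbb{D}_2$ (a countable dense subset of $\mathbb{D}_0$ maps to a countable dense subset of $\Phi(\mathbb{D}_0)$). Clearly $\mathbb{P}(\Phi(X^\theta) \in \Phi(\mathbb{D}_0)) = 1$, so $(\Phi(X^\theta))_{\theta \in \Theta}$ is separable in $\mathbb{D}_2$.

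Next I would relate the two bounded Lipschitz metrics. For any $f \in BL_1(\mathbb{D}_2)$, the composition $f \circ \Phi\colon \mathbb{D}_1 \to \mathbb{R}$ satisfies $\|f\circ \Phi\|_\infty \leq 1$ and
\begin{align*}
    |f(\Phi(x)) - f(\Phi(y))| \leq d_2(\Phi(x),\Phi(y)) \leq L\, d_1(x,y)
\end{align*}
for all $x,y \in \mathbb{D}_1$, so $K^{-1}(f\circ \Phi) \in BL_1(\mathbb{D}_1)$. Consequently,
\begin{align*}
    |\mathbb{E}[f(\Phi(X_n^\theta)) - f(\Phi(X^\theta))]|
    \leq K \cdot d_{BL(\mathbb{D}_1)}(X_n^\theta, X^\theta).
\end{align*}
Taking the supremum over $f \in BL_1(\mathbb{D}_2)$ gives
\begin{align*}
    d_{BL(\mathbb{D}_2)}(\Phi(X_n^\theta), \Phi(X^\theta))
    \leq K \cdot d_{BL(\mathbb{D}_1)}(X_n^\theta, X^\theta).
\end{align*}

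Finally, taking the supremum over $\theta \in \Theta$ and letting $n\to \infty$, the right-hand side converges to zero by assumption, so the left-hand side does as well, which is exactly $\Phi(X_n^\theta) \convUD \Phi(X^\theta)$. No serious obstacle is anticipated; the only subtle point is the separability of the image, which is handled by the elementary observation that continuous images of separable spaces are separable.
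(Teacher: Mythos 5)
Your proposal is correct and follows the same route as the paper's own proof: both verify separability of the image via continuity of $\Phi$, then show that scaling $f \circ \Phi$ by $\min(1, L^{-1})$ lands it in $BL_1(\mathbb{D}_1)$, yielding $d_{BL(\mathbb{D}_2)}(\Phi(X_n^\theta), \Phi(X^\theta)) \leq \max(1, L) \cdot d_{BL(\mathbb{D}_1)}(X_n^\theta, X^\theta)$. The only difference is notational (you write $L$ for the Lipschitz constant and $K = \max(1, L)$; the paper writes $K$ for the Lipschitz constant and works directly with $\min(1, K^{-1})$), and you are slightly more explicit about the separability step.
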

\begin{proof}
    Note first that if $X^\theta$ is in a separable subset $\mathbb{D}_0 \subseteq \mathbb{D}_1$, then the variables $\Phi(X_\theta)$ for $\theta \in \Theta$ 
    are all in the separable subset $\Phi(\mathbb{D}_0)\subseteq \mathbb{D}_2$. 
    Hence it is well-defined to consider uniform convergence in distribution towards $(\Phi(X^\theta))_{\theta \in \Theta}$. Let $f\in BL_1(\mathbb{D}_2)$ and let $K$ be the Lipschitz constant of $\Phi$. Consider the map 
    \[
        g\colon \mathbb{D}_1\longrightarrow \mathbb{R}, \qquad g(x) = \min(1,K^{-1}) f(\Phi(x)).
    \]
    Then $\|g\|_\infty\leq \|f\|_\infty \leq 1$ and for all $x,y\in \mathbb{D}_1$,
    \begin{align*}
        |g(x)-g(y)| &\leq \min(1,K^{-1}) d_2(\Phi(x),\Phi(y)) \\
        &\leq \min(1,K^{-1})K d_1(x,y)\leq d_1(x,y)
    \end{align*}
    Hence $g\in BL_1(\mathbb{D}_1)$. It follows that
    \begin{align*}
        d_{BL_1(\mathbb{D}_2)}(\Phi(X_n^\theta),\Phi(X^\theta)) 
        &= \sup_{f\in BL_1(\mathbb{D}_2)} |\mathbb{E}[f(\Phi(X_n^\theta))-f(\Phi(X^\theta))]|\\
        &\leq \frac{1}{\min(1,K^{-1})} \sup_{g\in BL_1(\mathbb{D}_1)} |\mathbb{E}[g(X_n^\theta)-g(X^\theta)]|\\
        &\leq \max(1,K) \cdot d_{BL_1(\mathbb{D}_1)}(X_n^\theta,X^\theta) 
    \end{align*}
    Taking the supremum over $\Theta$ and the limit superior as $n\to \infty$ finish the proof.
\end{proof}

We will also need the following two notions of tightness.

\begin{dfn} \label{dfn:tight}
Let $(\mu^\theta)_{\theta \in \Theta}$ be a family of probability measures on $\mathbb{D}$, and let $(X^\theta)_{\theta \in \Theta}$ and $(X_n^\theta)_{n \in \mathbb{N}, \theta \in \Theta}$ be collections of $\mathbb{D}$-valued random variables.
\begin{itemize}
    \item[i)] We say that $(\mu^\theta)_{\theta \in \Theta}$ is \emph{tight} if for any $\varepsilon > 0$, there exists a compact set $K\subseteq \mathbb{D}$ such that
    $\sup_{\theta \in \Theta} \mu^\theta(K^c)  < \varepsilon$.
    We say that $(X^\theta)_{\theta \in \Theta}$ is \emph{uniformly tight} if the collection of distributions $(X^\theta(\mathbb{P}))_{\theta \in \Theta}$ is tight.
    
    \item[ii)] The sequence $((X_n^\theta)_{\theta \in \Theta})_{n\in \mathbb{N}}$ of collections is said to be \emph{sequentially tight} if for any sequence $(\theta_n)_{n \in \mathbb{N}} \subset \Theta$, the sequence of distributions $(X_{n}^{\theta_n}(\mathbb{P}))_{n \in \mathbb{N}}$ is tight.
\end{itemize}
\end{dfn}
Definition \ref{dfn:tight} i) is a classical concept, whereas sequential tightness was introduced by \citet{lundborg2021conditional} and relaxes uniform tightness for sequences of variables parametrized over an infinite set.

The importance of tightness is mainly due to Prokhorov's theorem \citep[Theorem 23.2]{kallenberg2021foundations}, which states that if $\mathbb{D}$ is a Polish space\footnote{The `only if' part does not require separability nor completeness.}, then $(\mu^\theta)_{\theta \in \Theta}$ is tight if and only if all sequences in $(\mu^\theta)_{\theta \in \Theta}$ have a weakly convergent subsequence.

The continuous mapping theorem in Proposition \ref{prop:Uctsmapping} is more restrictive than the classical theorem as it requires Lipschitz continuity. However, we also have an alternative version of uniform continuous mapping when the limit variable is tight.
\begin{prop}\label{prop:UctsmappingT}
    Let $(\mathbb{D}_1,d_1)$ and $(\mathbb{D}_2,d_2)$ be Polish spaces, and let $(X_n^\theta)_{n\in \mathbb{N}, \theta \in \Theta}$ and $(X^\theta)_{\theta \in \Theta}$ be collections of $\mathbb{D}_1$-valued random variables. Assume $(X^\theta)_{\theta \in \Theta}$ is uniformly tight, and let $\Phi \colon \mathbb{D}_1 \longrightarrow \mathbb{D}_2$ be a map that is continuous on the support of $(X^\theta)_{\theta \in \Theta}$.
    
    If $X_n^\theta\convUD X^\theta$ in $\mathbb{D}_1$, then $\Phi(X_n^\theta)\convUD \Phi(X^\theta)$ in $\mathbb{D}_2$.
\end{prop}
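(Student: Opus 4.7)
The plan is to reduce the statement to the classical continuous mapping theorem via the subsequence characterization in Proposition \ref{prop:seqUni}. By part (c) of that proposition, it suffices to show that for every sequence $(\theta_n)_{n\in\mathbb{N}} \subseteq \Theta$, we can extract a strictly increasing $k\colon \mathbb{N}\to \mathbb{N}$ such that
\begin{equation*}
    d_{BL(\mathbb{D}_2)}\bigl(\Phi(X_{k(n)}^{\theta_{k(n)}}),\Phi(X^{\theta_{k(n)}})\bigr) \longrightarrow 0, \qquad n\to\infty.
\end{equation*}

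Fix such a sequence $(\theta_n)$. First I would use the hypothesis that $(X^\theta)_{\theta\in\Theta}$ is uniformly tight together with Prokhorov's theorem \citep[Theorem 23.2]{kallenberg2021foundations}, which applies since $\mathbb{D}_1$ is Polish. This yields a strictly increasing $k\colon \mathbb{N}\to\mathbb{N}$ and a $\mathbb{D}_1$-valued random variable $Y$ such that $X^{\theta_{k(n)}} \xrightarrow{\mathcal{D}} Y$ as $n\to\infty$. Next, the uniform convergence $X_n^\theta \convUD X^\theta$ implies via Proposition \ref{prop:seqUni}(b) that $d_{BL(\mathbb{D}_1)}(X_{k(n)}^{\theta_{k(n)}}, X^{\theta_{k(n)}}) \to 0$. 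Since $\mathbb{D}_1$ is separable, Proposition \ref{prop:boundedLipconv} and the triangle inequality for $d_{BL}$ combine to give $X_{k(n)}^{\theta_{k(n)}} \xrightarrow{\mathcal{D}} Y$ as well.

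The next step is to verify that $\Phi$ is continuous on the support of $Y$. Let $S$ denote the closure of the support of $(X^\theta)_{\theta \in \Theta}$, which is a closed set on which $\Phi$ is continuous by assumption. Since each $X^{\theta_{k(n)}}$ is supported in $S$, the Portmanteau theorem applied to the closed set $S$ yields $\mathbb{P}(Y \in S) \geq \limsup_{n\to\infty} \mathbb{P}(X^{\theta_{k(n)}} \in S) = 1$, so $Y$ is also supported in $S$, and in particular $\Phi$ is continuous almost everywhere with respect to the distribution of $Y$. The classical continuous mapping theorem therefore yields
\begin{equation*}
    \Phi(X_{k(n)}^{\theta_{k(n)}}) \xrightarrow{\mathcal{D}} \Phi(Y)
    \quad \text{and} \quad
    \Phi(X^{\theta_{k(n)}}) \xrightarrow{\mathcal{D}} \Phi(Y)
\end{equation*}
in $\mathbb{D}_2$.

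Finally, since $\Phi(Y)$ takes values in the separable space $\mathbb{D}_2$, Proposition \ref{prop:boundedLipconv} translates each of these weak convergences into $d_{BL(\mathbb{D}_2)}$-convergence, and the triangle inequality
\begin{equation*}
    d_{BL(\mathbb{D}_2)}\bigl(\Phi(X_{k(n)}^{\theta_{k(n)}}),\Phi(X^{\theta_{k(n)}})\bigr)
    \leq d_{BL(\mathbb{D}_2)}\bigl(\Phi(X_{k(n)}^{\theta_{k(n)}}),\Phi(Y)\bigr)
        + d_{BL(\mathbb{D}_2)}\bigl(\Phi(Y),\Phi(X^{\theta_{k(n)}})\bigr)
\end{equation*}
delivers the desired subsequence convergence. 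The main technical obstacle is the step involving Portmanteau: it requires carefully interpreting ``support of $(X^\theta)_{\theta\in\Theta}$'' as a closed set containing each individual support, so that the closed-set inequality in Portmanteau can transfer the support property to the weak limit $Y$; with that interpretation the remainder of the argument reduces to standard manipulations of $d_{BL}$ and the classical continuous mapping theorem.
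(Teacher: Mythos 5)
Your proposal is correct and takes essentially the same approach as the paper's intended one: the paper's own ``proof'' is a single line delegating to Proposition~10 of \citet{lundborg2021conditional} (which works in separable Banach spaces) with norms replaced by metrics, and that underlying argument is precisely the Prokhorov\;+\;subsequence\;+\;classical continuous mapping scheme you spell out. The mechanics — extract a weakly convergent subsequence $X^{\theta_{k(n)}} \Rightarrow Y$ from uniform tightness, transfer the diagonal convergence $X_{k(n)}^{\theta_{k(n)}} \Rightarrow Y$ via the triangle inequality for $d_{BL}$, push $Y$ into the continuity set of $\Phi$ via Portmanteau, apply classical CMT twice, and close with another triangle inequality — are all sound, and your use of the closed-set direction of Portmanteau is the correct one ($\mathbb{P}(Y\in S) \geq \limsup_n \mathbb{P}(X^{\theta_{k(n)}}\in S) = 1$).

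The one place to be slightly more careful is the sentence ``Let $S$ denote the closure of the support of $(X^\theta)_{\theta \in \Theta}$, which is a closed set on which $\Phi$ is continuous by assumption.'' Taking the closure and then asserting $\Phi$ is continuous there silently extends the assumed domain of continuity from the support to its closure, which need not follow in general. As you acknowledge at the end, this is resolved by the standard convention that the support of a probability measure (and of a family of them) is by definition a closed set — the smallest closed set containing all individual supports — in which case the closure operation is a no-op. Under that convention the Portmanteau step is airtight and the remainder is routine. Stating that convention explicitly rather than applying a closure operation would remove the wrinkle entirely.
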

\begin{proof}
    Same as the proof of Proposition 10 in \citet{lundborg2021conditional}, but with norms of differences replaced by metric distances.
\end{proof}

\subsection{Uniform stochastic convergence in Skorokhod space}
In this section we consider the special case where $(\mathbb{D},d_\mathbb{D})$ is the Skorokhod space $(D[0,1],d^\circ)$.  We can also equip $D[0,1]$ with the uniform norm, $\|x\|_\infty = \sup_{t\in [0,1]}|x_t|$, and it known that weak convergence based on either $\|\cdot\|_\infty$ or $d^\circ$ are equivalent when the limit is continuous. We now extend this result to stochastic convergence uniformly over $\Theta$. 

\begin{prop}[Skorokhod equivalence]
\label{prop:convergencetoCont2}
    Let $(X_n^\theta)_{n\in \mathbb{N},\theta \in \Theta}$ be a collection of $D[0,1]$-valued random variables and let $(X^\theta)_{\theta \in \Theta}$ be a uniformly tight collection of $C[0,1]$-valued random variables. Then $X_n^\theta \convUD X^\theta$ in $(D[0,1], d^\circ)$ if and only if $X_n^\theta \convUD X^\theta$ in $(D[0,1], \|\cdot\|_\infty)$. In the affirmative, $\|X_n^\theta\|_\infty \convUD \|X\|_\infty$.
\end{prop}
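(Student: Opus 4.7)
\medskip
\noindent\textbf{Proof proposal.} I propose to prove the two directions separately, using Proposition~\ref{prop:seqUni} to reduce uniform convergence to a subsequence principle, and Prokhorov's theorem to handle the infinite family $(X^\theta)_{\theta \in \Theta}$.

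For the easy direction (uniform $\|\cdot\|_\infty$-convergence implies uniform $d^\circ$-convergence), I would observe that $d^\circ(x,y) \leq \|x-y\|_\infty$ for all $x,y \in D[0,1]$, so every $f \in BL_1(D[0,1], d^\circ)$ is also in $BL_1(D[0,1], \|\cdot\|_\infty)$. This immediately yields $d_{BL}^{d^\circ}(\mu,\nu) \leq d_{BL}^{\|\cdot\|}(\mu,\nu)$ for all $\mu,\nu \in \mathcal{M}_1(D[0,1])$, from which the implication is immediate.

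For the harder direction, suppose $X_n^\theta \convUD X^\theta$ in $(D[0,1], d^\circ)$. By Proposition~\ref{prop:seqUni}, it suffices to show that for any sequence $(\theta_n) \subset \Theta$, every subsequence of $(\theta_n)$ admits a further subsequence $(\theta_{k(n)})$ along which $d_{BL}^{\|\cdot\|}(X_{k(n)}^{\theta_{k(n)}}, X^{\theta_{k(n)}}) \to 0$. Given any subsequence, use uniform tightness of $(X^\theta)_{\theta \in \Theta}$ together with Prokhorov's theorem (applicable since $(D[0,1], d^\circ)$ is Polish) to extract a further subsequence, still denoted $(\theta_{k(n)})$, along which $X^{\theta_{k(n)}} \xrightarrow{\mathcal{D}} X^\infty$ in $(D[0,1], d^\circ)$ for some random variable $X^\infty$. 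Since $C[0,1]$ is a closed subset of $(D[0,1], d^\circ)$ and each $X^{\theta_{k(n)}}$ takes values in $C[0,1]$ almost surely, the Portmanteau theorem yields $\mathbb{P}(X^\infty \in C[0,1]) = 1$. Combining this subsequential convergence with the hypothesis $X_n^\theta \convUD X^\theta$ in $d^\circ$ via the triangle inequality for $d_{BL}^{d^\circ}$, I obtain $X_{k(n)}^{\theta_{k(n)}} \xrightarrow{\mathcal{D}} X^\infty$ in $(D[0,1], d^\circ)$ as well.

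At this point, invoking the classical fact that weak convergence in $(D[0,1], d^\circ)$ to a limit supported on $C[0,1]$ is equivalent to weak convergence in $(D[0,1], \|\cdot\|_\infty)$ (see, e.g., Theorem 23.9 in Kallenberg, already cited earlier in the appendix), I get that both $X^{\theta_{k(n)}} \xrightarrow{\mathcal{D}} X^\infty$ and $X_{k(n)}^{\theta_{k(n)}} \xrightarrow{\mathcal{D}} X^\infty$ hold in $(D[0,1], \|\cdot\|_\infty)$. By Proposition~\ref{prop:boundedLipconv} this translates into $d_{BL}^{\|\cdot\|}(X^{\theta_{k(n)}}, X^\infty) \to 0$ and $d_{BL}^{\|\cdot\|}(X_{k(n)}^{\theta_{k(n)}}, X^\infty) \to 0$, and a final triangle inequality delivers the required convergence along the subsequence. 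The final claim $\|X_n^\theta\|_\infty \convUD \|X^\theta\|_\infty$ then follows from Proposition~\ref{prop:Uctsmapping} applied to the $1$-Lipschitz functional $\Phi(x) = \|x\|_\infty$ on $(D[0,1], \|\cdot\|_\infty)$.

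The main technical obstacle is that uniform tightness alone does not fix a common weak limit across $\theta$; the limit $X^\infty$ depends on the extracted subsequence. The subsequence principle in Proposition~\ref{prop:seqUni}(c) is essential precisely for this reason, as is the observation that $C[0,1]$ is closed in $(D[0,1], d^\circ)$ so that the unknown limit inherits the continuity of paths needed to upgrade Skorokhod convergence to uniform convergence.
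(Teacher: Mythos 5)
Your proposal is correct and follows essentially the same route as the paper's proof: both directions use $d^\circ\leq\|\cdot\|_\infty$ for the easy implication, then Prokhorov's theorem along an arbitrary $\theta$-sequence to extract a subsequential weak limit, the equivalence of Skorokhod and uniform weak convergence to a continuous limit (Kallenberg, Theorem 23.9), a triangle-inequality argument in $d_{BL}$, and the sequential characterization in Proposition~\ref{prop:seqUni}, finishing via Proposition~\ref{prop:Uctsmapping} for the supremum functional. The only cosmetic difference is that you explicitly identify the subsequential limit as supported on $C[0,1]$ via closedness of $C[0,1]$ in $(D[0,1],d^\circ)$ and the Portmanteau theorem, whereas the paper phrases Prokhorov directly as producing a measure on $C[0,1]$; these amount to the same thing.
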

\begin{proof}
    To avoid ambiguity in the topology on $D[0,1]$, we will throughout this proof use $\mathbb{D}^\circ$ to denote the metric space $(D[0,1],d^\circ)$ and we use $\mathbb{D}_\infty$ to denote the Banach space $(D[0,1], \|\cdot\|_\infty)$. Note also that $C[0,1]$ is separable within $\mathbb{D}_\infty$, so $(X^\theta)_{\theta\in\Theta}$ is separable, and hence the convergence $X_n^\theta \convUD X^\theta$ is well-defined in the non-separable space $\mathbb{D}_\infty$. 
    
    The `if' part is clear since $d^\circ(x,y)\leq \|x-y\|_\infty$ for all $x,y\in D[0,1]$. 
    
    For the `only if' part, assume that $X_n^\theta \convUD X^\theta$ in $\mathbb{D}^\circ$ and let $(\theta_n)\subseteq \Theta$ be an arbitrary sequence. Since $(X^{\theta_n}(\mathbb{P}))$ is tight, Prokhorov's Theorem asserts that there exists a subsequence 
    $(\theta_{k(n)})$ and a probability distribution $\mu$ on $C[0,1]$ such that $X^{\theta_{k(n)}}(\mathbb{P}) \xrightarrow{wk} \mu$ in $\mathbb{D}^\circ$. By the triangle inequality
    \begin{align*}
        d_{BL(\mathbb{D}^\circ)}(X_{k(n)}^{\theta_{k(n)}},\mu) 
            \leq 
            d_{BL(\mathbb{D}^\circ)}(X_{k(n)}^{\theta_{k(n)}},X^{\theta_{k(n)}}) 
            + d_{BL(\mathbb{D}^\circ)}(X^{\theta_{k(n)}},\mu) \to 0, \quad n \to 0.
    \end{align*}
    This shows that also $X_{k(n)}^{\theta_{k(n)}}(\mathbb{P})\xrightarrow{wk} \mu$ in $\mathbb{D}^\circ$. Now we can use that weak convergence in the Skorokhod topology and the uniform topology are equivalent when the limit is continuous \citep[Theorem 23.9 (iii)]{kallenberg2021foundations}.
    We therefore conclude that the convergences $X^{\theta_{k(n)}}(\mathbb{P}) \xrightarrow{wk} \mu$ and $X_{k(n)}^{\theta_{k(n)}}(\mathbb{P})\xrightarrow{wk} \mu$ also hold in $\mathbb{D}_\infty$. 
    But then another use of the triangle inequality shows that 
    $$
        d_{BL(\mathbb{D}_\infty)}(X_{k(n)}^{\theta_{k(n)}},X^{\theta_{k(n)}})  
        \leq 
        d_{BL(\mathbb{D}_\infty)}(X_{k(n)}^{\theta_{k(n)}},\mu)
        + d_{BL(\mathbb{D}_\infty)}(\mu,X^{\theta_{k(n)}}) \to 0.
    $$
    Since $(\theta_{k(n)})$ is a subsequence of the arbitrarily chosen sequence $(\theta_n)$, we conclude that $X_n^\theta \convUD X^\theta$ in $\mathbb{D}_\infty$ by Proposition \ref{prop:seqUni}.
    
    Finally, as the uniform norm is Lipschitz continuous as a map from $\mathbb{D}_\infty$ to $\mathbb{R}$, the continuous mapping theorem formulated in Proposition \ref{prop:Uctsmapping}
    yields that
    \begin{align*}
        X_n^\theta \convUD X^\theta 
            \: \text{  in  } \: \mathbb{D}_\infty
        \qquad \implies \qquad 
        \|X_n^\theta\|_\infty \convUD \|X\|_\infty.
    \end{align*}
    This establishes the last part of the lemma.
\end{proof}

Using $\|\mu\|_\infty$ to denote the pushforward measure for any $\mu \in \mathcal{M}_1(D([0,1]))$ we restate the result above for a fixed limit distribution.

\begin{cor}\label{cor:convergencetoCont}
    Let $(X_n^\theta)_{n\in \mathbb{N},\theta \in \Theta}$ be a collection of $D[0,1]$-valued random variables and let $\mu$ be a probability measure on $C[0,1]$.
    Then $X_n^\theta \convUD \mu$ in $(D[0,1], d^\circ)$ if and only if $X_n^\theta \convUD \mu$ in $(D[0,1], \|\cdot\|_\infty)$. In the affirmative, $\|X_n^\theta\|_\infty \convUD \|\mu\|_\infty$.
\end{cor}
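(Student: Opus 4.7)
The plan is to derive Corollary \ref{cor:convergencetoCont} as an immediate specialization of Proposition \ref{prop:convergencetoCont2}. For each $\theta \in \Theta$, define $X^\theta \coloneqq X$, where $X$ is a $C[0,1]$-valued random variable with distribution $\mu$ (such $X$ exists on any sufficiently rich background probability space, or we can work with the canonical representation $X = \mathrm{id}_{C[0,1]}$ under $\mu$). Then the convergence $X_n^\theta \convUD \mu$ in either topology on $D[0,1]$ is, by definition, the same as $X_n^\theta \convUD X^\theta$ for this constant family.

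The only nontrivial point to check is that the constant family $(X^\theta)_{\theta \in \Theta}$ is uniformly tight in the sense of Definition \ref{dfn:tight}, so that Proposition \ref{prop:convergencetoCont2} is applicable. This reduces to tightness of the single measure $\mu$ on $C[0,1]$. Since $C[0,1]$ with the uniform metric is a separable complete metric space, Ulam's theorem guarantees that every Borel probability measure on $C[0,1]$ is tight; hence for every $\varepsilon > 0$ there is a compact $K \subseteq C[0,1]$ with $\mu(K^c) < \varepsilon$, which gives $\sup_{\theta \in \Theta} \mathbb{P}(X^\theta \notin K) = \mu(K^c) < \varepsilon$.

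With uniform tightness in hand, Proposition \ref{prop:convergencetoCont2} applies and yields the claimed equivalence between $X_n^\theta \convUD X^\theta$ in $(D[0,1], d^\circ)$ and $X_n^\theta \convUD X^\theta$ in $(D[0,1], \|\cdot\|_\infty)$, together with $\|X_n^\theta\|_\infty \convUD \|X^\theta\|_\infty$ whenever the convergence holds. Re-expressing $X^\theta$ via its distribution gives $X_n^\theta \convUD \mu$ and $\|X_n^\theta\|_\infty \convUD \|\mu\|_\infty$, where $\|\mu\|_\infty$ denotes the pushforward of $\mu$ under $\|\cdot\|_\infty$, completing the proof. There is no real obstacle here beyond the routine verification that a constant family inherits uniform tightness from tightness of the common distribution on the Polish space $C[0,1]$.
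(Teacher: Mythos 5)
Your proposal is correct and follows essentially the same route as the paper: both reduce the corollary to Proposition \ref{prop:convergencetoCont2} by observing that a single Borel probability measure on the Polish space $C[0,1]$ is automatically tight, so the constant family is uniformly tight. Your write-up just spells out the construction of the constant family $(X^\theta)$ a bit more explicitly.
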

\begin{proof}
    Since $\mu$ is a probability measure on the Polish space $C[0,1]$, it is, in particular, tight \citep[Theorem 1.3]{billingsley2013convergence}. Hence the statement is a special case of Proposition~\ref{prop:convergencetoCont2}. 
\end{proof}

Now we are ready to prove a uniform version of Slutsky's theorem in the Skorokhod space.

\begin{lem}[Uniform Slutsky in Skorokhod space]\label{lem:SkorokhodSlutsky}
    Let $(X^\theta, X_n^\theta, Y_n^\theta)_{n\in \mathbb{N},\theta \in \Theta}$ be a collection of $D[0,1]$-valued random variables such that $Y_n^\theta \convUP 0$ and $X_n^\theta\convUD X^\theta$ in $D[0,1]$. 
    Then it holds that $X_n^\theta+Y_n^\theta \convUD X^\theta$.
\end{lem}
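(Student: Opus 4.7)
The plan is to reduce the statement to the generic metric-space Slutsky result of Lemma \ref{lem:GeneralSlutsky}. Concretely, it suffices to show that
$$
    d^\circ(X_n^\theta + Y_n^\theta,\, X_n^\theta) \convUP 0,
$$
since then Lemma \ref{lem:GeneralSlutsky} applied to the pair $(X_n^\theta, X_n^\theta + Y_n^\theta)$ with limit $X^\theta$ yields $X_n^\theta + Y_n^\theta \convUD X^\theta$, using the assumed uniform distributional convergence $X_n^\theta \convUD X^\theta$.

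The key observation is a pair of simple comparisons between $d^\circ$ and the sup-norm on $D[0,1]$. Recall that $d^\circ(x, y) = \inf_{\lambda \in \Lambda}\{ \|\lambda\|^\circ \vee \|x - y \circ \lambda\|_\infty \}$, where $\Lambda$ is the set of strictly increasing continuous bijections of $[0,1]$ and $\|\lambda\|^\circ$ measures how far $\lambda$ is from the identity in the log-derivative sense. Taking $\lambda = \mathrm{id}$ in the infimum gives $\|\lambda\|^\circ = 0$, so that for any $u, v \in D[0,1]$,
$$
    d^\circ(u + v,\, u) \;\le\; \|v\|_\infty.
$$
In particular, $d^\circ(v, 0) \le \|v\|_\infty$. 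Conversely, every $\lambda \in \Lambda$ is a bijection of $[0,1]$ onto itself, so $\|v \circ \lambda\|_\infty = \|v\|_\infty$ for every admissible $\lambda$; hence the infimum defining $d^\circ(v, 0)$ is bounded below by $\|v\|_\infty$. Together these give the exact identity
$$
    d^\circ(v, 0) \;=\; \|v\|_\infty, \qquad v \in D[0,1].
$$

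Putting the two bounds together, for every $\theta \in \Theta$ and every $n$,
$$
    d^\circ(X_n^\theta + Y_n^\theta,\, X_n^\theta)
    \;\le\; \|Y_n^\theta\|_\infty
    \;=\; d^\circ(Y_n^\theta, 0),
$$
which is $\convUP 0$ by the hypothesis $Y_n^\theta \convUP 0$ in $D[0,1]$. Combined with $X_n^\theta \convUD X^\theta$ and Lemma \ref{lem:GeneralSlutsky}, this yields the claim.

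The conceptual subtlety to flag is that the complete Skorokhod metric is \emph{not} translation-invariant, so the usual Slutsky strategy of showing $d^\circ(Y_n^\theta, 0) = d^\circ(X_n^\theta + Y_n^\theta, X_n^\theta)$ is unavailable. The trick that makes the argument go through is that the uniform-norm upper bound $d^\circ(u+v,u) \le \|v\|_\infty$ together with the coincidence $d^\circ(v,0) = \|v\|_\infty$ (coming from the bijectivity of the time changes in $\Lambda$) provides a one-sided control that is exactly what Lemma \ref{lem:GeneralSlutsky} needs. No additional tightness or continuity-of-limit hypotheses are required.
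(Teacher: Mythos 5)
Your proof is correct, and it takes a genuinely more direct route than the paper's. Both you and the paper first establish $\|Y_n^\theta\|_\infty \convUP 0$, then apply the ``trivial estimate'' $d^\circ(x+y,x) \leq \|y\|_\infty$ (taking $\lambda = \mathrm{id}$ in the infimum), and finally invoke Lemma \ref{lem:GeneralSlutsky}. The difference is in the first step: the paper converts the hypothesis $Y_n^\theta \convUP 0$ in $(D[0,1],d^\circ)$ into $\|Y_n^\theta\|_\infty \convUP 0$ by a detour through Corollary \ref{cor:convergencetoCont} (the Skorokhod/uniform topology equivalence for continuous limits, itself resting on Prokhorov's theorem) and Corollary \ref{cor:ConvUDtoconstant}. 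You instead observe the elementary exact identity $d^\circ(v, 0) = \|v\|_\infty$ coming straight from the definition: with $y = 0$ the term $\|x - y\circ\lambda\|_\infty$ collapses to $\|x\|_\infty$ for every time change $\lambda$, so the infimum over $\lambda$ is realized at the identity. This bypasses the heavier machinery entirely and makes the statement $\|Y_n^\theta\|_\infty = d^\circ(Y_n^\theta, 0) \convUP 0$ an immediate restatement of the hypothesis. One small nit: your phrasing ``bijectivity of $\lambda$ gives $\|v\circ\lambda\|_\infty = \|v\|_\infty$'' is not the cleanest way to see the lower bound for the convention of $d^\circ$ used in the paper, where $\lambda$ is composed with the \emph{second} argument; the simpler observation is just that $0 \circ \lambda \equiv 0$ regardless, so $\|v - 0\circ\lambda\|_\infty = \|v\|_\infty$ for all $\lambda$. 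Either way the identity holds and the argument goes through.
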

\begin{proof}
    Since $Y_n^\theta \convUP 0$, Corollary \ref{cor:convergencetoCont} implies that $\|Y_n^\theta\|_\infty \convUD 0$, and Corollary \ref{cor:ConvUDtoconstant} implies that $\|Y_n^\theta\|_\infty \convUP 0$. 
    Using the trivial estimate $d^\circ(x+y,x) \leq \|(x+y)-x\|_\infty = \|y\|_\infty$ for $x,y\in D[0,1]$, it follows that 
    $d^\circ(X_n^\theta+Y_n^\theta,X_n^\theta) \convUP 0$.
    Combining the latter with $X_n^\theta\convUD X^\theta$, the desired conclusion now follows from Lemma \ref{lem:GeneralSlutsky}.
\end{proof}

We also have a related result for sums of independent sequences.
\begin{lem}\label{lem:sumofindependent}
    Let $(X_n^\theta, Y_n^\theta)_{n\in\mathbb{N},\theta \in \Theta}$ be a collection of $D[0,1]$-valued random variables and let $(X^\theta)_{\theta \in \Theta}$ and $(Y^\theta)_{\theta \in \Theta}$ be uniformly tight collections of $C[0,1]$-valued random variables.
    Assume that $X_n^\theta \convUD X^\theta$ and $Y_n^\theta \convUD Y^\theta$ in $D[0,1]$, and that for each $\theta \in \Theta$ and $n\in \mathbb{N}$, it holds that 
    $X_n^\theta \perp \!\!\!\! \perp Y_n^\theta$.
    Let $Z^\theta$ have distribution 
    $X^\theta(\mathbb{P})* Y^\theta(\mathbb{P})$, that is, the same distribution as the sum of two independent copies of each of $X^\theta$ and $Y^\theta$. 
    
    Then it also holds that 
    $X_n^\theta + Y_n^\theta \convUD Z^\theta $
    in $(D[0,1],\|\cdot\|_\infty)$.
\end{lem}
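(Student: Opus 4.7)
The plan is to reduce the uniform convergence to a classical statement on weak convergence of independent sums via the subsequence characterization in Proposition \ref{prop:seqUni}(c), combined with uniform tightness and Prokhorov's theorem. As a preliminary observation, $(Z^\theta)_{\theta \in \Theta}$ is itself uniformly tight as a collection of $C[0,1]$-valued random variables: for any $\varepsilon>0$, uniform tightness of $(X^\theta)$ and $(Y^\theta)$ supplies compacts $K_X,K_Y \subset C[0,1]$ with exit probabilities below $\varepsilon/2$, and then $K_X + K_Y$ is compact (as the continuous image of $K_X \times K_Y$ under addition) and contains $Z^\theta$ with probability at least $1-\varepsilon$.

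First I would fix an arbitrary sequence $(\theta_n) \subseteq \Theta$. By uniform tightness and Prokhorov's theorem, I can extract a subsequence $(\theta_{a(n)})$ along which $X^{\theta_{a(n)}} \xrightarrow{\mathcal{D}} \mu_X$ in $C[0,1]$ for some Borel probability measure $\mu_X$; refining once more yields $(\theta_{b(n)}) \subseteq (\theta_{a(n)})$ with $Y^{\theta_{b(n)}} \xrightarrow{\mathcal{D}} \mu_Y$. The assumption $X_n^\theta \convUD X^\theta$ and the triangle inequality for $d_{BL}$ then force $X_{b(n)}^{\theta_{b(n)}} \xrightarrow{\mathcal{D}} \mu_X$ in $D[0,1]$, which Proposition \ref{prop:convergencetoCont2} upgrades to the uniform topology since $\mu_X$ is supported on $C[0,1]$; analogously $Y_{b(n)}^{\theta_{b(n)}} \xrightarrow{\mathcal{D}} \mu_Y$ in $(D[0,1], \|\cdot\|_\infty)$.

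Next I would invoke the classical fact that marginal weak convergence of independent sequences, with limits supported on a separable Polish subspace, implies joint weak convergence to the product measure. Applying this on $C[0,1] \times C[0,1]$ together with the assumed independence $X_{b(n)}^{\theta_{b(n)}} \perp \!\!\!\! \perp Y_{b(n)}^{\theta_{b(n)}}$ yields $(X_{b(n)}^{\theta_{b(n)}}, Y_{b(n)}^{\theta_{b(n)}}) \xrightarrow{\mathcal{D}} \mu_X \otimes \mu_Y$. Since addition is $1$-Lipschitz in the uniform norm, the continuous mapping theorem gives $X_{b(n)}^{\theta_{b(n)}} + Y_{b(n)}^{\theta_{b(n)}} \xrightarrow{\mathcal{D}} \mu_X * \mu_Y$. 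Running the same joint-independence argument on independent copies of $X^{\theta_{b(n)}}$ and $Y^{\theta_{b(n)}}$, whose sum has the distribution of $Z^{\theta_{b(n)}}$, likewise shows $Z^{\theta_{b(n)}} \xrightarrow{\mathcal{D}} \mu_X * \mu_Y$. A final triangle inequality in $d_{BL}$ through the common limit delivers $d_{BL}(X_{b(n)}^{\theta_{b(n)}} + Y_{b(n)}^{\theta_{b(n)}}, Z^{\theta_{b(n)}}) \to 0$, and Proposition \ref{prop:seqUni}(c) concludes the proof.

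The main obstacle is the uniform handling of joint convergence under independence, since the framework in Appendix \ref{app:UniformAsymptotics} is formulated entirely for marginal laws and does not treat joint distributions directly. The subsequence device through Prokhorov circumvents this by localizing to subsequences along which all relevant marginals converge weakly, thereby reducing the whole problem to a single pointwise application of the classical independence-plus-tightness principle followed by continuous mapping through addition.
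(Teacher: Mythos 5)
Your argument is correct and follows the paper's proof essentially step for step: extract a doubly-convergent subsequence via Prokhorov, pass each marginal convergence to the uniform topology through Proposition \ref{prop:convergencetoCont2}, combine independence with Billingsley's product-measure result to obtain joint convergence, apply continuous mapping through addition, and close with a triangle inequality in $d_{BL}$ and Proposition \ref{prop:seqUni}(c). The only cosmetic differences are that your preliminary observation on uniform tightness of $(Z^\theta)$ is unused (the convergence of $Z^{\theta_{b(n)}}$ is established directly, just as in the paper), and the paper streamlines the treatment of $Z^\theta$ by taking $Z^\theta = X^\theta + Y^\theta$ with $X^\theta \perp\!\!\!\!\perp Y^\theta$ without loss of generality.
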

\begin{proof}
    We may assume without loss of generality that $X^\theta \perp \!\!\!\! \perp Y^\theta$ and that $Z^\theta = X^\theta + Y^\theta$. Let $(\theta_n)\subseteq\Theta$ be an arbitrary sequence. By tightness of $(X^\theta)_{\theta \in \Theta}$ and $(Y^\theta)_{\theta \in \Theta}$, we can apply Prokhorov's theorem twice to obtain probability measures $\mu$ and $\nu$ on $C[0,1]$, and a subsequence $(\theta_{k(n)})$, such that 
    $X^{\theta_{k(n)}}(\mathbb{P}) \xrightarrow{wk} \mu$
    and 
    $Y^{\theta_{k(n)}}(\mathbb{P}) \xrightarrow{wk} \nu$.
    Hence the product measures converge,
    $$
    X^{\theta_{k(n)}}(\mathbb{P}) \otimes Y^{\theta_{k(n)}}(\mathbb{P}) \xrightarrow{wk} \mu \otimes \nu,
    $$
    in $C[0,1]$ as $n\to \infty$, see, for example, Theorem 2.8 (ii) in \citet{billingsley2013convergence}. 

    Since $X_n^\theta \convUD X^\theta$ in $D[0,1]$ by assumption and $(X^\theta)$ is uniformly tight in $C[0,1]$, Proposition \ref{prop:convergencetoCont2} implies that the convergence also holds in $(D[0,1],\|\cdot\|_\infty)$.
    The triangle inequality now yields
    \begin{align*}
        d_{BL}(X_{k(n)}^{\theta_{k(n)}}, \mu) 
            \leq d_{BL}(X_{k(n)}^{\theta_{k(n)}}, X^{\theta_{k(n)}}) 
            + d_{BL}( X^{\theta_{k(n)}}, \mu) \to 0,
    \end{align*}
    so also $X_{k(n)}^{\theta_{k(n)}}(\mathbb{P}) \xrightarrow{wk} \mu$ in $(D[0,1],\|\cdot\|_\infty)$. An analogous computation shows that $Y_{k(n)}^{\theta_{k(n)}}(\mathbb{P}) \xrightarrow{wk} \nu$, and hence also
    \begin{align*}
        X_{k(n)}^{\theta_{k(n)}}(\mathbb{P}) \otimes Y_{k(n)}^{\theta_{k(n)}}(\mathbb{P}) 
        \xrightarrow{wk} \mu \otimes \nu
    \end{align*}
    in the product space $D[0,1]\times D[0,1]$ endowed with the uniform product topology. From the independence statements 
    $X^\theta \perp \!\!\!\! \perp Y^\theta$ and 
    $X_n^\theta \perp \!\!\!\! \perp Y_n^\theta$, we have thus shown that
    \begin{equation*}
        (X^{\theta_{k(n)}},Y^{\theta_{k(n)}}) \xrightarrow{\mathcal{D}} \mu \otimes \nu
        \quad \text{and} \quad
       (X_{k(n)}^{\theta_{k(n)}},Y_{k(n)}^{\theta_{k(n)}})
       \xrightarrow{\mathcal{D}} \mu \otimes \nu
    \end{equation*}
    in the uniform product topology. Since addition $+\colon D[0,1]\times D[0,1] \to D[0,1]$ is continuous with respect to this topology, we conclude by the classical continuous mapping theorem that
    \begin{align*}
        Z^{\theta_{k(n)}} = X^{\theta_{k(n)}}+Y^{\theta_{k(n)}} 
            \xrightarrow{\mathcal{D}}
            \mu * \nu
        \quad \text{and} \quad
        X_{k(n)}^{\theta_{k(n)}}+Y_{k(n)}^{\theta_{k(n)}}
            \xrightarrow{\mathcal{D}} \mu * \nu.
    \end{align*}
    It now follows that
    \begin{align*}
        d_{BL}&(X_{k(n)}^{\theta_{k(n)}}+Y_{k(n)}^{\theta_{k(n)}},Z^{\theta_{k(n)}}) \\
        &\leq 
        d_{BL}(X_{k(n)}^{\theta_{k(n)}}+Y_{k(n)}^{\theta_{k(n)}}, \mu * \nu)
        +
        d_{BL}(\mu * \nu, Z^{\theta_{k(n)}}) \to 0.
    \end{align*}
    Since $(\theta_{k(n)})$ is a subsequence of the arbitrarily chosen sequence $(\theta_n)$, we conclude that $X_n^\theta + Y_n^\theta \convUD Z^\theta $ in $(D[0,1],\|\cdot\|_\infty)$ by Proposition \ref{prop:seqUni}.
\end{proof}

We also need the following lemma, which is a generalization of the classical result: pointwise convergence of a sequence of monotone functions towards a continuous limit is in fact uniform over compact intervals. 

\begin{lem} \label{lem:convergenceofincreasing}
    Let $(X_n^\theta)_{n\in \mathbb{N},\theta \in \Theta}$ be a collection of $D[0,1]$-valued random variables with non-decreasing sample paths. 
    Let $(f^\theta)_{\theta \in \Theta}\subset C[0,1]$ be a uniformly equicontinuous collection of non-decreasing functions. If $X_n^\theta(t)\convUP f^\theta(t)$ for each $t\in [0,1]$, then it also holds that 
    \begin{align*}
        \sup_{t\in [0,1]}|X_n^\theta(t)-f^\theta(t)| \convUP 0.
    \end{align*}
\end{lem}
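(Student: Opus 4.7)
The plan is to mimic the classical Dini-type argument but to make it work uniformly in $\theta$ by choosing the discretization grid based on the uniform equicontinuity of $(f^\theta)_{\theta \in \Theta}$ rather than on any single $f^\theta$.

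Fix $\epsilon > 0$. By uniform equicontinuity of $(f^\theta)_{\theta \in \Theta}$, there exists $\delta > 0$ such that $|f^\theta(s) - f^\theta(t)| < \epsilon/3$ for all $\theta \in \Theta$ whenever $|s - t| < \delta$. Choose a finite grid $0 = t_0 < t_1 < \cdots < t_K = 1$ with mesh less than $\delta$, where both $\delta$ and $K$ depend only on $\epsilon$ and not on $\theta$ or $n$.

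For any $t \in [t_i, t_{i+1}]$, monotonicity of $X_n^\theta$ and $f^\theta$ gives
\begin{align*}
X_n^\theta(t) - f^\theta(t)
&\leq X_n^\theta(t_{i+1}) - f^\theta(t_i) \\
&= \bigl(X_n^\theta(t_{i+1}) - f^\theta(t_{i+1})\bigr) + \bigl(f^\theta(t_{i+1}) - f^\theta(t_i)\bigr),
\end{align*}
and symmetrically $X_n^\theta(t) - f^\theta(t) \geq \bigl(X_n^\theta(t_i) - f^\theta(t_i)\bigr) - \bigl(f^\theta(t_{i+1}) - f^\theta(t_i)\bigr)$. Taking absolute values and using the mesh bound, one obtains
\begin{equation*}
    \sup_{t\in[0,1]} |X_n^\theta(t) - f^\theta(t)|
    \leq \max_{0 \leq i \leq K} |X_n^\theta(t_i) - f^\theta(t_i)| + \tfrac{\epsilon}{3}.
\end{equation*}

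Consequently, applying a union bound,
\begin{equation*}
    \sup_{\theta \in \Theta} \mathbb{P}\Bigl(\sup_{t\in[0,1]} |X_n^\theta(t) - f^\theta(t)| > \epsilon\Bigr)
    \leq \sum_{i=0}^{K} \sup_{\theta \in \Theta}\mathbb{P}\Bigl(|X_n^\theta(t_i) - f^\theta(t_i)| > \tfrac{2\epsilon}{3}\Bigr).
\end{equation*}
The right-hand side is a finite sum whose number of terms is independent of $n$ and $\theta$, and each summand tends to zero by the assumption $X_n^\theta(t_i) \convUP f^\theta(t_i)$. Since $\epsilon > 0$ was arbitrary, this establishes the desired uniform convergence in probability. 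No real obstacle arises here; the only point requiring care is to extract the single $\delta$ and hence the single finite grid from uniform equicontinuity, which is exactly what makes the discretization argument work uniformly over $\Theta$.
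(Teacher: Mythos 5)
Your proof is correct and follows essentially the same approach as the paper's: extract a single finite grid from uniform equicontinuity, use monotonicity of $X_n^\theta$ and $f^\theta$ to sandwich the supremum by the maximum over grid points plus a small constant, and then invoke pointwise uniform convergence at the finitely many grid points. The only cosmetic differences are your choice of $\epsilon/3$ versus the paper's $\epsilon/2$ and your explicit union bound where the paper passes directly to the probability that the finite maximum exceeds a threshold.
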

\begin{proof}
    Let $\epsilon>0$. By uniform equicontinuity we can find 
    $0=t_1<\cdots < t_k = 1$ such that $f^\theta(t_i)-f^\theta(t_{i-1})<\epsilon/2$ for all $\theta$ and $i$. Using that $X_n^\theta$ and $f^\theta$ are non-decreasing, we observe that for $t_{i-1}\leq t \leq t_{i}$:
    \begin{align*}
        X_n^\theta(t)-f^\theta(t) &\leq X_n^\theta(t_i)-f^\theta(t_i) + \epsilon/2, \\
        X_n^\theta(t)-f^\theta(t) &\geq X_n^\theta(t_{i-1})-f^\theta(t_{i-1}) - \epsilon/2.
    \end{align*}
    Combining the inequalities over the entire grid we have 
    \[
            \sup_{t\in [0,1]}|X_n^\theta(t)-f^\theta(t)| \leq \max_{i=0,\ldots,k} |X_n^\theta(t_i)-f^\theta(t_i)| + \epsilon/2.
    \]
    By assumption, $X_n^\theta(t)\convUP f^\theta(t)$ for each $t$, and in particular
    \[
        \max_{i=0,\ldots,k} |X_n^\theta(t_i)-f^\theta(t_i)| \convUP 0
    \]
    as $n\to \infty$. We therefore conclude that
    \begin{align*}
        \sup_{\theta \in \Theta}
        \mathbb{P}
        \Big(\sup_{t\in [0,1]}|X_n^\theta(t)-f^\theta(t)| > \epsilon\Big) 
        \leq 
        \sup_{\theta \in \Theta}
        \mathbb{P}
        \Big(\max_{i=0,\ldots,k} |X_n^\theta(t_i)-f^\theta(t_i)| > \epsilon/2\Big) 
                \longrightarrow 0
    \end{align*}
    as $n\to \infty$.
\end{proof}

The last auxiliary result of this section is an example of Prokhorov's method of ``tightness + identification of limit''.

\begin{lem}\label{lem:ProkhorovsPrinciple}
    Let $(\mathbb{D},d_\mathbb{D})$ be either $(C[0,1],\|\cdot\|_\infty)$ or $(D[0,1],d^\circ)$, and let $(X^\theta, X_n^\theta)_{n\in \mathbb{N},\theta \in \Theta}$ be a collection of $\mathbb{D}$-valued random variables with $(X^\theta)_{\theta \in \Theta}$ separable. Suppose that
    \begin{itemize}
        \item The finite dimensional marginals converge uniformly:        for any $0\leq t_1< \cdots < t_k\leq 1$
            \begin{align*}
                \pi_{t_1,\ldots, t_k}(X_n^\theta) \convUD 
                \pi_{t_1,\ldots, t_k}(X^\theta), 
                    \qquad n\to \infty,
            \end{align*}
            where $\pi_{t_1,\ldots, t_k} \colon \mathbb{D} \to \mathbb{R}^k$ is the projection given by $\pi_{t_1,\ldots, t_k}(x) = (x(t_1), \ldots, x(t_k))$.
        \item $(X_n^\theta)_{n\in \mathbb{N},\theta \in \Theta}$ is sequentially tight.
        \item $(X^\theta)_{n\in \mathbb{N},\theta \in \Theta}$ is uniformly tight.
    \end{itemize}
    Then $X_n^\theta \convUD X^\theta$ as $n\to \infty$.
\end{lem}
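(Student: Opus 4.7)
The plan is to verify the subsequential criterion for uniform convergence in distribution from Proposition \ref{prop:seqUni}(c): fixing an arbitrary sequence $(\theta_n) \subseteq \Theta$, I will extract a strictly increasing $k\colon \mathbb{N}\to \mathbb{N}$ along which $d_{BL}(X_{k(n)}^{\theta_{k(n)}}, X^{\theta_{k(n)}}) \to 0$. The overall scheme follows Prokhorov's method of ``tightness plus identification of the limit'': use tightness to extract weak limits of both the approximating and the limiting laws, then use the assumed convergence of finite-dimensional marginals to show these two limits coincide, and finally conclude via the triangle inequality.

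Concretely, since $(\mathbb{D},d_{\mathbb{D}})$ is Polish in both cases, I would first apply Prokhorov's theorem to the uniformly tight family $(X^\theta(\mathbb{P}))_{\theta \in \Theta}$ to extract a subsequence $(\theta_{a(n)}) \subseteq (\theta_n)$ and a Borel probability measure $\mu$ on $\mathbb{D}$ with $X^{\theta_{a(n)}}(\mathbb{P}) \xrightarrow{wk} \mu$. A second application of Prokhorov's theorem to the sequence $(X_{a(n)}^{\theta_{a(n)}}(\mathbb{P}))_{n \geq 1}$, which is tight by the sequential tightness assumption, yields a further subsequence $(\theta_{k(n)})$ and a probability measure $\nu$ with $X_{k(n)}^{\theta_{k(n)}}(\mathbb{P}) \xrightarrow{wk} \nu$. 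Once $\mu = \nu$ is established, the triangle inequality bounds $d_{BL}(X_{k(n)}^{\theta_{k(n)}}, X^{\theta_{k(n)}})$ by the sum $d_{BL}(X_{k(n)}^{\theta_{k(n)}}, \mu) + d_{BL}(\mu, X^{\theta_{k(n)}})$, both of which vanish by Proposition \ref{prop:boundedLipconv}.

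The heart of the argument is the identification $\mu = \nu$. I would pick finite collections $0 \leq t_1 < \cdots < t_k \leq 1$ consisting of common continuity points of both $\mu$ and $\nu$, i.e., points $t$ with $\mu\{x : x(t) \neq x(t-)\} = \nu\{x : x(t) \neq x(t-)\} = 0$. For such $t_i$, the projection $\pi_{t_1,\ldots,t_k}\colon \mathbb{D}\to \mathbb{R}^k$ is continuous on a set of full measure under both limits, so the classical continuous mapping theorem applied to the two weak convergences gives
\begin{equation*}
\pi_{t_1,\ldots,t_k}(X^{\theta_{k(n)}}) \xrightarrow{wk} \pi_{t_1,\ldots,t_k}(\mu),
\qquad
\pi_{t_1,\ldots,t_k}(X_{k(n)}^{\theta_{k(n)}}) \xrightarrow{wk} \pi_{t_1,\ldots,t_k}(\nu).
\end{equation*}
Combined with the assumed uniform convergence of finite-dimensional marginals specialized to the subsequence $(\theta_{k(n)})$, which forces these two candidate limits to agree, this yields $\pi_{t_1,\ldots,t_k}(\mu) = \pi_{t_1,\ldots,t_k}(\nu)$ for every admissible tuple.

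The main obstacle is leveraging this marginal equality to conclude $\mu = \nu$, and here the two cases diverge. For $C[0,1]$ every time point is a continuity point, so the marginals agree at all finite collections and laws on $C[0,1]$ are determined by their finite-dimensional distributions. For $D[0,1]$ with the Skorokhod topology, the complements of the continuity sets of $\mu$ and $\nu$ are at most countable, so the set of admissible $t_i$ is cocountable and, in particular, dense in $[0,1]$; laws on $D[0,1]$ are determined by their finite-dimensional distributions on any such dense set containing (or accumulating at) the endpoint, as in Theorem 14.5 and the associated discussion in \citet{billingsley2013convergence}. Either way, $\mu = \nu$, and Proposition \ref{prop:seqUni}(c) then delivers the claimed uniform convergence.
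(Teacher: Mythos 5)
Your argument is correct and reconstructs in full the Prokhorov-style ``tightness plus identification'' scheme that the paper merely cites from Proposition 18 of \citet{lundborg2021conditional}, including the necessary adaptation of restricting to common continuity points of $\mu$ and $\nu$ so that the projections $\pi_{t_1,\ldots,t_k}$ can be handled via the continuous mapping theorem on $D[0,1]$. One minor correction: the uniqueness-from-dense-marginals fact for $D[0,1]$ is Theorem 12.5(iii) together with Example 1.3 in \citet{billingsley2013convergence} (the references the paper itself uses), not Theorem 14.5.
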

\begin{proof}
    The statement is analogous to Proposition 18 in \citet{lundborg2021conditional}, the difference being that the functionals $\langle \cdot, h\rangle$ in 
    \cite{lundborg2021conditional} have been replaced by the functionals $\pi_{t_1,\ldots,t_k}$.
    
    The proof of \citet{lundborg2021conditional} also works in our case, given that the finite dimensional marginals form a separating class for the both the Borel algebra on $C[0,1]$ and the Borel algebra on $D[0,1]$. This is established in \citet{billingsley2013convergence}, Example 1.3 and Theorem 12.5 (iii).
\end{proof}

\subsection{Chaining in time uniformly over a parameter} \label{app:UniformChaining}
We extend the basic chaining arguments to hold uniformly over $\Theta$. Our arguments closely follow those of \citet[Chapter VII.2.]{Pollard:1984} and \citet{Newey:1991}. The results are formulated for processes indexed over a general metric space $T$, but we will only apply the results in the case $T=[0,1]$. We have the following extension of stochastic equicontinuity to the uniform setting.
\begin{dfn}\label{dfn:UniformChaining}
    A collection of sequences
    $$
        \big(Z^{(n),\theta}\big)_{n\in \mathbb{N},\theta \in \Theta}
        =
        \Big(Z_t^{(n),\theta}\Big)_{t\in T, n\in \mathbb{N},\theta \in \Theta}
    $$
    of stochastic processes indexed over a metric space $(T,d)$ is called \emph{stochastically equicontinuous uniformly over $\Theta$} if for all $\epsilon, \eta > 0$ there exists $\delta >0$ such that
    \begin{align*}
        \limsup_{n\to \infty} \sup_{\theta \in \Theta}\mathbb{P}\Big(\sup_{s,t\in T \colon d(s,t)\leq \delta} \big|Z_s^{(n),\theta}-Z_t^{(n),\theta}\big| 
                > \epsilon\Big)
            < \eta.
    \end{align*}
\end{dfn}
In Section 2.8.2 of \citet{Vaart:1996}, the same definition is given in the context of empirical processes.
Recall that we write, e.g., $Z^{(n)}$ as a shorthand for $Z^{(n),\theta}$ and let the dependency on $\theta$ be implicit for notational ease. We also write $\sup_{d(s,t)\leq \delta}$ as a shorthand for $\sup_{s,t\in T \colon d(s,t)\leq \delta}$.
Definition \ref{dfn:UniformChaining} is a direct extension of pointwise stochastic equicontinuity.
Accordingly, Theorem 2.1 from \citet{Newey:1991} generalizes as follows:
\begin{lem}\label{lem:uniformequcont}
    Let $(Z_t^{(n)})_{t\in T, n\in \mathbb{N}}$ be a sequence of stochastic processes indexed by a compact metric space $T$. Assume that $(Z_t^{(n)})$ is stochastically equicontinuous uniformly over $\Theta$ and that for each $t\in T$ it holds that $Z_t^{(n)} \convUP 0$. Then $\sup_{t\in T} |Z_t^{(n)}|\convUP 0$ as $n\to \infty$.
\end{lem}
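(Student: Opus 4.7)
The plan is to mimic the classical (pointwise) argument of Theorem 2.1 in \cite{Newey:1991}, leveraging that both the equicontinuity assumption and the finite covering of $T$ are intrinsically $\theta$-free. First I would fix $\epsilon,\eta > 0$ and invoke stochastic equicontinuity uniformly over $\Theta$ to choose $\delta > 0$ such that
\begin{align*}
    \limsup_{n\to\infty}\,\sup_{\theta\in\Theta}
    \mathbb{P}\Big(
    \sup_{d(s,t)\le\delta}
    \big|Z_s^{(n),\theta}-Z_t^{(n),\theta}\big|
    >\tfrac{\epsilon}{2}
    \Big) < \tfrac{\eta}{2}.
\end{align*}
By compactness of $T$, I would then cover $T$ by finitely many open $\delta$-balls with centers $t_1,\dots,t_K \in T$, where crucially $K$ and the centers depend only on $(T,d,\delta)$ and not on $\theta$ or $n$.

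Next I would perform the standard triangle decomposition: for any $t\in T$ there is some $k$ with $d(t,t_k)\le\delta$, so
\begin{align*}
    \sup_{t\in T}\big|Z_t^{(n),\theta}\big|
    \;\le\; \max_{1\le k\le K}\big|Z_{t_k}^{(n),\theta}\big|
    \;+\; \sup_{d(s,t)\le \delta}\big|Z_s^{(n),\theta}-Z_t^{(n),\theta}\big|.
\end{align*}
A union bound and the pointwise uniform convergence $Z_{t_k}^{(n)}\convUP 0$ give
\begin{align*}
    \sup_{\theta\in\Theta}
    \mathbb{P}\Big(\max_{1\le k\le K}\big|Z_{t_k}^{(n),\theta}\big|>\tfrac{\epsilon}{2}\Big)
    \le \sum_{k=1}^K \sup_{\theta\in\Theta}
        \mathbb{P}\Big(\big|Z_{t_k}^{(n),\theta}\big|>\tfrac{\epsilon}{2}\Big)
    \;\longrightarrow\; 0, \qquad n\to\infty,
\end{align*}
where uniformity over $\Theta$ is preserved through the finite sum precisely because $K$ does not depend on $\theta$.

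Combining these two bounds, another union bound, and taking $\limsup_{n\to\infty}$ yields
\begin{align*}
    \limsup_{n\to\infty}\,\sup_{\theta\in\Theta}
    \mathbb{P}\Big(\sup_{t\in T}\big|Z_t^{(n),\theta}\big|>\epsilon\Big)
    \;\le\; 0 + \tfrac{\eta}{2} \;<\; \eta.
\end{align*}
Since $\eta>0$ was arbitrary, the $\limsup$ is in fact $0$ for every $\epsilon>0$, which is exactly $\sup_{t\in T}|Z_t^{(n)}|\convUP 0$. There is no real obstacle here beyond carefully tracking where $\theta$-uniformity enters: it is supplied for free at the equicontinuity step, is preserved by the deterministic finite cover, and is preserved through the finite union bound on the grid points. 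This is essentially the only nontrivial observation required to upgrade the pointwise statement to its uniform analog.
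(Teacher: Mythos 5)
Your proof is correct and follows essentially the same route as the paper's: choose $\delta$ from the uniform stochastic equicontinuity hypothesis, cover the compact $T$ by finitely many $\delta$-balls, split via the triangle inequality into a finite-grid maximum plus an oscillation term, and pass the $\theta$-uniformity through the finite union bound. The only cosmetic difference is your explicit $\epsilon/2$, $\eta/2$ bookkeeping versus the paper's $2\varepsilon$ threshold; the substance is identical.
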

\begin{proof}
    Let $\varepsilon,\eta>0$ be given, and let $\delta>0$ be the corresponding distance obtained from the uniform stochastic equicontinuity of $(Z^{(n)})$. By compactness of $T$ there exists a finite set $T^*\subseteq T$ such that $T = \bigcup_{t\in T^*}B(t,\delta)$. By the triangle inequality we get that
    \begin{align*}
        \sup_{t\in T} |Z_t^{(n)}|
        = \sup_{t\in T^*}\sup_{s\in B(t,\delta)} |Z_s^{(n)}|
        \leq 
            \sup_{t\in T^*}|Z_t^{(n)}| +
            \sup_{t\in T^*}\sup_{s\in B(t,\delta)} |Z_s^{(n)}-Z_t^{(n)}|.
    \end{align*}
    Since $T^*$ is finite, it follows that $\sup_{t\in T^*} |Z_t^{(n)}|\convUP 0$, which combined with the inequality implies that
    \begin{align*}
        &\limsup_{n \to \infty} \sup_{\theta \in \Theta} \mathbb{P}
            \big(\sup_{t\in T} |Z_t^{(n)}| > 2\varepsilon\big) \\
        &\leq 0 + 
        \limsup_{n \to \infty} \sup_{\theta \in \Theta}
        \mathbb{P}\Big(
            \sup_{t\in T^*}\sup_{s\in B(t,\delta)} |Z_t^{(n)}-Z_t^{(n)}|>\varepsilon
        \Big)
        \leq \eta.
    \end{align*}
    As $\varepsilon,\eta>0$ were chosen arbitrarily, we conclude that $\sup_{t\in T} |Z_t^{(n)}| \convUP 0$.
\end{proof}
To establish uniform stochastic equicontinuity we extend the chaining lemma to a uniform setting. To formulate the theorem we first need some classical definitions related to chaining.
\begin{dfn}
    Let $T$ be a compact metric space. A subset $T^* \subseteq T$ is called a $\delta$-net if $\bigcup_{t\in T^*} B(t,\delta) = T$. 
    The covering number
    $$
        N(\delta) =N(\delta,T) \coloneqq \min\{\,|T^*| \colon
            T^*\subseteq T, T^* \text{ is a } \delta\text{-net}\,\}
    $$
    is the smallest possible cardinality of a $\delta$-net, which is finite by compactness. The associated covering integral is
    \begin{align*}
        J(\delta) = \int_0^\delta \pa{2 \log(N(\epsilon)/\epsilon)}^{\frac{1}{2}}\mathrm{d}\epsilon,
        \qquad 0\leq \delta \leq 1.
    \end{align*}
\end{dfn}

\begin{lem}
    Let $(T,d)$ be a metric space with finite covering integral $J(\cdot)$ and let $(Z_t^\theta)_{t\in T,\theta \in \Theta}$ be a collection of stochastic processes indexed by $T$ with continuous sample paths. Assume there is a uniform constant $\varsigma>0$ such that, for all $s,t\in T$ and $\eta>0$,
    \begin{align*}
        \sup_{\theta \in \Theta} \mathbb{P}\pa{|Z_s^\theta - Z_t^\theta| > \eta \cdot d(s,t)}
            \leq 2 e^{- \frac{\eta^2}{2\varsigma^2}}.
    \end{align*}
    Then, for all $0<\epsilon<1$,
    \begin{align*}
        \sup_{\theta \in \Theta} \mathbb{P}
        \Big(\sup_{d(s,t)\leq \epsilon}|Z_s^\theta - Z_t^\theta| > 26 \varsigma J(\epsilon)\Big)
        \leq 2 \epsilon.
    \end{align*}
\end{lem}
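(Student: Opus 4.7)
The plan is to prove this as a uniform extension of the classical chaining lemma of \citet{Pollard:1984} (Lemma VII.9). The essential observation is that in the classical chaining argument every intermediate probability bound depends on $\theta$ only through the sub-Gaussian constant $\varsigma$, which is assumed uniform over $\Theta$. Since union bounds commute with suprema over $\theta$ (in the sense that $\sup_\theta \sum_i p_i(\theta) \leq \sum_i \sup_\theta p_i(\theta)$), every step of the classical construction produces a bound whose right-hand side is manifestly $\theta$-free, and the final estimate is automatically uniform.

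Concretely, I would fix $0<\epsilon<1$ and pick a nested sequence of $2^{-k}\epsilon$-nets $T_0 \subseteq T_1 \subseteq \cdots \subseteq T$ with $|T_k| \leq N(2^{-k}\epsilon)$. For any $s,t \in T$ with $d(s,t)\leq \epsilon$, I would choose approximating chains $s_k,t_k \in T_k$ satisfying $d(s_k,s), d(t_k,t) \leq 2^{-k}\epsilon$ and use the telescoping decomposition
$$|Z_s^\theta - Z_t^\theta| \leq |Z_{s_0}^\theta - Z_{t_0}^\theta| + \sum_{k=0}^\infty \bigl(|Z_{s_{k+1}}^\theta - Z_{s_k}^\theta| + |Z_{t_{k+1}}^\theta - Z_{t_k}^\theta|\bigr).$$
Each increment at level $k$ is dominated by the level-$k$ maximum $M_k^\theta := \max\{|Z_u^\theta - Z_v^\theta|:u\in T_k, v\in T_{k+1}, d(u,v)\leq 2^{1-k}\epsilon\}$, and the continuity of sample paths justifies approximating $\sup_{d(s,t)\leq \epsilon}|Z_s^\theta - Z_t^\theta|$ by the countable family of chains indexed by $\bigcup_k T_k$.

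Applying the uniform sub-Gaussian hypothesis together with a union bound over the at most $N(2^{-k}\epsilon)^2$ pairs at level $k$ gives
$$\sup_{\theta \in \Theta} \mathbb{P}\bigl(M_k^\theta > 2^{1-k}\epsilon \cdot \eta_k\bigr) \leq 2 N(2^{-k}\epsilon)^2 \exp\bigl(-\eta_k^2/(2\varsigma^2)\bigr)$$
for any $\eta_k > 0$. Choosing $\eta_k$ as in the classical proof, essentially $\eta_k \asymp \varsigma\sqrt{\log(N(2^{-k}\epsilon)/2^{-k}\epsilon)}$ with a small additional summable margin, one can arrange that the right-hand sides sum to at most $2\epsilon$ while simultaneously $\sum_{k\geq 0} 2^{1-k}\epsilon \cdot \eta_k \leq 26\,\varsigma J(\epsilon)$, using the definition of the covering integral $J$. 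A final union bound over $k$ combined with the chaining decomposition, and taking the supremum over $\theta \in \Theta$ inside each summand, yields the claim.

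The main obstacle is purely the bookkeeping needed to recover the explicit constants $26$ and $2$; conceptually, nothing new beyond the classical Pollard argument is required, because the uniformity over $\theta$ is inherited step by step from the uniform sub-Gaussian tail bound and the $\theta$-independence of covering numbers and union bounds.
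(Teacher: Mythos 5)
Your proof is conceptually correct, and the key observation — that every probability bound depends on $\theta$ only through the uniform sub-Gaussian factor $\varsigma$, so the bounds are automatically $\theta$-free — is exactly the heart of the matter. However, you take a genuinely more laborious route than the paper's proof. The paper does not re-derive the chaining argument at all: it simply notes that, for each fixed $\theta \in \Theta$, the process $(Z_t^\theta)_{t\in T}$ satisfies the hypotheses of the classical chaining lemma (Pollard 1984, Lemma VII.9, p.~144) with sub-Gaussian factor $\varsigma$, and the conclusion of that lemma already reads
$$
\mathbb{P}\Big(\sup_{d(s,t)\leq\epsilon}|Z_s^\theta-Z_t^\theta|>26\,\varsigma\,J(\epsilon)\Big)\leq 2\epsilon,
$$
with a right-hand side that contains no $\theta$. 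Taking $\sup_{\theta\in\Theta}$ of both sides finishes the proof. In particular, the ``main obstacle'' you flag — the bookkeeping needed to recover the constants $26$ and $2$ — disappears entirely once the classical lemma is used as a black box, because those constants are already part of its conclusion. Your nested-net / telescoping / union-bound reconstruction is valid and would also give the result, but it duplicates work that Pollard's lemma has already done, and it leaves the crucial constant-chasing undone, whereas the paper's observation needs no such chasing. When you see that a family of objects indexed by $\theta$ satisfies the hypotheses of a pointwise theorem with constants not depending on $\theta$, the economical move is to apply the theorem pointwise and then sup out $\theta$ — re-deriving the theorem uniformly is only necessary if the constants or hypotheses genuinely varied with $\theta$.
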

\begin{proof}
    The lemma is a direct consequence of classical chaining lemma \citep[page 144]{Pollard:1984}. For each $\theta \in \Theta$, the conditions of the chaining lemma are met for $(Z_t^\theta)_{t\in T}$ with sub-exponential factor $\varsigma$. This implies, in particular, that for any $\theta \in \Theta$ and $0<\epsilon<1$,
    \begin{align*}
        \mathbb{P}
        \Big(\sup_{d(s,t)\leq \epsilon}|Z_s^\theta - Z_t^\theta| > 26 \varsigma J(\epsilon)\Big)
        \leq 2 \epsilon,
    \end{align*}
    which is equivalent to the conclusion of the lemma.
\end{proof}

This immediately implies the following corollary.
\begin{cor}\label{cor:uniformchaining}
    Let $(T,d)$ be a metric space with finite covering integral $J(\cdot)$ and let $(Z^{(n),\theta})$ be a sequence of stochastic processes on $T$ with continuous sample paths. Assume there exists a constant $\varsigma>0$ such that, for all $s,t\in T$ and $\eta>0$ and $n\in \mathbb{N}$,
    \begin{align*}
        \sup_{\theta \in \Theta} \mathbb{P} \pa{|Z_s^{(n),\theta} - Z_t^{(n),\theta}| > \eta \cdot d(s,t)}
            \leq 2 e^{- \frac{\eta^2}{2\varsigma^2}}.
    \end{align*}
    Then $(Z^{(n)})$ is stochastically equicontinuous uniformly over $\Theta$.
\end{cor}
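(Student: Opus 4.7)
The corollary follows almost immediately from the preceding chaining lemma, so the plan is essentially a matter of matching its conclusion with the definition of uniform stochastic equicontinuity in Definition \ref{dfn:UniformChaining}. The first step is to apply the lemma, for each fixed $n$, to the collection $(Z^{(n),\theta})_{\theta \in \Theta}$: the hypothesis of the corollary asserts the sub-Gaussian tail bound with the same constant $\varsigma$ for every $n \in \mathbb{N}$, so the conclusion of the lemma holds with the same bound $26 \varsigma J(\epsilon)$ for every $n$ as well, giving
\begin{align*}
    \sup_{\theta \in \Theta} \mathbb{P}\Big(\sup_{d(s,t)\leq \epsilon}|Z_s^{(n),\theta} - Z_t^{(n),\theta}| > 26 \varsigma J(\epsilon)\Big) \leq 2 \epsilon
\end{align*}
uniformly in $n \in \mathbb{N}$.

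Given arbitrary $\varepsilon, \eta > 0$, I would then pick $\delta \in (0,1)$ small enough that $26 \varsigma J(\delta) < \varepsilon$ and $2 \delta < \eta$ simultaneously. Such a $\delta$ exists because the finiteness of $J(1)$ implies, by continuity of the integral in its upper limit, that $J(\delta) \to 0$ as $\delta \to 0^+$. Substituting this $\delta$ into the display above yields
\begin{align*}
    \sup_{\theta \in \Theta} \mathbb{P}\Big(\sup_{d(s,t)\leq \delta}|Z_s^{(n),\theta} - Z_t^{(n),\theta}| > \varepsilon\Big) \leq 2 \delta < \eta
\end{align*}
for every $n \in \mathbb{N}$. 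Taking $\limsup_{n\to\infty}$ preserves this uniform-in-$n$ bound, which is precisely the criterion in Definition \ref{dfn:UniformChaining}.

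No step is genuinely hard here: the technical content has all been absorbed into the chaining lemma itself, and the corollary is merely the observation that when the sub-Gaussian constant $\varsigma$ in the hypothesis is free of $n$, the bound produced by that lemma is automatically uniform in $n$. The only minor point to be careful about is verifying that $J(\delta) \to 0$ as $\delta \to 0^+$, which is immediate from $J$ being a proper integral of a nonnegative function with $J(1) < \infty$.
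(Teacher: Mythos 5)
Your proof is correct and takes exactly the route the paper intends: the paper simply asserts the corollary is an immediate consequence of the preceding chaining lemma, and you have spelled out the straightforward details -- the bound from the lemma is uniform in $n$ because $\varsigma$ is, and $J(\delta)\to 0$ as $\delta\to 0^+$ (by absolute continuity of the integral, given $J(1)<\infty$) lets you shrink $\delta$ to make the threshold smaller than $\varepsilon$ and the probability bound smaller than $\eta$.
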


For stochastic processes with continuous sample paths, stochastic equicontinuity turns out to be equivalent to sequential tightness (Definition \ref{dfn:tight} ii)).
\begin{prop}\label{prop:equicontinuityistightness}
    Let $(Z^{(n),\theta})_{n\in \mathbb{N},\theta\in\Theta}$ be a collection of $C[0,1]$-valued random variables such that $\mathbb{P}(Z_0^{(n),\theta}=0)=1$ all $n\in \mathbb{N}$ and $\theta \in \Theta$.
    The following are equivalent:
    \begin{enumerate}
        \item $(Z^{(n),\theta})$ is stochastically equicontinuous uniformly over $\Theta$.
        
        \item $(Z^{(n),\theta})$ is sequentially tight.
    \end{enumerate}
\end{prop}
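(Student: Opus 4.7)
The plan is to reduce both conditions to the classical characterization of tightness in $C[0,1]$, which is Theorem 7.3 in \citet{billingsley2013convergence}: a collection of laws on $C[0,1]$ is tight if and only if (a) the laws of the initial values $X(0)$ form a tight family in $\mathbb{R}$, and (b) for every $\epsilon, \eta > 0$ there exists $\delta > 0$ such that
\begin{equation*}
    \limsup_{n\to \infty} \mathbb{P}\bigl( w(Z^{(n)}, \delta) > \epsilon \bigr) \leq \eta,
    \qquad w(x,\delta) = \sup_{|s-t|\leq \delta}|x(s)-x(t)|.
\end{equation*}
The assumption $\mathbb{P}(Z_0^{(n),\theta}=0)=1$ kills condition (a) automatically for every sequence $(\theta_n)$, so in our setting only the modulus-of-continuity condition matters.

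For the implication (1) $\Rightarrow$ (2), I fix an arbitrary sequence $(\theta_n)\subseteq \Theta$ and observe that stochastic equicontinuity uniform over $\Theta$ immediately gives
\begin{equation*}
    \limsup_{n\to \infty} \mathbb{P}\bigl( w(Z^{(n),\theta_n}, \delta) > \epsilon \bigr)
    \leq \limsup_{n\to \infty}\sup_{\theta \in \Theta} \mathbb{P}\bigl( w(Z^{(n),\theta}, \delta) > \epsilon \bigr) < \eta
\end{equation*}
for a suitable $\delta$, so Billingsley's criterion yields tightness of $(Z^{(n),\theta_n}(\mathbb{P}))_n$ in $C[0,1]$.

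For the converse (2) $\Rightarrow$ (1), I argue by contraposition. Suppose uniform stochastic equicontinuity fails, so there exist $\epsilon, \eta > 0$ with
\begin{equation*}
    \limsup_{n \to \infty}\sup_{\theta \in \Theta} \mathbb{P}\bigl( w(Z^{(n),\theta}, 1/k) > \epsilon \bigr) \geq \eta
    \quad \text{for every } k \in \mathbb{N}.
\end{equation*}
A diagonal extraction then produces a strictly increasing sequence $n_k$ and parameters $\theta_{n_k}$ with $\mathbb{P}( w(Z^{(n_k),\theta_{n_k}}, 1/k) > \epsilon ) \geq \eta/2$. I complete this to a full sequence $(\theta_n)_{n\in \mathbb{N}}$ by making arbitrary choices of $\theta_n$ for the remaining indices. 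Along the subsequence $(n_k)$ the modulus-of-continuity condition of Billingsley is violated, hence $(Z^{(n_k),\theta_{n_k}}(\mathbb{P}))_k$ is not tight in $C[0,1]$; since tightness is inherited by subsequences, the full sequence $(Z^{(n),\theta_n}(\mathbb{P}))_n$ is not tight either, contradicting sequential tightness.

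The only delicate point is verifying that the sup-limsup swap in the contrapositive produces an actual sequence witnessing the failure of tightness, which is handled by the diagonal selection of $(n_k, \theta_{n_k})$ above; otherwise the proof is a direct translation between two equivalent quantifier orderings in Billingsley's tightness criterion.
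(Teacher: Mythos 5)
Your proof is correct, and both you and the paper reduce the equivalence to Theorem~7.3 of Billingsley, with the direction (1)~$\Rightarrow$~(2) handled identically: $\mathbb{P}(Z_0^{(n),\theta}=0)=1$ disposes of the initial-value condition, and uniform stochastic equicontinuity over $\Theta$ yields the modulus-of-continuity condition for any sequence $(\theta_n)$, hence sequential tightness. For (2)~$\Rightarrow$~(1), however, the two arguments genuinely diverge. The paper argues directly: fix $\epsilon,\eta>0$, choose $\theta_n$ to come within $1/n$ of $\sup_{\theta}\mathbb{P}(w(Z^{(n),\theta},\delta)\geq\epsilon)$, then invoke tightness of $(Z^{(n),\theta_n}(\mathbb{P}))_n$ to produce $\delta$ and $N$. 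As written this has a circular quantifier dependence: $\theta_n$ is selected relative to a fixed $\delta$, yet the $\delta$ is only produced afterwards by tightness, and a near-maximizer for one $\delta$ need not be near-maximizing for another. Your contrapositive with a diagonal extraction sidesteps this: you negate uniform equicontinuity, extract $n_k\uparrow\infty$ and $\theta_{n_k}$ with $\mathbb{P}(w(Z^{(n_k),\theta_{n_k}},1/k)>\epsilon)\geq\eta/2$, and observe that for any $\delta>0$ the tail $k\geq 1/\delta$ satisfies $w(\cdot,1/k)\leq w(\cdot,\delta)$, so the modulus-of-continuity condition fails along $(n_k)$, contradicting sequential tightness because tightness passes to subfamilies. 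This is a cleaner and more careful version of the same idea; the only thing the paper's direct formulation buys is brevity, at the cost of the quantifier issue your diagonalization removes.
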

\begin{proof}
    The equivalence is a straightforward application of Theorem 7.3 in \citet{billingsley2013convergence}. 
    Condition $(i)$ of the aforementioned theorem is satisfied for any sequence of measures from the collection $(Z^{(n),\theta}(\mathbb{P}))_{n\in \mathbb{N},\theta \in \Theta}$, since $Z_0^{(n),\theta}=0$ almost surely for all $n$ and $\theta$.
    For any sequence $(\theta_n)\subseteq \Theta$, stochastic equicontinuity uniformly over $\Theta$ implies condition $(ii)$ of Theorem 7.3 in \citet{billingsley2013convergence} for the measures $((Z^{(n),\theta_n})(\mathbb{P}))$. 
    We therefore conclude that stochastic equicontinuity uniformly over $\Theta$ implies sequential tightness. 
    
    On the contrary, assume that $(Z^{(n),\theta})$ is sequentially tight and let $\epsilon,\eta>0$ be given. For each $n$, choose $\theta_n$ such that 
    \begin{align*}
        \sup_{\theta\in\Theta} \mathbb{P} \Big(\sup_{|s-t|\leq \delta} \big|Z_s^{(n),\theta}-Z_t^{(n),\theta}\big| 
                \geq \epsilon\Big)
        \leq \mathbb{P}\Big(\sup_{|s-t|\leq \delta} \big|Z_s^{(n),\theta_n}-Z_t^{(n),\theta_n}\big| 
                \geq \epsilon\Big) + \frac{1}{n}.
    \end{align*}
    Since $((Z^{(n),\theta_n})(\mathbb{P}))$ is tight by assumption, condition $(ii)$ of Theorem 7.3 asserts that there exists $\delta, N>0$ such that
    $$ 
        \mathbb{P}\Big(\sup_{|s-t|\leq \delta} \big|Z_s^{(n),\theta_n}-Z_t^{(n),\theta_n}\big| 
                \geq \epsilon\Big) < \eta
    $$ 
    for $n\geq N$. Combining both inequalities and taking the limit superior finish the proof.
\end{proof}


\section{The Functional Martingale CLT} \label{sec:fclt}
In this section we state Rebolledo's martingale CLT \citep{rebolledo1980central} based on its formulation in \citet{AndersenBorganGillKeiding:1993}, and then we extend the result to a uniform version without fixed variance functions.
The one-dimensional case suffices for our purpose, so for simplicity, every local martingale in the following is a real-valued stochastic process. For a local square integrable martingale $(M_t)$, we let $\langle M \rangle(t)$ denote its quadratic characteristic.
The theorem requires a condition on the jumps of the local martingales, for which we will need the following definition.
\begin{dfn}
    Let $M_t$ be a local square integrable $\cF_t$-martingale. For any $\varepsilon>0$, we define $\langle M_\varepsilon \rangle(t)$ to be the quadratic characteristic of the pure jump-process given by 
    $$
        t\mapsto \sum_{0\leq s\leq t} M_s \one(|\Delta M_s|>\varepsilon).
    $$
\end{dfn}
We also need a representation of Gaussian martingales, which ensures their continuity.

\begin{prop} \label{prop:BMrepresentation}
    Let $(B_t)_{t\in[0,\infty)}$ be a Brownian motion on $[0,\infty)$ with continuous sample paths. For every non-decreasing $f\in C[0,1]$, the process $(B_{f(t)})_{t\in [0,1]}$ is a continuous mean zero Gaussian martingale on $[0,1]$ with variance function $f$. 

    Consequently, if $U=(U_t)_{t\in [0,1]}$ is a mean zero Gaussian martingale with a continuous variance function $V$, then $U$ has the distributional representation 
    \begin{equation}\label{eq:BMrepresentation}
            (U_t)_{t\in [0,1]} 
            \stackrel{\mathcal{D}}{=}
            (B_{V(t)})_{t\in [0,1]}.
    \end{equation}
\end{prop}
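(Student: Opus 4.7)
The plan is to prove the two assertions in turn, where the first statement does the real work and the second is a simple consequence via characterization of Gaussian processes by mean and covariance.

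For the first assertion, let $f\in C[0,1]$ be non-decreasing and set $Y_t = B_{f(t)}$ for $t\in[0,1]$. Continuity of the sample paths of $Y$ follows from continuity of $f$ and of $B$. Since every finite-dimensional marginal $(Y_{t_1},\dots,Y_{t_k}) = (B_{f(t_1)},\dots,B_{f(t_k)})$ is a linear projection of a Gaussian vector, $Y$ is a Gaussian process. The mean and variance functions follow directly from $\ex(B_u)=0$ and $\var(B_u)=u$. For the martingale property, take as filtration $\mathcal{H}_t = \mathcal{F}_{f(t)}^B$, the time-changed natural filtration of $B$. Then $Y_t$ is $\mathcal{H}_t$-adapted, and for $s\leq t$ in $[0,1]$ we have $f(s)\leq f(t)$, so the martingale property of $B$ yields
\begin{equation*}
    \ex(Y_t \mid \mathcal{H}_s) = \ex(B_{f(t)} \mid \mathcal{F}_{f(s)}^B) = B_{f(s)} = Y_s.
\end{equation*}

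For the second assertion, I will use that the law of a real-valued Gaussian process on $[0,1]$ is determined by its mean and covariance functions (see e.g. Kallenberg). Both $U$ and $(B_{V(t)})_{t\in[0,1]}$ are mean-zero Gaussian processes, so it suffices to show their covariance functions agree. For $0\leq s\leq t\leq 1$, the martingale property of $U$ gives
\begin{equation*}
    \cov(U_s,U_t) = \ex(U_s U_t) = \ex(U_s\, \ex(U_t\mid \mathcal{F}_s^U)) = \ex(U_s^2) = V(s),
\end{equation*}
where we used $\ex(U_s)=0$ and that $V$ is the variance function. On the other side, by the covariance structure of Brownian motion,
\begin{equation*}
    \cov(B_{V(s)},B_{V(t)}) = V(s)\wedge V(t) = V(s),
\end{equation*}
since $V$ is non-decreasing (being a variance function of a martingale). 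The covariance functions match, so the two processes are equal in distribution on $[0,1]$.

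I do not anticipate any real obstacle here; the only subtlety is making sure that the variance function $V$ of a mean-zero square-integrable martingale is automatically non-decreasing, which is immediate from $\ex(U_t^2) - \ex(U_s^2) = \ex((U_t-U_s)^2) \geq 0$ for $s\leq t$ by orthogonality of martingale increments. This ensures that $B_{V(t)}$ is well-defined as a time-changed Brownian motion via the first part of the proposition.
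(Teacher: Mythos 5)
Your proof is correct and follows essentially the same route as the paper's: the first part is a direct verification via time-change (Gaussianity, continuity, and the martingale property under the time-changed filtration), and the second part uses that a mean-zero Gaussian process is determined by its covariance function, which for a martingale reduces to its variance function. You are somewhat more explicit than the paper — you write out the covariance computations on both sides and observe that a martingale variance function is automatically non-decreasing, a detail the paper leaves implicit but which is indeed needed to invoke the first part — but the underlying argument is the same.
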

\begin{proof}
    Let $f\in C[0,1]$ be non-decreasing. From the properties of Brownian motion, it follows directly that the time-transformed process $(B_{f(t)})_{t\in [0,1]}$ is a mean zero Gaussian process with variance function $f$. 
    Since $f$ is continuous, each sample path $t\mapsto B_{f(t)}$ is a composition of continuous functions and thus continuous itself. 
    Since $f$ is non-decreasing, the time-transformation also preserves the martingale property. This establishes the first part.
    
    For the second part, recall that the covariance function of a martingale is determined by its variance function.
    Hence the first part implies that the right-hand side in \eqref{eq:BMrepresentation} is a Gaussian process with the same mean and covariance structure as the left-hand side. Since the distribution of a Gaussian processes is uniquely determined by its mean and covariance structure, the equality in distribution follows.
\end{proof}

Proposition \ref{prop:BMrepresentation} is a simple, distributional variant of the 
Dubins-Schwarz theorem, see \cite{revuz2013continuous}, Chapter V, Theorems 1.6 and 1.7. 
The Dubins-Schwarz theorem implies that, in fact,  $U_t = B_{V(t)}$ for $t\in [0,1]$, 
where $B$ is a Brownian motion on $[0,V(1)]$. For the purpose of this paper we only need 
the simpler, distributional equality \eqref{eq:BMrepresentation}.

We can now formulate Rebolledo's CLT for local martingales. 
To this end, note that Proposition \ref{prop:BMrepresentation} ensures the existence of the continuous 
Gaussian limit martingale $U$ when the variance function $V$ is continuous. 

\begin{thm}[Rebolledo's CLT]\label{thm:fclt}
    Let $(U^{(n)})_{n\in \mathbb{N}}$ be a sequence a local square integrable martingales in $D[0,1]$, possibly defined on different sample spaces and with different filtrations for each $n\in \mathbb{N}$. Let $U$ be a continuous Gaussian martingale with continuous variance function $V\colon [0,1] \to [0,\infty)$, and assume that $U^{(n)}_0 = U_0 = 0$.
    Suppose that for every $t\in [0,1]$ and $\varepsilon>0$,
    \begin{align*}
        \langle U^{(n)} \rangle(t) \xrightarrow{P} V(t)
        \qquad \text{ and } \qquad 
        \langle U_\varepsilon^{(n)} \rangle(t) \xrightarrow{P} 0,
    \end{align*}
    as $n\to \infty$. 
    Then it holds that $U^{(n)} \xrightarrow{\mathcal{D}} U$
    in $D[0,1]$ as $n\to \infty$. 
\end{thm}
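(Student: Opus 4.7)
The plan is to recognize the statement as the classical Rebolledo martingale central limit theorem and sketch the structure of its proof. Since $V$ is continuous and non-decreasing with $V(0)=0$, Proposition \ref{prop:BMrepresentation} identifies $U$ distributionally as a time-changed Brownian motion $(B_{V(t)})_{t\in[0,1]}$, so $U$ is almost surely continuous and supported on $C[0,1] \subset D[0,1]$. This is important because it tells us what the limiting object looks like and, in particular, that the Skorokhod limit and the uniform limit should agree.

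At this level of generality the theorem is a direct restatement of Rebolledo's CLT; see, for instance, Theorem II.5.1 in \cite{AndersenBorganGillKeiding:1993}. The two hypotheses given, pointwise convergence in probability of the predictable quadratic variation $\langle U^{(n)}\rangle(t) \to V(t)$ and the jump rarefaction condition $\langle U_\varepsilon^{(n)}\rangle(t) \to 0$ for every $t$ and $\varepsilon>0$, are precisely the standard assumptions, so the cleanest proof is simply to invoke this reference after noting that $V$ is continuous and deterministic.

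If I were to reprove the result from scratch, I would proceed in two steps. First, I would verify tightness of $\{U^{(n)}\}$ in $(D[0,1], d^\circ)$ via Aldous' criterion: for stopping times $\tau_n \leq 1$ and deterministic $\delta_n \downarrow 0$, the increments $U^{(n)}_{(\tau_n+\delta_n)\wedge 1}-U^{(n)}_{\tau_n}$ go to zero in probability. This is obtained by applying the optional stopping theorem to the martingale $(U^{(n)})^2 - \langle U^{(n)}\rangle$ together with a Lenglart-type inequality, so that the increments are controlled by increments of $\langle U^{(n)}\rangle$; those in turn are controlled using the convergence to the continuous function $V$. The jump condition rules out asymptotic fixed discontinuities, ensuring that any subsequential limit has continuous sample paths. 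Second, I would identify the finite-dimensional marginals: for any $0\leq t_1 < \cdots < t_k \leq 1$, the martingale CLT applied increment by increment shows $(U^{(n)}_{t_1},\ldots,U^{(n)}_{t_k})$ converges to a centered Gaussian vector whose covariance matches that of $(U_{t_1},\ldots,U_{t_k})$; here the Lindeberg-type rarefaction condition delivers asymptotic normality of the increments and the convergence of $\langle U^{(n)}\rangle$ supplies the correct variances. Combining tightness with identification of the Gaussian finite-dimensional limits yields weak convergence in $(D[0,1], d^\circ)$.

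The main obstacle in the from-scratch route is the tightness step: one must translate pointwise convergence of $\langle U^{(n)}\rangle$ into a modulus-of-continuity estimate uniform in time, which requires combining Aldous' criterion with the Lenglart inequality and carefully exploiting the deterministic continuity of $V$. Once tightness is in hand, identifying the Gaussian limit through its covariance structure is comparatively routine, and the conclusion $U^{(n)} \xrightarrow{\mathcal{D}} U$ in $D[0,1]$ follows.
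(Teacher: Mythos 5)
Your proposal takes essentially the same approach as the paper, which simply cites the result as a special case of the Rebolledo CLT in \citet{AndersenBorganGillKeiding:1993}; the paper references Theorem~II.5.2 there, whereas you cite Theorem~II.5.1, a minor numbering discrepancy. Your additional from-scratch sketch (Aldous' criterion plus the Lenglart inequality for tightness, and a Lindeberg-type argument for the Gaussian finite-dimensional marginals) is a reasonable outline of the standard proof but is not required given the citation.
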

\begin{proof}
    This is a special case of Theorem $\mathrm{II}.5.2$ in \citet{AndersenBorganGillKeiding:1993}.
\end{proof}

The general formulation of Rebolledo's CLT above, which allows for $n$-dependent sample spaces and filtrations, can now be leveraged to obtain a uniform version via the sequential characterization of uniform stochastic convergence.

\begin{thm}[Uniform Rebolledo CLT]\label{thm:URebo}
    For each $n \in \mathbb{N}$ and $\theta \in \Theta$:
    \begin{itemize}[leftmargin=15pt]
        \item Let $\mathcal{F}^{(n),\theta}=(\mathcal{F}_t^{(n),\theta})_{t\in [0,1]}$ be a filtration satisfying the usual conditions.
        
        \item Let $U_t^{(n),\theta}$ be a local square integrable $\mathcal{F}_t^{(n),\theta}$-martingale in $D[0,1]$ with $U_0^{(n),\theta}=0$.
        
        \item Let $V^\theta \colon [0,1] \to [0,\infty)$ be a non-decreasing function with $V^\theta(0)=0$.
    \end{itemize}
    Assume that $(V^\theta)_{\theta \in \Theta}$ is uniformly equicontinuous and that $\sup_{\theta \in \Theta} V^\theta(1) < \infty$. 
    Assume further that for every $\varepsilon>0$ and $t\in [0,1]$,
    \begin{align}\label{eq:RebolledoConditions}
        \langle U^{(n),\theta} \rangle(t) \convUP V^\theta(t)
        \qquad \text{ and } \qquad
        \langle U_\varepsilon^{(n),\theta}\rangle(t) \convUP 0,
    \end{align}
    as $n\to \infty$. Then it holds that
    \begin{align*}
        U^{(n),\theta} \convUD U^\theta, \qquad n\to \infty,
    \end{align*}
    in $D[0,1]$ uniformly over $\Theta$, where for each $\theta \in \Theta$, $U^\theta$ is a mean zero continuous Gaussian martingale on $[0,1]$ with variance function $V^\theta$.
\end{thm}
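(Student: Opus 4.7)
My plan is to reduce the uniform statement to the classical Rebolledo CLT (Theorem~\ref{thm:fclt}) via the sequential characterization of uniform convergence in distribution provided by Proposition~\ref{prop:seqUni}. Concretely, it suffices to show that for an arbitrary sequence $(\theta_n)_{n\in\mathbb{N}} \subseteq \Theta$ there is a subsequence $(\theta_{k(n)})$ along which $d_{BL}(U^{(k(n)),\theta_{k(n)}}, U^{\theta_{k(n)}}) \to 0$ in $D[0,1]$. The $(U^\theta)_{\theta\in\Theta}$ are continuous and thus separable in $D[0,1]$, so this framework applies.

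The first step is to extract a convergent sequence of variance functions. Since $(V^\theta)_{\theta \in \Theta}$ is uniformly equicontinuous and uniformly bounded (with $V^\theta(0)=0$), the Arzelà–Ascoli theorem yields a subsequence $(\theta_{k(n)})$ and a function $V^* \in C[0,1]$ such that $\|V^{\theta_{k(n)}} - V^*\|_\infty \to 0$ as $n \to \infty$. Since each $V^{\theta_{k(n)}}$ is non-decreasing with $V^{\theta_{k(n)}}(0)=0$, so is the limit $V^*$.

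Next I apply the classical Rebolledo CLT along this subsequence. For each fixed $t\in[0,1]$ and $\varepsilon>0$, uniform convergence in probability implies in particular
\begin{align*}
\langle U^{(k(n)),\theta_{k(n)}} \rangle(t) - V^{\theta_{k(n)}}(t) \xrightarrow{P} 0,
\qquad
\langle U_\varepsilon^{(k(n)),\theta_{k(n)}} \rangle(t) \xrightarrow{P} 0.
\end{align*}
Combining the first display with the deterministic convergence $V^{\theta_{k(n)}}(t) \to V^*(t)$ yields $\langle U^{(k(n)),\theta_{k(n)}} \rangle(t) \xrightarrow{P} V^*(t)$. The classical Rebolledo CLT (Theorem~\ref{thm:fclt}), applied to the triangular array of martingales on their respective filtered probability spaces, then gives $U^{(k(n)),\theta_{k(n)}} \xrightarrow{\mathcal{D}} U^*$ in $D[0,1]$, where $U^*$ is the continuous mean-zero Gaussian martingale with variance function $V^*$. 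Equivalently, $d_{BL}(U^{(k(n)),\theta_{k(n)}}, U^*) \to 0$.

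It remains to show that $d_{BL}(U^{\theta_{k(n)}}, U^*) \to 0$, since then the triangle inequality for $d_{BL}$ completes the proof. Using Proposition~\ref{prop:BMrepresentation} we represent $U^{\theta_{k(n)}} \stackrel{\mathcal{D}}{=} (B_{V^{\theta_{k(n)}}(t)})_{t\in[0,1]}$ and $U^* \stackrel{\mathcal{D}}{=} (B_{V^*(t)})_{t\in[0,1]}$ for a single Brownian motion $B$ defined on $[0,\sup_\theta V^\theta(1)]$, which is a bounded interval by assumption. Brownian motion has almost surely uniformly continuous sample paths on compact intervals, so $\|V^{\theta_{k(n)}} - V^*\|_\infty \to 0$ forces
\begin{align*}
\sup_{t\in[0,1]} |B_{V^{\theta_{k(n)}}(t)} - B_{V^*(t)}| \to 0 \qquad \text{a.s.}
\end{align*}
This pathwise uniform convergence gives convergence in distribution in $(D[0,1],d^\circ)$ and hence $d_{BL}(U^{\theta_{k(n)}}, U^*) \to 0$, finishing the argument.

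The main obstacle is conceptual rather than technical: classical Rebolledo requires a fixed limiting variance function, but here the limit $U^{\theta_n}$ itself depends on $n$ through $\theta_n$. The Arzelà–Ascoli compactness provided by uniform equicontinuity of $(V^\theta)$ is precisely what reconciles this with the pointwise-limit framework of Theorem~\ref{thm:fclt}, and Proposition~\ref{prop:BMrepresentation} is what transfers continuity of the variance functions into distributional continuity of the Gaussian limits.
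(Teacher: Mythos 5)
Your proof is correct and follows the paper's overall strategy exactly: reduce to the classical Rebolledo CLT via the subsequence characterization of uniform convergence (Proposition~\ref{prop:seqUni}(c)), extract a uniformly convergent subsequence of variance functions by Arzelà--Ascoli, apply Theorem~\ref{thm:fclt} along that subsequence, and close with the triangle inequality for $d_{BL}$. The one place where you diverge is in showing $U^{\theta_{k(n)}}\xrightarrow{\mathcal{D}}U^*$: the paper verifies Billingsley's Theorem 7.5 directly, checking convergence of finite-dimensional Gaussian marginals and the modulus-of-continuity condition (the latter via the Hölder-continuity estimate in Lemma~\ref{lem:Ulimitistight}). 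You instead use Proposition~\ref{prop:BMrepresentation} to couple all the limiting martingales to a single Brownian motion $B$ on the compact interval $[0,\sup_\theta V^\theta(1)]$ and derive almost-sure uniform convergence of $B_{V^{\theta_{k(n)}}(\cdot)}$ to $B_{V^*(\cdot)}$ from the uniform continuity of $B$'s paths. This is a cleaner, more self-contained handling of that sub-step, since it replaces the two-part tightness verification with a single coupling observation; the trade-off is that it leans on constructing all limit laws from the same Brownian motion, which is legitimate here because only distributional convergence is claimed. Both routes buy the same theorem, so the difference is stylistic, but your coupling argument is the more elementary of the two.
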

\begin{proof}
    We will use the characterization of uniform convergence as stated in Proposition~\ref{prop:seqUni} c). To this end, let $(\theta_n)\subseteq \Theta$ be an arbitrary sequence. By assumption $(V_{\theta_n})_{n\in \mathbb{N}}$ is a uniformly equicontinuous and bounded sequence of functions on a compact interval, so the Arzelà–Ascoli theorem states that there exists a subsequence $\theta_{k(n)}$, with $k\colon \mathbb{N} \to \mathbb{N}$ strictly increasing, and a function $\tilde{V}\in C[0,1]$ such that 
    \[
        \sup_{t\in [0,1]} |V^{\theta_{k(n)}}(t) - \tilde{V}(t)| 
        \longrightarrow 0, \qquad n \to \infty.
    \]
    Since each function $V^{\theta_{k(n)}}$ is non-decreasing, it follows that $\tilde{V}$ is non-decreasing. It also holds that 
    $\tilde{V}(0)=\lim_{n\to \infty}V^{\theta_{k(n)}}(0)=0$, and therefore $\tilde{V}$ is 
    the variance function of a continuous Gaussian martingale $\tilde{U}$ with $\tilde{U}_0 = 0$. 
    
    By assumption of the convergences in \eqref{eq:RebolledoConditions}, we may conclude that 
    \begin{align*}
        |\langle U^{(k(n)),\theta_{k(n)}} \rangle(t) - \tilde{V}(t)|
        \leq
        \underbrace{
            |\langle U^{(k(n)),\theta_{k(n)}} \rangle(t) - V^{\theta_{k(n)}}(t)|
        }_{\xrightarrow{P} 0}
        +
        \underbrace{
        |V^{\theta_{k(n)}}(t) - \tilde{V}(t)|
        }_{\to 0}
        \xrightarrow{P} 0
    \end{align*}
    and that
    \(
        \langle U_\epsilon^{(k(n)),\theta_{k(n)}} \rangle(t) \to 0
    \) as $n\to \infty$. Thus we have established the conditions of the classical Rebolledo CLT -- Theorem \ref{thm:fclt} -- for the sequence $U^{(k(n)),\theta_{k(n)}}$ and the Gaussian martingale $\tilde{U}$ with variance function $\tilde{V}$. We therefore conclude that
    \begin{align*}
        U^{(k(n)),\theta_{k(n)}} 
            \xrightarrow{\mathcal{D}}
        \tilde{U}
    \end{align*}
    in $D[0,1]$ as $n\to \infty$. 

    We now establish that the sequence $(U^{\theta_{k(n)}})$ also converges in distribution to $\tilde{U}$ in $C[0,1]$, and in particular also in $D[0,1]$. To this end, we use the characterization of convergence in distribution in $C[0,1]$ from Theorem 7.5 in \citet{billingsley2013convergence}, which states that we need to show that
    \begin{enumerate}
        \item For all $0\leq t_1 < \cdots < t_m \leq 1$, it holds that
        \[
            (U_{t_1}^{\theta_{k(n)}}, \ldots, U_{t_m}^{\theta_{k(n)}})
                \xrightarrow{\mathcal{D}}
            (\tilde{U}_{t_1},\ldots, \tilde{U}_{t_m}), 
            \qquad n \to \infty.
        \]
        \item For all $\epsilon>0$
        \begin{align*}
            \lim_{\delta\to 0^+} \limsup_{n \to \infty}
            \mathbb{P}\Big(\sup_{|t-s|<\delta}|U_t^{\theta_{k(n)}}
            - U_s^{\theta_{k(n)}}|>\epsilon \Big)=0.
        \end{align*}
    \end{enumerate}
    The first condition is clear since all the marginals are multivariate Gaussian, and the mean and variance of the sequence converges to the mean and variance of the limit distribution. The second condition follows from the same computation as in the proof of Lemma~\ref{lem:Ulimitistight}. By Theorem 7.5 in \citet{billingsley2013convergence} we therefore conclude that 
    \begin{align*}
        U^{\theta_{k(n)}} \xrightarrow{\mathcal{D}} \tilde{U},
        \qquad \text{for} \:\:  n \to \infty,
    \end{align*}
    in $C[0,1]$, and hence also in $D[0,1]$. 

    We can now apply the triangle inequality for the bounded Lipschitz metric to conclude that
    \begin{align*}
        d_{BL}(U^{(k(n)),\theta_{k(n)}}, U^{\theta_{k(n)}} )
        \leq 
        d_{BL}(U^{(k(n)),\theta_{k(n)}}, \tilde{U} )
        +
        d_{BL}( \tilde{U}, U^{\theta_{k(n)}} )
        \longrightarrow 0.
    \end{align*}
    Since $(\theta_n) \subseteq \Theta$ was an arbitrary sequence, we conclude that $U^{(n),\theta} \convUD U^\theta$ by Proposition~\ref{prop:seqUni}.
\end{proof}

The following proposition gives explicit expressions for the quadratic characteristics that 
appear in Rebolledo's CLT in the special case where the local martingales are given as stochastic integrals with respect to a compensated counting processes. 

\begin{prop}\label{prop:Rebolledospecialcase}
    Let $N_1,\ldots, N_n$ be counting processes and assume that for each $j=1,\ldots,n$,
    $N_j$ has an absolutely continuous $\mathcal{F}_t^{(n)}$-compensator $\Lambda_{j,t}$ such that $M_{j,t}=N_{j,t}-\Lambda_{j,t}$ is a locally square integrable $\mathcal{F}_t^{(n)}$-martingale.
    Let $H_1,\ldots,H_n$ be locally bounded $\mathcal{F}_t^{(n)}$-predictable processes, and define the process $U_t^{(n)} = \sum_{j=1}^n \int_0^t H_{j,s} \mathrm{d}M_{j,s}$. Then $U_t^{(n)}$ is a local square integrable $\mathcal{F}_t^{(n)}$-martingale, and for any $t,\epsilon>0$ it holds that
    \begin{align*}
        \langle U^{(n)} \rangle(t) 
            &= \sum_{j=1}^n \int_0^t H_{j,s}^2 \mathrm{d}\Lambda_{j,s}, \\
        \langle U_\epsilon^{(n)} \rangle(t) 
            &= \sum_{j=1}^n \int_0^t H_{j,s}^2\one(|H_{j,s}|\geq \epsilon)\mathrm{d}\Lambda_{j,s}.
    \end{align*}
\end{prop}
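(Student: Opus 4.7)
The plan is to reduce the claim to two standard ingredients: the computation of the predictable quadratic characteristic for each individual stochastic integral $\int_0^t H_{j,s}\,\mathrm{d}M_{j,s}$, and the orthogonality of the $M_j$'s as purely discontinuous martingales on the common filtration $\mathcal{F}_t^{(n)}$. Once both are in hand, the desired identities follow by bilinearity of the predictable quadratic covariation.

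First I would observe that since each $N_j$ is a counting process with continuous (in fact absolutely continuous) compensator $\Lambda_j$, its jumps are of size one and the optional quadratic variation satisfies $[M_j](t)=N_j(t)$. By predictability of $\Lambda_j$ this gives $\langle M_j\rangle(t)=\Lambda_j(t)$. Applying the standard formula for the predictable quadratic variation of a stochastic integral with respect to a locally square integrable martingale with locally bounded predictable integrand (e.g.\ Theorem 2.4.4 in \cite{fleming2011counting}, which is already invoked in Lemma \ref{lem:countingmartingales}), I get that each $\int_0^{\cdot} H_{j,s}\,\mathrm{d}M_{j,s}$ is a local square integrable martingale with quadratic characteristic $\int_0^t H_{j,s}^2\,\mathrm{d}\Lambda_{j,s}$. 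Summation then immediately gives that $U^{(n)}$ is a local square integrable martingale, and yields the first displayed formula \emph{provided} the cross terms $\langle \int H_i\,\mathrm{d}M_i,\int H_j\,\mathrm{d}M_j\rangle$ vanish for $i\neq j$.

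The main obstacle is precisely this orthogonality. Since $M_i$ and $M_j$ are purely discontinuous locally square integrable martingales on the same filtration, $\langle M_i,M_j\rangle$ is the predictable compensator of $[M_i,M_j](t)=\sum_{s\leq t}\Delta N_{i,s}\,\Delta N_{j,s}$. In the setting where this proposition is applied (see the proof of Proposition \ref{prop:UasymptoticGaussian}), the counting processes $N_1,\ldots,N_n$ correspond to independent individuals and each has an absolutely continuous compensator; hence the event that $N_i$ and $N_j$ share a jump time has probability zero. Consequently $[M_i,M_j]\equiv 0$, so $\langle M_i,M_j\rangle\equiv 0$. This carries over to the stochastic integrals since the predictable quadratic covariation is linear in the integrands, and the first identity follows.

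Finally, for the rarefied formula, the key observation is that the jumps of $U^{(n)}$ satisfy $\Delta U^{(n)}_t=\sum_j H_{j,t}\,\Delta N_{j,t}$, and by non-simultaneity at most one summand is nonzero at any fixed time. Thus $|\Delta U^{(n)}_t|\geq\epsilon$ if and only if there exists a (unique) $j$ with $\Delta N_{j,t}=1$ and $|H_{j,t}|\geq\epsilon$. Consequently, the rarefied martingale $U_\epsilon^{(n)}$ is itself of the same form, $U_\epsilon^{(n)}(t)=\sum_j\int_0^t H_{j,s}\,\one(|H_{j,s}|\geq\epsilon)\,\mathrm{d}M_{j,s}$, and applying the first identity to the modified predictable integrands $H_{j,s}\,\one(|H_{j,s}|\geq\epsilon)$ yields the second formula.
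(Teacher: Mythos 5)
Your proposal is correct and gives a genuine first-principles derivation, whereas the paper itself simply cites the discussion following Theorem II.5.2 of \citet{AndersenBorganGillKeiding:1993} (their equations (2.5.6) and (2.5.8)) and supplies no argument. Your route has real pedagogical value: you reduce everything to $\langle M_j\rangle = \Lambda_j$ (from $[M_j] = N_j$ plus predictability of the continuous compensator), the standard $\langle \int H\,\mathrm{d}M\rangle = \int H^2\,\mathrm{d}\langle M\rangle$ identity, bilinearity, and the jump structure of $U^{(n)}$. The paper's citation buys brevity and avoids restating standard counting-process machinery; your argument buys transparency about exactly which structural assumptions are doing the work.

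You are also right to flag the orthogonality of the $M_j$'s as the nontrivial step. The proposition as literally stated (``Let $N_1,\ldots,N_n$ be counting processes'') does not by itself rule out common jump times, and without that the cross terms $\langle \int H_i\,\mathrm{d}M_i,\int H_j\,\mathrm{d}M_j\rangle$ need not vanish, nor does the rarefaction decompose componentwise. In \citet{AndersenBorganGillKeiding:1993} this is hidden inside the definition of a \emph{multivariate} counting process, where absence of simultaneous jumps is postulated; in the paper's application the $N_j$'s are i.i.d.\ with absolutely continuous compensators, which gives the same conclusion almost surely. Strictly as a freestanding proof of the proposition you would want to state ``no two of the $N_j$ jump simultaneously (a.s.)'' as a hypothesis rather than appeal to the downstream application, but the mathematical content of your argument is sound and you correctly identify the one place where an extra condition is being used implicitly. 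One small cosmetic mismatch, inherited from the paper's own notation: the displayed rarefaction formula uses $\one(|H_{j,s}|\geq \epsilon)$ while the definition of $\langle M_\epsilon\rangle$ uses a strict inequality $|\Delta M_s|>\epsilon$; your derivation naturally produces the strict version, and the distinction is immaterial for the subsequent limit arguments.
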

\begin{proof}
    See the discussion following Theorem $\mathrm{II}.5.2$ in \citet{AndersenBorganGillKeiding:1993}, in particular equations $(2.5.6)$ and $(2.5.8)$.
\end{proof}

\section{Estimation of \texorpdfstring{$\lambda$}{the intensity} and \texorpdfstring{$G$}{the residualization process}} \label{sec:estimation} 
The asymptotic theory for estimation of the LCM crucially relies on $\hat \lambda^{(n)}$ and $\hat G^{(n)}$ being consistent,
and more importantly, having a product error decaying at an $n^{-1/2}$-rate.
Therefore, a central question when applying the test, is how to model $\lambda$ and $G$. 

In principle, we could use parametric models to learn $\hat \lambda^{(n)}$ and $\hat G^{(n)}$, and under such models it should be possible to achieve $n^{-1/2}$-rates. For example, if we consider a parametrization $(t,\theta) \mapsto \lambda_t(\theta)$ which is $\kappa(t)$-Lipschitz in $\theta\in \Theta \subseteq \mathbb{R}^p$ for each $t$, then
\begin{align*}
    h(n)^2 
    = E\left(\int_0^1 (\lambda_t(\theta_0)-\lambda_t(\hat \theta^{(n)}))^2 
    \mathrm{d}t \right) 
    \leq 
    \|\kappa\|_{L_2([0,1])}^2 E\| \theta_0 - \hat \theta^{(n)}\|_{\mathbb{R}^p}^2.
\end{align*}
Thus the rates from parametric asymptotic theory can be converted to rates for $g$ and $h$. 

However, it is of greater interest if sufficient rates can be achieved with
nonparametric estimators. Below we give
concrete examples of nonparametric models and discuss which rates are
achievable. For simplicity, we focus on the case where $\mathcal{F}_t =
\mathcal{F}_t^{N,Z}$ and where $G_t = X_t - \Pi_t$ as in the introductory example.

\subsection{Nonparametric functional estimation of \texorpdfstring{$\Pi$}{the predictable projection}}
As seen in \cref{subsec:ex}, assumptions on the form of $\Pi$ turn the
general estimation problem into a concrete problem of estimating a function. 

If the system is Markovian, it can be reasonable to assume a
\emph{functional concurrent model}. The model asserts that $\Pi_t = \mu(t,Z_t)$
for a bivariate function $\mu$, and a survey of methods for estimating $\mu$ is
given by \citet{maity2017nonparametric}. Notably, \citet{jiang2011functional}
achieve an $n^{-1/3}$-rate of $g(n)$ under certain regularity and moment
assumptions, see their Theorem 3.3. That result also holds if $Z$ is 
replaced by a linear predictor $\beta^T\mathbf{Z}$ of several covariates.

Consider again the historical linear regression model from \cref{subsec:ex},
and assume that the effect of $Z$ on $X$ is homogeneous over time. 
That is, $\rho_X(s,t) = \tilde \rho_X(t-s)$ for some function $\tilde \rho_X$.
This submodel is known as the
\emph{functional convolution model}, since $\Pi$ can be written as the
convolution of $Z$ and $\tilde \rho_X$. Applying the Fourier transform converts
it into a (complex) linear concurrent model, so by Plancherel's theorem one can leverage the convergence rates from the concurrent model. \citet[Theorem
16]{manrique2016functional} uses this idea to transfer the $n^{-1/4}$-rate of the
\emph{functional ridge regression estimator} \citep{manrique2018ridge} 
to the convolution model, which holds under modest moment conditions on the data. With additional distributional assumptions, we conjecture that faster rate results for the linear concurrent model can also be leveraged to the convolution model. \citet{csenturk2010functional} consider a similar model under
the assumption that
$$
    \rho_X(s,t) = \one(t-\Delta \leq s\leq t)\tilde \rho_X^1(t)\tilde \rho_X^2(t-s),
$$
for two functions $\tilde \rho_X^1$ and $\tilde \rho_X^2$ and a lag $\Delta>0$.
They establish a pointwise rate result for the response curve, but it is not 
obvious how to cast their result as a polynomial rate for $g(n)$.

For the full historical functional linear model we are not aware of any
published rate results. \citet{yuan2010reproducing,cai2012minimax} establish
rates on the prediction error for scalar-on-function regression, and
\citet{Yao:2005} establish various rates for function-on-function regression,
but in a non-historical setting. Based on the former, we give a heuristic for
which rates are achievable for $g(n)$ in this model. If $\hat{\Pi}$ is based on
a kernel estimate $\hat{\rho}_X^{(n)}$ of $\rho_X$, then Tonelli's theorem yields
    \begin{align*}
        g(n)^2 = \vertiii{ 
            \Pi - \hat{\Pi}^{(n)}
        }_{2}^2
        &= \ex \left(
            \int_0^1 \left(
                \int_0^t (\rho_X(s,t)-\hat{\rho}_X^{(n)}(s,t))Z_s\mathrm{d}s
            \right)^2\mathrm{d}t
        \right) \\
        &= \int_0^1 \ex
            \left(\left(
                \int_0^t (\rho_X(s,t)-\hat{\rho}_X^{(n)}(s,t))Z_s\mathrm{d}s
            \right)^2\right)\mathrm{d}t.
    \end{align*}
Theorem 4 in \citet{cai2012minimax} asserts that we, under certain regularity
conditions, can estimate $\rho_X(\cdot,t)$ such that 
\begin{equation*}
    \ex \left(
    \left(
        \int_0^t (\rho_X(s,t)-\hat{\rho}_X^{(n)}(s,t))Z_s\mathrm{d}s
    \right)^2
    \right)
\end{equation*}
decays at a $n^{-2r_t/(2r_t+1)}$-rate for a fixed $t$. Here $r_t$ is a constant
describing the eigenvalue decay of a certain operator related to the 
autocovariance of $Z$ and the regularity of $\rho_X$. As a concrete example, if
$Z$ is a Wiener process and $\rho_X(\cdot,t)\in \mathcal{W}_2^m([0,t])$ is in
the $m$-th Sobolev space for each $t>0$, then $r_t = 1 + m$ and $g(n)$ will
converge at an $n^{-(1+m)/(2m+3)}$-rate, see the discussion after Corollary 8 in
\cite{yuan2010reproducing}. Based on these arguments, we believe that the
desired $n^{-(1/4+\varepsilon)}$-rate for $g(n)$ is achievable with 
suitable regularity assumptions on $Z$ and $\rho_X$. 

\subsection{Estimation of \texorpdfstring{$\lambda$}{the intensity}}
Within the framework of the Cox model, \cite{Wells:1994} demonstrate 
that the baseline intensity can be estimated with rate $n^{-2/5}$ using 
a standard kernel smoothing technique. With the parametric $n^{-1/2}$-rate 
on the remaining parameters, this translates readily into $h(n) = O(n^{-2/5})$.

As an alternative to the Cox model, \citet{bender2020general}
propose a general framework for nonparametric estimation of Markovian
intensities, i.e., $\lambda_t = \exp(f(t,Z_t))$ for some function $f$. They
survey existing methods such as gradient boosted trees and neural networks and
relate them to this setting. Based on real and synthetic data, they find that
both gradient boosted trees and neural networks outperform the Cox
model in terms of predictive performance as measured by the Brier score. In
essence, the framework relies on discretizing time and approximating the
intensity with successive Poisson regressions. Using the same idea,
\citet{rytgaard2021estimation} argue that $h(n) = o(n^{-1/4})$
can be achieved for time-independent covariates. 

Similarly, \citet{rytgaard2022continuous} mention that $h(n) = o(n^{-1/4})$ can
be achieved for estimation of intensities in a multivariate point process with a
uniformly bounded number of events, which we place into a general modeling 
framework below.

\subsection{Estimation of \texorpdfstring{$\lambda$}{the intensity} and \texorpdfstring{$\Pi$}{the predictable projection} for counting processes}
In \cref{sec:setup,sec:asymptotics} we considered the setup where $N$ was a counting
process adapted to a filtration $\mathcal{F}_t$, which could contain information
on baseline covariates and covariate processes that were not necessarily
counting processes. In this section we explore how our testing framework can be
applied when all stochastic processes of interest are
counting processes.

More specifically, let $(N_t^d)_{d \in [p]}$ be 
a $p$-dimensional counting process. For $a, b \in [p]$ and 
$C \subset [p] \setminus \{b\}$ with 
$a \neq b$ and $a \in C$ we are interested in testing 
the hypothesis that $N^a$ is conditionally locally independent 
of $N^b$ given the filtration, $\mathcal{F}^C_t$, generated 
by $N^C = (N^d)_{d \in C}$.

We can cast this setup in the framework of \cref{sec:setup} as follows.
Naturally, we let $N = N^a$ and $\mathcal{F}_t = \mathcal{F}^C_t$. The auxiliary
process $X$ is chosen to be \lc{} and predictable with respect to the
filtration, $\mathcal{F}^b_t$, generated by $N^b$. For example, we could choose
$X_t = N^b_{t-}$. But $X_t$ could be any functional of $N^b$ such
as $X_t = f(N_{t-}^b)$ for a suitable function $f$ or a linear filter of $N^b$,
    \begin{align*}
        X_t = \int_0^{t-} \kappa(t - s) \mathrm{d} N_s^b,
    \end{align*}
where $\kappa$ is a suitable kernel function, see also Section \ref{sec:neyman}.
In principle, the process $X$ could also depend on the process $N^C$, but it is
important that the filtration, $\mathcal{G}_t$, generated by $\mathcal{F}_t$ and
$X_t$ is strictly larger than $\mathcal{F}_t$, i.e., $X_t$ should depend on
$N^b$, in order to get a non-trivial test as explained in \cref{sec:setup}. 

In the framework of counting processes, we can approach the estimation of both
$\lambda$ and $\Pi$ in a unified and general way as follows: Let $(\tau_j,
z_j)_{j \geq 1}$ be the marked point process associated with the counting
process $N^C$, i.e., $(\tau_j)_{j \geq 1}$ is a sequence of almost surely
strictly increasing event times located at the jumps of $N^C$, and $(z_j)_{j
\geq 1}$ for $z_j \in C$ are the corresponding event types. 

Since both $\lambda_t$ and $\Pi_t$ are real-valued and
$\mathcal{F}^C_{t-}$-measurable for each fixed $t \geq 0$, they can be
represented as measurable functions of $\{(\tau_j, z_j) \mid \tau_j < t, z_j \in
C\}$. Hence, we can model both $\lambda$ and $\Pi$ using any sequence-to-number
model. For the intensity process, \citet{rytgaard2022continuous} propose a
sequence of HAL estimators when the total event count is uniformly bounded. As
an alternative, \citet{Xiao:2019} propose using a recurrent neural network
(LSTM). Unless there is a uniform bound on the total number of events, as
assumed by \cite{rytgaard2022continuous}, there are currently no published
results available on the rates of convergence for nonparametric estimation of
sequence-to-number functions. 

\clearpage
\section{Additional simulation figures} \label{sec:extrafigs}
This section contains additional figures related to the simulations of \cref{sec:simulations}.

\begin{figure}[ht]
    \includegraphics[width=\linewidth]{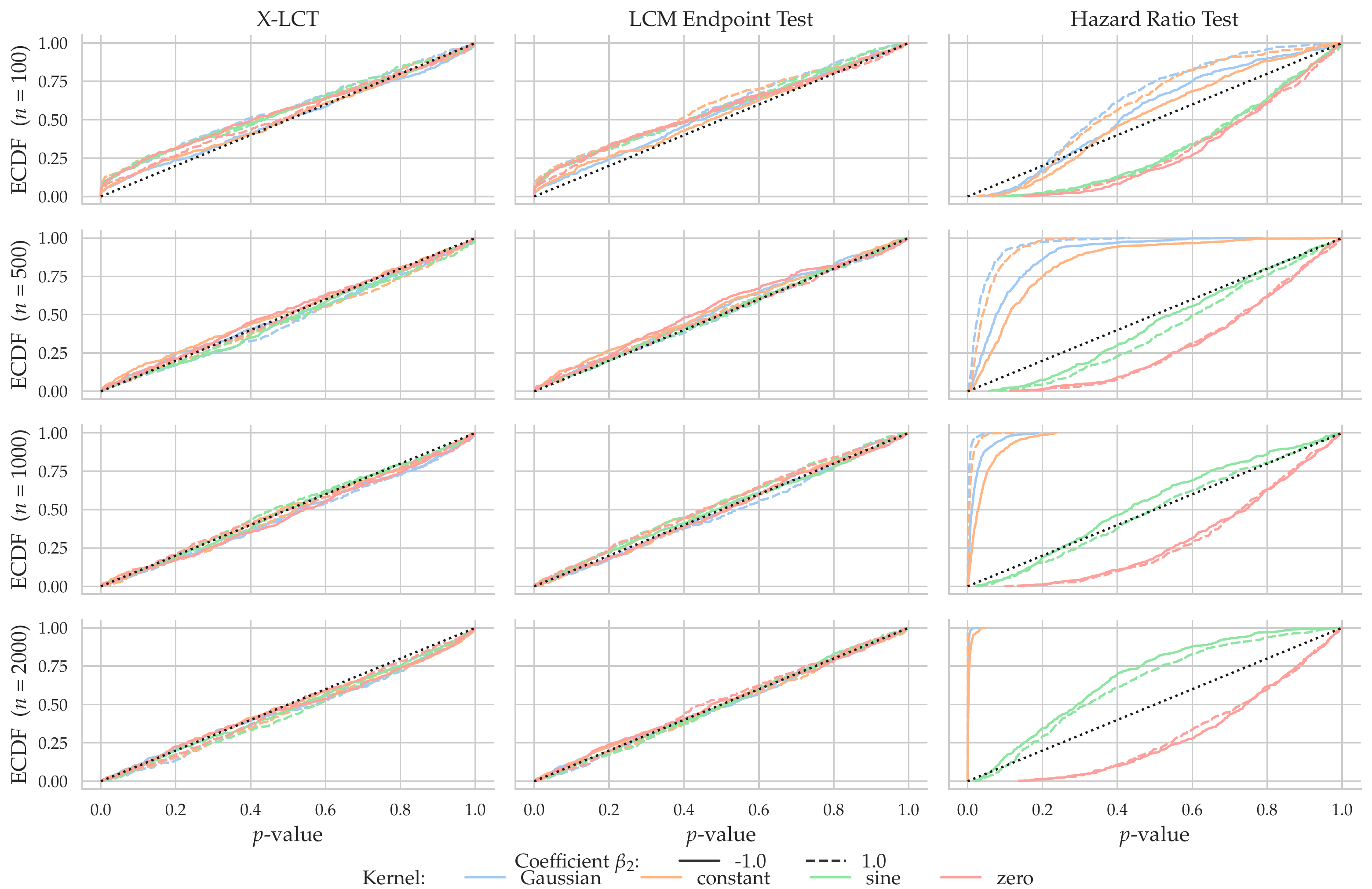}
    \caption{Empirical distribution functions of $p$-values for the three
    different conditional local independence tests considered, simulated under
    the sampling scheme described in \cref{sec:simulations}. The dotted line
    shows $y=x$ corresponding to a uniform distribution.}\label{fig:H0pvals_full}
\end{figure}

\begin{figure}[hb]
    \includegraphics[width = .7\linewidth]{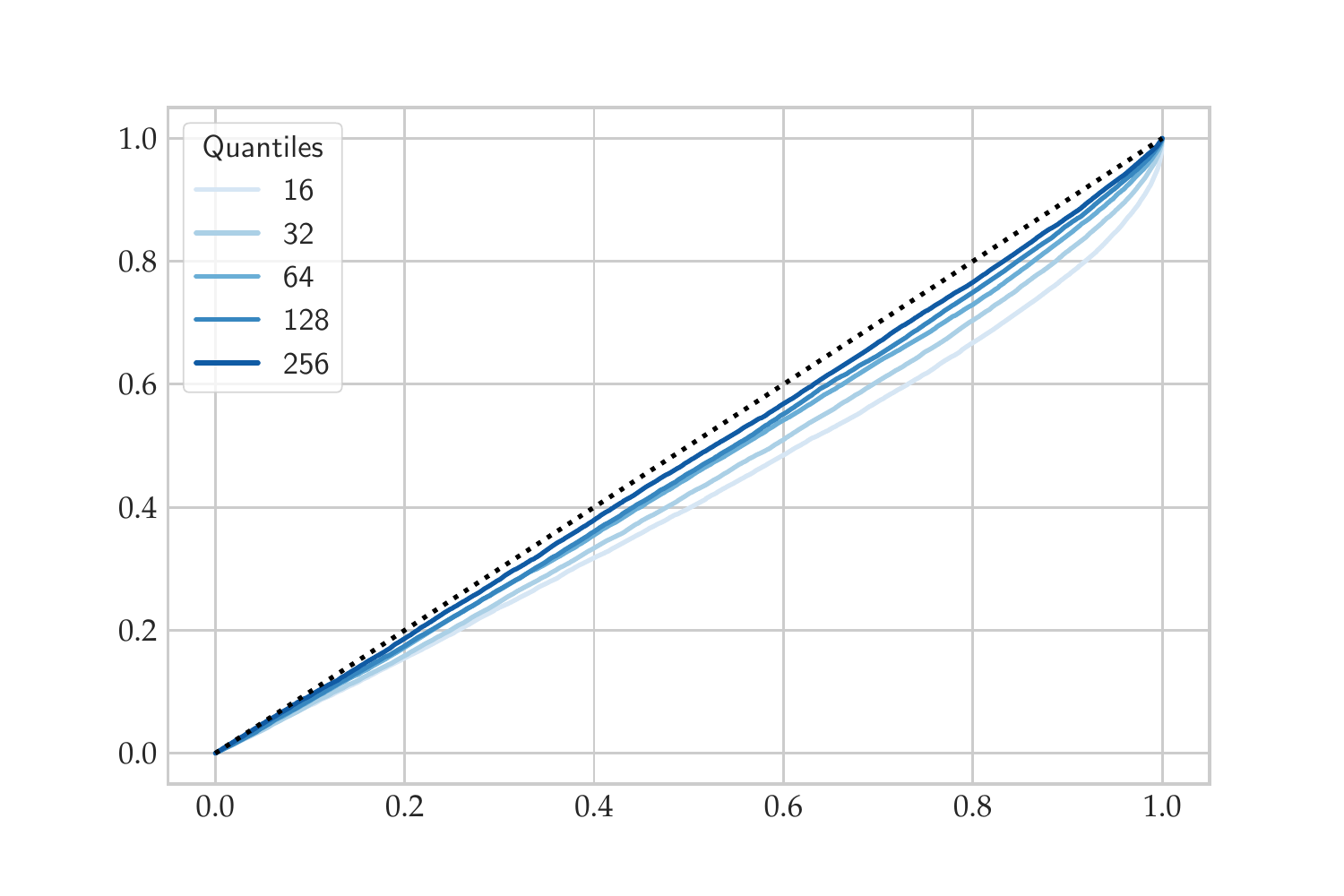}
    \caption{Empirical distribution functions of $p^{(q)} = 1- F_S(M^{(q)})$,
    where $M^{(q)} = (M_t^{(q)})_{t=1,\ldots,q}$ is a random walk with Gaussian
    increments such that $M_q^{(q)}$ has unit variance for each
    $q\in \{2^\ell \colon \ell=4,\ldots,8\}$. 
    Each empirical distribution function is based on $N=20\, 000$ samples.}
    \label{fig:supremumapprox}
\end{figure}

\end{document}